\let\oldvec\vec
\let\newvec\vec
\let\vec\oldvec
\DeclareSymbolFont{cmcal}{OMS}{cmsy}{m}{n}
\DeclareSymbolFontAlphabet{\mathcal}{cmcal}
\let\vec\newvec
\spnewtheorem{notation}[definition]{Notation}{\bfseries}{\itshape}
\patchcmd{\@citeo}{\hskip0.1em}{\kern0.1em}{}{}
\patchcmd{\@citex}{\hskip0.1em}{\kern0.1em}{}{}
\newcommand{\eq}[1]{eq.\ (\ref{#1})}
\newcommand{\eqn}[2]{
\begin{equation}\label{#1}
#2
\end{equation}
}
\newcommand{\eqnalign}[2]{
	\begin{equation}
	\begin{aligned}\label{#1}
	#2
	\end{aligned}
	\end{equation}
}
\newcommand*{\Xbar}[1]{}%
\DeclareRobustCommand*{\Xbar}[1]{%
  \mathpalette\@Xbar{#1}%
}
\newcommand*{\@Xbar}[2]{%
  \sbox0{$#1\mathrm{#2}\m@th$}%
  \sbox2{$#1#2\m@th$}%
  \rlap{%
    \hbox to\wd2{%
      \hfill
      $\overline{%
        \vrule width 0pt height\ht0 %
        \kern\wd0 %
      }$%
    }%
  }%
  \copy2 %
}
\newcommand*{\Frozenbar}[1]{}%
\DeclareRobustCommand*{\Frozenbar}[1]{%
  \mathpalette\@Frozenbar{#1}%
}
\newcommand*{\@Frozenbar}[2]{%
  \sbox0{$#1\mathrm{W}\m@th$}%
  \sbox2{$#1#2\m@th$}%
  \rlap{%
    \hbox to\wd2{%
      \hfill
      $\overline{%
        \vrule width 0pt height\ht0 %
        \kern\wd0 %
      }$%
    }%
  }%
  \copy2 %
}
\newcommand{\eqalign}[1]{
	\begin{align*}#1 \end{align*}
	}
\SetMathAlphabet{\mathbf}{normal}{OML}{mdput}{b}{n}
\def\mb#1{{\mathbf{#1}}}
\def\bm#1{{\boldsymbol{#1}}}
\def\a{\alpha}
\def\b{\beta}
\def\c{\chi}
\def\d{\delta}  
 \def\ep{\epsilon}
\def\g{\gamma}  \def\G{\Gamma}
\def\l{\lambda}  
\def\m{\mu}
\def\r{\rho}
\def\o{\omega}  \def\O{\Omega}
\def\p{\psi}  
\def\s{\sigma}  \def\vs{\varsigma} 
\def\t{\tau}
\def\w{\varphi}
\def\ups{\upsilon}
\def\CE{{\cal E}}
\def\CG{{\cal G}}
\def\CP{{\cal P}}
\def\Z{\mathbb{Z}}
\def\Q{\mathbb{Q}}
\def\I{\mathbb{I}}
\def\mI{\mathfrak{I}}
\def\mq{\mathfrak{q}}
\def\mP{\mathfrak{P}}
\def\mp{\mathfrak{p}}
\def\mr{\mathfrak{r}}
\def\ms{\mathfrak{s}}
\def\ml{\mathfrak{l}}
\def\mG{\mathfrak{G}}
\def\sN{\mathscr{N}}
\def\sQ{\mathscr{Q}}
\def\sS{\mathscr{S}}
\newcommand{\Fr}[2]{\dfrac{#1}{#2}}
\def\rd{\partial}
\def\pr{\prime}
\def\ppr{{\prime\!\prime}}
\DeclareMathOperator{\End}{End}
\DeclareMathOperator{\Aut}{Aut}
\DeclareMathOperator{\Ker}{Ker}
\DeclareMathOperator{\Hom}{Hom}
\DeclareRobustCommand*\uell{\mathpalette\@uell\relax}
\newcommand*\@uell[2]{
  \setbox0=\hbox{$#1\ell$}
  \setbox1=\hbox{\rotatebox{10}{$#1\ell$}}
  \dimen0=\wd0 \advance\dimen0 by -\wd1 \divide\dimen0 by 2
  \mathord{\lower 0.1ex \hbox{\kern\dimen0\unhbox1\kern\dimen0}}
}
\def\widebreve{\mathpalette\wide@breve}
\def\wide@breve#1#2{\sbox\z@{$#1#2$}%
     \mathop{\vbox{\m@th\ialign{##\crcr
\kern0.06em\brevefill#1{0.6\wd\z@}\crcr\noalign{\nointerlineskip}%
                    $\hss#1#2\hss$\crcr}}}\limits}
\def\brevefill#1#2{$\m@th\sbox\tw@{$#1($}%
  \hss\resizebox{#2}{\wd\tw@}{\rotatebox[origin=c]{90}{\upshape(}}\hss$}
\def\fieldk{\Bbbk}
\def\cstar{\hbox{\scriptsize\ding{75}}}
\newcommand{\functor}[1]{\mathbf{#1}}
\newcommand{\gdcat}[1]{\underline{\category{#1}}\,}
\newcommand{\category}[1]{\emph{\textbf{#1}}\,}
\newcommand{\HOM}{\hbox{$\mathop{\bm{\Hom}}$}}
\DeclareMathOperator{\THom}{THom}
\newcommand{\THOM}{\hbox{$\mathop{\bm{\THom}}$}}
\newcommand\isoto{\xrightarrow{
   \,\smash{\raisebox{-0.65ex}{\ensuremath{\scriptstyle\sim}}}\,}}
\def\cp{{\mathrel\vartriangle}}
\def\dga{{\category{dgA}(\Bbbk)}\!}
\def\ccdgc{{\category{ccdgC}(\Bbbk)}}
\def\hccdgc{{\mathit{ho}\category{ccdgC}(\Bbbk)}\!}
\def\ccdgh{{\category{ccdgH}(\Bbbk)}}
\def\hccdgh{{\mathit{ho}\category{ccdgH}(\Bbbk)}\!}
\DeclareMathAlphabet{\mathpzc}{OT1}{pzc}{m}{it}
\def\ide{\mathpzc{e}}
\begin{document}

\title{Representable presheaves of groups on the homotopy category of  cocommutative 
dg-coalgebras and  Tannakian reconstruction}

\titlerunning{Representable Presheaf of Groups and Tannakian Reconstruction}
\author{
Jaehyeok Lee\thanks{jhlee1228lee@postech.ac.kr} \and
Jae-Suk Park\thanks{jaesuk@postech.ac.kr}
}

\institute{
Department of Mathematics, POSTECH, Pohang 37673, Republic of Korea
}

\maketitle
\begin{abstract}
Motivated by rational homotopy theory,
we study a representable presheaf of groups $\bm{\mP}$
on the homotopy category  of cocommutative differential graded coalgebras,
its Lie algebraic counterpart and its linear representations.
We prove a Tannaka type reconstruction theorem
that $\bm{\mP}$ can be recovered from the dg-category of its linear representations
along with the forgetful dg-functor to  the underlying dg-category  of chain complexes. 
\end{abstract}
{MSC (2010): 20G05,16T05;14L15,55P62.}

{\footnotesize
\tableofcontents
}

\section{Introduction}
Throughout this paper $\Bbbk$ is a fixed ground field of characteristic zero,
and every differential graded (dg) object has homological $\Z$-grading, unless otherwise specified.

\subsection{A selected history and motivation}
There is a well-known picture connecting cocommutative Hopf algebras,  abstract groups 
and Lie algebras.\footnote{We refer to \cite{Cartier}
for a more extensive history.}
We obtain a  group  from a cocommutative Hopf  algebra as the group of group-like elements
and a cocommutative Hopf algebra
from an abstract group $\G$ as its group ring $\fieldk \G$, which case  $\G$ 
is isomorphic to the group of group-like elements.
The category $\category{Rep}(\G)$ of linear representations of $\G$  is  isomorphic to the category of left modules over $\Bbbk \G$ 
as tensor categories. 
Cocommutative Hopf algebras have
a similar correspondence with Lie algebras via the Lie algebra of primitive elements 
and the universal enveloping algebra. 
A group ring $\fieldk \G$ can be completed by  the powers of its augmentation ideal to a complete  Hopf algebra $\widehat{\fieldk \G}$.
For a complete Hopf algebra the group of group-like
elements is determined by the Lie algebra of primitive elements, and vice versa \cite{Quillen}.

This classic picture can be naturally recast
in the context of a representable presheaf of groups
$\category{P}: \mathring{\category{ccC}}(\Bbbk)\rightsquigarrow \category{Grp}$
on the category $\category{ccC}(\Bbbk)$ of cocommutative coalgebras.\footnote{
A formal group is a representable presheaf of groups on ${\category{ccC}}(\Bbbk)$
satisfying certain  conditions \cite{Dieud}.
}
A representing object of $\category{P}$ is a cocommutative Hopf algebra $H$,
where  the group $\category{P}(C)$ for each cocommutative coalgebra $C$
is isomorphic to the group formed by the set 
of all coalgebra maps from $C$ to $H$ with the convolution product as the composition.
In particular the group of group-like elements in $H$ is isomorphic to the group
$\category{P}(\Bbbk^\vee)$, where $\Bbbk^\vee$ is the ground field 
as a cocommutative coalgebra.
Conversely an abstract group $\G$ determines a presheaf  $\category{P}_{\Bbbk\G}$ of groups on 
${\category{ccC}}(\Bbbk)$ represented by $\Bbbk\G$, and we have
$\category{P}_{\Bbbk\G}(\Bbbk^\vee)\cong \G$.  We can also form the tensor
category of linear representations of $\category{P}$, which is isomorphic to the tensor
category of left modules over $H$, and reconstruct $\category{P}$ 
from the forgetful fiber functor.
The usual description of the Lie algebra of primitive elements in $H$
can also be recast more functorially by associating a presheaf
of Lie algebras $\category{TP}: \mathring{\category{ccC}}(\Bbbk)\rightsquigarrow \category{Lie}(\Bbbk)$
on ${\category{ccC}}(\Bbbk)$ to each  representable presheaf of groups $\category{P}$ such that  $(a)$
the Lie algebra $\category{TP}(\Bbbk^\vee)$ is isomorphic to the Lie algebra of primitive elements in $H$ representing $\category{P}$;
$(b)$ we have a natural isomorphism $\category{TP}\cong \category{TP}^\pr$
whenever the presheaves of groups $\category{P}$ and $\category{P}^\pr$ are isomorphic;
and $(c)$ we have a pair of natural isomorphisms $\grave{\category{TP}}\cong \grave{\category{P}}:  \mathring{\category{ccC}}(\Bbbk)\rightsquigarrow \category{Set}$,
between the  presheaves underlying ${\category{TP}}$ and $\category{P}$,
whenever the presheaf of groups ${\category{P}}$  is pro-represented by a complete Hopf algebra.

The purpose of this paper is to expand the above picture one-step further by studying 
a representable  presheaf of groups 
$\bm{\mP}:\mathit{ho}\mathring{\category{ccdgC}}(\fieldk)\rightsquigarrow \category{Grp}$
on the homotopy category $\mathit{ho}\category{ccdgC}(\fieldk)$ of cocommutative dg coalgebras (ccdg-coalgebras),
its Lie algebraic counterpart,  their linear representations and a Tannaka type reconstruction theorem.

In general,  a representable presheaf  of groups on a category 
can be regarded as  a group object in the category.
A fundamental example is 
the presheaf of groups  $\bm{\Pi}_{X_*}: \mathring{\mathit{ho}\category{Top}}_* \rightsquigarrow \category{Grp}$ 
on the homotopy category $\mathit{ho}\category{Top}_*$ 
of pointed topological spaces represented by the based loop space $\O X_*$ 
of a pointed space $X_*$, where the group $\bm{\Pi}_{X_*}(Y_*)$ for every
based space $Y_*$ is the group formed by the set  $[Y_*, \O X_*]$ of homotopy types of all 
base point  preserving continuous  maps to $\O X_*$
so that we have $\bm{\Pi}_{X_*}(S^n_*)\cong\pi_{n+1}(X_*)$ for $n\geq 0$. 
As it seems that  a full understanding of $\bm{\Pi}_{X_*}$ is out of reach, 
we may follow the ideas of rational homotopy theory of Quillen \cite{Quillen} 
and Sullivan \cite{Sullivan}
to replace $\mathit{ho}\category{Top}_*$ with the rational homotopy category 
$\mathit{ho}_{\mathbb{Q}}\category{Top}_*$ and consider suitable full subcategories.
For example,  Quillen has considered  the full subcategory 
$\mathit{ho}_{\mathbb{Q}}\category{Top}(2)_*$ of $1$-connected pointed spaces
and constructed a full embedding  
$\bm{\sQ}:\mathit{ho}_{\mathbb{Q}}\category{Top}(2)_*\rightsquigarrow \mathit{ho}\category{ccdgC}(\mathbb{Q})_*$
to the homotopy category $\mathit{ho}\category{ccdgC}(\mathbb{Q})_*$ 
of coaugmented ccdg-coalgebras over $\mathbb{Q}$. 
This gives us the motivation to develop a general theory of representable  presheaves
of groups on the homotopy category $\mathit{ho}\category{ccdgC}(\fieldk)$.

The categorical dual to a  representable  presheaf of groups
$\bm{\mP}:\mathit{ho}\mathring{\category{ccdgC}}(\fieldk)\rightsquigarrow \category{Grp}$
on $\mathit{ho}{\category{ccdgC}}(\fieldk)$ is a representable functor
$\bm{\mG}:\mathit{ho}{\category{cdgA}}(\fieldk)\rightsquigarrow \category{Grp}$
from the homotopy category $\mathit{ho}{\category{cdgA}}(\fieldk)$ 
of commutative dg-algebras (cdg-algebra) with cohomological grading.
This is the dg-version of an affine group scheme $G$ over $\Bbbk$, which we call  an affine group dg-scheme.
The study of affine group schemes and their linear representation is a classic subject in algebraic geometry,
which has led to the theory of Tannakian categories \cite{Rivano,Deligne90}.  A neutral Tannakian category is equivalent to
the category of finite dimensional linear representations of an affine group scheme along with the forgetful functor to
the category of underlying finite dimensional vector spaces.  
We expect to have  similar constructions for affine group dg-schemes, 
which is the main subject of a sequel to this paper \cite{JLP}.
Rational homotopy theory  gives us  additional motivation for our study, since Sullivan has constructed  
a \emph{contravariant} full embedding  
$\bm{\sS}:\mathit{ho}_{\mathbb{Q}}\mathring{\category{Top}}(1)^{f\!n}_*\rightsquigarrow 
\mathit{ho}\category{cdgA}(\mathbb{Q})_*$
of  the rational homotopy category ${\mathit{ho}_{\mathbb{Q}}\category{Top}}(1)^{f\!n}_*$ 
of $0$-connected \emph{nilpotent} pointed  spaces of \emph{finite types}
into the homotopy category $\mathit{ho}\category{cdgA}(\mathbb{Q})_*$ 
of augmented cdg-algebras over $\mathbb{Q}$ with the cohomological grading \cite{Sullivan}.

\subsection{Results}

Let $\bm{\mP}:\mathring{\mathit{ho}\category{ccdgC}}(\fieldk)\rightsquigarrow \category{Grp}$ 
be a representable presheaf of groups
on the homotopy category $\mathit{ho}\category{ccdgC}(\Bbbk)$ 
of ccdg-coalgebras over $\Bbbk$.  

A representing object of $\bm{\mP}$
is a cocommutative dg Hopf algebra (ccdg-Hopf algebra)  $\O$,  and we 
use the notation $\bm{\mP}_{\O}$ for it.
For each ccdg-coalgebra $C$
the group $\bm{\mP}_{\O}(C)$  is the group formed
by the set of homotopy types of all morphisms $g: C\rightarrow\O$ of ccdg-coalgebras,
and a homotopy equivalence $f:C\rightarrow C^\pr$ of ccdg-coalgebras
induces an isomorphism 
$\bm{\mP}_{\O}(C^\pr) \isoto\bm{\mP}_{\O}(C)$ of groups.
The category of representable presheaves of groups on  
$\mathit{ho}\category{ccdgC}(\fieldk)$
is equivalent to the homotopy category $\mathit{ho}\category{ccdgH}(\fieldk)$ 
of ccdg-Hopf algebras. [\emph{Theorem \ref{reppresheaf}}]

The Lie theoretic counterpart of  $\bm{\mP}_{\!\O}$ is a  presheaf  
$\category{T}\bm{\mP}_{\!\O}:
\mathring{\mathit{ho}\category{ccdgC}}(\fieldk)\rightsquigarrow \category{Lie}(\Bbbk)$ 
of Lie algebras  over $\Bbbk$ 
on the homotopy category $\mathit{ho}\category{ccdgC}(\fieldk)$, defined so that
we have a natural isomorphism 
$\category{T}\bm{\mP}_{\!\O}\cong \category{T}\bm{\mP}_{\!\O^\pr}$ 
whenever we have a natural isomorphism $\bm{\mP}_{\!\O}\cong \bm{\mP}_{\!\O^\pr}$ or, equivalently, 
whenever $\O$ and $\O^\pr$ are homotopy equivalent as ccdg-Hopf algebras. [\emph{Theorem \ref{Liereppresheaf}}]

If $\O$ is concentrated in degree zero, 
the group $\bm{\mP}_{\!\O}(\Bbbk^\vee)$ is isomorphic to the group of group-like elements in $\O$
and
the Lie algebra 
$\category{T}\bm{\mP}_{\!\O}(\Bbbk^\vee)$ is isomorphic to the Lie algebra of primitive elements in $\O$.

A complete ccdg-Hopf algebra is the dg-version of Quillen's complete cocommutative Hopf algebra.
If $\O$ is a complete ccdg-Hopf algebra,
we construct a natural isomorphism $\xymatrix{\grave{\bm{T\!\mP}}_{\O}\ar@/^0.3pc/@{=>}[r]
& \ar@/^0.3pc/@{=>}[l] \grave{\bm{\mP}}_{\O}}:
\mathring{\mathit{ho}\category{ccdgC}}(\fieldk)\rightsquigarrow \category{Set}$
between the underlying presheaves,
so that the representable presheaf  $\bm{\mP}_{\O}$ of groups can be recovered from the presheaf  
$\bm{T\!\mP}_{\O}$ of Lie algebras by  the Baker-Campbell-Hausdorff formula. [\emph{Theorem \ref{dgquillen}}]

We use the chain model for dg-categories---categories enriched in the category of chain complexes
over $\Bbbk$.  
We define a linear representation of $\bm{\mP}_\O$ via a linear representation of
the associated presheaf of groups 
$\bm{\CP}_{\!\!\O}:\mathring{\category{ccdgC}}(\fieldk)\rightsquigarrow \category{Grp}$ 
on the category $\category{ccdgC}(\fieldk)$ of ccdg-coalgebras, which is represented by $\O$
and induces $\bm{\mP}_\O$ on the homotopy category $\mathit{ho}\category{ccdgC}(\fieldk)$.
The linear representations   of $\bm{\CP}_{\!\!\O}$ form a dg-tensor category
$\gdcat{Rep}(\bm{\CP}_{\!\!\O})$, which is isomorphic 
to the dg-tensor category $\gdcat{dgMod}_L(\O)$ of 
left dg-modules over $\O$.

We reconstruct $\bm{\mP}_\O$ via  the forgetful dg-functor
$
\bm{\o}:\gdcat{dgMod}_L(\O)\rightsquigarrow \gdcat{Ch}(\Bbbk)
$
to the underlying dg-category $\gdcat{Ch}(\Bbbk)$ of chain complexes
as follows.
\begin{itemize}
\item
We consider a dg-tensor functor 
$C\otimes: \gdcat{Ch}(\Bbbk)\rightsquigarrow\gdcat{dgComod}^{\mathit{cofr}}_L(C)$
for each ccdg-coalgebra $C$, 
which sends a chain complex to the cofree left dg-comodule  over $C$ cogenerated 
by the chain complex. Composing it with $\bm{\o}$, we have a dg-tensor functor  
$C\otimes \bm{\o}: \gdcat{dgMod}_L(\O)\rightsquigarrow\gdcat{dgComod}^{\mathit{cofr}}_L(C)$.
Then, it is trivial that the  set  $\mathsf{End}\big(C\!\otimes\!\bm{\o}\big)$ of natural endomorphisms 
of the functor $C\!\otimes\! \bm{\o}$ is  a dg-algebra over $\Bbbk$.

\item
By considering the subset $\mathsf{Z_0End}^\otimes\big(C\!\otimes\! \bm{\o}\big)$ of 
$\mathsf{End}\big(C\!\otimes\! \bm{\o}\big)$ 
consisting  of tensorial natural endomorphisms belonging to the kernel
of the natural differential, we construct a presheaf of \emph{groups}
$\bm{\CP}^\otimes_{\!\!\bm{\o}}: \mathring{\category{ccdgC}(\Bbbk)}\rightsquigarrow 
\category{Grp}$  on the
category $\category{ccdgC}(\Bbbk)$ of ccdg-coalgebras and prove that it is 
represented by the ccdg-Hopf algebra $\O$.

\item
After introducing a notion of homotopy types of elements in
$\mathsf{Z_0End}^\otimes\big(C\otimes \bm{\o}\big)$ 
analogous to the homotopy types of morphisms
of ccdg-coalgebras, we show that the presheaf of groups $\bm{\CP}^\otimes_{\!\!\bm{\o}}$ 
on $\category{ccdgC}(\Bbbk)$ induces a presheaf of groups
$\bm{\mP}^\otimes_{\!\!\bm{\o}}$  on the homotopy category $\mathit{ho}\category{ccdgC}(\Bbbk)$.
We, then, construct a natural isomorphism 
$$
\xymatrix{ \bm{\mP}^\otimes_{\!\!\bm{\o}}\ar@/^0.3pc/@{=>}[r]^{}  
& \ar@/^0.3pc/@{=>}[l]^{} \bm{\mP}_\O}:
\mathring{\mathit{ho}\category{ccdgC}}(\Bbbk)\rightsquigarrow\category{Grp}
$$
of presheaves of groups on the homotopy category ${\mathit{ho}\category{ccdgC}}(\Bbbk)$,
which is our Tannakian reconstruction. [\emph{Theorem \ref{homainth}}]
\end{itemize}
 
A typical example of a ccdg-Hopf algebra $\O$ is the complete tensor dg-Hopf algebra
generated by a $\Z$-graded vector space, via the cobar construction \cite{Adams} of a
ccdg-coalgebra or a $C_\infty$-coalgebra. 
If $X_*$ is a pointed $2$-connected and pointed space, the cobar construction of the ccdg-coalgebra
$\sQ(X_*)$, the rational Quillen model for $X_*$,  is a rational Quillen model for the based loop space $\O X_*$.
Let $\Q\bm{\Pi}_{X_*}: \mathbb{Q}\mathit{ho}\category{Top}(1)_*\rightsquigarrow\category{Grp}$ be
a presheaf of groups on $\mathbb{Q}\mathit{ho}\category{Top}(1)_*$ represented by $\O X_*$.
Then we have an isomorphism $\Q\bm{\Pi}_{X_*}(Y_*) \cong \category{P}_{\O(\sQ(X_*)}\big(\sQ(Y_*)\big)$ of groups
for every $1$-connected pointed space $Y_*$ since Quillen's functor $\sQ$ is a full embedding. 
 

\begin{acknowledgement}
The work of JL was supported by NRF(National Research Foundation of Korea) 
Grant funded by the Korean Government(NRF-2016-Global Ph.D. Fellowship Program).
JSP is grateful to Cheolhyun Cho, Gabriel Drummond Cole and Dennis Sullivan for useful comments.
\end{acknowledgement}

\section{Notation and basic notions}

Unadorned tensor product $\otimes$ is over the ground field $\fieldk$.
By an element  of a $\Z$-graded vector space 
we shall usually mean a homogeneous element $x$ whose degree will be denoted $|x|$. 
Let $V=\bigoplus_{n\in \Z}V_n$ and $W=\bigoplus_{n\in \Z}W_n$ be $\Z$-graded vector spaces.
Then $V\otimes W = \bigoplus_{n\in \Z}(V \otimes W)_n$,  
where $(V \otimes W)_n= \bigoplus_{i+j=n\in \Z}V_i \otimes W_j$, 
is also a  $\Z$-graded vector  space.
Denoted by  $\Hom(V,W)= \bigoplus_{n\in \Z}\Hom(V,W)_n$ is the $\Z$-graded vector 
space of $\fieldk$-linear maps from $V$ to $W$, where $\Hom(V,W)_n$ 
is the space of $\fieldk$-linear maps increasing the degrees  by $n$.
A chain complex $\big(V, \rd_V\big)$ is often denoted by $V$ for simplicity.
The ground field $\fieldk$ is a chain complex with the zero differential.
Let $V$ and $W$ be chain complexes. 
Then $V\otimes W$ 
and $\Hom(V, W)$ are also chain complexes with the following differentials
\eqn{tshmdiff}{
\begin{cases}
\rd_{V\otimes  W} = \rd_V\otimes \I_W +\I_V\otimes \rd_W, &
\cr
\rd_{V\!,W} f= \rd_W\circ f -(-1)^{|f|}f\circ \rd_V, & \forall f\in \Hom(V,W)_{|f|}
.
\end{cases}
}
A \emph{chain map} $f:\big(V, \rd_V\big)\rightarrow \big(W, \rd_W\big)$ is an $f \in \Hom(V,W)_0$
satisfying $\rd_{\mathit{V\!,W}} f =\rd_V\circ f -f\circ \rd_W =0$.  
Two chain maps $f$ and $\tilde f$ are \emph{homotopic}, denoted by $f \sim \tilde f$,
or \emph{have the same homotopy type}, denoted by $[f]=[\tilde f]$, if there is a chain homotopy  $\l \in \Hom(V,W)_{1}$
such that $\tilde f - f = \rd_{\mathit{V\!,W}} \l$.

The set of morphisms from an object $C$ to another object $C^\pr$ in a category 
$\category{C}$ is denoted by $\HOM_{\category{C}}(C, C^\pr)$. 
We denote  the set of natural transformations of functors 
$\category{F}\Rightarrow \category{G}:\category{C}\rightsquigarrow \category{D}$
by $\mathsf{Nat}(\category{F}, \category{G})$.
For any functor $\category{F}:\category{C}\rightsquigarrow \category{D}$,
where $\category{D}$ is small, we use the notation 
$\grave{\category{F}}: \category{C}\rightsquigarrow \category{Set}$ for the
underlying set valued functor obtained by composing it with the forgetful functor 
$\functor{Forget}:\category{D}\rightsquigarrow \category{Set}$.
A \emph{presheaf of groups on  a category 
$\category{C}$} is a functor $\category{F}:\mathring{\category{C}}\rightsquigarrow \category{Grp}$
from the opposite category $\mathring{\category{C}}$ of  $\category{C}$ to the category $\category{Grp}$ of groups
and is called \emph{representable} if $\grave{\category{F}}$ is representable.

The canonical isomorphisms $\fieldk\otimes V\cong V$  and $V\otimes \fieldk \cong V$
will be denote by $\imath_V: \fieldk\otimes V\rightarrow  V$ and $\imath^{-1}_V:V\rightarrow \fieldk\otimes V$
as well as  $\jmath_V: V\otimes\fieldk\rightarrow V$ and $\jmath^{-1}_V:V\rightarrow   V\otimes \fieldk$.

A \emph{dg-algebra} on  a $\Z$-graded vector space $A$ is 
a tuple $A=\big(A, u_A, m_A, \rd_A\big)$, which is both a chain complex $\big(A, \rd_A\big)$
and a $\Z$-graded associative algebra $\big(A, u_A, m_A\big)$ 
such that both the unit $u_A: \fieldk \rightarrow  A$
and the product $m_A:A\otimes A\rightarrow A$ are chain maps:
\eqnalign{dgalgebra}{
\begin{cases}
\rd_A\circ u_A=0
,\cr
m_A\circ \rd_{A\otimes A} = \rd_{A}\circ m_A
,
\end{cases}
\quad
\begin{cases}
m_A\circ (u_A\otimes \I_A)=\imath_A\cong  m_A\circ (\I_A\otimes u_A)=\jmath_A,
\cr
m_A\circ (m_A\otimes \I_A)= m_A\circ (\I_A\circ m_A)
.
\end{cases}
}

A morphism $f:A \rightarrow A^\pr$ of dg-algebras is simultaneously a chain map, $f\circ \rd_A =\rd_{A^\pr}\circ f$, and an unital algebra
map,  $f\circ u_A= u_{A^\pr}$ and  $f\circ m_A = m_{A^\pr}\circ(f\otimes f)$.
The dg-algebras  form a category, denoted by $\category{dgA}(\fieldk)$,
where the composition of morphisms is the composition as linear maps.

A \emph{dg-coalgebra} on $\Z$-graded vector space $C$  is 
a tuple $C=\big(C, \ep_C, \cp_C, \rd_C\big)$, which is both a chain complex $\big(C, \rd_C\big)$
and a $\Z$-graded  coassociative coalgebra $\big(C, \ep_C, \cp_C\big)$
such that both the counit $\ep_C: C\rightarrow \fieldk$
and the coproduct $\cp_C:C\rightarrow  C\otimes C$ are chain maps:
\eqnalign{dgcoalgebra}{
\begin{cases}
\ep_C\circ \rd_C=0
,\cr
\cp_C\circ \rd_{C} = \rd_{C\otimes C}\circ\cp_C
,
\end{cases}
\quad
\begin{cases}
(\ep_C\otimes \I_C)\circ \cp_C=\imath^{-1}_C\cong  (\I_C\otimes \ep_C)\circ \cp_C=\jmath^{-1}_C,
\cr
(\cp_C\otimes \I_C)\circ \cp_C=(\I_C\otimes \cp_C)\circ \cp_C
.
\end{cases}
}

A morphism $f:C \rightarrow C^\pr$ of dg-coalgebras is simultaneously a chain map, 
$f\circ \rd_C =\rd_{C^\pr}\circ f$, and a counital coalgebra
map, $\ep_{C^\pr}\circ f=\ep_{C}$ and $\cp_{C^\pr}\circ f =(f\otimes f)\circ \cp_C$.
The dg-coalgebras  form a category, denoted by $\category{dgC}(\fieldk)$,
where the composition of morphisms is the composition as linear maps.

Every dg-coalgebra $C$ in this paper is \emph{cocommutative} that 
$\cp_C = \t\circ \cp_C$, where  $\t(x\otimes y) = (-1)^{|x||y|}y\otimes x$, $\forall x, y \in C$.
The full subcategory of cocommutative dg-coalgebras (ccdg-coalgebras) of $\category{dgC}(\fieldk)$ 
is denoted by $\category{ccdgC}(\fieldk)$, and we use the prefix "cc" for cocommutative.

Remark that the ground field $\fieldk$ is 
an algebra $\fieldk=(\fieldk, u_\fieldk, m_\fieldk)$, where  $u_\fieldk =\I_\fieldk$
and $m_\fieldk(a\otimes b)=a\cdot b$, 
and  a coalgebra $\fieldk^\vee=(\fieldk, \ep_\fieldk, \cp_\fieldk)$ 
with $\ep_\fieldk =\I_\fieldk$ and $\cp_\fieldk(1)= 1\otimes 1$.
A  \emph{ccdg-bialgebra} $\O=\big(\O, u_\O, m_\O, \ep_\O, \cp_\O, \rd_\O\big)$
is simultaneously  a dg-algebra $\big(\O, u_\O, m_\O, \rd_\O\big)$ and  a ccdg-coalgebra 
$\big(\O, \ep_\O, \cp_\O, \rd_\O\big)$ such that
both the counit $\ep_\O:\O\rightarrow \fieldk$ and the coproduct $\cp_\O:\O\rightarrow \O\otimes \O$
are morphisms of dg-algebras:
\eqnalign{bialgebra}{
\begin{cases}
\ep_\O\circ u_\O = u_\fieldk
,\cr
\cp_\O\circ u_\O=  (u_\O\otimes u_\O)\circ \cp_\fieldk
,
\end{cases}
\quad
\begin{cases}
\ep_\O\circ m_\O=m_\fieldk\circ (\ep_\O\otimes \ep_\O)
,\cr
\cp_\O\circ m_\O=m_{\O\otimes \O}\circ (\cp_\O\otimes \cp_\O)
,
\end{cases}
}
or, equivalently, both the unit $u_\O:\fieldk\rightarrow \O$ and the product $m_\O:\O\otimes \O \rightarrow \O$
are morphisms of ccdg-coalgebras---remind that 
$m_{\O\otimes \O}\circ (\cp_\O\otimes \cp_\O)=
(m_\O\otimes m_\O)\circ (\I_\O\otimes \t\otimes \I_\O)\circ (\cp_\O\otimes \cp_\O)
=(m_\O\otimes m_\O)\circ \cp_{\O\otimes \O}$.

A {\it ccdg-Hopf algebra}  $\O$ is a ccdg-bialgebra with an antipode $\vs_\O$, 
which is a linear map $\vs_\O:\O\rightarrow \O$ of degree zero satisfying
the following axiom:
\eqn{antipodeaxiom}{
m_\O\circ(\vs_\O\otimes \I_\O)\circ\cp_\O = m_\O\circ (\I_\O\otimes \vs_\O)\circ\cp_\O = u_\O\circ \ep_\O.
}
Then, $\vs_\O$  is automatically a chain map.
Also, antipode $\vs_\O$ of ccdg-bialgebra is unique if exists and both an anti-algebra map and coalgebra map:
\eqnalign{antialgandcoalg}{
\begin{cases}
\vs_\O\circ u_\O=u_\O
,\cr 
\vs_\O \circ m_\O = m_\O\circ(\vs_\O\otimes \vs_\O)\circ \t
,
\end{cases}
\qquad
\begin{cases}
\ep_\O\circ \vs_\O=\ep_\O
,\cr
\cp_\O \circ \vs_\O = (\vs_\O\otimes \vs_\O)\circ\cp_\O
.
\end{cases}
}
A morphism $f:\O\rightarrow \O^\pr$ of  ccdg-Hopf algebras  is simultaneously a morphism 
of dg-algebras and a morphism
of ccdg-coalgebras---it is, then, automatic that  $f$ commutes with the antipodes.
The ccdg-Hopf algebras  form a category $\category{ccdgH}(\fieldk)$,
where the composition of morphisms is the composition as linear maps.

We can also form the homotopy category of ccdg-Hopf algebras, for which we introduce the notion of homotopy type
of  ccdg-Hopf algebra morphisms. 
\begin{definition}
A homotopy pair on $\HOM_{\ccdgh}(\O, \O^\pr)$
is a pair of $1$-parameter families $\big(f(t), \xi(t)\big)\in \Hom(\O, \O^\pr)_0[t]\oplus\Hom(\O, \O^\pr)_{1}[t]$,
which is parametrized by time variable $t$ with polynomial dependences
and
satisfies the \emph{homotopy flow equation} $\Fr{d}{dt}f(t)=\rd_{\O, \O^\pr} \xi(t)$ generated by $\xi(t)$,
subject to the following two types of conditions:
\begin{itemize}
\item \emph{infinitesimal algebra map}:  $f(0) \in  \HOM_{\dga}(\O, \O^\pr)$ and 
$$
\xi(t)\circ u_\O=0, \qquad
\xi(t) \circ m_\O =m_{\O^\pr}\circ \big(f(t)\otimes \xi(t) +\xi(t)\otimes f(t)\big)
.
$$
\item\emph{infinitesimal coalgebra map}:  $f(0) \in  \HOM_{\ccdgc}(\O, \O^\pr)$
and
$$
\ep_{\O^\pr}\circ\xi(t)=0,\qquad
\cp_{\O^\pr}\circ\xi(t)=\big(f(t)\otimes \xi(t) +\xi(t)\otimes f(t)\big)\circ \cp_\O
.
$$
\end{itemize}
\end{definition}

Let $\big(f(t), \xi(t)\big)$ be a homotopy pair on $\HOM_{\ccdgh}(\O, \O^\pr)$. 
By the homotopy flow equation,  ${f}(t)$ is uniquely determined by $\xi(t)$ modulo an initial condition $f(0)$ such that
${f}(t) = {f}(0) +\rd_{\O, \O^\pr}\int^t_0\xi(s)\mathit{ds}$,  and we can check that $f(t)$ is a family of morphisms of ccdg-Hopf algebras.
In this case, we say \emph{${f}(1)$ is homotopic to ${f}(0)$ by the homotopy $\int^1_0\xi(t)\mathit{dt}$}
and denote ${f}(0)\sim {f}(1)$, which is clearly an equivalence relation.
In other words,  two morphisms $f$ and $\tilde f$  of ccdg-Hopf algebras are homotopic, $f\sim \tilde f$,
if there is a homotopy pair connecting them (by the time $1$ map).  
Then,  we also say that $f$ and $\tilde f$ \emph{have the same homotopy type}, 
denoted by $[f]=[\tilde f]$.
For any diagram 
$\xymatrix{\O\ar@/^/[r]^-{f}\ar@/_/[r]_-{\tilde f}&\O^\pr\ar@/^/[r]^-{f^\pr}\ar@/_/[r]_-{\tilde f^\pr}&\O^\ppr}$ 
in the category $\category{ccdgH}(\fieldk)$, where $f\sim \tilde f$ and $f^\pr\sim \tilde f^\pr$, 
it is straightforward to check that   $f^\pr\circ f \sim \tilde f^\pr\circ \tilde f \in \HOM_{\ccdgh}(\O, \O^\pr)$
and the homotopy type of $f^\pr\circ f$ depends  only on the homotopy types of $f$ and $f^\pr$, so that we have
the well-defined  associative composition  $[f^\pr]\circ_h [f] :=[f^\pr\circ f]$ of homotopy types.
A morphism  $\xymatrix{\O\ar[r]^f & \O^\pr}$ of ccdg-Hopf algebras is a \emph{homotopy equivalence} if there
is a morphism  $\xymatrix{\O & \ar[l]_{h} \O^\pr}$ of  ccdg-Hopf algebras from the opposite direction
such that $h\circ f \sim \I_\O$ and $f\circ h \sim \I_{\O^\pr}$.

The homotopy category $\mathit{ho}\category{ccdgH}(\fieldk)$ of ccdg-Hopf algebras is defined such
that the objects are ccdg-Hopf algebras and morphisms are homotopy types of morphisms of ccdg-Hopf algebras.  
Note that a homotopy equivalence of ccdg-Hopf algebras is an isomorphism in the homotopy category 
$\mathit{ho}\category{ccdgH}(\fieldk)$. 

We define a homotopy pair on te morphisms of ccdg-coalgebras  as the case of ccdg-Hopf algebras  
but without imposing the condition for infinitesimal algebra map.
Then,  we have corresponding notions for homotopy types of morphisms of ccdg-coalgebras and 
a homotopy equivalence of ccdg-coalgebras.
Thus we can form the homotopy category 
$\mathit{ho}\category{ccdgC}(\fieldk)$ of ccdg-coalgebras, 
whose morphisms are homotopy types of morphisms of ccdg-coalgebras.

Remark that our notion of homotopy category is the \emph{naive} one---all 
based on  chain complex over a field $\fieldk$ with explicitly defined homotopies of morphisms.\footnote{
Our definition of the homotopy category of ccdg-coalgebras is equivalent to Quillen's definition in  \cite{Quillen}
if we consider the full subcategory of $2$-reduced ccdg-coalgebras.}

A dg-category $\gdcat{C}$   over $\Bbbk$
is a category enriched in the category $\category{Ch}(\fieldk)$ 
of \emph{chain} complexes over $\fieldk$---we refer to \cite{Keller} for a review of dg-categories. 
A dg-category shall be distinguished from an ordinary category by putting an "underline".
Besides from using the chain model we follows \cite{Simpson} for the notion of dg-tensor categories.
The chain complex of morphisms from object $X$ to object $Y$ in a dg-category $\gdcat{C}$
is denoted by $\HOM_{\gdcat{C}}(X,Y)$ with the differential $\rd_{\HOM_{\gdcat{C}}(X,Y)}$.
A morphism $f \in \HOM_{\gdcat{C}}(X,Y)$ between two objects $X$ and $Y$ in $\gdcat{C}$  
is an \emph{isomorphism} if  
$f\in \HOM_{\gdcat{C}}(X,Y)_0$  and satisfies $\rd_{\HOM_{\gdcat{C}}(X,Y)}f=0$, with its inverse
$g \in \HOM_{\gdcat{C}}(Y,X)_0$ satisfying $\rd_{\HOM_{\gdcat{C}}(Y,X)}g=0$.

A dg-functor $\category{F}:\gdcat{C} \rightsquigarrow \gdcat{D}$ 
is a functor which induces chain maps
$\HOM_{\gdcat{C}}(X,Y)\to\HOM_{\gdcat{D}}(\functor{F}(X),\functor{F}(Y))$
for every pair $(X,Y)$ of objects in $\gdcat{C}$.
The set ${\mathsf{Nat}}(\category{F}, \category{G})$ of natural transformations of dg-functors 
is a chain complex $\big({\mathsf{Nat}}(\category{F}, \category{G}), \bm{\d}\big)$,
where
\begin{itemize}
\item
its degree $n$ element is a collection of  morphisms 
$\eta=\{\eta_X:\functor{F}(X)\to\functor{G}(X)|X\in \text{Ob}(\gdcat{C})\}$
of degree $n$,
where $\eta_X$ is called the \emph{component of $\eta$ at $X$}, 
with the supercommuting naturalness condition, i.e. $\functor{G}(f)\circ\eta_X=(-1)^{mn}\eta_Y\circ\functor{F}(f)$ 
for every morphism $f:X\to Y$ of degree $m$.

\item
for every $\eta \in {\mathsf{Nat}}(\category{F}, \category{G})$ of degree $n$ 
we have $\d\eta \in {\mathsf{Nat}}(\category{F}, \category{G})$ of degree $n-1$,
whose component at $X$ is defined by $\big(\d\eta\big)_X:=\rd_{\HOM_{\gdcat{C}}\big(\functor{F}(X),\functor{G}(X)\big)}\eta_X$,
and $\d\circ \d=0$.
\end{itemize}
The dg-functors from $\gdcat{C}$ to $\gdcat{D}$ form a dg-category, with morphisms 
as the above natural transformations. 
In particular, the set $\mathsf{End}(\functor{F})$ of natural endomorphisms  has a canonical structure of dg-algebra.
A natural  transformation $\eta$ 
from a dg-functor $\functor{F}$ to another dg-functor
$\functor{G}$ is often indicated by a diagram $\eta: \functor{F}\Rightarrow \functor{G}: \gdcat{C}\rightsquigarrow \gdcat{D}$.
A natural transformation $\eta$ is an \emph{(natural) isomorphism} 
if the component morphism  $\eta_X:\functor{F}(X)\rightarrow \functor{G}(X)$ is an isomorphism in
$ \gdcat{D}$ for every object $X$ of  $\gdcat{C}$.

The notion of tensor categories \cite{Rivano,Deligne90} has a natural generalization to dg-tensor categories.
For a dg-category $\gdcat{C}$ we have a new dg-category $\gdcat{C}\boxtimes \gdcat{C}$, whose objects
are pairs denoted by $X\boxtimes Y$ and whose Hom complexes are the tensor products of
Hom complexes of  $\gdcat{C}$, i.e., $\HOM_{\gdcat{C}\boxtimes \gdcat{C}}(X\boxtimes Y, X^\pr\boxtimes Y^\pr)
=\HOM_{\gdcat{C}}(X, X^\pr)\otimes \HOM_{\gdcat{C}}(Y, Y^\pr)$ with the natural composition operation and 
differentials.  Then  we have a natural equivalence of dg-categories  
$(\gdcat{C}\boxtimes \gdcat{C})\boxtimes \gdcat{C} \cong \gdcat{C}\boxtimes (\gdcat{C}\boxtimes \gdcat{C})$.
A dg-category
$\gdcat{C}$ is a \emph{dg-tensor category} if 
we have dg-functor $\bm{\otimes}: \gdcat{C}\boxtimes \gdcat{C}\rightsquigarrow \gdcat{C}$
and a unit object $\bm{1}_{\gdcat{C}}$ satisfying the associativity, the commutativity and the unit axioms.
(See pp $40$-$41$ in  \cite{Simpson} for the details.)

The fundamental example of dg-tensor 
category over $\Bbbk$ is the dg-category $\gdcat{Ch}(\Bbbk)$ of chain complexes, whose 
set of morphisms $\HOM_{\gdcat{Ch}(\Bbbk)}(V,W)$
from a chain complex $V$ to a chain complex $W$
is the Hom complex $\Hom(V,W)$ with the differential $\rd_{\HOM_{\gdcat{Ch}(\Bbbk)}(V,W)}=\rd_{V\!,W}$.  
The dg-functor $\bm{\otimes}: \gdcat{Ch}(\Bbbk)\boxtimes \gdcat{Ch}(\Bbbk)\rightsquigarrow \gdcat{Ch}(\Bbbk)$
sends $(V, \rd_V)\boxtimes(W, \rd_W)$ to the  chain complex $(V\otimes W, \rd_{V\otimes W})$ 
and an unit object is the ground field $\Bbbk$ as a chain complex $(\Bbbk,0)$,
where all coherence isomorphisms  are the obvious ones.

A \emph{dg-tensor functor} $\functor{F}:\gdcat{C}\rightsquigarrow\gdcat{D}$ 
between dg-tensor categories is a dg-functor 
satisfying
$\functor{F}(X\bm{\otimes} Y)\cong\functor{F}(X)\bm{\otimes}\functor{F}(Y)$
and $\functor{F}(\bm{1}_{\gdcat{C}})\cong
\bm{1}_{\gdcat{D}}$.
A \emph{tensor natural transformation} $\eta:\functor{F}\Rightarrow\functor{G}$ 
between dg-tensor functors is a natural transformation  of degree $0$
satisfying $\eta_{X\bm{\otimes} Y}\cong\eta_X\otimes\eta_Y$ 
and $\eta_{\bm{1}_{\gdcat{C}}}\cong\I_{\bm{1}_{\gdcat{D}}}$.

We use the  notation $[\a]$ for the homotopy type of a morphism 
$\a$ as well as for the homology class of  a cycle $\a$, depending on the context.

\section{Representable presheaves of groups and presheaves of Lie algebras}

In this section, we develop basic theory of a
representable presheaf of groups  
$\bm{\mP}:\mathring{\mathit{ho}\category{ccdgC}}(\fieldk)\rightsquigarrow \category{Grp}$
on the homotopy category ${\mathit{ho}\category{ccdgC}}(\fieldk)$ and its Lie algebraic counterpart.

In Sect.\ 3.1,  we check that a representing object of $\bm{\mP}$
is a ccdg-Hopf algebra  $\O$,  and we use the notation $\bm{\mP}_{\O}$ for it.
The group $\bm{\mP}_{\O}(C)$ for each ccdg-coalgebra $C$ is the group formed
by the set of homotopy types of all morphisms $g: C\rightarrow\O$ of ccdg-coalgebras.
Moreover, a homotopy equivalence $f:C\rightarrow C^\pr$ of ccdg-coalgebras
induces an isomorphism  $\bm{\mP}_{\O}(C^\pr) \isoto\bm{\mP}_{\O}(C)$ of groups.
The category of representable presheaves of groups on  $\mathit{ho}\category{ccdgC}(\fieldk)$
is equivalent to the homotopy category $\mathit{ho}\category{ccdgH}(\fieldk)$ of ccdg-Hopf algebras.
We observe that the group  $\bm{\mP}_{\O}(\Bbbk^\vee)$, where $\Bbbk^\vee$ is the dual coalgebra on $\Bbbk$,
acts naturally on $\bm{\mP}_{\O}(C)$ so that the underlying presheaf 
$\grave{\bm{\mP}}_{\O}:\mathring{\mathit{ho}\category{ccdgC}}(\fieldk)\rightsquigarrow \category{Set}$
is $\bm{\mP}_{\O}(\Bbbk^\vee)$-set valued. If $\O$ is concentrated in degree zero, the group
$\bm{\mP}_{\!\O}(\Bbbk^\vee)$ is isomorphic to the group of group-like elements in $\O$.

In Sect.\ 3.2, we define  the Lie theoretic counterpart to  $\bm{\mP}_{\!\O}$  by 
a  presheaf of Lie algebras 
$\category{T}\bm{\mP}_{\!\O}:\mathring{\mathit{ho}\category{ccdgC}}(\fieldk)\rightsquigarrow \category{Lie}(\Bbbk)$ 
on the homotopy category $\mathit{ho}\category{ccdgC}(\fieldk)$.
The Lie algebra $\category{T}\bm{\mP}_{\O}(C)$ for each ccdg-coalgebra $C$ is the Lie algebra formed
by the set of homotopy types of all  \emph{infinitesimal} morphisms 
$\ups: C\rightarrow\O$ of ccdg-coalgebras about the
identity element of the group $\bm{\mP}_{\O}(C)$.
We have a natural isomorphism 
$\category{T}\bm{\mP}_{\!\O}\cong \category{T}\bm{\mP}_{\!\O^\pr}$ 
whenever there is a natural isomorphism $\bm{\mP}_{\!\O}\cong \bm{\mP}_{\!\O^\pr}$ or, equivalently, 
whenever $\O$ and $\O^\pr$ are homotopy equivalent as ccdg-Hopf algebras. 
If $\O$ is concentrated in degree zero, the Lie algebra 
$\category{T}\bm{\mP}_{\!\O}(\Bbbk^\vee)$ is isomorphic to the Lie algebra of primitive elements in $\O$.

In Sect.\ 3.3, we consider a pro-representable presheaf of group $\bm{\mP}_{\O}$, whose representing object 
$\O$ is  a complete ccdg-Hopf algebra $\O=\widehat{\O}$.
Complete ccdg-Hopf algebra is the dg-version of Quillen's complete (cocommutative) Hopf algebra.
We construct a natural isomorphism 
$
\xymatrixcolsep{3pc}
\xymatrix{ \grave{\bm{\mP}}_{{\O}}\ar@/^0.3pc/@{=>}[r]^{\bm{\ln}}  
& \ar@/^0.3pc/@{=>}[l]^{\bm{\exp}} \grave{\bm{T\mP}}_{{\O}}}
:\mathring{\mathit{ho}\category{ccdgC}}(\Bbbk)\rightsquigarrow \category{Set}
$
between the underlying presheaves,
so that the representable presheaf of groups $\bm{\mP}_{\O}$  can be recovered from the presheaf of Lie algebras
$\bm{T\!\mP}_{\O}$ using  the Baker-Campbell-Hausdorff formula.

\subsection{Representable presheaf  of groups 
$\bm{\mP}_{\O}:\mathring{\mathit{ho}\category{ccdgC}}(\fieldk)\rightsquigarrow \category{Grp}$}

The purpose of this subsection is to prove the following:

\begin{theorem}[Definition]\label{reppresheaf}
For each ccdg-Hopf algebra $\O$,
we have 
a representable presheaf  of groups  
$\bm{\mP}_{\!\Omega}:\mathring{\mathit{ho}\category{ccdgC}}(\fieldk)\rightsquigarrow \category{Grp}$
on the homotopy category $\mathit{ho}\category{ccdgC}(\fieldk)$ of ccdg-coalgebras 
over $\Bbbk$,
sending

\begin{itemize}
\item
a  ccdg-coalgebra $C=\big(C, \ep_C,\cp_C, \rd_C\big)$ to the group
$$
\bm{\mP}_{\!\Omega}(C):=\Big(\HOM_{\hccdgc} \big(C,\O\big), \ide_{C\!,\O}, \ast_{C\!,\O} \Big),
$$
with the group operation 
$[g_1]\ast_{C\!,\O}  [g_2] 
:=\big[m_\O\circ (g_1\otimes g_2) \circ\cp_C\big]$,  
the identity element $\ide_{C\!,\O} =\big[ u_\O\circ \ep_C\big]$,  
and  the inverse $[g]^{-1}:=\big[\vs_\O\circ g\big]$ of $[g]$,
where $g_i \in \HOM_{\ccdgc} \big(C,\O\big)$ is an arbitrary representative 
of the homotopy type $[g_i]\in \HOM_{\hccdgc} \big(C,\O\big)$.

\item 
a morphism $[f] \in \HOM_{\hccdgc}\big(C, C^\pr\big)$ in the homotopy category of ccdg-coalgebras 
to a homomorphism
$\bm{\mP}_{\O}([f]): \bm{\mP}_{\O}\big(C^\pr\big)\rightarrow \bm{\mP}_{\O}\big(C\big)$ of groups
defined by, $\forall [g^\pr] \in \HOM_{\hccdgc} \big(C^\pr,\O\big)$,
$$
\bm{\mP}_{\Omega}\big([f]\big)\big([g^\pr]\big):=\big[g^\pr\circ f\big],
$$
where  $f\in \HOM_{\ccdgc}\big(C, C^\pr\big)$
and $g^\pr \in \HOM_{\ccdgc} \big(C^\pr,\O\big)$
are arbitrary representatives of the homotopy types $[f]$ and $[g^\pr]$, respectively,
\end{itemize}

such that
$\bm{\mP}_{\!\Omega}([f])$ is an isomorphism of groups 
whenever $f:C \rightarrow C^\pr$ is a homotopy equivalence of ccdg-coalgebras.

Representing object of a representable  presheaf  of groups  
$\bm{\mP}:\mathring{\mathit{ho}\category{ccdgC}}(\fieldk)\rightsquigarrow \category{Grp}$
on  $\mathit{ho}\category{ccdgC}(\fieldk)$ is a ccdg-Hopf algebra $\O$ such that $\bm{\mP}\cong \bm{\mP}_\O$.

For each morphism  $[\p] \in \HOM_{\hccdgh}\big(\O,\O^\pr\big)$ in the homotopy category of ccdg-Hopf algebras
we have a natural transformation 
$\sN_{[\p]}: \bm{\mP}_{\!\Omega}\Longrightarrow \bm{\mP}_{\!\Omega^\pr}:
\mathring{\mathit{ho}\category{ccdgC}}(\fieldk)\rightsquigarrow \category{Grp}$ 
defined  such that  for every ccdg-coalgebra $C$
and $[g] \in \HOM_{\hccdgc}\big(C, \O\big)$ we have
$$
\sN_{[\p]}^{C}\big([g]\big):= \big[\psi\circ g\big],
$$
where 
$\p \in \HOM_{\ccdgh}\big(\Omega, \Omega^\pr\big)$ 
and $g \in \HOM_{\ccdgc}\big(C, \O\big)$ 
are  arbitrary representatives of the homotopy types $[\p]$
and $[g]$, respectively.

We have an one-to-one correspondence
$\HOM_{\hccdgh}\big(\O,\O^\pr\big)\cong  \mathsf{Nat}\big(\bm{\mP}_{\!\Omega},\bm{\mP}_{\!\Omega^\pr}\big)$
such that $\sN_{[\p]}$ is a natural isomorphism whenever $\p:\O \rightarrow \O^\pr$ is 
a homotopy equivalence of ccdg-Hopf algebras.

\end{theorem}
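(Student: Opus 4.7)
The plan is to begin at the chain level, establish a group structure on $\HOM_{\ccdgc}(C,\O)$ using the Hopf structure on $\O$, and then descend to homotopy classes, before finally deducing the representability and bijection statements by Yoneda-type reasoning.

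First I would verify that convolution $g_1 \asx g_2 := m_\O\circ(g_1\otimes g_2)\circ\cp_C$ lands back in $\HOM_{\ccdgc}(C,\O)$ whenever $g_1,g_2$ do: cocommutativity of $C$ together with $\cp_\O\circ m_\O=(m_\O\otimes m_\O)\circ\cp_{\O\otimes\O}$ and $\ep_\O\circ m_\O=m_\fieldk\circ(\ep_\O\otimes\ep_\O)$ forces $g_1\asx g_2$ to be a counital coalgebra chain map. Associativity follows from coassociativity of $\cp_C$ and associativity of $m_\O$; the identity $u_\O\circ\ep_C$ uses $m_\O\circ(u_\O\otimes\I_\O)=\imath_\O$ together with $(\ep_C\otimes\I_C)\circ\cp_C=\imath^{-1}_C$; and the inverse $\vs_\O\circ g$ of $g$ is produced directly from the antipode axiom $m_\O\circ(\vs_\O\otimes\I_\O)\circ\cp_\O=u_\O\circ\ep_\O$. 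This gives a group on $\HOM_{\ccdgc}(C,\O)$ at the chain level.

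Next I would check that this structure descends to the homotopy category. Given a homotopy pair $(g(t),\xi(t))$ on $\HOM_{\ccdgc}(C,\O)$ and a fixed $g_2\in\HOM_{\ccdgc}(C,\O)$, the pair $\bigl(g(t)\asx g_2,\ m_\O\circ(\xi(t)\otimes g_2)\circ\cp_C\bigr)$ is again a homotopy pair of ccdg-coalgebra morphisms: the infinitesimal coalgebra condition on $\xi(t)$ and the fact that $g_2$ is a coalgebra chain map combine with cocommutativity to verify the infinitesimal coalgebra condition for the convolved family, and the homotopy flow equation is preserved because $m_\O$ and $\cp_C$ are chain maps. Applying this on both slots shows that convolution descends to a well-defined, associative operation on $\HOM_{\hccdgc}(C,\O)$; the identity and inverse are visibly homotopy-invariant, giving the group $\bm{\mP}_{\!\O}(C)$. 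Functoriality in $C$ proceeds similarly: if $f\sim\tilde f$ via a pair $(f(t),\chi(t))$ on $\HOM_{\ccdgc}(C,C^\pr)$, then $(g^\pr\circ f(t), g^\pr\circ\chi(t))$ is a homotopy pair of coalgebra morphisms $C\to\O$, so $[g^\pr\circ f]$ is well-defined; that $\bm{\mP}_{\!\O}([f])$ is a group homomorphism is immediate from $\cp_{C^\pr}\circ f=(f\otimes f)\circ\cp_C$ and $\ep_{C^\pr}\circ f=\ep_C$. A homotopy equivalence $f$ with quasi-inverse $h$ produces mutually inverse group homomorphisms, hence an isomorphism.

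The second half of the theorem I would handle by Yoneda. For the bijection, any $[\p]\in\HOM_{\hccdgh}(\O,\O^\pr)$ yields $\sN_{[\p]}$ by postcomposition; homotopy invariance of $\sN_{[\p]}^{C}$ follows from the same argument as in the previous paragraph applied to the infinitesimal coalgebra part of a homotopy pair of ccdg-Hopf algebra morphisms, and the group-homomorphism property of each component $\sN_{[\p]}^{C}$ follows from $\p$ being a dg-algebra map. Conversely, given $\sN\in\mathsf{Nat}(\bm{\mP}_{\!\O},\bm{\mP}_{\!\O^\pr})$, the class $[\p]:=\sN^\O([\I_\O])\in\bm{\mP}_{\!\O^\pr}(\O)=\HOM_{\hccdgc}(\O,\O^\pr)$ recovers $\sN$ by naturality, exactly as in the classical Yoneda argument applied on $\mathit{ho}\category{ccdgC}(\fieldk)$; that each $\sN^{C}$ is a group homomorphism translates, via naturality along $\cp_\O:\O\to\O\otimes\O$ and $\ep_\O$, into compatibility of $\p$ with $m_\O,u_\O$ up to homotopy, so $[\p]$ lies in $\HOM_{\hccdgh}(\O,\O^\pr)$ (uniqueness of the antipode then ensures compatibility with $\vs_\O$ automatically). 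Finally, if $\bm{\mP}$ is any representable presheaf of groups on $\mathit{ho}\category{ccdgC}(\fieldk)$ with representing object $\O$, the same Yoneda transport converts the group object structure on $\O$ in the representing category into ccdg-Hopf algebra axioms on $\O$, so the representing object is a ccdg-Hopf algebra and the equivalence of categories stated in the theorem follows.

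The main obstacle I anticipate is the bookkeeping for the descent to homotopy in the second paragraph: one must check that each of the operations (convolution, inversion, precomposition) transports homotopy pairs to homotopy pairs in the correct sense (infinitesimal coalgebra for $\bm{\mP}_{\!\O}$, both infinitesimal algebra and coalgebra conditions in the Hopf-algebra argument of the last paragraph), and that the resulting operation on homotopy classes is independent of all choices of representatives and chain homotopies. Once this is set up cleanly, the Yoneda steps are essentially formal.
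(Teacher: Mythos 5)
Your proposal follows essentially the same route as the paper: establish the chain-level convolution group on $\HOM_{\ccdgc}(C,\O)$ (Lemma \ref{presheaftwo}), descend to homotopy classes by checking that convolution, the antipode, and pre-/post-composition transport homotopy pairs to homotopy pairs (Lemma \ref{cohprt}), and then use Yoneda together with the product decomposition $\HOM_{\ccdgc}(-,\O\otimes\O)\cong\grave{\bm{\CP}}\times\grave{\bm{\CP}}$ for representability and the correspondence with $\HOM_{\hccdgh}(\O,\O^\pr)$ (Lemmas \ref{presheaffour} and \ref{presheafthree}). The only step where you are slightly quicker than warranted is the passage from ``$\p$ compatible with $m_\O,u_\O$ up to homotopy'' to ``$[\p]\in\HOM_{\hccdgh}(\O,\O^\pr)$'', but the paper's own proof is equally terse at exactly that point.
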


We divide the proof into few pieces.
The main technical point is that
the presheaf of groups $\bm{\mP}_\O$ is defined via 
the associated presheaf of groups 
$\bm{\CP}_{\!\!\O}:\mathring{\category{ccdgC}}(\fieldk)\rightsquigarrow \category{Grp}$ 
on the category $\category{ccdgC}(\fieldk)$ of ccdg-coalgebras, which is represented by $\O$
and induces $\bm{\mP}_\O$ on the homotopy category $\mathit{ho}\category{ccdgC}(\fieldk)$.

\begin{lemma}[Definition]\label{presheafone}
For every ccdg-Hopf algebra $\O$
we have a presheaf of dg-algebras 
$\bm{\CE}_{\!\!\O}:\mathring{\category{ccdgC}}(\Bbbk) \rightsquigarrow \category{dgA}(\Bbbk)$ over $\Bbbk$ 
on ${\category{ccdgC}}(\Bbbk)$,
sending
each ccdg-coalgebra $C$ to the convolution dg-algebra
$\bm{\CE}_{\!\O}(C):=\big(\Hom\big(C, \O\big), u_\O\circ\ep_C, \star_{C\!,\O}, \rd_{C\!,\O}\big)$, 
where 
\begin{itemize}
\item  $u_\O\circ {\ep}_C:\xymatrix{C\ar[r]^{\ep_C} &\fieldk \ar[r]^{u_\O} & \O}$ is the unit, and
\item
$\a_1\star_{C\!,\O} \a_2 
:=m_\O\circ (\a_1\otimes \a_2)\circ \cp_C
:\xymatrix{C\ar[r]^-{\cp_C}& C\otimes C\ar[r]^-{\a_1\otimes \a_2} &\O\otimes \O \ar[r]^-{m_\O} &\O}$,
$\a_1, \a_2 \in  \Hom(C,\O)$, is the convolution product,
\end{itemize}
 and
each morphism $f:C\rightarrow C^\pr$ of ccdg-coalgebras
to a morphism $\bm{\CE}_{\!\!\O}(f): \bm{\CE}_{\!\!\O}(C^\pr)\rightarrow \bm{\CE}_{\!\!\O}(C)$
of dg-algebras defined by $\bm{\CE}_{\!\O}(f)(\a^\pr):= \a^\pr\circ f$ for all $\a^\pr \in \Hom\big(C^\pr, \O\big)$.
\end{lemma}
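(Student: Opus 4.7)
The plan is a direct multi-step verification from the axioms \eqref{dgalgebra}, \eqref{dgcoalgebra}, and \eqref{bialgebra}; the antipode $\vs_\O$ plays no role, only the bialgebra structure of $\O$ and the coalgebra structure of $C$ are used. First, I would fix a ccdg-coalgebra $C$ and check the defining axioms of a dg-algebra for the tuple $\bm{\CE}_{\!\O}(C)$. The unit $u_\O\circ\ep_C$ sits in degree zero and is a chain map by $\rd_\O\circ u_\O=0$ and $\ep_C\circ\rd_C=0$. Associativity of $\star_{C\!,\O}$ follows from coassociativity of $\cp_C$ combined with associativity of $m_\O$; the two unit axioms follow analogously from the counit laws of $\cp_C$ post-composed with the unit laws of $m_\O$. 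The only nontrivial point is the Leibniz rule
\begin{equation*}
\rd_{C\!,\O}(\a_1\star_{C\!,\O}\a_2) = (\rd_{C\!,\O}\a_1)\star_{C\!,\O}\a_2 + (-1)^{|\a_1|}\a_1\star_{C\!,\O}(\rd_{C\!,\O}\a_2),
\end{equation*}
which I would obtain by expanding $\rd_{C\!,\O}$ using \eqref{tshmdiff}, invoking $\rd_\O\circ m_\O=m_\O\circ\rd_{\O\otimes\O}$ together with $\rd_{\O\otimes\O}=\rd_\O\otimes\I_\O+\I_\O\otimes\rd_\O$ to push $\rd_\O$ past $m_\O$, and symmetrically $\cp_C\circ\rd_C=\rd_{C\otimes C}\circ\cp_C$ to pull $\rd_C$ through $\cp_C$; the expected Koszul sign emerges from moving $\I_\O$ across $\a_1$.

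Second, for any morphism $f\in\HOM_{\ccdgc}(C,C^\pr)$, I would show that $\bm{\CE}_{\!\O}(f)$ is a morphism of dg-algebras. Precomposition with $f$ is a $\Bbbk$-linear degree-preserving map that commutes with the differential because $f\circ\rd_C=\rd_{C^\pr}\circ f$. Preservation of the unit is immediate from $\ep_{C^\pr}\circ f=\ep_C$, and preservation of the convolution
\begin{equation*}
(\a^\pr_1\star_{C^\pr\!,\O}\a^\pr_2)\circ f = m_\O\circ(\a^\pr_1\otimes\a^\pr_2)\circ\cp_{C^\pr}\circ f = (\a^\pr_1\circ f)\star_{C\!,\O}(\a^\pr_2\circ f)
\end{equation*}
follows from $\cp_{C^\pr}\circ f=(f\otimes f)\circ\cp_C$. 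Contravariant functoriality, $\bm{\CE}_{\!\O}(\I_C)=\I_{\bm{\CE}_{\!\O}(C)}$ and $\bm{\CE}_{\!\O}(f^\pr\circ f)=\bm{\CE}_{\!\O}(f)\circ\bm{\CE}_{\!\O}(f^\pr)$, then reduces to formal associativity of composition of linear maps.

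I do not anticipate any real obstacle: the entire statement is a direct encoding of the bialgebra-and-coalgebra data into the convolution algebra structure. The only place that requires any care is the Koszul-sign bookkeeping in the Leibniz rule, making sure that the signs coming from \eqref{tshmdiff}, from $\rd_{\O\otimes\O}=\rd_\O\otimes\I_\O+\I_\O\otimes\rd_\O$, and from $\rd_{C\otimes C}$ all align; once this is in place, every other check is transparent.
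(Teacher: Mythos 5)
Your proposal is correct and follows essentially the same route as the paper: the dg-algebra axioms for the convolution algebra are verified directly from the (co)associativity, (co)unit, and (co)derivation properties, and the morphism property of $\bm{\CE}_{\!\O}(f)$ is reduced to $f$ being a chain map and a counital coalgebra map, with functoriality formal. The paper simply cites the dg-algebra part as standard and writes out the morphism computation explicitly, whereas you flag the Koszul-sign bookkeeping in the Leibniz rule as the one point needing care; both come to the same thing.
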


\begin{proof}
It is a standard fact  that $\bm{\CE}_{\!\O}(C)$ is a dg-algebra:
The convolution product $\star_{C\!,\O}$ is associative due to
the associativity of $m_\O$ and the coassociativity of $\cp_C$,
and $u_\O\circ\ep_C$ is the identity element for the convolution product
due to the counit axiom of $C\xrightarrow{\ep_C}\Bbbk$ and the unit axiom of $\Bbbk\xrightarrow{u_\O}\O$.
We have $\rd_{C\!,\O} (u_\O\circ\ep_C)=0$ by $\ep_C\circ\rd_C =\rd_\O\circ u_\O=0$
and $\rd_{C\!,\O}$ is a derivation of the convolution product since $\rd_C$ is a coderivation
of $\cp_C$ and $\rd_\O$ is a derivation of $m_\O$.

We check that $\bm{\CE}_{\!\O}(f)$ is a morphism of
dg-algebras as follows: $\forall \a^\pr,\a_1^\pr, \a_2^\pr \in \Hom\big(C^\pr, \O\big)$,
\eqalign{
\bm{\CE}_{\!\!\O}(f)(u_\O\!\circ\!\ep_{C^\pr}) = & u_\O\!\circ\!\ep_{C^\pr}\circ f = u_\O\!\circ\!\ep_C
,\cr
\bm{\CE}_{\!\!\O}(f)\left(\rd_{C^\pr\!,\O}\a^\pr\right) = & \rd_{\O}\circ \a^\pr\circ f -(-1)^{|\a^\pr|}\a^\pr\circ \rd_{C^\pr}\circ f 
=\rd_{\O}\circ \a^\pr\circ f - (-1)^{|\a^\pr|}\a^\pr \circ f \circ \rd_{C} 
\cr
=& \rd_{C^\pr\!,\O}\left(\bm{\CE}_{\!\!\O}(f)(\a^\pr)\right)
,\cr
\bm{\CE}_{\!\!\O}(f)\left(\a_1^\pr\star_{C^\pr\!,\O}\a_2^\pr\right) 
= & m_\O\circ (\a_1^\pr\otimes \a_2^\pr)\circ \cp_{C^\pr}\circ f
= m_\O\circ (\a_1^\pr\circ f \otimes \a_2^\pr\circ f)\circ \cp_{C} 
\cr
=&\bm{\CE}_{\!\!\O}(f)\left(\a_1^\pr\right)\star_{C^\pr\!,\O}\bm{\CE}_{\!\!\O}(f)\left(\a_2^\pr\right),
}
where we have use the defining properties of   $f$  being a morphism of ccdg-coalgebras. 
The functoriality of $\bm{\CE}_{\!\O}$ is obvious.
\qed
\end{proof}

\begin{lemma}\label{presheaftwo} 
For every ccdg-Hopf algebra $\O$
we have a representable presheaf of groups  
$\bm{\CP}_{\!\!\O}:\mathring{\category{ccdgC}}(\Bbbk) \rightsquigarrow \category{Grp}$ 
on ${\category{ccdgC}}(\Bbbk)$,
sending
\begin{itemize}
\item
each ccdg-coalgebra $C$ to a group 
$\bm{\CP}_{\!\O}(C):=\big(\HOM_{\ccdgc}\big(C, \O\big), u_\O\circ\ep_C, \star_{C\!,\O}\big)$,
where  the inverse of $g\in \HOM_{\ccdgc}\big(C, \O\big)$ is $g^{-1}:=\vs_\O\circ g$, and 
\item
each morphism $f:C\rightarrow C^\pr$ of ccdg-coalgebras
to a morphism $\bm{\CP}_{\!\!\O}(f): \bm{\CP}_{\!\!\O}(C^\pr)\rightarrow \bm{\CP}_{\!\!\O}(C)$
of groups defined by $\bm{\CP}_{\!\O}(f)(g^\pr):= g^\pr\circ f$ for all $g^\pr \in \HOM_{\ccdgc}\big(C^\pr, \O\big)$.
\end{itemize}
\end{lemma}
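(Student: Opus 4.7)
The plan is to realise $\bm{\CP}_{\!\O}(C)$ as the submonoid of units inside the convolution dg-algebra $\bm{\CE}_{\!\O}(C)$ of Lemma \ref{presheafone}, cut out by the condition that its elements be morphisms of ccdg-coalgebras. Concretely, I must check four things: (a) the set $\HOM_{\ccdgc}(C,\O)$ contains $u_\O\circ\ep_C$, (b) it is closed under $\star_{C\!,\O}$, (c) each of its elements $g$ has a two-sided $\star_{C\!,\O}$-inverse, namely $\vs_\O\circ g$, and (d) the pullback map $g^\pr\mapsto g^\pr\circ f$ induced by $f\in\HOM_{\ccdgc}(C,C^\pr)$ restricts to a group homomorphism. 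Associativity of $\star_{C\!,\O}$ and the unit law for $u_\O\circ\ep_C$ are inherited for free from the dg-algebra structure of $\bm{\CE}_{\!\O}(C)$.

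For (a), the element $u_\O\circ\ep_C$ is visibly a chain map, and it is counital/comultiplicative via the bialgebra axioms $\ep_\O\circ u_\O=u_\fieldk$ and $\cp_\O\circ u_\O=(u_\O\otimes u_\O)\circ\cp_\fieldk$ from \eqref{bialgebra}, combined with the counit and coproduct axioms of $C$. For (b), if $g_1,g_2\in\HOM_{\ccdgc}(C,\O)$, then $g_1\star_{C\!,\O}g_2=m_\O\circ(g_1\otimes g_2)\circ\cp_C$ is a chain map of degree $0$ by Lemma \ref{presheafone}; counitality follows from $\ep_\O\circ m_\O=m_\fieldk\circ(\ep_\O\otimes\ep_\O)$ together with $\ep_\O\circ g_i=\ep_C$; comultiplicativity follows from $\cp_\O\circ m_\O=(m_\O\otimes m_\O)\circ(\I_\O\otimes\t\otimes\I_\O)\circ(\cp_\O\otimes\cp_\O)$ together with $\cp_\O\circ g_i=(g_i\otimes g_i)\circ\cp_C$ and the coassociativity/cocommutativity of $\cp_C$, which supplies the twist $\t$ on the $C$-side. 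For (c), $\vs_\O$ is a coalgebra map by \eqref{antialgandcoalg}, so $\vs_\O\circ g\in\HOM_{\ccdgc}(C,\O)$; then using $\cp_\O\circ g=(g\otimes g)\circ\cp_C$ together with the antipode axiom \eqref{antipodeaxiom} yields
\begin{equation*}
g\star_{C\!,\O}(\vs_\O\circ g)=m_\O\circ(\I_\O\otimes\vs_\O)\circ\cp_\O\circ g=u_\O\circ\ep_\O\circ g=u_\O\circ\ep_C,
\end{equation*}
and analogously on the other side.

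For (d), the dg-algebra morphism $\bm{\CE}_{\!\O}(f)$ of Lemma \ref{presheafone} already sends $u_\O\circ\ep_{C^\pr}$ to $u_\O\circ\ep_C$ and intertwines the two convolution products, so once one observes that $g^\pr\circ f\in\HOM_{\ccdgc}(C,\O)$ whenever $g^\pr\in\HOM_{\ccdgc}(C^\pr,\O)$ (composition of ccdg-coalgebra maps), the restriction $\bm{\CP}_{\!\O}(f)$ is automatically a group homomorphism, and functoriality in $f$ is inherited from that of $\bm{\CE}_{\!\O}$. Representability of $\bm{\CP}_{\!\O}$ is immediate, since by construction $\grave{\bm{\CP}}_{\!\O}(C)=\HOM_{\ccdgc}(C,\O)$ is the functor corepresented by $\O$ on $\category{ccdgC}(\fieldk)$. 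The only step that is not purely routine is the closure check in (b): it is precisely the compatibility packaged in the bialgebra axioms, with cocommutativity of $C$ absorbing the twist produced by $\cp_\O\circ m_\O$, and this is where all the ccdg-Hopf-algebra data of $\O$ comes into play.
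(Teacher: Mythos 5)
Your proposal is correct and follows essentially the same route as the paper: verify that $u_\O\circ\ep_C$, the convolution product of two ccdg-coalgebra morphisms, and $\vs_\O\circ g$ all land in $\HOM_{\ccdgc}(C,\O)$ (with cocommutativity of $\cp_C$ absorbing the twist coming from $\cp_\O\circ m_\O$), invoke the antipode axiom for two-sided inverses, and inherit associativity, unitality, functoriality and the homomorphism property of $\bm{\CP}_{\!\O}(f)$ from Lemma \ref{presheafone}. The only cosmetic difference is that the paper checks membership of $u_\O\circ\ep_C$ by noting that $\ep_C$ and $u_\O$ are themselves morphisms of ccdg-coalgebras rather than unwinding the bialgebra axioms, and "corepresented" should read "represented" for the contravariant Hom functor $\HOM_{\ccdgc}(-,\O)$.
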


\begin{proof}
1. We show that $\bm{\CP}_{\!\O}(C)$ is a group for every ccdg-coalgebra $C$ as follows.
We remind that
$$
 \HOM_{\ccdgc}\big(C, \O\big):=\Big\{ g \in \Hom\big(C, \O\big)_0\Big|
 \rd_{C\!,\O}g =0,\;
 {\ep}_\O\circ g= {\ep}_C,\;
 \cp_\O\circ g=(g\otimes g)\circ \cp_C
\Big\},
$$
and check routinely that

\begin{itemize}
\item $u_\O\circ \ep_C\in\HOM_{\ccdgc}\big(C, \O\big)$, 
since $C\xrightarrow{\ep_C}\Bbbk$ and $\Bbbk\xrightarrow{u_\O}\O$ are morphisms of ccdg-coalgebras;

\item $g_1\star_{C\!,\O}g_2\in\HOM_{\ccdgc}\big(C, \O\big)$
whenever $g_1,g_2  \in \HOM_{\ccdgc}\big(C, \O\big)$, since we have
\eqalign{
\rd_{C\!,\O}(g_1\star_{C\!,\O}g_2) &=(\rd_{C\!,\O} g_1)\star_{C\!,\O}g_2 +g_1\star_{C\!,\O} \rd_{C\!,\O}g_2=0
,\cr
\ep_\O\circ (g_1\star_{C\!,\O}g_2) 
&= \ep_\O\circ m_\O\circ( g_1\otimes g_2)\otimes \cp_C 
=m_\Bbbk\circ( \ep_\O\circ g_1\otimes \ep_\O\circ g_2)\circ \cp_C
\cr
&
=m_\Bbbk\circ( \ep_C\otimes \ep_C)\circ \cp_C 
= m_\Bbbk\circ( \ep_C\otimes \I_\Bbbk)\circ \jmath^{-1}_C
=\ep_C,
}
and by the cocommutativity of $\cp_C$, we have
\eqalign{
\cp_\O\circ \Big(g_1\star_{C\!,\O} g_2\Big)
=
&
\cp_\O\circ m_\O\circ (g_1\otimes g_2)\circ \cp_C
\cr
=
&
m_{\O\otimes\O}\circ (\cp_\O\otimes \cp_\O)\circ (g_1\otimes g_2)\circ \cp_C
\cr
=
&
( m_\O\otimes m_\O)\circ (\I_\O\otimes \t\otimes \I_\O)
\circ (g_1\otimes g_1\otimes g_2\otimes g_2)\circ(\cp_C\otimes \cp_C)\circ \cp_C
\cr
=
&( m_\O\otimes m_\O)\circ (g_1\otimes g_2\otimes g_1\otimes g_2)
\circ (\I_C\otimes \t\otimes \I_C)\circ(\cp_C\otimes \cp_C)\circ \cp_C
\cr
=
&
( m_\O\otimes m_\O)\circ (g_1\otimes g_2\otimes g_1\otimes g_2)\circ (\cp_C\otimes \cp_C)\circ \cp_C
\cr
=&
\Big( (g_1\star_{C\!,\O}g_2)\otimes (g_1\star_{C\!,\O}g_2)\Big)\circ \cp_C
.
}

\item $g^{-1}:=\vs_\O\circ g\in \HOM_{\ccdgc}\big(C, \O\big)$ whenever $g\in \HOM_{\ccdgc}\big(C, \O\big)$,
since $\vs_\O:\O\to \O$ is a morphism of ccdg-coalgebras.

\end{itemize}
Then, by Lemma \ref{presheafone}, 
$\bm{\CP}_{\!\O}(C):=\big(\HOM_{\ccdgc}\big(C, \O\big), u_\O\circ\ep_C, \star_{C\!,\O}\big)$ is a monoid.
On the other hand, by the antipode axiom  \eq{antipodeaxiom} we have, $\forall g \in \HOM_{\ccdgc}\big(C, \O\big)$,
\eqalign{
g\star_{C\!,\O} g^{-1}
:= m_\O\circ (g\otimes \vs_\O\circ g)\circ\cp_C =m_\O\circ (\I_\O\otimes \vs_\O)\circ\cp_\O\circ g
= u_\O\circ \ep_\O\circ g = u_\O\circ \ep_C,
} 
and, similarly, $g^{-1}\star_{C\!,\O} g=u_\O\circ \ep_C$. Hence,  $\bm{\CP}_{\!\O}(C)$ is actually a group.

2.  We have $\bm{\CP}_{\!\O}(f)(g^\pr):=g^\pr\circ f \in \HOM_{\ccdgc}\big(C, \O\big)$ whenever 
$g^\pr\in \HOM_{\ccdgc}\big(C^\pr, \O\big)$ since $C\xrightarrow{f}C^\pr$ is a morphism of ccdg-coalgebras. 
Then, by Lemma \ref{presheafone},  we have
\begin{itemize}
\item
$\bm{\CP}_{\!\O}(f):\bm{\CP}_{\!\O}(C^\pr)\rightarrow\bm{\CP}_{\!\O}(C)$ is a group homomorphism;

\item $\bm{\CP}_{\!\O}(f^\pr\circ f)
= \bm{\CP}_{\!\O}(f)\circ \bm{\CP}_{\!\O}(f^\pr)$, $\forall f^\pr  \in \HOM_{\ccdgc}\big(C^\pr, C^\ppr\big)$.
\end{itemize}
Therefore
$\bm{\CP}_{\!\O}:\mathring{\category{ccdgC}}(\Bbbk)\rightsquigarrow \category{Grp}$ 
is  a representable presheaf of groups
on the category $\category{ccdgC}(\Bbbk)$ of ccdg-coalgebras.  
\qed
\end{proof}

\begin{remark} $\bm{\CP}_{\!\O}:\mathring{\category{ccdgC}}(\Bbbk)\rightsquigarrow \category{Grp}$ 
is a presheaf of groups even if  
$\O$ is not a strictly cocommutative 
dg-Hopf algebra but, then,  it is not a representable presheaf on ${\category{ccdgC}}(\Bbbk)$.
\end{remark}

\begin{lemma}\label{presheaffour}
Suppose $\bm{\CP}$ is a representable presheaf of groups 
$\mathring{\category{ccdgC}}(\Bbbk) \rightsquigarrow \category{Grp}$ on $\ccdgc$. 
Then $\bm{\CP}\cong\bm{\CP}_{\!\!\O}$ for some ccdg-Hopf algebra $\O$.
\end{lemma}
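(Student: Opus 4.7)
The plan is to run a Yoneda argument that turns a group object in the presheaf category into a group object in $\ccdgc$, and then observe that a group object in $\ccdgc$ is exactly a ccdg-Hopf algebra.

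First I would record the key categorical fact that $\ccdgc$ has finite products. The terminal object is $\Bbbk^\vee$ via the counit, and the product of $D_1, D_2 \in \ccdgc$ is $D_1 \otimes D_2$ with projections $\jmath_{D_1}\circ(\I_{D_1}\otimes \ep_{D_2})$ and $\imath_{D_2}\circ(\ep_{D_1}\otimes \I_{D_2})$; the pairing of $f_i: C \to D_i$ is $(f_1 \otimes f_2)\circ \cp_C$, and cocommutativity of $\cp_C$ is exactly what makes this a morphism of ccdg-coalgebras (and the unique such). In particular $\HOM_{\ccdgc}(-, \O \otimes \O)\cong \HOM_{\ccdgc}(-, \O) \times \HOM_{\ccdgc}(-, \O)$ naturally.

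By hypothesis the underlying set-valued presheaf $\grave{\bm{\CP}}$ is representable, so Yoneda supplies a ccdg-coalgebra $\O$ and a natural isomorphism $\grave{\bm{\CP}}(-) \cong \HOM_{\ccdgc}(-, \O)$. Transporting the group structure on $\bm{\CP}(C)$ across this isomorphism gives, naturally in $C$, maps
\[
\mu_C: \HOM_{\ccdgc}(C,\O)\times \HOM_{\ccdgc}(C,\O)\to \HOM_{\ccdgc}(C,\O),\quad
e_C: \{\ast\}\to\HOM_{\ccdgc}(C,\O),\quad
\iota_C: \HOM_{\ccdgc}(C,\O)\to \HOM_{\ccdgc}(C,\O),
\]
obeying the group axioms. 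Using the product description above, $\mu$ is a natural transformation $\HOM_{\ccdgc}(-,\O\otimes \O)\Rightarrow \HOM_{\ccdgc}(-,\O)$, so by Yoneda it corresponds to a morphism $m_\O:\O\otimes\O\to \O$ in $\ccdgc$. Likewise $e$ corresponds to $u_\O:\Bbbk^\vee\to\O$ and $\iota$ to $\vs_\O:\O\to\O$. Rewriting the group axioms diagrammatically and applying Yoneda object by object, $m_\O$ is associative, $u_\O$ is a two-sided unit, and $\vs_\O$ is a two-sided inverse for convolution — i.e.\ the antipode axiom \eqref{antipodeaxiom} holds. Since $m_\O$ and $u_\O$ are morphisms of ccdg-coalgebras, the compatibility axioms \eqref{bialgebra} are automatic, so $\O$ acquires the structure of a ccdg-Hopf algebra.

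It remains to identify the transported group structure with the convolution structure of Lemma \ref{presheaftwo}. For this I would evaluate the Yoneda correspondence on the universal element: let $\I_\O \in \HOM_{\ccdgc}(\O,\O)$ correspond to the identity; then by naturality $\mu_C(g_1,g_2)$ is the pullback of $\mu_\O(\I_\O,\I_\O)$ along the pairing $(g_1\otimes g_2)\circ \cp_C : C \to \O\otimes \O$, which by construction of $m_\O$ is $m_\O\circ(g_1\otimes g_2)\circ \cp_C = g_1 \star_{C\!,\O} g_2$. The same direct check gives $e_C(\ast)=u_\O\circ \ep_C$ and $\iota_C(g)=\vs_\O\circ g$. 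Hence $\bm{\CP}\cong \bm{\CP}_{\!\!\O}$ as presheaves of groups on $\ccdgc$. The only real work is bookkeeping the Yoneda translations; the sole potential obstacle is verifying that tensor product really is the categorical product in $\ccdgc$, and that step is forced by cocommutativity, so there is no genuine difficulty.
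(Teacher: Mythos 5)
Your proposal is correct and follows essentially the same route as the paper's own proof: representability of the underlying presheaf plus the observation that $\Bbbk^\vee$ is terminal and $\otimes$ is the categorical product in $\category{ccdgC}(\Bbbk)$ (forced by cocommutativity), then Yoneda to convert the structure maps $\mu, e, \iota$ into $m_\O, u_\O, \vs_\O$ and the group axioms into the Hopf axioms, and finally evaluation on the universal element to identify the transported product with convolution. The only cosmetic difference is that you verify the product structure up front while the paper defers it to the end of its argument.
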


\begin{proof}
Since $\grave{\bm{\CP}}:\mathring{\category{ccdgC}}(\Bbbk) \rightsquigarrow \category{Set}$ is representable, 
we have $\grave{\bm{\CP}}\cong\HOM_\ccdgc(-,\O)$ for some ccdg-coalgebra $\O$. 
We shall show  that $\O$ carries a ccdg-Hopf algebra structure. 
We can restate the condition that $\grave{\bm{\CP}}$ factors through $\category{Grp}$\ as follows:
\begin{itemize}
\item For each ccdg-coalgebra $C$, there is  a  group structure on $\grave{\bm{\CP}}(C)$. 
In other words, there are three structure functions
$\m^C:\grave{\bm{\CP}}(C)\times\grave{\bm{\CP}}(C)\to\grave{\bm{\CP}}(C)$,
$e^C:\{*\}\to\grave{\bm{\CP}}(C)$ and $i^C:\grave{\bm{\CP}}(C)\to\grave{\bm{\CP}}(C)$ satisfying the group axioms.
\item For each morphism $f:C\to C^\pr$ of ccdg-coalgebras, 
the function $\grave{\bm{\CP}}(f):\grave{\bm{\CP}}(C^\pr)\to\grave{\bm{\CP}}(C)$ is a group homomorphism.
\end{itemize}
This is equivalent to the existence of natural transformations 
$\m:\grave{\bm{\CP}}\times\grave{\bm{\CP}}\to\grave{\bm{\CP}}$,
$e:\underline{\{*\}}\to\grave{\bm{\CP}}$, 
and
$i:\grave{\bm{\CP}}\to\grave{\bm{\CP}}$ satisfying the group axioms.
Here, $\underline{\{*\}}$ is a functor $\mathring{\category{ccdgC}}(\Bbbk)\to\category{Set}$ 
sending every ccdg-coalgebra $C$ to a one-point set $\{*\}$.

Let $\O\otimes \O$ be
the ccdg-coalgebra obtained by the tensor product of the ccdg-coalgebra $\O$.
We claim that there are  natural isomorphisms of presheaves
\eqn{isopre}{
\grave{\bm{\CP}}\times\grave{\bm{\CP}}\cong\HOM_\ccdgc(-,\O\otimes\O),
\qquad
\underline{\{*\}}\cong\HOM_\ccdgc(-,\Bbbk^\vee).
}
Then, by the Yoneda lemma, the natural transformations $\m$, $e$, and $i$
are completely determined by morphisms $m_\O:\O\otimes\O\to \O$, $u_\O:\Bbbk^\vee\to\O$ 
and $\vs_\O:\O\to \O$ of ccdg-coalgebras, respectively.
Applying the Yoneda lemma again,  a plain calculation shows that
\begin{enumerate}
\item $\m\circ(\m\times\I_{\grave{\bm{\CP}}})=\m\circ(\I_{\grave{\bm{\CP}}}\times \m)$ implies the associativity of $m_\O$.
\item $\m\circ(e\times\I_{\grave{\bm{\CP}}})=\m\circ(\I_{\grave{\bm{\CP}}}\times e)=\I_{\grave{\bm{\CP}}}$ 
implies the unit axiom of $u_\O$.
\item inverse element axiom of $i$ implies the antipode axiom of $\vs_\O$. 
\end{enumerate}
Therefore $\O$ has a ccdg-Hopf algebra structure $(\O,u_\O,m_\O,\ep_\O,\cp_\O,\vs_\O)$.

Now we check the claimed isomorphisms in \eq{isopre}.
Note that $\Bbbk^\vee$ is a terminal object in the category $\ccdgc$,
since  any morphism $C\to \Bbbk^\vee$  of ccdg-coalgebras has to be the counit $\ep_C$  by the counit axiom. 
Let  $C\otimes C^\pr$ be the ccdg-coalgebra obtained by the tensor product
of ccdg-coalgebras $C$ and $C^\pr$. Then we have the following projection morphisms of ccdg-coalgebras: 
\[
\pi_C:=
\xymatrix{
C\otimes C^\pr\ar[r]^-{\I_C\otimes\ep_{C^\pr}}&C\otimes \Bbbk\ar[r]^-{\jmath_C}&C
,
}
\quad\quad
\pi_{C^\pr}:=
\xymatrix{
C\otimes C^\pr\ar[r]^-{\ep_C\otimes\I_{C^\pr}}&\Bbbk\otimes C^\pr\ar[r]^-{\imath_{C^\pr}}&C^\pr
.
}
\]
For each ccdg-coalgebra $T$ we consider the function
$$
\HOM_\ccdgc(T,C\otimes C^\pr)\rightarrow\HOM_\ccdgc(T,C)\times \HOM_\ccdgc(T,C^\pr)
$$
defined by  $h\mapsto (\pi_C\circ h,\pi_{C^\pr}\circ h)$. We show that the function
is a bijection for every $T$ by constructing its inverse. 
Given morphisms $f:T\to C$, $g:T\to C^\pr$ of ccdg-coalgebras, define 
$\left<f, g\right>:=(f\otimes g)\circ\cp_T:T\to C\otimes C^\pr$. 
Then it is a morphism of ccdg-coalgebras since $\cp_T$ is cocommutative.
It is obvious that $\left<\pi_C\circ h,\pi_{C^\pr}\circ h\right>=h$, 
$\pi_C\circ\left<f, g\right>=f$ and $\pi_{C^\pr}\circ\left<f, g\right>=g$. 
Moreover, the above bijection is natural in $T\in\category{ccdgC}(\Bbbk)$.
Therefore we have constructed the claimed isomorphisms in  \eq{isopre}.

Let $x_1,x_2$ be the elements in $\grave{\bm{\CP}}(C)$ that correspond
to morphisms $g_1,g_2:C\to \O$ of ccdg-coalgebras via the bijection 
$\grave{\bm{\CP}}(C)\cong\grave{\bm{\CP}}_\O(C)$. 
Then, by the Yoneda lemma, $\m^C(x_1,x_2)\in\grave{\bm{\CP}}(C)$ corresponds to 
$m_\O\circ\left<g_1,g_2\right>=g_1\star_{C\!,\O}g_2\in\grave{\bm{\CP}}_\O(C)$, and
$e^C\in\grave{\bm{\CP}}(C)$ corresponds to $u_\O\circ \ep_C\in\grave{\bm{\CP}}_\O(C)$.
This shows that the bijection $\grave{\bm{\CP}}(C)\cong\grave{\bm{\CP}}_\O(C)$ is an isomorphism of groups.
Thus we have a natural isomorphism $\bm{\CP}\cong\bm{\CP}_{\!\!\O}$ of  the presheaves of groups 
on ${\category{ccdgC}}(\Bbbk)$.
\qed
\end{proof}

\begin{lemma}\label{presheafthree}
For every morphism $\p:\O \rightarrow \O^\pr$ of ccdg-Hopf algebras we have a natural transformation 
$\sN_{\p}: \bm{\CP}_\O \Longrightarrow \bm{\CP}_{\O^\pr}: 
\mathring{\category{ccdgC}}(\Bbbk) \rightsquigarrow \category{Grp}$
defined such that
its component $\sN^C_{\psi}: \bm{\CP}_{\!\Omega}(C)\longrightarrow\bm{\CP}_{\!\Omega^\pr}(C)$ 
group homomorphism
at each ccdg-coalgebra $C$ is  
$\sN_{\p}^C(\a) :=\p\circ\a$ for all $\a \in \HOM_{\ccdgc}\big(C, \O\big)$. 
We also have 
$\HOM_{\ccdgh}\big(\O, \O^\pr\big)\cong \mathsf{Nat}\big(\bm{\CP}_{\O},\bm{\CP}_{\O^\pr}\big)$.
\end{lemma}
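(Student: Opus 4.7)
The plan is to verify that $\sN_\p$ is a natural transformation of presheaves of \emph{groups} and then to construct an inverse to the assignment $\p \mapsto \sN_\p$ via the Yoneda lemma, exploiting the additional group-structure constraint to upgrade a ccdg-coalgebra morphism to a ccdg-Hopf algebra morphism.

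First, for fixed $\p \in \HOM_{\ccdgh}(\O, \O^\pr)$ and any ccdg-coalgebra $C$, the composite $\p \circ \a$ lies in $\HOM_{\ccdgc}(C, \O^\pr)$ whenever $\a \in \HOM_{\ccdgc}(C, \O)$, since $\p$ is in particular a morphism of ccdg-coalgebras. That $\sN_\p^C$ is a group homomorphism then follows from $\p$ being a morphism of dg-algebras via the short computation
\begin{align*}
\p \circ (g_1 \star_{C\!,\O} g_2)
&= \p \circ m_\O \circ (g_1 \otimes g_2) \circ \cp_C \\
&= m_{\O^\pr} \circ (\p \otimes \p) \circ (g_1 \otimes g_2) \circ \cp_C \\
&= (\p \circ g_1) \star_{C\!,\O^\pr} (\p \circ g_2),
\end{align*}
together with $\p \circ u_\O \circ \ep_C = u_{\O^\pr} \circ \ep_C$. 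Naturality in $C$ is immediate from associativity of composition.

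For the claimed bijection, the map $\p \mapsto \sN_\p$ is already defined. For its inverse, given $\eta \in \mathsf{Nat}(\bm{\CP}_\O, \bm{\CP}_{\O^\pr})$, I would set $\p_\eta := \eta^\O(\I_\O)$, which lies in $\HOM_{\ccdgc}(\O, \O^\pr)$ since the codomain of $\eta^\O$ is $\bm{\CP}_{\O^\pr}(\O)$. Standard Yoneda-style naturality, applied to any morphism $g : C \to \O$ of ccdg-coalgebras, yields
$$\eta^C(g) = \eta^C\bigl(\bm{\CP}_\O(g)(\I_\O)\bigr) = \bm{\CP}_{\O^\pr}(g)\bigl(\eta^\O(\I_\O)\bigr) = \p_\eta \circ g,$$
so $\eta = \sN_{\p_\eta}$, and clearly $\p_{\sN_\p} = \sN_\p^\O(\I_\O) = \p$, giving mutually inverse bijections on the set level.

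The main obstacle is to promote $\p_\eta$, which the Yoneda argument only produces as a ccdg-coalgebra morphism, to a morphism of ccdg-Hopf algebras. Here the extra input is that each $\eta^C$ is a group homomorphism, not merely a function. Using the isomorphism $\grave{\bm{\CP}}_\O \times \grave{\bm{\CP}}_\O \cong \HOM_{\ccdgc}(-, \O \otimes \O)$ constructed in the proof of Lemma \ref{presheaffour} (and its companion $\underline{\{*\}} \cong \HOM_{\ccdgc}(-, \Bbbk^\vee)$), the multiplications and identities of $\bm{\CP}_\O$ and $\bm{\CP}_{\O^\pr}$ are Yoneda-represented respectively by $m_\O, u_\O$ and $m_{\O^\pr}, u_{\O^\pr}$, all of which are genuine morphisms of ccdg-coalgebras since $\O, \O^\pr$ are ccdg-bialgebras. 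Unpacking the condition that $\eta$ intertwines these group operations and applying the Yoneda lemma at $\O \otimes \O$ and $\Bbbk^\vee$ then forces $\p_\eta \circ m_\O = m_{\O^\pr} \circ (\p_\eta \otimes \p_\eta)$ and $\p_\eta \circ u_\O = u_{\O^\pr}$. Hence $\p_\eta$ is a morphism of ccdg-bialgebras, and compatibility with the antipodes is automatic by the remark preceding the definition of the homotopy category $\ccdgh$, so $\p_\eta \in \HOM_{\ccdgh}(\O, \O^\pr)$. This closes the loop and establishes the claimed natural bijection.
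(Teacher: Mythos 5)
Your proof is correct and follows the same route as the paper's: the identical convolution computation for the group-homomorphism property of $\sN_\p^C$, naturality by associativity of composition, and the Yoneda lemma for the bijection. The paper compresses the entire second half into the phrase ``combined with the Yoneda lemma''; your unpacking of how the group-homomorphism constraint on the components, tested against $\HOM_{\ccdgc}(-,\O\otimes\O)$ and $\HOM_{\ccdgc}(-,\Bbbk^\vee)$ via the product decomposition from Lemma \ref{presheaffour}, upgrades the universal element $\p_\eta=\eta^\O(\I_\O)$ from a ccdg-coalgebra morphism to a ccdg-bialgebra (hence, automatically, ccdg-Hopf algebra) morphism is exactly the detail the paper leaves implicit, and it is correct.
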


\begin{proof}
For every $C$,
$\sN_{\p}^C$ is a group homomorphism since, $\forall g_1, g_2 \in \HOM_{\ccdgc}\big(C, \O\big)$,
\eqalign{
\sN_{\p}^C(g_1\star_{C\!,\O}g_2)&= \p\circ m_\O\circ (g_1\otimes g_2)\circ \cp_C
= m_{\O^\pr}\circ (\p\circ g_1\otimes \p\circ g_2)\circ \cp_C
\cr
&=
\sN_{\p}^C(g_1)\star_{C,\O^\pr}\sN_{\p}^C(g_2)
,\cr
\sN_{\p}^C(u_\O\circ \ep_C)&=\p\circ u_\O\circ \ep_C = u_{\O^\pr}\circ \ep_C.
}
For every morphism $f:C\rightarrow C^\pr$ of ccdg-coalgebras we have,
$\forall g^\pr \in \HOM_{\ccdgc}\big(C^\pr, \O\big)$,
\eqalign{
\sN_{\psi}^C\!\circ\! \bm{\CP}_{\O}(f)(g^\pr)= {\psi}\circ (g^\pr\!\circ f)=({\psi}\circ g^\pr)\circ\! f=
\bm{\CP}_{\O^\pr}(f)\!\circ\! \sN_{\psi}^{C^\pr}(g^\pr).
}
Therefore $\sN_{\psi} \in \mathsf{Nat}\big(\bm{\CP}_{\O}, \bm{\CP}_{\O^\pr}\big)$ whenever 
${\psi} \in \HOM_{\ccdgh}\big(\O, \O^\pr\big)$. Combined with the Yoneda lemma, we have 
$\mathsf{Nat}\big(\bm{\CP}_{\!\O}, \bm{\CP}_{\!\O^\pr}\big)\cong \HOM_{\ccdgh}\big(\O, \O^\pr\big)$.
\qed
\end{proof}

\begin{lemma}\label{cohprt}
Assume that
\begin{itemize}

\item $\big({g}(t),\chi(t)\big)$,  $\big({g}_1(t),\chi_1(t)\big)$ and  $\big({g}_2(t),\chi_2(t)\big)$
are homotopy pairs  in $\HOM_{\ccdgc}(C,\O)$;

\item $\big(f(t),\l(t)\big)$ is a homotopy pair in $\HOM_{\ccdgc}(C,C^\pr)$
and $\big({g}^\pr(t),\l^\pr(t)\big)$  is a homotopy pair in $\HOM_{\ccdgc}(C^\pr,\O)$;

\item
$\big(\p(t),\xi(t)\big)$ is a homotopy pair in $\HOM_{\ccdgh}(\O,\O^\pr)$.
\end{itemize}

Then we have following homotopy pairs
\begin{enumerate}[label=({\alph*})]

\item
$\big({g}_1(t)\star_{C\!,\O}{g}_2(t), \chi_1(t)\star_{C\!,\O}{g}_2(t) + {g}_1(t)\star_{C\!,\O}\chi_2(t) \big)$
on $\HOM_{\ccdgc}(C,\O)$.

\item
$\big(\vs_\O\circ {g}(t), \vs_\O\circ\chi(t)\big)$
on $\HOM_{\ccdgc}(C,\O)$.

\item
$\big(g^\pr(t)\circ{f}(t), g^\pr(t)\circ \l(t) +\chi^\pr(t)\circ {f}(t)\big)$ 
on $\HOM_{\ccdgc}(C,\O)$.

\item
$\big({\p}(t)\circ g(t), \p(t)\circ \chi(t)+ {\xi}(t)\circ g(t)\big)$ 
on $\HOM_{\ccdgc}(C,\O^\pr)$.
\end{enumerate}
\end{lemma}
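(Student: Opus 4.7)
The plan is to treat all four parts uniformly, since each reduces to verifying the same three defining conditions of a homotopy pair on $\HOM_{\ccdgc}$: (i) the $t=0$ value lies in $\HOM_{\ccdgc}$, (ii) the homotopy flow equation $\frac{d}{dt}(\text{first slot})=\rd_{-,-}(\text{second slot})$ holds, and (iii) the infinitesimal coalgebra-map conditions on the second slot hold. Note that in part (d) the output pair lives in $\HOM_{\ccdgc}(C,\O^\pr)$ rather than $\HOM_{\ccdgh}$, so only the coalgebra conditions on the output need to be checked; the infinitesimal algebra-map data carried by $\big(\p(t),\xi(t)\big)$ is not a condition to verify on the output but is merely a tool used internally.

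For (i), the initial values $g_1(0)\star_{C\!,\O}g_2(0)$, $\vs_\O\circ g(0)$, $g^\pr(0)\circ f(0)$, and $\p(0)\circ g(0)$ are morphisms of ccdg-coalgebras by Lemmas \ref{presheaftwo} and \ref{presheafthree}. For (ii), I first invoke the standing observation recorded after Definition 3.3 (obtained by integrating the flow equation and using $\rd^2=0$) that each of $g_i(t), g(t), g^\pr(t), f(t)$ is a chain map and that $\p(t)$ is a ccdg-Hopf algebra morphism for every $t$. With this in hand, the Leibniz rule for $\frac{d}{dt}$ applied to $\star_{C\!,\O}$ or to composition matches the derivation property of $\rd_{-,-}$ for the same operation, because the ``derivative-of-the-other-factor'' terms vanish on chain maps. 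Each flow identity then collapses to the flow equations of the input pairs; part (b) additionally uses that $\vs_\O$ is a chain map.

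For (iii), the counit identities follow at once from $\ep_\O\circ\chi=0$, $\ep_\O\circ\vs_\O=\ep_\O$, and $\ep_{\O^\pr}\circ\xi=0$ by direct substitution. The coproduct identities are the only substantive computations, and they are the infinitesimal analogues of the coproduct calculations already carried out in Lemmas \ref{presheaftwo} and \ref{presheafthree}. Concretely, (a) combines $\cp_\O\circ g_i=(g_i\otimes g_i)\circ\cp_C$ with $\cp_\O\circ\chi_j=(g_j\otimes\chi_j+\chi_j\otimes g_j)\circ\cp_C$ and the cocommutativity of $\cp_C$, exactly as in the product calculation of Lemma \ref{presheaftwo}; (b) uses $\cp_\O\circ\vs_\O=(\vs_\O\otimes\vs_\O)\circ\cp_\O$; (c) uses only that $f(t)$ is a coalgebra morphism together with the infinitesimal condition on $\chi^\pr$; and (d) uses that $\p(t)$ is a coalgebra morphism together with the infinitesimal condition on $\chi$ and the infinitesimal coalgebra condition $\cp_{\O^\pr}\circ\xi=(\p\otimes\xi+\xi\otimes\p)\circ\cp_\O$ on $\xi$, so that the two cross terms $\p\circ\chi$ and $\xi\circ g$ recombine into the expected expression.

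The main obstacle is purely Koszul-sign bookkeeping: the odd-degree operators $\chi,\l,\xi$ must be moved past tensor factors in the coproduct calculations, and this uses the same $\I_\O\otimes\t\otimes\I_\O$ reshuffle that appears in the proof of Lemma \ref{presheaftwo}. No new conceptual ingredient beyond those already developed is required.
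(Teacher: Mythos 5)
Your proposal is correct and follows exactly the route the paper intends: the paper omits this proof entirely with the remark that it is a ``routine computation,'' and your plan supplies precisely that routine verification (initial condition, flow equation via the derivation/chain-map properties, and the infinitesimal counit and coproduct identities, with the cocommutativity reshuffle handling the Koszul signs). Your observation that part (d) lands in $\HOM_{\ccdgc}(C,\O^\pr)$ rather than $\HOM_{\ccdgh}$, so no algebra-map conditions need to be checked on the output, is also the correct reading of the statement.
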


\begin{proof}
These can be checked by routine computations, which are omitted for the sake of space.
\qed
\end{proof}

Now we are ready for the proof of  Theorem \ref{reppresheaf}.

\begin{proof}[Theorem \ref{reppresheaf}]
After Lemmas \ref{presheafone}, \ref{presheaftwo},  \ref{presheaffour}, \ref{presheafthree}
and \ref{cohprt},  we just need to check  few things to finish the proof.

1.  We check that the group $\bm{\mP}_{\!\O}(C)$ is well defined for every ccdg-coalgebra $C$.
\begin{itemize}
\item We have $g_1\star_{C\!,\O} g_2 \sim \tilde g_1\star_{C\!,\O} \tilde g_2 \in  \Hom_{\ccdgc}\big(C, \O\big)$
whenever $g_1\sim \tilde g_1, g_2\sim \tilde g_2\in \Hom_{\ccdgc}\big(C, \O\big)$: this follows from
Lemma \ref{cohprt}$(a)$.
 
\item  We have $g^{-1}\sim \tilde{g}^{-1} \in  \Hom_{\ccdgc}\big(C, \O\big)$
whenever $g\sim \tilde g \in \Hom_{\ccdgc}\big(C, \O\big)$: this follows from
Lemma \ref{cohprt}$(b)$.
\end{itemize}
Also followed is that the homotopy type
$[g_1\star_{C\!,\O} g_2]$ of $g_1\star_{C\!,\O} g_2$ depends only on the homotopy types
$[g_1], [g_2]  \in \HOM_{\hccdgc}\big(C, \O\big)$ of $g_1$ and $g_2$.
Therefore the group  $\bm{\mP}_{\!\O}(C)$ is well-defined.

2. We check that the homomorphism 
$\bm{\mP}_{\!\O}([f]):\bm{\mP}_{\!\O}(C^\pr)\rightarrow \bm{\mP}_{\!\O}(C)$ of groups
is well defined for every $[f]\in \HOM_\hccdgc(C,C^\pr)$.  Let $f\sim \tilde f \in \HOM_\ccdgc(C,C^\pr)$ and
$g^\pr\sim\tilde g^\pr \in   \HOM_\ccdgc(C^\pr,\O)$. Then, by Lemma \ref{cohprt}$(c)$, we have
$g\circ f \sim g\circ \tilde f\sim \tilde g\circ f\sim \tilde g\circ \tilde f \in \HOM_\ccdgc(C, \O)$
so that $\bm{\mP}_{\!\O}([f])([g]):=[g\circ f]$ depends only on the homotopy types
$[f]$ and $[g^\pr]$ of arbitrary representatives $f \in \HOM_{\ccdgc}\big(C, C^\pr\big)$ and
$g^\pr \in   \HOM_\ccdgc(C^\pr,\O)$.  Therefore $\bm{\mP}_{\!\O}([f])$ is a well defined group homomorphism.
It is obvious that $\bm{\mP}_{\!\O}([f])$ is an isomorphism of groups
whenever $f: C \rightarrow C^\pr$ is a homotopy equivalence of ccdg-coalgebras.

3.  We check that the natural transformation 
$\sN_{[\p]}: \bm{\mP}_{\!\O}\Longrightarrow \bm{\mP}_{\!\O^\pr}:
\mathit{ho}\category{ccdgC}(\fieldk)\rightsquigarrow \category{Grp}$ is well-defined
for every  $[{\psi}]\in \HOM_{\hccdgh}\big(\O, \O^\pr\big)$.
Let $\p\sim \tilde \p \in \HOM_\ccdgh(\O,\O^\pr)$ and
$g, \tilde g \in   \HOM_\ccdgc(C,\O)$. Then, by Lemma \ref{cohprt}$(d)$, we have
$$
\p\circ g \sim \p\circ \tilde{g}\sim \tilde{\p}\circ g\sim \tilde{\p}\circ \tilde{g} \in \HOM_\ccdgc(C, \O^\pr)
$$
so that $\sN^C_{[\p]}([g]):=[\p\circ g]$ for every ccdg-coalgebra $C$ depends only on the homotopy types
$[\p]$ and $[g]$ of arbitrary representatives $\p \in \HOM_{\ccdgh}\big(\O, \O^\pr\big)$ and
$g \in   \HOM_\ccdgc(C,\O)$. 
Therefore the natural transformation 
$\sN_{[\p]}: \bm{\mP}_{\!\O}\Longrightarrow \bm{\mP}_{\!\O^\pr}$ is well-defined such that
$\sN_{[{\psi}]}\in \mathsf{Nat}\big(\bm{\mP}_{\!\O}, \bm{\mP}_{\!\O^\pr}\big)$ 
whenever $[{\psi}]\in \HOM_{\hccdgh}\big(\O, \O^\pr\big)$.
Combined with the Yoneda lemma, we have 
$\mathsf{Nat}\big(\bm{\mP}_{\!\O}, \bm{\mP}_{\!\O^\pr}\big)\cong \HOM_{\hccdgh}\big(\O, \O^\pr\big)$,
i.e., the category of representable presheaves of groups on $\mathit{ho}\category{ccdgC}(\Bbbk)$  over $\Bbbk$ is 
equivalent to the homotopy category $\mathit{ho}\category{ccdgH}(\Bbbk)$ 
of ccdg-Hopf algebras over $\Bbbk$.
It is obvious that 
$\sN^C_{[{\psi}]}: \bm{\mP}_{\!\O}(C)\longrightarrow\bm{\mP}_{\!\O^\pr}(C)$  is an isomorphism of groups 
for every ccdg-coalgebra $C$ 
whenever $\p:\O\rightarrow \O^\pr$ is a homotopy equivalence of ccdg-Hopf algebras. 
Therefore
$\sN_{[{\psi}]}$  is a natural isomorphism whenever $\p$ is  a homotopy equivalence of ccdg-Hopf algebras.
\qed
\end{proof}

\subsubsection{$\bm{\mP}(\Bbbk^\vee)$ action on the presheaf
$\grave{\bm{\mP}}:\mathring{\mathit{ho}\category{ccdgC}}(\fieldk)\rightsquigarrow \category{Set}$.}

Remind that $\fieldk$ has a structure $\Bbbk^\vee=\big(\Bbbk, {\ep}_\Bbbk, \cp_\Bbbk, 0\big)$ of ccdg-coalgebra,
which 
is a terminal object in the category  ${\mathit{ho}\category{ccdgC}}(\fieldk)$.
Therefore we have the group $\bm{\mP}_{\!\!\O}(\Bbbk^\vee)$, which plays a special role.
The counit ${\ep}_C:C \rightarrow \Bbbk$ of every ccdg-coalgebra $C$ is  a morphism
of ccdg-coalgebra  to $\Bbbk^\vee$, so that there is a canonical  homomorphism 
$\bm{\mP}_{\O}([{\ep}_C]): \bm{\mP}_{\O}(\Bbbk^\vee) \rightarrow \bm{\mP}_{\O}(C)$ of groups.
Consider  the 
representable presheaf $\grave{\bm{\mP}}_\O: \mathring{\mathit{ho}\category{ccdgC}}(\fieldk)\rightsquigarrow \category{Set}$ underlying  $\bm{\mP}_{\!\O}$. 

\begin{lemma}
$\grave{\bm{\mP}}_\O: \mathring{\mathit{ho}\category{ccdgC}}(\fieldk)\rightsquigarrow \category{Set}$ 
is a representable presheaf of 
$\bm{\mP}_{\O}(\Bbbk^\vee)$-sets  on the homotopy category of ccdg-coalgebras.
\end{lemma}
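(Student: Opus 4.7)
The plan is to produce a natural action of the group $\bm{\mP}_\O(\Bbbk^\vee)$ on the set $\bm{\mP}_\O(C)$ for every ccdg-coalgebra $C$, using the terminality of $\Bbbk^\vee$ in $\mathit{ho}\category{ccdgC}(\Bbbk)$, and then to check that morphisms in $\mathit{ho}\category{ccdgC}(\Bbbk)$ induce equivariant maps. Representability of $\grave{\bm{\mP}}_\O$ as a set-valued presheaf is already given by Theorem \ref{reppresheaf}, so the entire content is the construction and naturality of the action.

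First, for each ccdg-coalgebra $C$, the counit $\ep_C:C\to \Bbbk^\vee$ is a morphism of ccdg-coalgebras, so applying $\bm{\mP}_\O$ produces a group homomorphism $\bm{\mP}_\O([\ep_C]):\bm{\mP}_\O(\Bbbk^\vee)\to \bm{\mP}_\O(C)$, $[h]\mapsto [h\circ \ep_C]$. I will define the action by
$$
[h]\cdot [g]:=\bm{\mP}_\O([\ep_C])([h])\star_{C\!,\O}[g]=\bigl[(h\circ \ep_C)\star_{C\!,\O}g\bigr],
$$
for $[h]\in \bm{\mP}_\O(\Bbbk^\vee)$ and $[g]\in \bm{\mP}_\O(C)$. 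That this is a group action is immediate from the fact that $\bm{\mP}_\O([\ep_C])$ is a homomorphism together with the group structure on $\bm{\mP}_\O(C)$ supplied by Theorem \ref{reppresheaf}.

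Next I will verify naturality, i.e.\ that for every morphism $[f]\in \HOM_{\hccdgc}(C,C^\pr)$ the induced map $\bm{\mP}_\O([f]):\bm{\mP}_\O(C^\pr)\to \bm{\mP}_\O(C)$ is $\bm{\mP}_\O(\Bbbk^\vee)$-equivariant. The key observation is that $\ep_{C^\pr}\circ f=\ep_C$ since $f$ is a morphism of ccdg-coalgebras, and hence $[\ep_C]=[\ep_{C^\pr}]\circ_h [f]$ in $\mathit{ho}\category{ccdgC}(\Bbbk)$. Functoriality of $\bm{\mP}_\O$ then gives $\bm{\mP}_\O([\ep_C])=\bm{\mP}_\O([f])\circ\bm{\mP}_\O([\ep_{C^\pr}])$, and combining this with the fact that $\bm{\mP}_\O([f])$ is itself a group homomorphism yields the desired equivariance
$$
\bm{\mP}_\O([f])\bigl([h]\cdot[g^\pr]\bigr)=[h]\cdot \bm{\mP}_\O([f])\bigl([g^\pr]\bigr).
$$

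There is no real obstacle here; the statement is essentially a formal consequence of $\Bbbk^\vee$ being terminal in $\mathit{ho}\category{ccdgC}(\Bbbk)$ together with the contravariant functoriality already established in Theorem \ref{reppresheaf}. The only thing worth double-checking is that the action is well-defined on homotopy types, which reduces to Lemma \ref{cohprt}(a) applied to the homotopy pair obtained by composing a homotopy of $h$ with $\ep_C$ and convolving with $g$; this is routine and need not be spelled out. The representability of $\grave{\bm{\mP}}_\O$ as a presheaf of $\bm{\mP}_\O(\Bbbk^\vee)$-sets then follows by packaging these actions into the factorization $\grave{\bm{\mP}}_\O:\mathring{\mathit{ho}\category{ccdgC}}(\Bbbk)\rightsquigarrow \bm{\mP}_\O(\Bbbk^\vee)\text{-}\category{Set}\to \category{Set}$.
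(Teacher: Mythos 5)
Your proof is correct and takes essentially the same route as the paper: the action $[h]\cdot[g]:=[(h\circ\ep_C)\ast_{C,\O}g]$ is exactly the paper's $\r_C$, and the equivariance check via $\ep_{C^\pr}\circ f=\ep_C$ is precisely the commuting square the paper records (and dismisses as trivial). Everything you invoke—well-definedness of the group law and of $\bm{\mP}_\O([\ep_C])$—is already supplied by Theorem \ref{reppresheaf}, so the argument is complete.
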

\begin{proof}
It is trivial to  check that $\grave{\bm{\mP}}_\O(C)$ is 
a $\bm{\mP}_{\O}(\Bbbk^\vee)$-set for every ccdg-coalgebra $C$ with the  action
$\xymatrix{\r_C:\bm{\mP}_{\O}(\Bbbk^\vee)\times\grave{\bm{\mP}}_\O(C)\ar[r] & \grave{\bm{\mP}}_\O(C)}$ 
defined by
$$
\r_C\big([g], [\a] \big):= [g\circ \ep_C]\ast_{C\!,\O} [\a]
=\big[m_\O\circ  \big((g\circ \ep_C)\otimes\a\big)\circ \cp_C\big],
$$
where $g \in \HOM_\ccdgc (\Bbbk^\vee, \O)$ and $\a \in \HOM_\ccdgc (C,\O)$ 
are arbitrary representative of $[g]$ and
$[\a]$ respectively, 
such that for every $[f] \in \HOM_\hccdgc (C, C^\pr)$  the following diagram commutes
$$
\xymatrixrowsep{1.5pc}
\xymatrixcolsep{3pc}
\xymatrix{
\ar[d]^-{\I\times \grave{\bm{\mP}}_\O([f])}
\bm{\mP}_{\O}(\Bbbk^\vee)\times\grave{\bm{\mP}}_\O\big(C^\pr\big)\ar[r]^-{\r_{\!C^\pr}}  &\grave{\bm{\mP}}_\O\big(C^\pr\big)\ar[d]^-{\grave{\bm{\mP}}_{\O}([f])}
\cr
\bm{\mP}_{\O}(\Bbbk^\vee)\times \grave{\bm{\mP}}_\O\big(C\big)\ar[r]^-{\r_{\!C}}  &\grave{\bm{\mP}}_\O\big(C\big)
,
}
\quad\hbox{i.e.,}\quad
\r_{\!C}\circ (\I\times \grave{\bm{\mP}}_\O([f]))= \grave{\bm{\mP}}_\O([f])\circ \r_{\!C^\pr}
.
$$
\qed
\end{proof}

\subsection{Presheaf of Lie algebras 
$\bm{T\mP}_{\O}: \mathring{\mathit{ho}\category{ccdgC}}(\Bbbk) \rightsquigarrow \category{Lie}(\Bbbk)$}

For a ccdg-Hopf algebra $\O$ we construct a presheaf 
$\bm{T\mP}_{\O}: \mathring{\mathit{ho}\category{ccdgC}}(\Bbbk) \rightsquigarrow \category{Lie}(\Bbbk)$ 
of Lie algebras on  the homotopy category of ccdg-coalgebras such that 
we have a natural isomorphism $\bm{T\mP}_{\O}\cong\bm{T\mP}_{\O^\pr}$ whenever  we have
a natural isomorphism 
$\bm{\mP}_{\O}\cong\bm{\mP}_{\O^\pr}$ or, equivalently, we have a homotopy equivalence $\O \cong \O^\pr$
of ccdg-Hopf algebras. 
The converse is not true in general.

Denoted by
$\THOM_{\ccdgc}(C, \O)$, for each ccdg-coalgebra $C$,  is the set of all \emph{tangential morphisms} of ccdg-coalgebras about 
the identity
$e_{C\!,\O}:=u_\O\circ \ep_C$: 
\eqn{tanccdgc}{
\THOM_{\ccdgc}(C, \O):=
\left\{{\ups} \in \Hom\big(C, \O\big)_0\left|
\begin{aligned}
& \rd_{C\!,\Omega}{\ups} =0
,\cr
&\ep_\O\circ {\ups} =0
,\cr
& \cp_\O\circ {\ups} = \big(e_{C,\!\O}\otimes {\ups} +{\ups}\otimes e_{C,\!\O}\big)\circ \cp_C
\end{aligned}
 \right.\!\!\!\!\!\!\!\!
\right\}.
}
A homotopy pair $\big({\ups}(t),\s(t)\big)$ on $\THOM_{\ccdgc}(C, \O)$  is 
a pair  of ${\ups}(t) \in \Hom\big(C, \O\big)_0[t]$
and $\l(t) \in \Hom\big(C, \O\big)_1[t]$ satisfying the homotopy flow equation 
$\Fr{d}{dt}{\ups}(t)= \rd_{C\!,\O}\s(t)$ generated by $\s(t)$ subject to the following
conditions
\eqn{tancchopa}{
\ups(0)\in \THOM_{\ccdgc}(C, \O)
,\qquad
\left\{
\begin{aligned}
\ep_\O\circ \l(t)&=0,\cr
\cp_\O\circ \l(t) &= \big(e_{C\!,\O}\otimes \s(t) +\s(t)\otimes e_{C\!,\O}\big)\circ \cp_C.
\end{aligned}\right.
}
It is straightforward to check that ${\ups}(t)$ is a family of tangential morphisms of ccdg-coalgebras about the identity.
We say ${\ups},\tilde {\ups} \in \THOM_{\ccdgc}(C, \O)$  are homotopic, ${\ups}\sim \tilde{\ups}$, 
or having the same homotopy type, $[{\ups}]=[\tilde{\ups}]$, if there is a homotopy flow connecting them.  
Denoted by
$\THOM_{\hccdgc}(C, \O)$  is the set of homotopy types of all tangential morphisms of ccdg-coalgebras about the identity.

\begin{theorem}[Definition]\label{Liereppresheaf}
For each ccdg-Hopf algebra $\O$
we have  a presheaf   of Lie algebras
$\bm{T\mP}_{\O}:\mathring{\mathit{ho}\category{ccdgC}}(\fieldk)\rightsquigarrow \category{Lie}(\Bbbk)$
on the homotopy category of ccdg-coalgebras,
sending

\begin{itemize}
\item
each  ccdg-coalgebra $C$ to the Lie algebra 
$$
\bm{T\mP}_{\O}(C):=\Big(\THOM_{\hccdgc}(C, \O), [-,-]_{\ast_{C\!,\O}} \Big)
$$
with the Lie bracket defined by, $\forall [\ups_1],[\ups_2] \in \THOM_{\hccdgc}(C, \O)$,
$$
\big[[{\ups}_1],[{\ups}_2]\big]_{\ast_{C\!,\O}} 
:=  \big[m_{\O}\circ ({\ups}_1\otimes {\ups}_2-{\ups}_2\otimes {\ups}_1\big)\circ \cp_C\big]=[\ups_1]\ast_{C\!,\O}[\ups_2]- 
[\ups_2]\ast_{C\!,\O}[\ups_1]
,
$$
where ${\ups}_1,{\ups}_2 \in \THOM_{\ccdgc}\big(C,\Omega\big)$ are arbitrary representatives
of the homotopy types $[{\ups}_1],[{\ups}_2]$, respectively.

\item 
each  morphism $[f] \in \HOM_{\hccdgc}\big(C, C^\pr\big)$ to
the morphism
$\bm{T\mP}_{\O}\big([f]\big):\bm{T\mP}_{\O}\big(C^\pr\big)\rightarrow \bm{T\mP}_{\O}\big(C\big)$ 
of Lie algebras
defined by, $\forall [{\ups}^\pr] \in   \THOM_{\hccdgc}\big(C^\pr,\Omega\big)$,
$$
\bm{T\mP}_{\O}([f])\big([{\ups}^\pr]\big):=\big[{\ups}^\pr\circ f\big]
,$$
where  $f$
and ${\ups}^\pr$
are arbitrary representatives of the homotopy types $[f]$ and $[{\ups}^\pr]$, respectively,
\end{itemize}

such that
$\bm{T\mP}_{\O}([f])$ is an isomorphism of Lie algebras
whenever $f:C\rightarrow C^\pr$ is a homotopy equivalence of ccdg-coalgebras.

For every morphism  $[\p] \in \HOM_{\hccdgh}\big(\O,\O^\pr\big)$ 
we have a natural transformation 
${T\!\!\sN}_{[\p]}: \bm{T\!\mP}_{\!\Omega}\Longrightarrow \bm{T\!\mP}_{\!\Omega^\pr}:
\mathring{\mathit{ho}\category{ccdgC}}(\fieldk)\rightsquigarrow \category{Lie}(\Bbbk)$,
whose component 
${T\!\!\sN}_{[\p]}^{C}: \bm{T\!\mP}_{\!\Omega}(C)\rightarrow  \bm{T\!\mP}_{\!\Omega^\pr}(C)$
at  each ccdg-coalgebra $C$  is defined by,  $\forall [\ups] \in \THOM_{\hccdgc}\big(C, \O\big)$,
$$
{T\!\!\sN}_{[\p]}^{C}\big([\ups]\big):= \big[\psi\circ \ups\big]
,
$$
where 
 $\p \in \HOM_{\ccdgh}\big(\Omega, \Omega^\pr\big)$ 
and $\ups \in \THOM_{\ccdgc}\big(C, \O\big)$ 
are  arbitrary representatives of the homotopy types $[\p]$
and $[\ups]$, respectively,
such that
${T\!\!\sN}_{[\p]}: \bm{T\mP}_{\!\Omega}\Longrightarrow \bm{T\!\mP}_{\!\Omega^\pr}:
\mathring{\mathit{ho}\category{ccdgC}}(\fieldk)\rightsquigarrow \category{Lie}(\Bbbk)$ 
is a natural isomorphism whenever $\p:\O\rightarrow \O^\pr$ is a homotopy equivalence of ccdg-Hopf algebras.

\end{theorem}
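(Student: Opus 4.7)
The plan is to mirror the proof of Theorem \ref{reppresheaf} step by step, replacing $\HOM_\ccdgc(C,\O)$ by $\THOM_\ccdgc(C,\O)$ and the group operation by its commutator. First I would construct the strict-level Lie-algebra-valued presheaf on $\category{ccdgC}(\Bbbk)$ sending $C$ to $\THOM_\ccdgc(C,\O)$ equipped with the commutator bracket $[\ups_1,\ups_2]_{\star_{C\!,\O}} := \ups_1\star_{C\!,\O}\ups_2 - \ups_2\star_{C\!,\O}\ups_1$, and sending a morphism $f:C\to C^\pr$ to $\ups^\pr\mapsto \ups^\pr\circ f$. Closure of $\THOM_\ccdgc(C,\O)$ under $\rd_{C\!,\O}$ and under the counit condition $\ep_\O\circ(-)=0$ is immediate, because $\rd_{C\!,\O}$ is a derivation of $\star_{C\!,\O}$ and $\ep_\O\circ(\ups_1\star_{C\!,\O}\ups_2)=m_\Bbbk\circ(\ep_\O\ups_1\otimes\ep_\O\ups_2)\circ\cp_C=0$. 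Antisymmetry and Jacobi are inherited from the associativity of $\star_{C\!,\O}$ established in Lemma \ref{presheafone}.

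The delicate point is the tangentiality of the commutator with respect to $\cp_\O$. Expanding $\cp_\O\circ(\ups_1\star_{C\!,\O}\ups_2)$ using multiplicativity of $\cp_\O$ and the tangentiality relations $\cp_\O\circ\ups_i=(e_{C\!,\O}\otimes\ups_i+\ups_i\otimes e_{C\!,\O})\circ\cp_C$ produces four cross-terms; two of them reassemble into $(e_{C\!,\O}\otimes\ups_1\star_{C\!,\O}\ups_2+\ups_1\star_{C\!,\O}\ups_2\otimes e_{C\!,\O})\circ\cp_C$, while the remaining two mixed terms—upon invoking the cocommutativity of $\cp_C$ exactly as in the proof of Lemma \ref{presheaftwo}—are symmetric under exchange of $\ups_1$ and $\ups_2$ and therefore cancel in the commutator. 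This yields closure of $\THOM_\ccdgc(C,\O)$ under $[-,-]_{\star_{C\!,\O}}$. Strict functoriality in $C$, and the construction of strict natural transformations associated with $\p\in\HOM_\ccdgh(\O,\O^\pr)$ via $\ups\mapsto\p\circ\ups$, proceed exactly as in Lemmas \ref{presheafone} and \ref{presheafthree}.

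Next I would establish the tangential analog of Lemma \ref{cohprt}: given homotopy pairs $(\ups_i(t),\s_i(t))$ on $\THOM_\ccdgc(C,\O)$, a homotopy pair $(f(t),\l(t))$ in $\HOM_\ccdgc(C,C^\pr)$, and a homotopy pair $(\p(t),\xi(t))$ in $\HOM_\ccdgh(\O,\O^\pr)$, produce homotopy pairs in $\THOM_\ccdgc$ for the commutator $[\ups_1(t),\ups_2(t)]_{\star_{C\!,\O}}$, for $\ups(t)\circ f(t)$, and for $\p(t)\circ\ups(t)$. The homotopy flow equations follow from the Leibniz rule for $\rd_{C\!,\O}$ with respect to $\star_{C\!,\O}$ and from the chain rule, while the infinitesimal counit and comultiplication conditions on the generating vector fields are verified by the same cocommutativity cancellation as above.

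With these ingredients in place, the four well-definedness assertions of the theorem—that $\bm{T\mP}_\O(C)$ is a Lie algebra on homotopy types, that $\bm{T\mP}_\O([f])$ is a well-defined Lie algebra morphism, that ${T\!\!\sN}_{[\p]}$ is a well-defined natural transformation, and that both become isomorphisms when $f$ or $\p$ is a homotopy equivalence—descend from the strict level through the tangential homotopy lemma by the same formal pattern used in items 1--3 of the proof of Theorem \ref{reppresheaf}. I expect the main obstacle to be precisely the verification that the commutator of two tangential morphisms is again tangential, together with its parametric homotopy version, since both rest on careful bookkeeping of the four cross-terms produced by $\cp_\O\circ m_\O$ and on the essential use of cocommutativity of $\cp_C$ to cancel the symmetric pair.
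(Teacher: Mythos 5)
Your proposal follows essentially the same route as the paper: it first builds the strict-level presheaf of Lie algebras $\bm{T\!\CP}_{\O}$ on $\category{ccdgC}(\Bbbk)$ with the convolution commutator (the paper's Lemma \ref{cLieone}, whose key computation is exactly your four-cross-term cancellation via cocommutativity), then the strict natural transformations for $\p$ (Lemma \ref{cLietwo}), then the tangential homotopy-pair lemma (Lemma \ref{cLiethree}), and finally descends to homotopy types as in the proof of Theorem \ref{reppresheaf}. The argument is correct and the identified delicate point — closure of $\THOM_{\ccdgc}(C,\O)$ under the bracket and its parametric version — is indeed where the paper spends its effort.
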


For every ccdg-Hopf algebra $\O$ we define a  presheaf of Lie algebras 
$\bm{T\!\CP}_{\O}:\mathring{\category{ccdgC}}(\fieldk)\rightsquigarrow \category{Lie}(\Bbbk)$
on $\mathring{\category{ccdgC}}(\fieldk)$, which will induces 
$\bm{T\mP}_{\O}:\mathring{\mathit{ho}\category{ccdgC}}(\fieldk)\rightsquigarrow \category{Lie}(\Bbbk)$.

\begin{lemma}\label{cLieone}
For each ccdg-Hopf algebra $\O$
we have  a presheaf   of Lie algebras
$\bm{T\!\CP}_{\O}:\mathring{\category{ccdgC}}(\fieldk)\rightsquigarrow \category{Lie}(\Bbbk)$,
sending
each  ccdg-coalgebra $C$ to the Lie algebra 
$$
\bm{T\!\CP}_{\O}(C):=\Big(\THOM_{\ccdgc}(C, \O), [-,-]_{\star_{C\!,\O}} \Big)
,
$$
where
$[\ups_1, \ups_2]_{\star_{C\!,\O}}: =  \ups_1{\star_{C\!,\O}}\ups_2 -\ups_2{\star_{C\!,\O}}\ups_1$ 
for all $\ups_1, \ups_2 \in \THOM_{\ccdgc}(C, \O)$,
and each morphism $f:C\rightarrow C^\pr$ of ccdg-coalgebras to
a Lie algebra homomorphism $\bm{T\!\CP}_{\O}(f):\bm{T\!\CP}_{\O}(C^\pr)\rightarrow \bm{T\!\CP}_{\O}(C)$
defined by $\bm{T\!\CP}_{\O}(f)(\ups^\pr) =\ups^\pr\circ f$, $\forall \ups^\pr \in \THOM_{\ccdgc}(C^\pr, \O)$.
\end{lemma}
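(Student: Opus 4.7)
The plan is to view $\THOM_{\ccdgc}(C,\O)$ as a Lie subalgebra of $\bigl(\Hom(C,\O)_0,[-,-]_{\star_{C\!,\O}}\bigr)$ under the commutator of the convolution product, using the dg-algebra structure on $\bm{\CE}_{\!\O}(C)$ from Lemma~\ref{presheafone}. I would proceed in three steps: (i) check that $\THOM_{\ccdgc}(C,\O)$ is a $\Bbbk$-linear subspace of $\Hom(C,\O)_0$ closed under $[-,-]_{\star_{C\!,\O}}$; (ii) deduce antisymmetry and the Jacobi identity from associativity of $\star_{C\!,\O}$; (iii) verify that pullback $\ups^\pr\mapsto \ups^\pr\circ f$ is a Lie algebra homomorphism and that the assignment is functorial in $f$.

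Linearity is immediate, since each defining condition in \eq{tanccdgc} is $\Bbbk$-linear in $\ups$. For closure under the bracket, given $\ups_1,\ups_2\in\THOM_{\ccdgc}(C,\O)$, the cycle condition $\rd_{C\!,\O}(\ups_1\star_{C\!,\O}\ups_2)=0$ follows from the Leibniz rule already established in Lemma~\ref{presheafone}, and $\ep_\O\circ(\ups_1\star_{C\!,\O}\ups_2)=0$ follows from $\ep_\O\circ m_\O=m_\Bbbk\circ(\ep_\O\otimes\ep_\O)$ together with $\ep_\O\circ\ups_i=0$. The main content is to verify the infinitesimal-coalgebra condition on $[\ups_1,\ups_2]_{\star_{C\!,\O}}$. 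For this I would compute $\cp_\O\circ(\ups_1\star_{C\!,\O}\ups_2)$ by first applying the bialgebra axiom~\eq{bialgebra} to get $m_{\O\otimes\O}\circ(\cp_\O\otimes\cp_\O)\circ(\ups_1\otimes\ups_2)\circ\cp_C$, then substituting the primitive-like identity $\cp_\O\circ\ups_i=(e_{C\!,\O}\otimes\ups_i+\ups_i\otimes e_{C\!,\O})\circ\cp_C$ into each factor and expanding the resulting four terms in $\O\otimes\O$. Using the counit axiom of $C$, the two ``diagonal'' terms collapse into $\bigl(e_{C\!,\O}\otimes(\ups_1\star_{C\!,\O}\ups_2)+(\ups_1\star_{C\!,\O}\ups_2)\otimes e_{C\!,\O}\bigr)\circ\cp_C$, while the two ``cross'' terms give $(\ups_1\otimes\ups_2)\circ\cp_C+(\ups_2\otimes\ups_1)\circ\t\circ\cp_C$. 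Cocommutativity $\t\circ\cp_C=\cp_C$ then symmetrises the cross terms in $\ups_1,\ups_2$, so that they cancel exactly when subtracting the analogous expression for $\ups_2\star_{C\!,\O}\ups_1$. This cancellation---the precise analog of the classical computation that primitives in a cocommutative bialgebra form a Lie algebra---is the main obstacle, and it is here that the cocommutativity of $C$ is indispensable.

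Step (ii) is then formal: associativity of $\star_{C\!,\O}$ (Lemma~\ref{presheafone}) forces its commutator to satisfy antisymmetry and Jacobi on $\Hom(C,\O)_0$, which restricts to $\THOM_{\ccdgc}(C,\O)$ by (i). For step (iii), given a morphism $f\colon C\to C^\pr$ of ccdg-coalgebras and $\ups^\pr\in\THOM_{\ccdgc}(C^\pr,\O)$, the cycle and $\ep_\O$-vanishing conditions for $\ups^\pr\circ f$ are immediate, while the infinitesimal-coalgebra condition uses $\cp_{C^\pr}\circ f=(f\otimes f)\circ\cp_C$ together with the identity $e_{C^\pr\!,\O}\circ f=u_\O\circ\ep_{C^\pr}\circ f=u_\O\circ\ep_C=e_{C\!,\O}$. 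That $\bm{T\!\CP}_{\O}(f)$ respects the bracket is the calculation $(\ups_1^\pr\star_{C^\pr\!,\O}\ups_2^\pr)\circ f=(\ups_1^\pr\circ f)\star_{C\!,\O}(\ups_2^\pr\circ f)$, again using $\cp_{C^\pr}\circ f=(f\otimes f)\circ\cp_C$; functoriality in $f$ is then immediate from associativity of composition.
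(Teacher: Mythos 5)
Your proposal is correct and follows essentially the same route as the paper: closure of $\THOM_{\ccdgc}(C,\O)$ under the convolution commutator (with the classical primitive-element computation, where cocommutativity of $\cp_C$ kills the symmetric cross terms), the Lie axioms from associativity of $\star_{C\!,\O}$, and the same verification that pullback along $f$ preserves the tangential conditions and the bracket. The only cosmetic difference is that the paper packages the cross-term cancellation as a commutator identity in the convolution algebra $\Hom(C\otimes C,\O\otimes\O)$, while you expand the four terms directly; the underlying computation is identical.
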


\begin{proof}
1.  We show that $\bm{T\!\CP}_{\O}(C)$ is a Lie algebra as follows.  
It is suffice to check that  $[\ups_1, \ups_2]_{\star_{C\!,\O}}\in \THOM_{\ccdgc}(C, \O)$ 
for  every  $\ups_1, \ups_2 \in \THOM_{\ccdgc}(C, \O)$
since the convolution product ${\star_{C\!,\O}}$ is associative.  
We have
\eqalign{
\rd_{C\!,\O}[{\ups}_1,{\ups}_2]_\star=&[\rd_{C\!,\O}{\ups}_1,{\ups}_2]_\star +[{\ups}_1,\rd_{C\!,\O}{\ups}_2]_\star=0
,\cr
\ep_\O\circ [{\ups}_1,{\ups}_2]_\star =
&
\ep_\O\circ m_\O\circ \big([{\ups}_1, {\ups}_2]_\otimes\big)\circ \cp_C
= m_\O\circ (\ep_\O\otimes \ep_\O)\circ \big([{\ups}_1, {\ups}_2]_\otimes\big)\circ \cp_C
=0
,\cr
\cp_\O\circ [{\ups}_1,{\ups}_2]_\star =
&
\cp_\O\circ m_\O\circ \big([{\ups}_1, {\ups}_2]_\otimes\big)\circ \cp_C
\cr
=
&m_{\O\otimes\O}\circ\big(\cp_\O\otimes\cp_\O\big)({\ups}_1\otimes {\ups}_2 -
{\ups}_2\otimes {\ups}_1\big)\circ \cp_C
\cr
=
&m_{\O\otimes\O}\circ\big[{\ups}_1\otimes e+e\otimes {\ups}_1,{\ups}_2\otimes e+e\otimes{\ups}_2\big]_\otimes 
\circ\big(\cp_C\otimes\cp_C\big)\circ\cp_C
\cr
=
&\big(m_\O\otimes m_\O\big)
\circ\big([{\ups}_1,{\ups}_2]_\otimes \otimes e\otimes e +e\otimes e\otimes [{\ups}_1,{\ups}_2]_\otimes \big)
\circ \big(\cp_C\otimes\cp_C\big)\circ\cp_C
\cr
=
&\big([{\ups}_1,{\ups}_2]_\star\otimes e\star e + e\star e\otimes[{\ups}_1,{\ups}_2]_\star\big)\circ \cp_C
\cr
=&\big( [{\ups}_1,{\ups}_2]_\star\otimes e+e\otimes [{\ups}_1,{\ups}_2]_\star\big)\circ \cp_C
,
}
where we have used the short-hand notations ${\star_{C\!,\O}}=\star$, $u_\O\circ \ep_C=e$
and $[\ups_1,\ups_2]_\otimes = \ups_1\otimes \ups_2 -\ups_2\otimes \ups_1$.

2.  We show that $\bm{T\!\CP}_{\O}(f):\bm{T\!\CP}_{\O}(C^\pr)\rightarrow \bm{T\!\CP}_{\O}(C)$ 
is a Lie algebra homomorphism as
follows.   To begin with we need to check 
that $\bm{T\!\CP}_{\O}(f)(\ups^\pr) =\ups^\pr\circ f \in \THOM_{\ccdgc}(C,\O)$ 
whenever $\ups^\pr\in \THOM_{\ccdgc}(C^\pr,\O)$:
\eqalign{
\rd_{C\!,\O}(\ups^\pr\circ f)
= 
& \rd_\O\circ \ups^\pr\circ f - \ups^\pr\circ f\circ \rd_C 
=\big(\rd_\O\circ \ups^\pr- \ups^\pr\circ \rd_{C^\pr}\big)\circ f
=0
,\cr
\ep_\O\circ (\ups^\pr\circ f)= & (\ep_\O\circ \ups^\pr)\circ f=0
,\cr
\cp_\O\circ({\ups}^\pr\circ f) = 
& 
\big((u_\O\circ \ep_{C^\pr})\otimes {\ups}^\pr +{\ups}^\pr\otimes (u_\O\circ \ep_{C^\pr})\big)\circ \cp_{C^\pr}\circ f
\cr
=
& 
\big((u_\O\circ \ep_{C^\pr})\otimes {\ups}^\pr +{\ups}^\pr\otimes (u_\O\circ \ep_{C^\pr})\big)\circ (f\otimes f)\circ \cp_{C}
\cr
=
& 
\big((u_\O\circ \ep_{C})\otimes {\ups}^\pr\circ f +{\ups}^\pr\circ f\otimes (u_\O\circ \ep_{C})\big)\circ \cp_{C}
.
}
Then we check that $\bm{T\!\CP}_{\O}(f)$ is a homomorphism of Lie algebras:
\eqalign{
\bm{T\!\CP}_{\O}(f)\big([\ups_1^\pr,\ups_2^\pr]_{\star_{C^\pr\!,\O}}\big)
=
&m_\O\circ  [\ups^\pr_1, \ups^\pr_2]_\otimes \circ \cp_{C^\pr}\circ f
=
m_\O\circ  [\ups^\pr_1, \ups^\pr_2]_\otimes \circ (f\otimes f)\circ \cp_{C}
\cr
=
&m_\O\circ  [\ups^\pr_1\circ f , \ups^\pr_2\circ f]_\otimes \circ \cp_{C}
=\left[\bm{T\!\CP}_{\O}(f)(\ups_1^\pr), \bm{T\!\CP}_{\O}(f)(\ups_2^\pr)\right]_{\star_{C\!,\O}}
.
}

3. The functoriality of $\bm{T\!\CP}_{\O}$ is obvious.
\qed
\end{proof}

\begin{lemma}\label{cLietwo}
For every morphism  $\p:\O\rightarrow \O^\pr$ of ccdg-Hopf algebras 
we have a natural transformation 
${T\!\!\sN}_{\!\!\p}: \bm{T\!\CP}_{\!\O}\Longrightarrow 
\bm{T\!\CP}_{\!\O^\pr}: \mathring{\category{ccdgC}}(\fieldk)\rightsquigarrow \category{Lie}(\Bbbk)$,
whose component 
${T\!\!\sN}_{\!\!\p}^{C}: \bm{T\!\CP}_{\!\O}(C)\rightarrow  \bm{T\!\CP}_{\!\O^\pr}(C)$
at  each ccdg-coalgebra $C$ is defined by 
${T\!\!\sN}_{\!\!\p}^{C}(\ups):= \psi\circ \ups$,  $\forall \ups \in \THOM_{\ccdgc}\big(C, \O\big)$.
\end{lemma}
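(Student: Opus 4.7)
The plan is to proceed in three stages that parallel the structure of Lemma \ref{cLieone}: first verify that each component ${T\!\!\sN}_{\!\!\p}^{C}$ lands in the correct target space, then check it is a Lie algebra homomorphism, and finally establish naturality in the ccdg-coalgebra variable $C$. All three stages will be pure diagram chases using (i) the defining identities of a ccdg-Hopf algebra morphism $\psi$, namely $\ep_{\O^\pr}\circ\psi = \ep_\O$, $\cp_{\O^\pr}\circ \psi=(\psi\otimes\psi)\circ\cp_\O$, $\psi\circ u_\O = u_{\O^\pr}$, $\psi\circ m_\O = m_{\O^\pr}\circ(\psi\otimes\psi)$, and $\rd_{\O^\pr}\circ\psi = \psi\circ\rd_\O$; and (ii) the tangentiality conditions in \eq{tanccdgc} satisfied by elements of $\THOM_{\ccdgc}(C,\O)$.

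For stage one, given $\ups\in\THOM_{\ccdgc}(C,\O)$, I would check the three defining properties of $\THOM_{\ccdgc}(C,\O^\pr)$ for $\psi\circ\ups$. The vanishing of $\rd_{C\!,\O^\pr}(\psi\circ\ups)$ is immediate from $\psi$ being a chain map and $\rd_{C\!,\O}\ups=0$. The identity $\ep_{\O^\pr}\circ(\psi\circ\ups)=\ep_\O\circ\ups=0$ follows since $\psi$ is a counital coalgebra map. For the tangentiality condition about the identity, the crucial observation is that $\psi\circ e_{C\!,\O} = \psi\circ u_\O\circ\ep_C = u_{\O^\pr}\circ\ep_C = e_{C\!,\O^\pr}$, which, combined with $\cp_{\O^\pr}\circ\psi = (\psi\otimes\psi)\circ\cp_\O$ and the tangentiality of $\ups$, yields
\begin{equation*}
\cp_{\O^\pr}\circ(\psi\circ\ups) = (\psi\otimes\psi)\circ\big(e_{C\!,\O}\otimes\ups+\ups\otimes e_{C\!,\O}\big)\circ\cp_C = \big(e_{C\!,\O^\pr}\otimes(\psi\circ\ups)+(\psi\circ\ups)\otimes e_{C\!,\O^\pr}\big)\circ\cp_C.
\end{equation*}

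For stage two, I would show that ${T\!\!\sN}_{\!\!\p}^{C}$ preserves the Lie bracket by reducing it to the fact that $\psi$ is an algebra map. Explicitly, for $\ups_1,\ups_2\in\THOM_{\ccdgc}(C,\O)$ the composite $\psi\circ(\ups_1\star_{C\!,\O}\ups_2)=\psi\circ m_\O\circ(\ups_1\otimes\ups_2)\circ\cp_C$ equals $m_{\O^\pr}\circ(\psi\otimes\psi)\circ(\ups_1\otimes\ups_2)\circ\cp_C = (\psi\circ\ups_1)\star_{C\!,\O^\pr}(\psi\circ\ups_2)$; antisymmetrizing gives compatibility with $[-,-]_{\star_{C\!,\O}}$. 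For stage three, naturality in $C$ reduces to associativity of composition: for any $f\in\HOM_{\ccdgc}(C,C^\pr)$ and $\ups^\pr\in\THOM_{\ccdgc}(C^\pr,\O)$,
\begin{equation*}
{T\!\!\sN}_{\!\!\p}^{C}\circ \bm{T\!\CP}_{\!\O}(f)(\ups^\pr) = \psi\circ(\ups^\pr\circ f) = (\psi\circ\ups^\pr)\circ f = \bm{T\!\CP}_{\!\O^\pr}(f)\circ {T\!\!\sN}_{\!\!\p}^{C^\pr}(\ups^\pr).
\end{equation*}

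None of the individual verifications is hard; the only mild subtlety is keeping track of which of the Hopf algebra axioms on $\psi$ is used at each step, and in particular noticing that $\psi$'s compatibility with the units is what makes the notion of tangentiality about the identity transport correctly from $\O$ to $\O^\pr$. No homotopy-pair arguments are needed at this stage---those will enter only later when passing to the induced presheaf $\bm{T\!\mP}_{\!\O}$ on the homotopy category in Theorem \ref{Liereppresheaf}.
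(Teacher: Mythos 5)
Your proposal is correct and follows essentially the same route as the paper's proof: verify that $\psi\circ\ups$ lands in $\THOM_{\ccdgc}(C,\O^\pr)$ using the chain-map, counit, unit and coproduct compatibilities of $\psi$ (the unit compatibility being exactly what transports $e_{C\!,\O}$ to $e_{C\!,\O^\pr}$), deduce bracket preservation from $\psi$ being an algebra map, and obtain naturality from associativity of composition. Nothing is missing.
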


\begin{proof}
1.
We show that 
${T\!\!\sN}_{\p}^{C}: \bm{T\!\CP}_{\!\O}(C)\rightarrow  \bm{T\!\CP}_{\!\O^\pr}(C)$ is a Lie algebra homomorphism
for every ccdg-coalgebra $C$. 
We check that ${T\!\!\sN}_{\p}^{C}(\ups):= \psi\circ \ups \in \THOM_{\ccdgc}\big(C^\pr, \O\big)$ for all
$\ups \in \THOM_{\ccdgc}\big(C, \O\big)$:
\eqalign{
\rd_{C\!,\O^\pr}( \psi\circ \ups)= & \rd_{\O^\pr}\circ \psi\circ \ups -  \psi\circ \ups\circ \rd_{C} 
=\psi\circ \big(\rd_\O\circ \ups- \ups\circ \rd_{C}\big)=0
,\cr
\ep_{\O^\pr}\circ ( \psi\circ \ups)= & \ep_\O\circ \ups=0
,\cr
\cp_{\O^\pr}\circ( \psi\circ \ups) = 
&
(\psi\otimes\psi)\circ \cp_{\O}\circ \ups
=
(\psi\otimes\psi)\circ 
\big((u_\O\circ \ep_{C})\otimes {\ups} +{\ups}\otimes (u_\O\circ \ep_{C})\big)\circ\cp_C
\cr
=
& 
\big((u_{\O^\pr}\circ \ep_{C})\otimes \p\circ {\ups} + \p\circ{\ups}\otimes (u_{\O^\pr}\circ \ep_{C})\big)\circ\cp_C
.
}
Now we check that ${T\!\!\sN}_{\!\!\p}^{C}:\bm{T\!\CP}_{\!\O}(C)\rightarrow  \bm{T\!\CP}_{\!\O^\pr}(C)$ 
is a Lie algebra homomorphism:
for all $\ups_1,\ups_2 \in \THOM_{\ccdgc}\big(C, \O\big)$ we have
\eqalign{
{T\!\!\sN}_{\!\!\p}^{C}\big([\ups_1, \ups_2]_{\star_{C\!,\O}}\big)=
& \p\circ m_\O\circ [\ups_1,\ups_2]_\otimes \circ \cp_C
= m_{\O^\pr}\circ [\p\circ\ups_1,\p\circ\ups_2]_\otimes \circ \cp_C
\cr
=
&
\big[{T\!\!\sN}_{\!\!\p}^{C}(\ups_1),{T\!\!\sN}_{\p}^{C}(\ups_2)\big]_{\star_{C\!,\O^\pr}}
.
}

2.  We show that 
${T\!\!\sN}_{\!\!\p}: \bm{T\!\CP}_{\!\O}\Longrightarrow \bm{T\!\CP}_{\!\O^\pr}
: \mathring{\category{ccdgC}}(\fieldk)\rightsquigarrow \category{Lie}(\Bbbk)$ 
is a natural transformation:
For every morphism $f:C\rightarrow C^\pr$ of ccdg-coalgebras we have
${T\!\!\sN}_{\!\!\psi}^C\circ \bm{T\!\CP}_{\O}(f) = \bm{T\!\CP}_{\O^\pr}(f)\circ {T\!\!\sN}_{\!\!\psi}^{C^\pr}$
since for all $\ups^\pr \in \THOM_{\ccdgc}\big(C^\pr, \O\big)$ we obtain that
$$
{T\!\!\sN}_{\psi}^C\circ\bm{T\!\CP}_{\O}(f)(\ups^\pr)= {\psi}\circ (\ups^\pr\!\circ f)=({\psi}\circ \ups^\pr)\circ f=
\bm{T\!\CP}_{\O^\pr}(f)\circ {T\!\!\sN}_{\psi}^{C^\pr}(\ups^\pr)
.
$$
\qed
\end{proof}

\begin{lemma}\label{cLiethree}
Assume that
\begin{itemize}

\item $\big({\ups}(t),\s(t)\big)$, $\big({\ups}_1(t),\s_1(t)\big)$ and  $\big({\ups}_2(t),\s_2(t)\big)$
are homotopy pairs  on $\THOM_{\ccdgc}(C,\O)$;

\item $\big(f(t),\l(t)\big)$ is a homotopy pair on $\HOM_{\ccdgc}(C,C^\pr)$
and $\big({\ups}^\pr(t),\s^\pr(t)\big)$  is a homotopy pair on $\THOM_{\ccdgc}(C^\pr,\O)$;

\item
$\big(\p(t),\xi(t)\big)$ is a homotopy pair on $\HOM_{\ccdgh}(\O,\O^\pr)$.
\end{itemize}

Then we have the following homotopy pairs
\begin{enumerate}[label=({\alph*})]

\item
$\Big(\left[\big({\ups}_1(t), {\ups}_2(t)\right]_{\star_{C\!,\O}},  \left[\s_1(t), {\ups}_2(t)\right]_{\star_{C\!,\O}} \
+ \left[{\ups}_1(t),\s_2(t)\right]_{\star_{C\!,\O}} \Big)$
on $\THOM_{\ccdgc}(C,\O)$.

\item
$\Big(\ups^\pr(t)\circ{f}(t), \ups^\pr(t)\circ \l(t) +\s^\pr(t)\circ {f}(t)\Big)$ 
on $\THOM_{\ccdgc}(C,\O)$.

\item
$\Big({\p}(t)\circ \ups(t), \p(t)\circ \s(t)+ {\xi}(t)\circ \ups(t)\Big)$ 
 on $\HOM_{\ccdgc}(C,\O^\pr)$.
\end{enumerate}
\end{lemma}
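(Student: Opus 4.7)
The plan is to verify, for each of the three cases, the defining conditions of the purported homotopy pair: the initial condition, the homotopy flow equation, and the infinitesimal counit/comultiplication conditions (and, for (c), that the output indeed lies in $\THOM_{\ccdgc}(C,\O^\pr)$). A useful preliminary observation, used throughout, is that for any homotopy pair $(\ups(t),\s(t))$ on $\THOM_{\ccdgc}(C,\O)$, the curve $\ups(t)$ remains $\rd_{C\!,\O}$-closed for every $t$ (because $\rd_{C\!,\O}\ups(0)=0$ and $\rd_{C\!,\O}^2=0$), and similarly $\ep_\O\circ\ups(t)=0$ and $\cp_\O\circ\ups(t)=(e_{C,\!\O}\otimes\ups(t)+\ups(t)\otimes e_{C,\!\O})\circ\cp_C$ for every $t$, by propagating the initial conditions along the flow using $\ep_\O\circ\s(t)=0$ and the comultiplication condition on $\s(t)$. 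Analogous propagation statements apply to homotopy pairs on $\HOM_{\ccdgc}(C,C^\pr)$ and on $\HOM_{\ccdgh}(\O,\O^\pr)$, making $f(t)$ a bona fide ccdg-coalgebra morphism and $\p(t)$ a bona fide ccdg-Hopf algebra morphism at every $t$.

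For (a), the initial condition is immediate from Lemma \ref{cLieone}. The flow equation reduces to the statement that $\rd_{C\!,\O}$ is a graded derivation of $[-,-]_{\star_{C\!,\O}}$; the two terms involving $\rd\ups_i$ drop out by the closure observation above. The counit condition uses only $\ep_\O\circ\s_i(t)=0$ together with $\ep_\O\circ m_\O=m_\Bbbk\circ(\ep_\O\otimes\ep_\O)$. The comultiplication condition is the most laborious: one expands $\cp_\O\circ([\s_1,\ups_2]_{\star}+[\ups_1,\s_2]_{\star})$ using the bialgebra compatibility $\cp_\O\circ m_\O=m_{\O\otimes\O}\circ(\cp_\O\otimes\cp_\O)$, substitutes the comultiplication conditions on each of $\s_1,\ups_2,\ups_1,\s_2$, and uses cocommutativity of $\cp_C$ together with $e_{C,\!\O}\star_{C\!,\O}e_{C,\!\O}=e_{C,\!\O}$ to collapse the crossed terms. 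The pattern is exactly that of the computation of $\cp_\O\circ[\ups_1,\ups_2]_{\star}$ in the proof of Lemma \ref{cLieone}.

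For (b), the initial condition is the $\THOM$-version of that lemma applied to $\ups^\pr(0)\circ f(0)$. The flow equation is proved by using the closure statements $\rd_\O\circ\ups^\pr=\ups^\pr\circ\rd_{C^\pr}$ and $\rd_{C^\pr}\circ f=f\circ\rd_C$ to rewrite $\tfrac{d}{dt}(\ups^\pr\circ f)$ as $\rd_{C\!,\O}(\s^\pr\circ f+\ups^\pr\circ\l)$. The counit condition follows from $\ep_\O\circ\s^\pr=0$ and, by propagation, $\ep_\O\circ\ups^\pr=0$, together with $\ep_{C^\pr}\circ\l=0$. The comultiplication condition splits into $\cp_\O\circ(\ups^\pr\circ\l)$ and $\cp_\O\circ(\s^\pr\circ f)$: the first uses the $\THOM$-condition on $\ups^\pr$, the coalgebra-homotopy condition on $\l$, the identity $e_{C^\pr,\!\O}\circ f=e_{C,\!\O}$, and the vanishing $e_{C^\pr,\!\O}\circ\l=0$ to kill the crossed terms; the second uses the $\THOM$-condition on $\s^\pr$ combined with $f$ being a coalgebra map.

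For (c), the initial condition follows from Lemma \ref{cLietwo}. The flow equation uses $\rd_{\O^\pr}\circ\p=\p\circ\rd_\O$ and $\rd_\O\circ\ups=\ups\circ\rd_C$ to reorganize $\tfrac{d}{dt}(\p\circ\ups)$ into $\rd_{C\!,\O^\pr}(\p\circ\s+\xi\circ\ups)$. The counit condition combines $\ep_{\O^\pr}\circ\p=\ep_\O$, $\ep_\O\circ\s=0$, and $\ep_{\O^\pr}\circ\xi=0$. The comultiplication condition is the heart of the case: $\cp_{\O^\pr}\circ(\p\circ\s)$ is handled by $(\p\otimes\p)\circ\cp_\O=\cp_{\O^\pr}\circ\p$ and $\p\circ u_\O=u_{\O^\pr}$, while $\cp_{\O^\pr}\circ(\xi\circ\ups)$ uses the infinitesimal coalgebra condition on $\xi$; the infinitesimal algebra condition $\xi(t)\circ u_\O=0$ is the crucial ingredient, yielding $\xi(t)\circ e_{C,\!\O}=0$, which eliminates the would-be crossed terms and leaves exactly $(e_{C,\!\O^\pr}\otimes(\xi\circ\ups)+(\xi\circ\ups)\otimes e_{C,\!\O^\pr})\circ\cp_C$. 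The main obstacle across (a)--(c) is the sign bookkeeping in the comultiplication step of (a), but it follows the template already established in the proofs of Lemmas \ref{cLieone}, \ref{cLietwo}, and \ref{cohprt}, so each verification amounts to a routine, if lengthy, calculation.
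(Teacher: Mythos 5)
Your proposal is correct and is exactly the routine verification the paper has in mind: the paper omits this proof entirely, remarking only that it "can be checked by routine computations," and your outline supplies those computations following the same template as the worked cases in Lemmas \ref{cLieone}, \ref{cLietwo} and \ref{cohprt}. You correctly isolate the decisive ingredients at each step --- propagation of the tangential/coalgebra conditions along the flow, the vanishing $\ep_{C^\pr}\circ\l(t)=0$ killing the crossed terms in (b), and the infinitesimal algebra condition $\xi(t)\circ u_\O=0$ doing the same in (c) --- and your reading of (c) as producing a homotopy pair on $\THOM_{\ccdgc}(C,\O^\pr)$ is the one actually needed for Theorem \ref{Liereppresheaf}.
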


\begin{proof}
These can be checked by routine computations, which are omitted for the sake of space.
\qed
\end{proof}

Now we are ready for the proof of  Theorem \ref{Liereppresheaf}

\begin{proof}[Theorem \ref{Liereppresheaf}]
Based on Lemmas \ref{cLieone}, \ref{cLietwo} and \ref{cLiethree},
it is trivial to
check that the Lie algebra $\bm{T\mP}_{\O}(C)$ is well-defined
and $\bm{T\!\mP}_{\O}([f]): \bm{T\!\mP}_{\O}(C^\pr)\rightarrow \bm{T\!\mP}_{\O}(C)$ 
is a well-defined Lie algebra homomorphism,
depending only on the homotopy type $[f]$ of $f$. 
Therefore $\bm{T\!\mP}_{\O}$ is a presheaf of Lie algebras
on the homotopy category $\mathit{ho}\category{ccdgC}(\Bbbk)$ of ccdg-coalgebras
as claimed,  where the functoriality of $\bm{T\!\mP}_{\O}$ is obvious.
It is also obvious that $\bm{T\!\mP}_{\O}([f])$ is an isomorphism of Lie algebras whenever $f:C\rightarrow C^\pr$ is
a homotopy equivalence of ccdg-coalgebras. 
It also trivial to check that the natural transformation 
${T\!\!\sN}_{\!\![\p]}: \bm{T\!\mP}_{\!\Omega}\Longrightarrow \bm{T\!\mP}_{\!\Omega^\pr}:
\mathring{\mathit{ho}\category{ccdgC}}(\fieldk)\rightsquigarrow \category{Lie}(\Bbbk)$ is well-defined such that
${T\!\!\sN}_{\!\![{\psi}]}\in \mathsf{Nat}\big(\bm{T\!\mP}_{\O}, \bm{T\!\mP}_{\O^\pr}\big)$ whenever 
$[{\psi}]\in \HOM_{\hccdgh}\big(\O, \O^\pr\big)$. Finally it is obvious that
${T\!\!\sN}_{\!\![{\psi}]}$  is a natural isomorphism whenever $\p$ is  a homotopy equivalence of ccdg-Hopf algebra.
\qed
\end{proof}

Remind that
the category of representable presheaves of
groups on  $\mathit{ho}\category{ccdgC}(\Bbbk)$ is equivalent 
to the homotopy category $\mathit{ho}\category{ccdgH}(\Bbbk)$
of ccdg-Hopf algebras---Theorem \ref{reppresheaf}.  Therefore,
the assignments
$\bm{\mP}_{\O}\mapsto \bm{T\!\mP}_{\O}$
and $\sN_{\![{\psi}]}\mapsto  {T\!\!\sN}_{\!\![{\psi}]}$
define a functor $\category{T}$ from the category of representable presheaves of
groups on  $\mathit{ho}\category{ccdgC}(\Bbbk)$  to the category of presheaves of
Lie algebras on  $\mathit{ho}\category{ccdgC}(\Bbbk)$ 
such that ${T\!\!\sN}_{\!\![{\psi}]}$ is a natural isomorphism
whenever $\sN_{\![{\psi}]}$ is a natural isomorphism.

The following is obvious by definitions.
\begin{corollary}
If $\O$ is concentrated in degree zero, 
the group $\bm{\mP}_{\O}(\Bbbk^\vee)$ is the group of group-like elements in $\O$,
and  the Lie algebra $\bm{T\!\mP}_{\O}(\Bbbk^\vee)$ is the Lie algebra of primitive elements in $\O$.
\end{corollary}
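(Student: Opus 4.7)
The proof proposal is to unwind the definitions and exploit the fact that when $\O$ is concentrated in degree zero, all non-trivial degree considerations collapse.

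First, I would analyze $\bm{\mP}_\O(\Bbbk^\vee)$. A morphism $g \in \HOM_{\ccdgc}(\Bbbk^\vee, \O)$ is a degree-zero linear map $g\colon \Bbbk \to \O$ satisfying the coalgebra-map conditions $\ep_\O\circ g = \ep_\Bbbk$ and $\cp_\O\circ g = (g\otimes g)\circ\cp_\Bbbk$; the chain map condition is automatic since $\rd_\Bbbk = 0 = \rd_\O$. Such a $g$ is determined by the element $x := g(1) \in \O$, and the coalgebra-map conditions translate exactly to $\ep_\O(x) = 1$ and $\cp_\O(x) = x \otimes x$, i.e.\ to $x$ being group-like. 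Thus $\HOM_{\ccdgc}(\Bbbk^\vee,\O)$ is in bijection with the set of group-like elements of $\O$.

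Next, I would show that homotopy collapses to equality in this setting. A homotopy pair on $\HOM_{\ccdgc}(\Bbbk^\vee,\O)$ involves a family $\chi(t) \in \Hom(\Bbbk,\O)_1[t]$, but since both $\Bbbk$ and $\O$ are concentrated in degree zero, $\Hom(\Bbbk,\O)_1 = 0$, forcing $\chi(t) = 0$; the homotopy flow equation then gives $\tfrac{d}{dt}g(t) = 0$, so $g$ is constant. Hence $\HOM_{\hccdgc}(\Bbbk^\vee,\O) = \HOM_{\ccdgc}(\Bbbk^\vee,\O)$ as sets. Under the bijection $g \leftrightarrow g(1)$, the group operation $[g_1]\ast_{\Bbbk^\vee\!,\O}[g_2] = [m_\O\circ(g_1\otimes g_2)\circ\cp_\Bbbk]$ sends the pair $(x_1, x_2)$ to $m_\O(x_1\otimes x_2) = x_1 x_2$, matching the product on group-like elements, and the identity $u_\O\circ\ep_\Bbbk$ corresponds to $1\in\O$. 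This establishes the first claim.

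For the Lie-algebra statement, the same argument applies to $\bm{T\!\mP}_\O(\Bbbk^\vee)$. A tangential morphism $\ups\colon \Bbbk^\vee \to \O$ corresponds to $y := \ups(1) \in \O$ with $\ep_\O(y) = 0$ and $\cp_\O(y) = 1\otimes y + y\otimes 1$, i.e.\ exactly the primitivity conditions. Homotopies again vanish for degree reasons, so $\THOM_{\hccdgc}(\Bbbk^\vee,\O) = \THOM_{\ccdgc}(\Bbbk^\vee,\O)$. Under $\ups \leftrightarrow \ups(1)$, the bracket $[\ups_1,\ups_2]_{\star_{\Bbbk^\vee\!,\O}}$ evaluated at $1$ gives $y_1 y_2 - y_2 y_1$, which is the standard commutator bracket on primitive elements.

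The main (mild) point to verify carefully is that no subtle homotopy class is overlooked; this is really just the observation $\Hom(\Bbbk,\O)_1 = 0$ in degree zero, so there is no genuine obstacle. All remaining steps are direct translations via the universal property of $\Bbbk^\vee$.
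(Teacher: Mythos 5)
Your proof is correct and follows exactly the route the paper intends: the paper simply declares the corollary ``obvious by definitions,'' and your argument is the honest unwinding of those definitions --- identifying $\HOM_{\ccdgc}(\Bbbk^\vee,\O)$ with group-like elements, $\THOM_{\ccdgc}(\Bbbk^\vee,\O)$ with primitive elements, and noting that $\Hom(\Bbbk,\O)_1=0$ kills all homotopies. Nothing is missing.
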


\subsection{Complete ccdg-Hopf algebras and an isomorphism
$\grave{\mb{T}\!\bm{\mP}}_{\O}\cong \grave{\bm{\mP}}_{\O}$ of presheaves}

Quillen has introduced the notion of complete cocommutative Hopf algebra 
to  provide the Hopf algebra framework for the Malcev completion  and groups 
defined by the Baker-Campbell-Hausdorff formula
as well as for the rationalized homotopy groups of a pointed space \cite{Quillen}.  
For example,  the $\Bbbk$-rational group ring $\Bbbk\G$ of an abstract group $\G$ 
can completed by  the powers of its augmentation ideal $J$ to a complete  
Hopf algebra $\widehat{\fieldk \G}=\varprojlim\; \Bbbk\G /J^{n+1}$.
Then the set of primitive elements  form a Lie algebra 
$\mathbb{L}\big(\widehat{\fieldk \G}\big)$, which has a bijective correspondence 
$\xymatrix{\mathbb{L}\big(\widehat{\Bbbk\G}\big)\ar@/^0.2pc/[r]^-{\exp} 
&\ar@/^0.2pc/[l]^{\ln} \mathbb{G}\big(\widehat{\Bbbk \G}\big)}$
with the group $\mathbb{G}\big(\widehat{\Bbbk \G}\big)$ of group-like elements.\footnote{
The Malcev completion of $\G$ is isomorphic to $\mathbb{G}\big(\widehat{\mathbb{Q} \G}\big)$.
If the rational Abelianization $\G^{\mathit{ab}}\otimes_{\mathbb{Z}}\mathbb{Q}$  of $\G$ 
is finite dimensional one can attach
a pro-unipotent affine group scheme $\G^{\mathit{uni}}:\category{cA}(\Bbbk)\rightsquigarrow \category{Grp}$,
where $\category{cA}(\Bbbk)$ denotes the category of commutative algebras,
which is pro-represented by
the  linearly compact dual commutative Hopf algebra to  $\widehat{\fieldk \G}$. 
For $\G =\pi_1(X_*)$, where $X_*$ is
a $0$-connected pointed finite type space, $\G^{\mathit{un}}=\pi^{\mathit{un}}_1(X_*)$ is called the pro-unipotent
fundamental group (scheme) of the space, 
which was a starting point of the rational homotopy theory according to Sullivan---his paper
\cite{Sullivan} start with an algebraic model of unipotent local system (private communication).
}
Quillen's construction of complete Hopf algebras can be easily generalized to dg setting.

Consider a ccdg-Hopf algebra  $\O=\big(\O, u_\O, m_\O, \ep_\O, \cp_\O, \vs_\O, \rd_\O\big)$.

We introduce some notations.
Denote by  $m^{(n)}_\O:\O^{\otimes n}\rightarrow \O$, $n\geq 1$, 
is the $n$-fold iterated product generated by $m_\O:\O\otimes \O \rightarrow \O$
such that $m^{(1)}_\O=\I_\O$ and 
$m^{(n+1)}_\O = m_\O\circ (m^{(n)}_\O\otimes \I_\O)\equiv  m^{(n)}_\O\circ (\I^{n-1}_\O\otimes m_\O\big)$.
Denote by $m^{(n)}_{\O\otimes \O}:(\O\otimes\O)^{\otimes n}\rightarrow \O\otimes \O$, $n\geq 1$, 
is the $n$-fold iterated coproduct generated by 
$m_{\O\otimes \O}=(m_\O\otimes m_\O)\circ (\I_\O\otimes \t \otimes \I_\O)
:(\O\otimes \O)\otimes (\O\otimes \O) \rightarrow \O\otimes \O$.   Then we have
\eqn{hsqa}{
\cp_\O\circ m_\O^{(n)} = m^{(n)}_{\O\otimes \O}\circ (\cp_\O\otimes \ldots\otimes \cp_\O).
}

The counit $\ep_\O:\O\rightarrow \Bbbk$ is a canonical augmentation of $\O$,
since we have ${\ep}_\O\circ u_\O=\I_\fieldk$,  ${\ep}_\O\circ m_\O =m_\Bbbk\circ \big({\ep}_\O\otimes{\ep}_\O\big)$
and ${\ep}_\O\circ\rd_\O =0$.
Let $\mI=\Ker {\ep}_\O$ be the augmentation ideal. Then, we have 
a splitting $\O = \fieldk\cdot u_\O(1) \oplus \mI$ and a decreasing filtration 
\eqn{jfilter}{
\O =\mI^0 \supset \mI^1 \supset \mI^2\supset  \mI^3\supset \cdots,
}
where $\mI^n:=m^{(n)}_\O(\mI\otimes\cdots\otimes \mI)$.
It is straightforward to check that every structure of ccdg-Hopf algebra $\O$ is compatible with the above filtration.
We can endow trivial filtration on $\Bbbk$ and both $u_\O:\Bbbk\rightarrow \O$ and  $\ep_\O:\O\rightarrow \Bbbk$
are filtration preserving map.  
It is obvious that $m_\O:\mI^i\otimes \mI^j\rightarrow \mI^{i+j}$.
From $\rd_\O\circ u_\O=\ep_\O\circ \rd_\O=0$, 
we have $\rd_\O:\mI \rightarrow \mI$ and $\rd_\O: \mI^n \subset \mI^n$ as $\rd_\O$ is a derivation of $m_\O$.
From $\ep_\O\circ \vs_\O=\ep_\O$ we have $\vs_\O:\mI \rightarrow \mI$
and $\vs_\O : \mI^n \rightarrow \mI^n$ since $\vs_\O$ is anti-homomorphism of $m_\O$. 
Finally, we have $\cp_\O:  \mI^n \rightarrow\bigoplus_{i+j=n}^n  \mI^i\otimes \mI^j$, which can be checked as follows:
we have $\cp_\O: \mI^0 \rightarrow \mI^0\otimes \mI^0$ by definition and we can deduce that
$\cp_\O: \mI^1 \rightarrow \mI^0\otimes \mI^1\oplus \mI^1\otimes \mI^0$ from the counit property,
which is combined with the identity \eq{hsqa} to show the rest.
Then, $\hat{\O}=\varprojlim \left( \O \big\slash \mI^{n+1}\right)$
is a complete augmented dg-algebra with the decreasing filtration 
$$
\hat{\O} =F^0(\hat{\O}) \supset F^1(\hat{\O}) \supset F^2(\hat{\O})\supset \cdots,
$$
where $F^n(\hat{\O}) =\Ker\big(\hat{\O}\rightarrow \O\slash \mI^{n+1}\big)$.
We also have the extended coproduct 
$$
\hat\cp_\O: \hat{\O}
\rightarrow\hat{\O}\hat{\otimes} \hat{\O} = 
\varprojlim\left(\hat{\O}{\otimes}\hat{\O}\Big\slash F^{n+1}\big(\hat{\O}{\otimes} \hat{\O}\big)\right),
$$
where $F^{n}\big(\hat{\O}{\otimes} \hat{\O}\big)=\bigoplus_{i+j=n}F^{i}\big(\hat{\O}\big){\otimes} F^j\big(\hat{\O}\big)$,
such that $\hat{\O}=\big(\hat{\O},\hat u_\O, \hat m_\O, \hat \ep_\O, \hat \cp_\O, \hat \vs_\O, \hat\rd_\O\big)$ 
is a  ccdg-Hopf algebra. 

Let $\O$ be a complete ccdg-Hopf algebra and 
$\bm{\mP}_{{\O}}:\mathring{\mathit{ho}\category{ccdgC}}(\fieldk)\rightsquigarrow \category{Grp}$
be the presheaf of groups pro-represented by $\O$. Let 
$\bm{T\!\mP}_{{\O}}:\mathring{\mathit{ho}\category{ccdgC}}(\fieldk)\rightsquigarrow \category{Lie}(\Bbbk)$
be the associated presheaf of Lie algebras.
Let 
\eqalign{
\grave{\bm{\mP}}_{\O}
&=\HOM_{\ccdgc}(-,\O):\mathring{\mathit{ho}\category{ccdgC}}(\Bbbk)\rightsquigarrow \category{Set}
,\cr
\grave{\category{T}\!\bm{\mP}}_\O
&=\THOM_{\ccdgc}(-,\O): \mathring{\mathit{ho}\category{ccdgC}}(\Bbbk)\rightsquigarrow \category{Set}
,
}
be the underlying presheaves.
The remaining part of this subsection is devoted to the proof of the following.

Consider a ccdg-coalgebra  $C=\big(C, \ep_C,\cp_C, \rd_C\big)$.
Denote by  $\cp^{(n)}_C:C\rightarrow C^{\otimes n}$, $n\geq 1$, is the $n$-fold 
iterated coproduct generated by the coproduct $\cp_C:C \rightarrow C\otimes C$,
where $\cp^{(1)}_C=\I_C$ and $\cp^{(n+1)}_C 
= (\cp^{(n)}_C\otimes \I_C)\circ \cp_C \equiv (\I^{n-1}_C\otimes \cp_C)\circ \cp^{(n)}_C$.

\begin{theorem}\label{dgquillen}
For every complete ccdg-Hopf algebra  ${\O}$ we have  natural isomorphism 
$(\bm{\exp}, \bm{\ln})$ of  the presheaves
$
\xymatrixcolsep{3pc}
\xymatrix{ \grave{\bm{T\mP}}_{{\O}}\ar@/^0.3pc/@{=>}[r]^{\bm{\exp}}  
& \ar@/^0.3pc/@{=>}[l]^{\bm{\ln}} \grave{\bm{\mP}}_{{\O}}}
:\mathring{\mathit{ho}\category{ccdgC}}(\Bbbk)\rightsquigarrow \category{Set}
$,
whose component
$(\bm{\exp}^C, \bm{\ln}^C)$
at each ccdg-coalgebra $C$ are defined as follows:
 $\forall [g]\in  \HOM_{\hccdgc}\big(C,{\O}\big)$ 
and $\forall [\ups]\in  \THOM_{\hccdgc}\big(C,{\O}\big)$,
\eqalign{
\bm{\exp}^{C}\big([\ups]\big) 
&:= [\ep_C\circ u_{{\O}}]
+\sum_{n=1}^\infty\Fr{1}{n!}
\left[m^{(n)}_{{\O}}\circ \big({\ups}\hat{\otimes} \ldots \hat{\otimes}{\ups}\big)\circ \cp^{(n)}_C\right]
,\cr
\bm{\ln}^{C}\big([g]\big) &:=- \sum_{n=1}^\infty\Fr{(-1)^n}{n}\left[m^{(n)}_{{\O}}\circ 
\big(\bar g \hat{\otimes} \ldots \hat{\otimes} \bar g \big)\circ \cp^{(n)}_C\right]
,
}
where $g\in  \HOM_{\ccdgc}\big(C,{\O}\big)$ and ${\ups}\in  \THOM_{\ccdgc}\big(C,{\O}\big)$ 
are arbitrary representatives of the homotopy types
$[g]$ and $[{\ups}]$, respectively; and $\bar g :=g-  u_\O\circ \ep_C$.
\end{theorem}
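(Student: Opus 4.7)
My strategy is to reduce the theorem to the standard theory of the exponential and logarithm in a complete filtered associative algebra, applied to the completion of the convolution dg-algebra $\bm{\CE}_\O(C)$ of Lemma \ref{presheafone} with respect to the filtration induced from the augmentation ideal $\mI = \Ker \ep_\O$ (cf.\ \eqref{jfilter}). I would first construct strict set-level bijections between $\THOM_\ccdgc(C, \O)$ and $\HOM_\ccdgc(C, \O)$ via the same formulas with the brackets removed, verify naturality in $C$, and finally descend to the homotopy category.

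\textbf{Convergence and strict identification.} For $\ups \in \THOM_\ccdgc(C, \O)$ the condition $\ep_\O \circ \ups = 0$ forces $\ups(C) \subset \mI$, so $m^{(n)}_\O \circ \ups^{\hat\otimes n} \circ \cp^{(n)}_C$ takes values in $\mI^n$ and the series defining $\bm{\exp}^C(\ups)$ converges in $\Hom(C, \O)$; dually, $\bar g := g - u_\O \circ \ep_C$ lies in $\Hom(C, \mI)$ because $\ep_\O \circ g = \ep_C$. The chain-map and counit axioms for $\bm{\exp}^C(\ups)$ are routine, while the key identity
\[
\cp_\O \circ \bm{\exp}^C(\ups) = \big(\bm{\exp}^C(\ups) \hat\otimes \bm{\exp}^C(\ups)\big) \circ \cp_C
\]
is the familiar ``primitive exponentiates to group-like'' fact and follows from the primitivity condition $\cp_\O \circ \ups = (e_{C,\O} \otimes \ups + \ups \otimes e_{C,\O}) \circ \cp_C$ combined with identity \eqref{hsqa} and a binomial expansion in the (super)commutative convolution algebra on $\Hom(C, \O \hat\otimes \O)$. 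The dual verification gives $\bm{\ln}^C(g) \in \THOM_\ccdgc(C, \O)$, while $\bm{\exp}^C \circ \bm{\ln}^C = \mathrm{id}$ and $\bm{\ln}^C \circ \bm{\exp}^C = \mathrm{id}$ are the standard formal power-series identities in a complete filtered associative algebra. Naturality in $C$ along $f : C \to C'$ is immediate from $\cp^{(n)}_{C'} \circ f = f^{\hat\otimes n} \circ \cp^{(n)}_C$ and $\overline{g' \circ f} = \bar{g'} \circ f$.

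\textbf{Descent to homotopy types and main obstacle.} Given a homotopy pair $(\ups(t), \s(t))$ on $\THOM_\ccdgc(C, \O)$, my candidate pair on $\HOM_\ccdgc(C, \O)$ is $(g(t), \chi(t))$ with $g(t) := \bm{\exp}^C(\ups(t))$ and
\[
\chi(t) := \sum_{n \geq 1} \frac{1}{n!} \sum_{k=0}^{n-1} m^{(n)}_\O \circ \big(\ups(t)^{\hat\otimes k} \hat\otimes \s(t) \hat\otimes \ups(t)^{\hat\otimes (n-1-k)}\big) \circ \cp^{(n)}_C,
\]
obtained by formally differentiating $\bm{\exp}$. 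That $(g(t), \chi(t))$ satisfies the infinitesimal coalgebra axioms preceding Lemma \ref{cohprt} should follow from \eqref{tancchopa} and the fact that $\rd_\O$ is a derivation of $m_\O$ by direct bookkeeping. The principal difficulty is that the excerpt's homotopy-pair formalism insists on polynomial dependence on $t$, whereas these formulas produce formal series. I plan to circumvent this by working in the pro-system $\{\O/\mI^{N+1}\}_N$: the $n$-th term of the series above lies in $\mI^n$, so modulo $\mI^{N+1}$ only finitely many terms survive and one obtains an honest polynomial-time homotopy pair in $\HOM_\ccdgc(C, \O/\mI^{N+1})$; compatibility under the reduction maps then yields a coherent homotopy between $\bm{\exp}^C(\ups(0))$ and $\bm{\exp}^C(\ups(1))$ in $\HOM_\ccdgc(C, \O)$. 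The dual argument for $\bm{\ln}^C$ is analogous, completing the natural isomorphism $(\bm{\exp}, \bm{\ln})$.
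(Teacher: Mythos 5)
Your proposal follows essentially the same route as the paper's proof (Lemmas \ref{gusl}--\ref{quslz} and the subsequent descent): convergence of the series via the augmentation filtration $\mI^n$, a strict set-level bijection $(\exp^C,\ln^C)$ with the primitive-to-group-like verification carried out by a binomial expansion in an auxiliary convolution algebra valued in $\O\hat{\otimes}\O$ together with the identity \eq{hsqa}, naturality in $C$ from $\cp^{(n)}_{C^\pr}\circ f=f^{\hat{\otimes}n}\circ\cp^{(n)}_C$, and descent to homotopy types by formally differentiating the flow; your candidate $\chi(t)$ is exactly the paper's $\l(t)=\sum_{n\geq1}\sum_{j=1}^{n}\fr{1}{n!}\,\ups(t)^{\star j-1}\star\s(t)\star\ups(t)^{\star n-j}$. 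The one place you genuinely diverge is the descent step. The paper simply writes down $\big(\exp^C(\ups(t)),\l(t)\big)$ and declares the verification trivial, whereas you correctly observe that when $\ups(t)$ is a nonconstant polynomial in $t$ these series have unbounded $t$-degree, so they are formal power series in $t$ and do not literally satisfy the polynomial-dependence clause in the paper's definition of a homotopy pair; your fix via the pro-system $\{\O/\mI^{N+1}\}_N$, where only finitely many terms survive and the pair is honestly polynomial, is more careful than the paper on this point. What your sketch still owes is the final assembly: compatible homotopy pairs over every $\O/\mI^{N+1}$ give a flow over $\O=\varprojlim\O/\mI^{N+1}$ whose $t$-dependence is only a filtration-convergent $t$-series, so you must either enlarge the definition of homotopy pair to admit such series (the flow identities then hold coefficientwise in $t$), or argue directly that the limit homotopy $\int^1_0\l(t)\,\mathit{dt}$ together with the limiting flow equations suffices for the equivalence relation used in $\HOM_{\hccdgc}(C,\O)$; one sentence to that effect would close the argument.
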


\begin{remark}
The above theorem implies that the presheaf of groups $\bm{\mP}_{\O}$ 
can be recovered from the presheaf of Lie algebras $\bm{T\!\mP}_{\O}$ 
using the Baker-Campbell-Hausdorff formula.
If  $\mI/\mI^2$ is finite dimensional we can dualize $\bm{\mP}_{\O}$
 to obtain a pro-unipotent affine group dg-scheme, which is a subject of the sequel to this paper.
\end{remark}

We divide the proof into pieces.  
\begin{lemma}\label{gusl} 
Let $C$ be a ccdg-coalgebra.  Then we have
\begin{enumerate}[label=({\alph*})]
\item
$\a_1\star_{C\!,\O}\cdots\star_{C\!,\O} \a_n 
= m^{(n)}_\O\circ(\a_1\hat{\otimes}\ldots\hat{\otimes} \a_n)\circ \cp^{(n)}_C$,
$\forall \a_1,\ldots,\a_n \in \Hom(C,\O)$ and $n\geq 1$;
\item 
$\cp^{(n)}_{C{\otimes} C}\circ \cp_C =\overbrace{(\cp_C{\otimes} \ldots {\otimes} \cp_C)}^n \circ \cp^{(n)}_{C}$
 for all $n\geq 1$.
\end{enumerate}

\end{lemma}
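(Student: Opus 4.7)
Both parts are routine inductions on $n$, so the plan is to set them up cleanly and identify the one place where cocommutativity must be invoked.

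For part (a), I would induct on $n$ with base case $n=1$ immediate, since $m^{(1)}_{\O}=\I_\O$ and $\cp^{(1)}_{C}=\I_C$. For the inductive step, I would use associativity of the convolution product (a general fact recorded in Lemma~\ref{presheafone}, coming from associativity of $m_\O$ and coassociativity of $\cp_C$) to write
\[
\a_1\star_{C\!,\O}\cdots\star_{C\!,\O}\a_{n+1}
=(\a_1\star_{C\!,\O}\cdots\star_{C\!,\O}\a_n)\star_{C\!,\O}\a_{n+1},
\]
then apply the induction hypothesis to the first factor, unfold the outer $\star_{C\!,\O}$ by its definition, and finally collapse using $m^{(n+1)}_\O=m_\O\circ(m^{(n)}_\O\otimes \I_\O)$ together with $\cp^{(n+1)}_C=(\cp^{(n)}_C\otimes \I_C)\circ\cp_C$. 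This is a bookkeeping exercise with no obstruction.

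For part (b), again I would induct on $n$. The base case $n=1$ is trivial. The crucial input is the $n=2$ identity
\[
\cp_{C\otimes C}\circ\cp_C=(\cp_C\otimes\cp_C)\circ\cp_C,
\]
which I would verify by unwinding $\cp_{C\otimes C}=(\I_C\otimes\t\otimes\I_C)\circ(\cp_C\otimes\cp_C)$ and then using cocommutativity of $\cp_C$ to absorb the braiding $\t$ applied to the middle two factors (the Koszul sign generated by $\t$ is exactly the one produced by swapping two tensorands of $\cp^{(2)}_C$). Once this base case is in hand, the inductive step is formal: starting from $\cp^{(n+1)}_{C\otimes C}=(\cp^{(n)}_{C\otimes C}\otimes\I_{C\otimes C})\circ\cp_{C\otimes C}$, I would precompose with $\cp_C$, rewrite $\cp_{C\otimes C}\circ\cp_C$ using the base case, then apply the induction hypothesis to $\cp^{(n)}_{C\otimes C}\circ\cp_C$, and finally fold the result back into $(\cp_C\otimes\cdots\otimes\cp_C)\circ\cp^{(n+1)}_C$ using the recursive definition of $\cp^{(n+1)}_C$.

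The only non-bureaucratic step is the $n=2$ base case of part (b), where cocommutativity must be used to cancel the braiding $\t$ hidden inside the definition of $\cp_{C\otimes C}$; this is also where Koszul signs have to be tracked carefully. Everything else reduces to matching parenthesizations via associativity/coassociativity, and in particular neither part requires any hypothesis on $\O$ beyond what is already used to make the convolution products make sense.
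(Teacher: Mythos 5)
Your proposal is correct and follows essentially the same route as the paper's own (much terser) proof: part (a) by induction from the definition of the convolution product and the recursions for $m^{(n)}_\O$ and $\cp^{(n)}_C$, and part (b) by isolating the $n=2$ case, where cocommutativity of $\cp_C$ absorbs the braiding $\t$ inside $\cp_{C\otimes C}$, followed by a formal induction. Your identification of that $n=2$ step as the only place where cocommutativity enters matches exactly what the paper emphasizes.
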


\begin{proof}
The property
(a)  is trivial for $n=1$, is the definition of the convolution product $\star_{C\!,\O}$ for $n=2$ 
and the rest can be checked easily using an induction.
The property
(b) is trivial for $n=1$, since $\cp^{(1)}_{C{\otimes} C}:=\I_{C\otimes C}$ and $\cp^{(1)}_{C}:=\I_{C}$.
For $n=2$, from 
$\cp^{(2)}_{C\otimes C} := \cp_{C\otimes C}=(\I_C\otimes \t \otimes \I_C)\circ (\cp_C\otimes \cp_C)$ 
and $\cp^{(2)}_C =\cp_C$
it becomes the identity
$(\I_C\otimes \t \otimes \I_C)\circ (\cp_C\otimes \cp_C)\circ \cp_C  =(\cp_C\otimes \cp_C)\circ \cp_C$,
which is valid due to the \emph{cocommutativity} of $\cp_C$.
The rest can be checked easily by an induction.
\qed
\end{proof}

\begin{lemma}\label{gusla} 
We have $(\b_1\star_{C\!,\O}\ldots\star_{C\!,\O} \b_n)(c) \in \mI^n$, $n\geq 1$,   for every $c \in C$ 
whenever $\b_i \in \Hom(C,\O)$ has  the property $\ep_\O\circ \b_i=0$ for all $i=1,\ldots, n$.
\end{lemma}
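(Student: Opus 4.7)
The plan is to combine Lemma \ref{gusl}(a) with the definition of the filtration $\mI^n = m^{(n)}_\O(\mI\otimes \cdots \otimes \mI)$. By Lemma \ref{gusl}(a), the iterated convolution can be rewritten as
\[
\b_1\star_{C\!,\O}\cdots\star_{C\!,\O} \b_n = m^{(n)}_\O\circ(\b_1\otimes \ldots\otimes \b_n)\circ \cp^{(n)}_C,
\]
so evaluating on $c \in C$ and using Sweedler-type notation $\cp^{(n)}_C(c) = \sum c_{(1)}\otimes\cdots\otimes c_{(n)}$ reduces the claim to showing that each summand $m^{(n)}_\O(\b_1(c_{(1)})\otimes\cdots\otimes\b_n(c_{(n)}))$ lies in $\mI^n$.

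Next I would observe that the hypothesis $\ep_\O\circ \b_i = 0$ means precisely that $\b_i(C)\subset \Ker \ep_\O = \mI$ for every $i$. Therefore $\b_i(c_{(i)}) \in \mI$ for each $i$, and consequently
\[
(\b_1\otimes\cdots\otimes\b_n)\bigl(\cp^{(n)}_C(c)\bigr) \in \mI\otimes\cdots\otimes\mI.
\]
Applying $m^{(n)}_\O$ then lands in $m^{(n)}_\O(\mI\otimes\cdots\otimes\mI) = \mI^n$ by the very definition of $\mI^n$ given in the paragraph introducing the filtration \eqref{jfilter}. Summing over the Sweedler decomposition yields the claim.

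There is essentially no obstacle: the lemma is a direct bookkeeping statement that unwinds once the convolution is expanded by Lemma \ref{gusl}(a) and the augmentation-kernel hypothesis is fed in. The only minor point worth noting is that the statement should be read termwise in the Sweedler sum (each term already lies in $\mI^n$), but since $\mI^n$ is a linear subspace the sum itself remains in $\mI^n$. No homotopy-theoretic or coalgebra-structure argument beyond $\cp^{(n)}_C$ being coassociative and the $\ep_\O \circ \b_i = 0$ hypothesis is required.
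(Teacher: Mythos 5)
Your proof is correct and is essentially the paper's own argument: expand the iterated convolution via Lemma \ref{gusl}(a) into the Sweedler sum $\sum m^{(n)}_\O(\b_1(c_{(1)})\otimes\cdots\otimes\b_n(c_{(n)}))$, note that $\ep_\O\circ\b_i=0$ forces each $\b_i$ to land in $\mI=\Ker\ep_\O$, and conclude termwise from the definition $\mI^n=m^{(n)}_\O(\mI\otimes\cdots\otimes\mI)$. If anything, your write-up is slightly cleaner, since the paper's preliminary computation of $\ep_\O\circ(\b_1\star\cdots\star\b_n)=0$ is not actually needed for the $\mI^n$ membership.
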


\begin{proof}
From   $\ep_\O\circ  m^{(n)}_\O = m^{(n)}_\O \circ (\ep_\O\hat{\otimes}\ldots \hat{\otimes} \ep_\O)$, we obtain that
$\ep_\O\circ\big(\b_1\star_{C\!,\O}\ldots\star_{C\!,\O} \b_n\big) 
= m^{(n)}_\O\circ \Big( \ep_\O\circ \b_1\hat{\otimes}\ldots\hat{\otimes} \ep_\O\circ\b_n\Big)\circ \cp^{(n)}_C=0$.
It follows that $(\b_1\star\ldots\star \b_n)(c) \in \mI^n$ for all $c \in C$ 
since  
$(\b_1\star\ldots\star \b_n)(c)=\sum_{(c)} m^{(n)}_B\Big(\b_1(c_{(1)})\hat{\otimes}\ldots\hat{\otimes} \b_n(c_{(n)})\Big)$.
\qed
\end{proof}

\begin{lemma}\label{quslx}
Let $\O$ be a complete ccdg-Hopf algebra.  For any ccdg-coalgebra $C$, we have 
an isomorphism 
$\xymatrix{\THOM_{\ccdgc}(C, \O)\ar@/^/[r]^-{\exp^C}&\ar@/^/[l]^-{\ln^C} \HOM_{\ccdgc}(C, \O)}$, where
\begin{itemize}

\item for all $ \ups \in \THOM_{\ccdgc}(C, \O)$, we have
$$
\exp^C(\ups)
:=u_\O\circ \ep_C 
+ \sum_{n=1}^\infty\Fr{1}{n!}m^{(n)}_{{\O}}\circ \big({\ups}\hat{\otimes} \ldots \hat{\otimes}{\ups}\big)\circ \cp^{(n)}_C
\in \HOM_{\ccdgc}(C, \O)
$$

\item for all $\forall g \in \HOM_{\ccdgc}(C, \O)$, we have
$$
\ln^C(g)
:=
- \sum_{n=1}^\infty\Fr{(-1)^n}{n}
m^{(n)}_{{\O}}\circ \big(\bar g \hat{\otimes} \ldots \hat{\otimes} \bar g \big)\circ \cp^{(n)}_C
\in \THOM_{\ccdgc}(C, \O),
$$

\end{itemize}
such that $\exp^C(\ups)\sim \exp^C(\tilde\ups) \in \HOM_{\ccdgc}(C, \O)$
whenever $\ups\sim \tilde\ups  \in \THOM_{\ccdgc}(C, \O)$, and  
$\ln^C(g)\sim \ln^C(\tilde g) \in \THOM_{\ccdgc}(C, \O)$
whenever $g\sim \tilde g  \in \HOM_{\ccdgc}(C, \O)$.

\end{lemma}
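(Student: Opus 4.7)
The plan is to work in the complete convolution dg-algebra $\bm{\CE}_{\!\O}(C)$ of Lemma \ref{presheafone}, whose underlying space $\Hom(C,\O)$ inherits the decreasing filtration $F^n := \{f\in\Hom(C,\O) : f(C)\subset \mI^n\}$ from the $\mI$-adic filtration of $\O$. Since $\ep_\O\circ \ups = 0$ and $\ep_\O\circ \bar g = \ep_\O\circ g - \ep_C = 0$, Lemma \ref{gusla} shows that $\ups^{\star n}$ and $\bar g^{\star n}$ lie in $F^n$, so by completeness of $\O$ the two series defining $\exp^C(\ups)$ and $\ln^C(g)$ converge in $\Hom(C,\O)$.

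Next I would verify that $\exp^C(\ups)\in\HOM_{\ccdgc}(C,\O)$. The chain-map and counit conditions are immediate: $\rd_{C\!,\O}$ is a derivation of $\star_{C\!,\O}$ and annihilates $\ups$, and $\ep_\O$ is multiplicative with respect to $\star_{C\!,\O}$ and annihilates $\ups$. The nontrivial condition is the coalgebra-morphism identity $\cp_\O\circ\exp^C(\ups) = (\exp^C(\ups)\otimes\exp^C(\ups))\circ\cp_C$. The key observation is that the bialgebra identity $\cp_\O\circ m_\O = m_{\O\otimes \O}\circ(\cp_\O\otimes\cp_\O)$ translates into the statement that $\a\mapsto \cp_\O\circ\a$ is a homomorphism of convolution algebras from $(\Hom(C,\O),\star_{C\!,\O})$ to $(\Hom(C,\O\otimes\O),\star_{C\!,\O\otimes\O})$. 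Consequently
\[
\cp_\O\circ \exp^C(\ups) = \exp^{C,\O\otimes\O}(\cp_\O\circ\ups) = \exp^{C,\O\otimes\O}\bigl((e_{C\!,\O}\otimes\ups)\circ\cp_C + (\ups\otimes e_{C\!,\O})\circ\cp_C\bigr),
\]
where $e_{C\!,\O} := u_\O\circ\ep_C$. Using cocommutativity of $\cp_C$, one checks that the two summands commute under $\star_{C\!,\O\otimes\O}$ (both mixed products equal $(\ups\otimes\ups)\circ\cp_C$), so the exponential splits as a $\star_{C\!,\O\otimes\O}$-product, and a direct computation identifies the two factors as $(e_{C\!,\O}\otimes\exp^C(\ups))\circ\cp_C$ and $(\exp^C(\ups)\otimes e_{C\!,\O})\circ\cp_C$, whose $\star_{C\!,\O\otimes\O}$-product is precisely $(\exp^C(\ups)\otimes\exp^C(\ups))\circ\cp_C$.

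The proof that $\ln^C(g)\in\THOM_{\ccdgc}(C,\O)$ runs dually: applying the same algebra homomorphism $\cp_\O\circ(-)$ yields $\cp_\O\circ\ln^C(g) = \ln^{C,\O\otimes\O}(\cp_\O\circ g)$, and $\cp_\O\circ g = (g\otimes g)\circ\cp_C$ factors as $((g\otimes e_{C\!,\O})\circ\cp_C)\star_{C\!,\O\otimes\O}((e_{C\!,\O}\otimes g)\circ\cp_C)$; the two factors commute, so the log of their product equals the sum of the logs, which is exactly the primitive condition for $\ln^C(g)$. That $\exp^C$ and $\ln^C$ are mutual inverses is then the purely formal identity $\exp(\ln(e_{C\!,\O}+x))=e_{C\!,\O}+x$ and $\ln(\exp(y))=y$, valid in any complete filtered associative algebra, applied to $x=\bar g$ and $y=\ups$.

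For the homotopy compatibility, given a homotopy pair $(\ups(t),\s(t))$ on $\THOM_{\ccdgc}(C,\O)$ I would set
\[
\chi(t) := \sum_{n=1}^\infty \frac{1}{n!}\sum_{k=0}^{n-1}\ups(t)^{\star k}\star_{C\!,\O}\s(t)\star_{C\!,\O}\ups(t)^{\star(n-1-k)},
\]
which converges in $F^1$ because $\ep_\O\circ\s(t)=0$. The flow equation $\tfrac{d}{dt}\exp^C(\ups(t)) = \rd_{C\!,\O}\chi(t)$ follows by termwise differentiation together with $\rd_{C\!,\O}\ups(t)=0$ (itself deduced from $\tfrac{d}{dt}\ups(t)=\rd_{C\!,\O}\s(t)$ and $\ups(0)\in\THOM_{\ccdgc}(C,\O)$). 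The infinitesimal-coalgebra condition on $\chi(t)$ is checked by the same $\cp_\O$-is-an-algebra-map manipulation as above, replacing $\ups$ by $\s(t)$ in the relevant slot. A symmetric construction produces the homotopy pair transporting $\ln^C(g(t))$ from a given homotopy pair on $\HOM_{\ccdgc}(C,\O)$. The main obstacle throughout is establishing the commutativity of $(e_{C\!,\O}\otimes-)\circ\cp_C$ with $(-\otimes e_{C\!,\O})\circ\cp_C$ under $\star_{C\!,\O\otimes\O}$, which crucially depends on the cocommutativity of $\cp_C$; without it neither the binomial splitting of the exponential nor the sum-of-logs identity used for the logarithm would be available, and the correspondence would collapse.
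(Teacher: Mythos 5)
Your proposal is correct, and at bottom it runs on the same engine as the paper's proof: everything reduces to the fact that the two ``one-sided'' elements built from $\ups$ (resp.\ $\bar g$) commute in a suitable convolution algebra, which is exactly where cocommutativity of $\cp_C$ enters, and your homotopy $\chi(t)$ is literally the paper's $\l(t)$ after reindexing. The packaging differs slightly: the paper introduces the auxiliary convolution algebra $\Hom(C\otimes C,\O\hat\otimes\O)$ with the product $\cstar$, proves $\cp^{(n)}_{C\otimes C}\circ\cp_C=(\cp_C\otimes\cdots\otimes\cp_C)\circ\cp^{(n)}_C$ (Lemma \ref{gusl}(b), again cocommutativity), and then verifies the binomial identity $\cp_\O\circ\ups^{\star n}=\sum_k\binom{n}{k}\big(\ups^{\star n-k}\hat\otimes\ups^{\star k}\big)\circ\cp_C$ term by term, whereas you observe once and for all that $\a\mapsto\cp_\O\circ\a$ is a homomorphism of convolution algebras into $\Hom(C,\O\hat\otimes\O)$ and then invoke the formal identities $\exp(A+B)=\exp(A)\star\exp(B)$ and $\ln(AB)=\ln A+\ln B$ for commuting elements. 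Your version is marginally more conceptual and avoids writing the binomial sum explicitly, at the cost of having to verify that $\big((\a\otimes e)\circ\cp_C\big)\star\big((e\otimes\b)\circ\cp_C\big)=(\a\otimes\b)\circ\cp_C$ in both orders and that $\big((\ups\otimes e)\circ\cp_C\big)^{\star n}=(\ups^{\star n}\otimes e)\circ\cp_C$ --- computations of the same weight as the ones the paper does. One small caution: for the series $\exp^{C}$ and $\ln^{C}$ taken in $\Hom(C,\O\otimes\O)$ to converge you should work with the completed tensor product $\O\hat\otimes\O$ and its filtration $F^n(\O\hat\otimes\O)=\bigoplus_{i+j=n}F^i\hat\otimes F^j$, as the paper does; with that understood, every step of your argument goes through.
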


\begin{proof}
We use some shorthand notations. 
We set $e =u_\O\circ \ep_C$.
We also set $\star =\star_{C\!,\O}$, $\a^{\star 0}=e$ and
$\a^{\star n} = \overbrace{\a\star\ldots\star \a}^n$, $n\geq 1$, for all $\a \in \Hom(C,\O)_0$.
Then, by Lemma \ref{gusl}(a),  we have
\eqnalign{sexpln}{
\exp^C(\ups)=& e + \sum_{n=1}^\infty\Fr{1}{n!}\ups^{\star n},
\qquad
\ln^C(g)=- \sum_{n=1}^\infty\Fr{(-1)^n}{n} \bar g^{\star n}.
}
Remind that $e \in  \HOM_{\ccdgc}(C, \O)$, i.e.,
$\rd_{C\!,\O}e=0$,  $\ep_\O\circ e =\ep_C$ and
$\cp_\O\circ e = (e\hat{\otimes} e\big)\circ \cp_C$.

1. We have to justify that the infinite sums in the definition of $\exp^C$ and $\ln^C$ make sense. 
Note the $\ep_\O\circ \ups=0$ by definition. We also have $\ep_\O\circ\bar g=0$ 
since   $\ep_\O\circ g =\ep_\O\circ e=\ep_C$ and $\bar g=g-e$.
By Lemma \ref{gusl}, we have $\ups^{\star n}(c), \bar g^{\star n}(c) \in \mI^n$ for all $c\in C$ and $n\geq 1$.
Therefore both $\exp^C(\ups)$ and $\ln^C(g)$ are well-defined.

2.  We check that $\exp^C(\ups)\in  \HOM_{\ccdgc}(C, \O)$ for every $\ups \in \THOM_{\ccdgc}(C, \O)$:
\eqalign{
\rd_{C\!,\O}\exp^C (\ups)=0
,\quad
\ep_\O\circ \exp^C(\ups)= \ep_C
,\quad
\cp_\O\circ\exp^C(\ups) =\big(\exp^C(\ups)\hat{\otimes} \exp^C(\ups)\big)\circ \cp_C.
}
The $1$st relation is  trivial since $\rd_{C\!,\O}$ is a derivation of $\star$ and $\rd_{C\!,\O}e=\rd_{C\!,\O}\ups=0$.
The $2$nd relation is also  trivial since $\ep_\O\circ e = \ep_C$ 
and $\ep_\O\circ \ups^{\star n}= (\ep_\O\circ \ups)^{\star n}=0$ for all $n\geq 1$.
It remains to check the $3$rd relation, which is equivalent to the following relations, $\forall n\geq 0$,
\eqn{checkthis}{
\cp_\O\circ\ups^{\star n}= \sum_{k=0}^n \Fr{n!}{(n-k)!k!}  \big(\ups^{\star n-k}\hat{\otimes} \ups^{\star k}\big)\circ \cp_C.
}
For $n=0$ the above becomes $\cp_\O\circ e = (e\hat{\otimes} e\big)\circ \cp_C$, which is true.

For cases with $n \geq 1$, we adopt a new notation.
Recall that $\big(\O\hat{\otimes} \O, u_{\O\hat{\otimes} \O}, m_{\O\hat{\otimes} \O}\big)$ 
is a $\Z$-graded associative algebra
and  $\big(C{\otimes} C, \ep_{C{\otimes} C}, \cp_{C\otimes C}\big)$ 
is a $\Z$-graded coassociative coalgebra.
Therefore we have  a $\Z$-graded associative algebra 
$\big( \Hom(C\otimes C, \O\hat{\otimes} \O), e\hat{\otimes} e,  \hbox{\scriptsize\ding{75}} \big)$,
where $\chi_1{\cstar}\chi_2 := m_{\O\hat{\otimes} \O}\circ (\chi_1\hat{\otimes} \chi_2)\circ \cp_{C\otimes C}$,  
$\forall \chi_1,\chi_2\in \Hom(C\otimes C, \O\hat{\otimes} \O)$.
We also have, for all $n\geq 1$ and $\chi_1,\ldots,\chi_n\in \Hom(C\otimes C, \O\hat{\otimes} \O)$,
\eqn{quslc}{
\chi_1{\cstar}\ldots \hbox{\scriptsize\ding{75}}\chi_n 
=  m^{(n)}_{\O\hat{\otimes} \O}\circ (\chi_1\hat{\otimes}\ldots\hat{\otimes} \chi_n)\circ \cp^{(n)}_{C\otimes C}.
}
For example, consider $\a_1,\a_2,\b_1,\b_2 \in \Hom(C,\O)$ 
so that $\a_1\hat{\otimes} \a_2, \b_1\hat{\otimes} \b_2\in \Hom(C\otimes C, \O\hat{\otimes} \O)$.
Then we have 
$(\a_1\hat{\otimes} \b_1){\cstar}(\a_2\hat{\otimes} \b_2) =(-1)^{|\b_1||\a_2|} \a_1\star\a_2 \hat{\otimes} \b_1\star \b_2$.

It follows that 
$(\ups\hat{\otimes} e) {\cstar}(e\hat{\otimes} \ups) =(e\hat{\otimes} \ups){\cstar}(\ups\hat{\otimes} e)$ 
since the both terms are $\ups\hat{\otimes} \ups$.
We also have $(\ups\hat{\otimes} e)^{{\cstar}n} = \ups^{\star n}\hat{\otimes} e$ 
and $(e\hat{\otimes} \ups)^{{\cstar}n} = e\hat{\otimes} \ups^{\star n}$.
Combined with the binomial identity, we obtain that, $\forall n\geq 1$,
$$
(\ups\hat{\otimes} e + e\hat{\otimes} \ups)^{{\cstar}n} 
=\sum_{k=0}^n \Fr{n!}{(n-k)!k!} \big(\ups^{\star n-k}\hat{\otimes} \ups^{\star k}\big)
.
$$
Therefore the RHS of \eq{checkthis} becomes
\eqalign{
\mathit{RHS}=
&
(\ups\hat{\otimes} e + e\hat{\otimes} \ups)^{{\cstar}n}  \circ \cp_C
= 
m^{(n)}_{\O\hat{\otimes} \O}\circ \big(\ups\hat{\otimes} e + e\hat{\otimes} \ups\big)^{\hat{\otimes}  n}
\circ \cp^{(n)}_{C\otimes C}\circ \cp_C
\cr
=
&m^{(n)}_{\O\hat{\otimes} \O}\circ \big(\ups\hat{\otimes} e + e\hat{\otimes} \ups\big)^{\hat{\otimes}  n}
\circ (\cp_C\otimes \ldots \otimes \cp_C) \circ \cp^{(n)}_{C},
}
where we  use Lemma \ref{gusl}(b) for the last equality.
Consider the LHS of \eq{checkthis}:
\eqalign{
\mathit{LHS}=
&
\cp_\O\circ \ups^{\star n}
=
\cp_\O\circ m^{(n)}_{\O}\circ (\ups\hat{\otimes}\ldots\hat{\otimes} \ups)\circ \cp^{(n)}_{C}
=
m^{(n)}_{\O\hat{\otimes} \O}\circ (\cp_\O\circ\ups \hat{\otimes} \ldots \hat{\otimes} \cp_\O\circ \ups)\circ\cp^{(n)}_{C}
\cr
=
&
m^{(n)}_{\O\hat{\otimes} \O}\circ \big(\ups\hat{\otimes} e + e\hat{\otimes} \ups\big)^{\hat{\otimes} n}
\circ (\cp_\O\hat{\otimes} \ldots \hat{\otimes} \cp_\O)\circ \cp^{(n)}_{C}
,
}
where we use \eq{hsqa} for the $3$rd equality and
the property $\cp_\O\circ \ups =(\ups\hat{\otimes} e+e\hat{\otimes}\ups)\circ\cp_C$ for the last equality.
Therefore we conclude that $\cp_\O\circ\exp^C(\ups) =\big(\exp^C(\ups)\hat{\otimes} \exp^C(\ups)\big)\circ \cp_C$.

2.  We check that $\ln^C(g)\in   \THOM_{\ccdgc}(C, \O)$ for every  $g\in\HOM_{\ccdgc}(C, \O)$:
\eqalign{
\rd_{C\!,\O}\ln^C (g)=0
,\quad
\ep_\O\circ \ln^C(g)= 0
,\quad
\cp_\O\circ \ln^C (g)= \big(\ln^C(g)\hat{\otimes} e+e\hat{\otimes} \ln^C(g)\big)\circ \cp_C.
}
The $1$st relation is obvious
since $\rd_{C\!,\O}(\bar g)=\rd_{C\!,\O}g -\rd_{C\!,\O}e=0$ and $\rd_{C\!,\O}$ is a derivation of $\star$. 
The $2$nd relation is also obvious  
since $\ep_\O\circ \bar g^{\star n}=(\ep_\O\circ  \bar g)^{\star n}=0$ for all $n\geq 1$.
Therefore it remains to check the $3$rd relation.

Define 
$\ln_{\cstar}^C(\chi) =-\sum_{n=1}^\infty\Fr{(-1)^n}{n} (\chi -e\hat{\otimes} e)^{{\cstar}n}$ 
for all $\chi \in \Hom(C{\otimes} C, \O\hat{\otimes} \O)$ satisfying
$(\ep_\O\hat{\otimes} \ep_\O)\circ \chi =\ep_C\hat{\otimes} \ep_C$.  Then, we have
\eqalign{
\ln^C_{\cstar}(g\hat{\otimes} e) =-\sum_{n=1}^\infty\Fr{(-1)^n}{n} ( g\hat{\otimes} e -e\hat{\otimes} e)^{{\cstar}n}
=-\sum_{n=1}^\infty\Fr{(-1)^n}{n} (\bar g\hat{\otimes} e)^{{\cstar}n}=\ln^C (g)\hat{\otimes} e,
}
and, similarly,  $e\hat{\otimes} \ln^C(g)=\ln^C_{\cstar}( g\hat{\otimes} e)$.
Therefore, we have 
\eqalign{
\ln^C (g)\hat{\otimes} e+e\hat{\otimes} \ln^C(g)
=
& \ln_{\cstar}^C(g\hat{\otimes} e) +\ln_{\cstar}^C(e\hat{\otimes} g)
=  
\ln_{\cstar}^C \big((g \hat{\otimes} e) {\cstar}(e\hat{\otimes} g)\big)
=
\ln_{\cstar}^C(g\hat{\otimes} g).
}
On the other hand,
we have
\eqalign{
\cp_\O\circ \bar g^{\star n}
=
&\cp_\O\circ  m_\O^{(n)}\circ \bar g^{\hat{\otimes} n}\circ \cp^{(n)}_C
= 
m^{(n)}_{\O\hat{\otimes} \O}
\circ (\cp_\O\hat{\otimes} \ldots\hat{\otimes} \cp_\O)\circ  \bar g^{\hat{\otimes} n}\circ \cp^{(n)}_C
\cr
=
&
m^{(n)}_{\O\hat{\otimes} \O}\circ(g\hat{\otimes} g -e\hat{\otimes} e)^{\hat{\otimes} n} 
\circ  (\cp_C\otimes \ldots \otimes \cp_C)\circ \cp^{(n)}_{C} 
\cr
=
& 
m^{(n)}_{\O\hat{\otimes} \O}
\circ(g\hat{\otimes} g -e\hat{\otimes} e)^{\hat{\otimes} n} 
\circ \cp^{(n)}_{C\otimes C}\circ \cp_C 
\cr
=
& (g \hat{\otimes} g -e\hat{\otimes} e)^{{\cstar}n}\circ \cp_C
,
}
where we have used 
$\cp_\O\circ\bar g = \cp_\O \circ g  - \cp_\O\circ e=(g\hat{\otimes} g -e\hat{\otimes} e)\circ\cp_\O$ 
for the $3$rd equality.
Therefore, we obtain that
\eqalign{
\cp_\O\circ \ln^C (g) = 
&
- \sum_{n=1}^\infty\Fr{(-1)^n}{n} (g \hat{\otimes} g -e\hat{\otimes} e)^{{\cstar}n} \circ \cp_C  
=
\ln_{\cstar}^C(g\hat{\otimes} g) \circ \cp_C 
\cr
= 
&
\Big(\ln^C (g)\hat{\otimes} e+e\hat{\otimes} \ln^C(g)\Big)\circ \cp_C. 
}

3.  It is obvious now that 
$\ln^C \big(\exp^C(\ups)\big) =\ups$ and $\exp^C \big(\ln^C(g)\big) =g$. Hence $(\exp^C ,\ln^C)$ is
an isomorphism.

4. 
Let $\ups\sim \tilde\ups \in \THOM_{\ccdgc}(C, \O)$. 
Then we have a corresponding  homotopy pair $\big(\ups(t),\s(t)\big)$  on $\THOM_{\ccdgc}(C, \O)$
such that $\ups(0)=\ups$ and $\ups(1)=\tilde\ups$. Let
\eqalign{
g(t) :=& \exp^C \big(\ups(t)\big)
,\cr
\l(t) :=& 
\sum_{n=1}^\infty\sum_{j=1}^n\Fr{1}{n!} \ups(t)^{\star j-1}\star \s(t) \star \ups(t)^{\star n-j}
.
}
Then it is trivial to check that $\big(g(t),\l(t)\big)$ is a homotopy pair on $\HOM_{\ccdgc}(C, \O)$,
so that $\exp^C (\ups)=g(0)\sim g(1)= \exp^C (\tilde\ups) \in \HOM_{\ccdgc}(C, \O)$.

5. 
Let $g\sim \tilde g \in \HOM_{\ccdgc}(C, \O)$ 
and $\big(g(t),\l(t)\big)$  be the corresponding  homotopy pair on $\HOM_{\ccdgc}(C, \O)$
such that $g(0)=g$ and $g(1)=\tilde g$. Let
\eqalign{
v(t) :=& \ln^C \big(g(t)\big)
,\cr
\s(t) :=& - \sum_{n=1}^\infty\sum_{j=1}^n\Fr{(-1)^n}{n} \bar g(t)^{\star j-1}\star \l(t) \star \bar g(t)^{\star n-j}
.
}
Then it is trivial to check that $\big(\ups(t),\s(t)\big)$ is a homotopy pair on $\THOM_{\ccdgc}(C, \O)$,
so that $ \ln^C (g)=\ups(0) \sim \ups(1)= \ln^C (\tilde g) \in \THOM_{\ccdgc}(C, \O)$.
\qed
\end{proof}

\begin{lemma}\label{quslz}
Let $\O$ be a complete ccdg-Hopf algebra.  Then
we have a natural isomorphism
$
\xymatrixcolsep{3pc}
\xymatrix{ \grave{\bm{T\!\CP}}_{{\O}}\ar@/^/@{=>}[r]^{{\exp}}  
& \ar@/^/@{=>}[l]^{{\ln}} \grave{\bm{\CP}}_{{\O}}}
:\mathring{\category{ccdgC}}(\Bbbk)\rightsquigarrow \category{Set}
$ of presheaves on ${\category{ccdgC}}(\Bbbk)$,
whose  component at each ccdg-coalgebra $C$ is $(\exp^C, \ln^C)$
defined in Lemma \ref{quslx}.
\end{lemma}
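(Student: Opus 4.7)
The plan is to leverage Lemma \ref{quslx} which already supplies, for each ccdg-coalgebra $C$, mutually inverse bijections $\exp^C$ and $\ln^C$ between $\THOM_{\ccdgc}(C,\O)$ and $\HOM_{\ccdgc}(C,\O)$. Thus the only content remaining in Lemma \ref{quslz} is the naturality of the families $\exp = \{\exp^C\}$ and $\ln=\{\ln^C\}$ with respect to the presheaf structures of $\grave{\bm{T\!\CP}}_\O$ and $\grave{\bm{\CP}}_\O$ on $\category{ccdgC}(\Bbbk)$.

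The key auxiliary identity I will establish is that for every morphism $f:C\rightarrow C^\pr$ of ccdg-coalgebras and every $n\geq 1$,
\[
\cp^{(n)}_{C^\pr}\circ f \;=\; f^{\hat{\otimes}\, n}\circ \cp^{(n)}_{C},
\]
which follows by a trivial induction: the case $n=1$ is $\I_{C^\pr}\circ f=f\circ \I_C$, the case $n=2$ is the defining coalgebra map property $\cp_{C^\pr}\circ f=(f\otimes f)\circ \cp_C$, and the inductive step uses the recursion $\cp^{(n+1)}_{C^\pr}=(\cp^{(n)}_{C^\pr}\otimes \I_{C^\pr})\circ \cp_{C^\pr}$. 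Combined with the counit identity $\ep_{C^\pr}\circ f=\ep_C$, this is all that is needed.

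To verify naturality of $\exp$, I will show that for each $f:C\rightarrow C^\pr$ in $\category{ccdgC}(\Bbbk)$ the square
\[
\exp^{C}\circ\bm{T\!\CP}_\O(f) \;=\; \bm{\CP}_\O(f)\circ \exp^{C^\pr}
\]
commutes; unpacking, this is the assertion that $\exp^{C}(\ups^\pr\circ f)=\exp^{C^\pr}(\ups^\pr)\circ f$ for every $\ups^\pr\in \THOM_{\ccdgc}(C^\pr,\O)$. Using the series representation \eq{sexpln} from the proof of Lemma \ref{quslx} and the identity above, each $n$-th summand transforms as
\[
m^{(n)}_{\O}\circ(\ups^\pr)^{\hat{\otimes}\, n}\circ \cp^{(n)}_{C^\pr}\circ f
= m^{(n)}_{\O}\circ(\ups^\pr\circ f)^{\hat{\otimes}\, n}\circ \cp^{(n)}_{C},
\]
and the constant term transforms via $u_\O\circ \ep_{C^\pr}\circ f=u_\O\circ \ep_C$; since both sides are computed in the complete dg-Hopf algebra $\O$ they converge and agree term by term. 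Naturality of $\ln$ can be checked by an entirely parallel calculation after noting that $\overline{g^\pr\circ f}=(g^\pr-u_\O\circ \ep_{C^\pr})\circ f=\bar{g}^\pr\circ f$, or equivalently derived for free once the naturality of $\exp$ is established, since $\ln^C$ is defined object-wise as the set-theoretic inverse of $\exp^C$ by Lemma \ref{quslx}.

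There is no serious obstacle: all of the analytic subtleties (convergence of the series in the $\mI$-adic topology, the target of $\exp^C$ lying in $\HOM_{\ccdgc}(C,\O)$ and of $\ln^C$ in $\THOM_{\ccdgc}(C,\O)$, and the inversion relations $\exp^C\circ \ln^C=\I$ and $\ln^C\circ \exp^C=\I$) have been handled in Lemma \ref{quslx}. The naturality step is a direct consequence of the multiplicative behaviour of iterated coproducts under coalgebra morphisms, so the proof reduces to the identity displayed above plus a term-by-term comparison of the defining series.
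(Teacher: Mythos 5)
Your proposal is correct and follows essentially the same route as the paper: the object-wise isomorphisms come from Lemma \ref{quslx}, and naturality is verified term by term using $\cp^{(n)}_{C^\pr}\circ f = f^{\hat\otimes n}\circ\cp^{(n)}_C$ (a consequence of $f$ being a coalgebra morphism) together with $\ep_{C^\pr}\circ f=\ep_C$. Your observation that naturality of $\ln$ comes for free from naturality of $\exp$ plus the inversion relations is a valid small shortcut where the paper merely says the check is "similar."
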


\begin{proof}
We have shown that  
$\xymatrix{ \grave{\bm{T\!\CP}}_{{\O}}(C)  \ar@/^/[r]^-{\exp^C}&\ar@/^/[l]^-{\ln^C}  \grave{\bm{\CP}}_{{\O}}(C)}$ 
is an isomorphism for every ccdg-coalgebra $C$. It remains to check
the naturalness  that for every morphism $f:C \rightarrow C^\pr$ of ccdg-coalgebras
the diagrams are commutative:
\eqalign{
\xymatrixrowsep{1.5pc}
\xymatrixcolsep{3pc}
\xymatrix{
\ar[d]_-{\exp^{C^\pr}}
\grave{\bm{T\!\CP}_{\O}}(C^\pr)\ar[r]^-{\grave{\bm{T\!\CP}}_{\O}(f)} & \grave{\bm{T\!\CP}}_{\O}(C) 
\ar[d]^-{\exp^{C}}
\cr
\grave{\bm{\CP}}_{\O}(C^\pr)\ar[r]^-{\grave{\bm{\CP}}_{\O}(f)} & \grave{\bm{\CP}}_{\O}(C) 
}
,\qquad
\xymatrix{
\ar[d]_-{\ln^{C^\pr}}
\grave{\bm{\CP}}_{\O}(C^\pr) \ar[r]^-{\grave{\bm{\CP}}_{\O}(f)} & \grave{\bm{\CP}}_{\O}(C) 
\ar[d]^-{\ln^{C}}
\cr
\grave{\bm{T\!\CP}}_{\O}(C^\pr)\ar[r]^-{\grave{\bm{T\!\CP}}_{\O}(f)} & \grave{\bm{T\!\CP}}_{\O}(C) 
}.
}
That   is,  
$\grave{\bm{\CP}}_{\O}(f)\circ\exp^{C^\pr} =\exp^C\circ \grave{\bm{T\!\CP}}_{\O}(f)$ and
$\grave{\bm{T\!\CP}}_{\O}(f)\circ\ln^{C^\pr} =\ln^C\circ \grave{\bm{\CP}}_{\O}(f)$.  These are
straightforward since for every $\ups^\pr \in \grave{\bm{T\!\CP}}_{\O}(C^\pr)$ we have
\eqalign{
\grave{\bm{\CP}}_{\O}(f)\left(\exp^{C^\pr} (\ups^\pr)\right) 
&= \exp^{C^\pr}(\ups^\pr)\circ f 
=u_\O\circ \ep_{C^\pr} \circ f
+ \sum_{n=1}^\infty\Fr{1}{n!}
m^{(n)}_{{\O}}\circ \big({\ups}^\pr\hat{\otimes} \ldots \hat{\otimes}{\ups}^\pr\big)\circ \cp^{(n)}_{C^\pr}\circ f
\cr
&=u_\O\circ \ep_{C} 
+ \sum_{n=1}^\infty\Fr{1}{n!}
m^{(n)}_{{\O}}\circ \big({\ups}^\pr\circ f\hat{\otimes} \ldots \hat{\otimes}{\ups}^\pr\circ f\big)\circ \cp^{(n)}_C
=  \exp^{C}(\ups^\pr\circ f) 
\cr
&= \exp^C\left( \grave{\bm{T\!\CP}}_{\O}(f)(\ups^\pr)\right).
}
The naturalness of $\ln$ can be checked similarly.
\qed
\end{proof}

\begin{proof}[Theorem \ref{dgquillen}]
We note that
the components 
$\xymatrixcolsep{1.5pc}
\xymatrix{ \grave{\bm{T\mP}}_{{\O}}(C)\ar@/^0.5pc/[r]^{\bm{\exp}^C}  
& \ar@/^0.5pc/[l]^{\bm{\ln}^C} \grave{\bm{\mP}}_{\O}(C)}$
of  $\bm{\exp}$ and $\bm{\ln}$ at every ccdg-coalgebra $C$ 
are defined such that $\bm{\exp}^{C}\big([\ups]\big) =\left[ \exp^C(\ups) \right]$
and $\bm{\ln}^{C}\big([g]\big) =\left[ \ln^C(g) \right]$.  Due to Lemma \ref{quslx},
they are well defined, depending only on the homotopy types of $\ups$ and $g$,
isomorphisms for every ccdg-coalgebra $C$.
It remains to check the naturalness of $\bm{\exp}$ and $\bm{\ln}$
that for every $[f] \in \HOM_{\hccdgc}(C, C^\pr)$ the following diagrams commute
\eqalign{
\xymatrixrowsep{1.5pc}
\xymatrixcolsep{3pc}
\xymatrix{
\ar[d]_-{\bm{\exp}^{C^\pr}}
\grave{\bm{T\!\mP}_{\O}}(C^\pr)\ar[r]^-{\grave{\bm{T\!\mP}}_{\O}([f])} & \grave{\bm{T\!\mP}}_{\O}(C) 
\ar[d]^-{\bm{\exp}^{C}}
\cr
\grave{\bm{\mP}}_{\O}(C^\pr)\ar[r]^-{\grave{\bm{\mP}}_{\O}([f])} & \grave{\bm{\mP}}_{\O}(C) 
}
,\qquad
\xymatrix{
\ar[d]_-{\bm{\ln}^{C^\pr}}
\grave{\bm{\mP}}_{\O}(C^\pr) \ar[r]^-{\grave{\bm{\CP}}_{\O}([f])} & \grave{\bm{\mP}}_{\O}(C) 
\ar[d]^-{\bm{\ln}^{C}}
\cr
\grave{\bm{T\!\mP}}_{\O}(C^\pr)\ar[r]^-{\grave{\bm{T\!\mP}}_{\O}([f])} & \grave{\bm{T\!\mP}}_{\O}(C) 
}.
}
Here we will check the naturalness of $\bm{\exp}$ only, since the proof is similar of $\bm{\ln}$.

Let $f \in \HOM_{\ccdgc}(C, C^\pr)$ be an arbitrary representative of $[f]$. 
Consider any $[\ups^\pr] \in \THOM_{\hccdgc}(C^\pr,\O)$ and
let  $\ups^\pr \in \THOM_{\ccdgc}(C^\pr,\O)$ be  an arbitrary representative of $[\ups^\pr]$.
Then it is straightforward to check that the homotopy type 
$[\ups^\pr\circ f]$ of $\ups^\pr\circ f \in \THOM_{\ccdgc}(C,\O)$ 
depends only on $[f]$ and $[\ups^\pr]$. From Lemma \ref{quslz}, it also follow that 
 the homotopy type $\big[\exp^{C}(\ups^\pr\circ f)\big]$ of $\exp^{C}(\ups^\pr\circ f) \in \HOM_{\ccdgc}(C,\O)$
depends only on $[f]$ and $[\ups^\pr]$.  It is also obvious that the homotopy type
$\left[\exp^{C^\pr}(\ups^\pr)\circ f \right]$ of  $\exp^{C^\pr}(\ups^\pr)\circ f \in \HOM_{\ccdgc}(C,\O)$
depends only on $[f]$ and $[\ups^\pr]$. 
Combined with the identity
$\exp^{C^\pr}(\ups^\pr)\circ f= \exp^{C}(\ups^\pr\circ f)$ in the proof of  
Lemma \ref{quslz}, we have
\eqalign{
\grave{\bm{\mP}}_{\O}([f])\left(\bm{\exp}^{C^\pr} ([\ups^\pr])\right) 
&= \left[\exp^{C^\pr}(\ups^\pr)\circ f \right]
=\left[ \exp^{C}(\ups^\pr\circ f) \right]
=
\bm{\exp}^C\left( \grave{\bm{T\!\mP}}_{\O}([f])([\ups^\pr])\right).
}
Hence 
$\bm{\exp}:\grave{\bm{T\mP}}_\O \Rightarrow \grave{\bm{\mP}}_\O
: \mathring{\mathit{ho}\category{ccdgC}}(\Bbbk)\rightsquigarrow \category{Set}$
is a natural isomorphism. 
\qed
\end{proof}

\section{Linear representation of a representable presheaf   of groups}

Throughout this section we fix a  ccdg-Hopf algebra $\O=(\O,u_\O, m_\O,\ep_\O, \cp_\O,\vs_\O,\rd_\O)$.
We define a linear representation of 
the presheaf of groups 
$\bm{\mP}_{\!\!\O}:\mathring{\mathit{ho}\category{ccdgC}}(\Bbbk)\rightsquigarrow \category{Grp}$
on the homotopy category $\mathit{ho}\category{ccdgC}(\Bbbk)$ of ccdg-coalgebras
via a linear representation of
the associated presheaf of groups 
$\bm{\CP}_{\!\!\O}:\mathring{\category{ccdgC}}(\fieldk)\rightsquigarrow \category{Grp}$ 
on the category $\category{ccdgC}(\fieldk)$ of ccdg-coalgebras.
Remind that   $\bm{\CP}_{\!\!\O}$  is represented by $\O$ and induces $\bm{\mP}_\O$ 
on the homotopy category $\mathit{ho}\category{ccdgC}(\fieldk)$.

The linear representations   of $\bm{\CP}_{\!\!\O}$ form a dg-tensor category
$\gdcat{Rep}(\bm{\CP}_{\!\!\O})$, which shall be isomorphic 
to the dg-tensor category $\gdcat{dgMod}_L(\O)$ of 
left dg-modules over $\O$.  Working with the linear representations of 
$\bm{\CP}_{\!\!\O}$ instead of the linear representations of $\bm{\mP}_{\!\!\O}$ 
will be a crucial step for a Tannakian reconstruction of  $\bm{\mP}_{\!\!\O}$.

\subsection{Preliminary}

Our main concern here is a dg-tensor category formed by cofree left dg-comodules
over a  ccdg-coalgebra $C=(C,\ep_C,\cp_C,\rd_C)$.
We shall need the following basic lemma, which is  due to the defining properties  of dg-coalgebra$C$.
\begin{lemma}
\label{cbasicl} 
For every pair $(M,N)$ of chain complexes we have an exact sequence of chain complexes
\eqalign{
\xymatrixcolsep{2pc}
\xymatrixrowsep{0pc}
\xymatrix{
0\ar[r]  & \Hom\big(C\!\otimes\! M, N\big)\ar[r]^-{\check\mp} & \ar@/^1pc/@{..>}[l]^{\check\mq} 
\Hom\big(C\!\otimes\! M, C\!\otimes\! N\big)\ar[r]^-{\check\mr} &   \ar@/^1pc/@{..>}[l]^{\check\ms} 
\Hom\big(C\!\otimes\! M, C\!\otimes\! C\!\otimes\! N\big)
,
}
}
where $\forall \a_{i+1} \in \Hom\big(C\!\otimes\! M, C^{\otimes i}\otimes N\big)$, $i=0,1,2$,
\eqn{coalgext}{
\begin{aligned}
\check{\mp}(\a_1)&:=(\I_C\otimes \a_1)\circ (\cp_C\otimes \I_M)
,\cr
\check{\mq}(\a_2)&:=\imath_N\circ (\ep_C\otimes \I_N)\circ \a_2
,
\end{aligned}
\qquad
\begin{aligned}
\check\mr(\a_2)&:=(\cp_C\otimes \I_N)\circ \a_2- (\I_C\otimes \a_2)\circ (\cp_C \otimes I_M)
,\cr
\check{\ms}(\a_3)&:=\imath_N\circ (\I_C\otimes\ep_C\otimes \I_N)\circ \a_3
,
\end{aligned}
}
such that 
\eqn{coalgextx}{
\check\mr\circ\check\mp=0
,\qquad   
\check\mq\circ \check\mp =\I_{\Hom(C\otimes M,N)}
,\qquad
\check\mp\circ\check\mq+\check\ms\circ\check\mr =\I_{\Hom(C\otimes M,C\otimes N)}.
}
\end{lemma}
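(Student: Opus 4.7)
The plan is to derive exactness of the displayed sequence as a formal consequence of the three identities in~\eq{coalgextx}, each of which reduces to a short diagrammatic computation using only the coalgebra axioms for $C$. That $\check\mp$, $\check\mq$, $\check\mr$, $\check\ms$ are chain maps between the Hom-complexes is automatic, since each is built from pre- and post-composing with the chain maps $\cp_C$ and $\ep_C$ tensored with identities; I would record this at the outset and move on.

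To verify $\check\mq\circ\check\mp=\I$, I would expand the composition and slide $(\ep_C\otimes\I_N)$ through $(\I_C\otimes\a_1)$, so that the whole expression collapses to $\a_1$ by the left counit axiom $(\ep_C\otimes\I_C)\circ\cp_C=\imath_C^{-1}$. For $\check\mr\circ\check\mp=0$, I would expand the two summands of $\check\mr(\check\mp(\a_1))$ and observe that, after commuting $(\I_C\otimes\a_1)$ past the relevant coproducts, they coincide by coassociativity $(\cp_C\otimes\I_C)\circ\cp_C=(\I_C\otimes\cp_C)\circ\cp_C$.

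The splitting identity $\check\mp\circ\check\mq+\check\ms\circ\check\mr=\I$ is the most delicate step. I would unpack $\check\ms(\check\mr(\a_2))$ as a difference of two terms: contracting $\ep_C$ with the second $C$-slot of $(\cp_C\otimes\I_N)\circ\a_2$ reduces to $\a_2$ by the right counit axiom $(\I_C\otimes\ep_C)\circ\cp_C=\jmath_C^{-1}$, while contracting $\ep_C$ with the second $C$-slot of $(\I_C\otimes\a_2)\circ(\cp_C\otimes\I_M)$ gives exactly $\check\mp(\check\mq(\a_2))$. Subtracting then yields $\a_2-\check\mp(\check\mq(\a_2))$, and the identity drops out.

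Given the three identities, exactness is purely formal: $\check\mq\circ\check\mp=\I$ forces $\check\mp$ to be split injective; $\check\mr\circ\check\mp=0$ gives $\im\check\mp\subseteq\ker\check\mr$; and if $\check\mr(\a_2)=0$, then the splitting identity specialises to $\a_2=\check\mp(\check\mq(\a_2))\in\im\check\mp$, proving the reverse inclusion. The only obstacle I anticipate is the Sweedler-type bookkeeping needed to track which $C$-factor is being contracted with $\ep_C$ or further coproducted at each step---a nuisance but not a conceptual difficulty.
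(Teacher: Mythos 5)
Your proposal is correct and follows exactly the argument the paper leaves implicit (the lemma is stated as a consequence of "the defining properties of dg-coalgebra $C$" with no written proof): the three identities in \eq{coalgextx} follow from the left counit axiom, coassociativity, and the right counit axiom respectively, and exactness is then the formal consequence of the split injectivity of $\check\mp$ together with the contracting-homotopy identity $\check\mp\circ\check\mq+\check\ms\circ\check\mr=\I$. Your attribution of each identity to the correct coalgebra axiom, and your observation that $\check\ms(\check\mr(\a_2))=\a_2-\check\mp(\check\mq(\a_2))$, both check out.
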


Remind that a left dg-comodule  $(M,\r_M)$ over a ccdg-coalgebra $C$ is a chain complex 
$M=(M,\rd_M)$ together with a chain map $\r_M: M\rightarrow C\otimes M$, called a coaction, making the following
diagrams commute
\eqn{leftcomodule}{
\xymatrixcolsep{3pc}
\xymatrix{
\ar[d]_-{\r_M}
M\ar[r]^{\r_M} & C\otimes M
\ar[d]^-{\I_C\otimes \r_M}
\cr
C\otimes M \ar[r]^-{\cp_C\otimes \I_M} & C\otimes C\otimes M
},
\qquad
\xymatrix{
\ar[dr]_-{\imath^{-1}_M}
M\ar[r]^{\r_M} & C\otimes M
\ar[d]^{\ep_C\otimes \I_M}
\cr
&\Bbbk\otimes M
}.
}

For every chain complex $M$ we have 
a \emph{cofree} left dg-comodule  $(C\otimes M, \cp_C\otimes \I_M)$ over $C$ with the cofree coaction
$\xymatrixcolsep{3pc}\xymatrix{C\otimes M \ar[r]^-{\cp_C\otimes \I_M}& C\otimes C\otimes M}$.
We 
can form a dg-category   $\gdcat{{dgComod}}^{\mathit{cofr}}_L(C)$ of cofree left dg-comodules over $C$,
whose the set of morphisms from $(C\!\otimes\! M,\cp_C\!\otimes\! \I_M)$ to  $(C\!\otimes\! N, \cp_C\!\otimes\! \I_M)$
is  $\Hom_{\cp_C\!}\big(C\!\otimes\! M, C\!\otimes\! N\big)$ with the differential $\rd_{C\otimes M\!,C\otimes N}$,
where $\Hom_{\cp_C\!}\big(C\!\otimes\! M, C\!\otimes\! N\big)$ is the set of
every $\Bbbk$-linear map $\w:C\otimes M \rightarrow C\otimes {N}$ making the following diagram commutative
$$
\xymatrixcolsep{3pc}
\xymatrix{
\ar[d]_-{\w}
C\otimes M \ar[r]^-{\cp_C\otimes \I_M}& C\otimes C\otimes M
\ar[d]^-{\I_C\otimes \w}
\cr
C\otimes {N} \ar[r]_-{\cp_C\otimes\I_{N}}& C\otimes C\otimes {N}
},\quad\hbox{i.e.,}\quad
(\cp_C\otimes \I_{N})\circ \w = (\I_{C}\otimes \w)\circ (\cp_C\otimes \I_M) \Longleftrightarrow \check\mr(\w)=0.
$$

\begin{corollary} 
There is a bijection 
$\xymatrix{\check{\mp}:\Hom(C\!\otimes\! M, N) \ar@/^/[r] & \ar@/^/[l]: \Hom_{\cp_C\!}\big(C\!\otimes\! M, C\!\otimes\! N\big): \check{\mq}}$.
\end{corollary}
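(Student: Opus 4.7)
The proof is essentially a direct consequence of Lemma \ref{cbasicl}, so the plan is short and computational. The key observation is that $\Hom_{\cp_C}\!\big(C\!\otimes\! M, C\!\otimes\! N\big)$ is by definition the kernel of $\check\mr$ inside $\Hom\big(C\!\otimes\! M, C\!\otimes\! N\big)$, and Lemma \ref{cbasicl} provides precisely the splitting data needed to identify this kernel with $\Hom(C\!\otimes\! M, N)$ via $\check\mp$ and $\check\mq$.

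First, I would observe that $\check\mp$ lands inside $\Hom_{\cp_C}\!\big(C\!\otimes\! M, C\!\otimes\! N\big)$: for any $\a_1 \in \Hom(C\!\otimes\! M, N)$, we have $\check\mr\big(\check\mp(\a_1)\big) = 0$ by the first identity in \eqref{coalgextx}, so $\check\mp(\a_1)$ satisfies the coaction-compatibility condition defining $\Hom_{\cp_C}$. Next, the identity $\check\mq \circ \check\mp = \I_{\Hom(C\otimes M,N)}$ from \eqref{coalgextx} shows that $\check\mq$ is a left inverse to $\check\mp$, so $\check\mp$ is injective.

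For surjectivity, take any $\w \in \Hom_{\cp_C}\!\big(C\!\otimes\! M, C\!\otimes\! N\big)$, so that $\check\mr(\w) = 0$. Applying the third identity in \eqref{coalgextx} to $\w$ gives
\[
\check\mp\big(\check\mq(\w)\big) + \check\ms\big(\check\mr(\w)\big) = \w,
\]
and the second term vanishes, so $\w = \check\mp\big(\check\mq(\w)\big)$, i.e.\ $\w$ lies in the image of $\check\mp$ and $\check\mq$ is also a right inverse to $\check\mp$ restricted to $\Hom_{\cp_C}\!\big(C\!\otimes\! M, C\!\otimes\! N\big)$.

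There is no real obstacle here: once Lemma \ref{cbasicl} is established, the corollary is a one-line consequence of the splitting identities, with the role of $\check\ms$ being to witness that $\check\mp \circ \check\mq$ equals the identity on $\ker \check\mr$. The only thing to be careful about is to keep track of the distinction between $\Hom\big(C\!\otimes\! M, C\!\otimes\! N\big)$ and its subset $\Hom_{\cp_C}\!\big(C\!\otimes\! M, C\!\otimes\! N\big)$, but no additional computation is needed.
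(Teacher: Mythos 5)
Your proof is correct and is exactly the argument the paper intends: the corollary is stated as an immediate consequence of the splitting identities \eqref{coalgextx}, with $\check\mr\circ\check\mp=0$ placing the image of $\check\mp$ in $\Hom_{\cp_C}$, $\check\mq\circ\check\mp=\I$ giving injectivity, and $\check\mp\circ\check\mq+\check\ms\circ\check\mr=\I$ giving surjectivity onto $\ker\check\mr=\Hom_{\cp_C}\big(C\otimes M,C\otimes N\big)$. No gaps.
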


By Lemma \ref{cbasicl}, 
we can check that
$\Hom_{\cp_C\!}\big(C\!\otimes\! M, C\!\otimes\! {M^\pr}\big)$ is a chain complex with the differential
$\rd_{C\!\otimes\! M, C\!\otimes\! {M^\pr}}$ 
and $\w^\pr\circ \w \in \Hom_{\cp_C\!}\big(C\!\otimes\! M, C\!\otimes\! {M^\ppr}\big)$
whenever $\w\in \Hom_{\cp_C\!}\big(C\!\otimes\! M, C\!\otimes\! {M^\pr}\big)$ and 
$\w^\pr\in \Hom_{\cp_C\!}\big(C\!\otimes\! M^\pr, C\!\otimes\! {M^\ppr}\big)$. It is obvious that
differentials are derivations of  the composition operation. 
Therefore, $\gdcat{{dgComod}}^{\mathit{cofr}}_L(C)$ is a dg-category.

\begin{lemma}[Definition]\label{tensorfrcomod}
The dg-category  $\gdcat{{dgComod}}^{_{\mathit{cofr}}}_L(C)$ 
is a dg-tensor category with the following tensor structure.

\begin{enumerate}
\item
The tensor product of cofree left dg-comodules 
$(C\otimes M,\cp_C\otimes \I_{M})$ 
and  $(C\otimes M^\pr,\cp_C\otimes \I_{M^\pr})$ 
over $C$ is the cofree left dg-comodule 
$\big(C\otimes M\otimes M^\pr, \cp_C\otimes \I_{M\otimes M^\pr}\big)$ 
over $C$,  and $(C\otimes \Bbbk, \cp_C\otimes \I_\Bbbk)$ is the unit object.

\item
The tensor product of morphisms $\w \in \Hom_{\cp_C}\big(C\otimes M, C\otimes N\big)$
and $\w^\pr \in \Hom_{\cp_C}\big(C\otimes M^\pr, C\otimes N^\pr\big)$ is the morphism
$\w\otimes_{\cp_C}\!\w^\pr \in \Hom_{\cp_C}\big(C\otimes M\otimes M^\pr, C\otimes N\otimes N^\pr\big)$,
where
\eqalign{
\w \otimes_{\cp_C}& \!\w^\pr
: = \big(\w\otimes \check{\mq}(\w^\pr)\big)\circ (\I_C\otimes \t\otimes \I_{M^\pr})
\circ (\cp_C\otimes \I_M\otimes \I_{M^\pr})
\cr
&
\xymatrixcolsep{3.2pc}
\xymatrix{
C{\!\otimes\!} M{\!\otimes\!} M^\pr\ar[r]^-{\cp_C{\otimes} \I_{M}{\otimes} \I_{M^\pr}}
&C{\!\otimes\!} C{\!\otimes\!} M{\!\otimes\!} M^\pr \ar[r]^-{\I_C{\otimes} \t{\otimes} \I_{M^\pr}} 
& C{\!\otimes\!} M{\!\otimes\!} C{\!\otimes\!} M^\pr
\ar[r]^-{\w{\otimes} \check{\mq}(\w^\pr)}
& C{\!\otimes\!} N^\pr {\!\otimes\!} N^\pr.}
}

Equivalently, $\w\otimes_{\cp_C}\w^\pr$ is determined by the following equality:
\[
\check{\mq}(\w\otimes_{\cp_C}\w^\pr)
=\big(\check{\mq}(\w)\otimes\check{\mq}(\w^\pr)\big)
\circ(\I_C\otimes\t\otimes \I_{M^\pr})\circ(\cp_C\otimes\I_{M\otimes M^\pr})
:C\otimes M\otimes M^\pr\to N\otimes N^\pr.
\]

\item
$\rd_{C\otimes M\otimes M^\pr,\! C\otimes N\otimes N^\pr}
\big(\w\otimes_{\cp_C}\!\w^\pr\big)
=\rd_{C\otimes M\!, C\otimes N} \w\otimes_{\cp_C}\!\w^\pr   
+(-1)^{|\w|} \w\otimes_{\cp_C}\!\rd_{C\otimes M^\pr\!, C\otimes N^\pr}\w^\pr$.
\end{enumerate}

\end{lemma}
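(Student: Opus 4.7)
The plan is to exploit the bijection $\check{\mp}:\Hom(C\otimes M,N)\isoto\Hom_{\cp_C}(C\otimes M,C\otimes N)$ with inverse $\check{\mq}$ from Lemma~\ref{cbasicl} in order to transport identities in the cofree-comodule morphism complex into the plain Hom complex $\Hom(C\otimes M,N)$, where computations are much more transparent. Concretely, I would first check that $\w\otimes_{\cp_C}\w^\pr$ actually lies in $\Hom_{\cp_C}(C\otimes M\otimes M^\pr,C\otimes N\otimes N^\pr)$, i.e., that $\check{\mr}(\w\otimes_{\cp_C}\w^\pr)=0$. Unfolding the definition, the two terms of $\check{\mr}$ each contain an iterated coproduct built from $(\cp_C\otimes\I_C)\circ\cp_C$ and its twist, which coincide thanks to coassociativity and cocommutativity of $\cp_C$, so the two terms cancel. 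The equivalent characterization through $\check{\mq}$ stated in~(2) then follows by a direct collapse: applying $\check{\mq}$ to the defining formula and using $(\ep_C\otimes\I_C)\circ\cp_C=\imath_C^{-1}$ immediately reproduces the stated identity.

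For the Leibniz rule in item~(3), I would observe that $\check{\mq}$ intertwines the differentials $\rd_{C\otimes M,C\otimes N}$ and $\rd_{C\otimes M,N}$, so after applying $\check{\mq}$ to both sides the identity reduces to the Leibniz rule for the composition $(\check{\mq}(\w)\otimes\check{\mq}(\w^\pr))\circ(\I_C\otimes\t\otimes\I_{M^\pr})\circ(\cp_C\otimes\I_{M\otimes M^\pr})$. Since $\cp_C$ and $\t$ are chain maps, and ordinary composition and tensor product of linear maps satisfy the Koszul Leibniz rule, this is straightforward. For the remaining dg-tensor category axioms I would verify bifunctoriality $(\w_2\circ\w_1)\otimes_{\cp_C}(\w_2^\pr\circ\w_1^\pr)=(\w_2\otimes_{\cp_C}\w_2^\pr)\circ(\w_1\otimes_{\cp_C}\w_1^\pr)$ and $\I_{C\otimes M}\otimes_{\cp_C}\I_{C\otimes M^\pr}=\I_{C\otimes M\otimes M^\pr}$ by passing through $\check{\mq}$ and invoking coassociativity and cocommutativity once more. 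The associativity, unit, and symmetry coherence isomorphisms are taken to be $\I_C\otimes$ applied to the corresponding coherence isomorphisms in $\gdcat{Ch}(\Bbbk)$ for $M\otimes M^\pr$, $\Bbbk\otimes M$, and $M\otimes M^\pr\cong M^\pr\otimes M$; the pentagon, triangle, and hexagon conditions then translate via $\check{\mq}$ into their counterparts in $\gdcat{Ch}(\Bbbk)$, which are known.

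The main obstacle will be the bifunctoriality check: the two sides have noticeably different combinatorial shapes --- the left one features a single $\cp_C$ shuffled through paired morphisms, whereas the right one features two iterated $\cp_C$'s, one from each $\otimes_{\cp_C}$ --- and equating them demands both coassociativity and cocommutativity together with careful Koszul sign tracking across two braid maps $\t$. Once this is in hand, everything else is routine bookkeeping after $\check{\mq}$ translates the statements into identities in the plain Hom complexes.
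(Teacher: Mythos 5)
Your proposal is correct and follows essentially the same route as the paper, whose own proof consists of exactly the observations you elaborate: $\check{\mr}(\w\otimes_{\cp_C}\w^\pr)=0$ follows from $\check{\mr}(\w)=\check{\mr}(\w^\pr)=0$ together with coassociativity, and the $\check{\mq}$-characterization and the Leibniz rule are direct computations using the fact that $\check{\mp}$ and $\check{\mq}$ are mutually inverse chain maps. Two small points of hygiene: the interchange law you call bifunctoriality must carry the Koszul sign $(-1)^{|\w_2^\pr||\w_1|}$ coming from the composition in $\gdcat{C}\boxtimes\gdcat{C}$ (you do flag the sign bookkeeping), and it is there --- not in the verification of $\check{\mr}=0$, which needs only coassociativity --- that the cocommutativity of $\cp_C$ is genuinely used.
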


\begin{proof} Property $1$ is obvious. For property $2$, 
it is straightforward to check that $\check{\mr}(\w\otimes_{\cp_C} \!\w^\pr)=0$ 
whenever $\check{\mr}(\w)=\check{\mr}(\w^\pr)=0$.
Property $3$ can be checked by a straightforward computation.
\qed
\end{proof}

\begin{lemma}\label{ctensoring}
 We have  a dg-tensor functor 
$C\otimes :\gdcat{Ch}(\Bbbk) \rightsquigarrow \gdcat{dgComod}^{\mathit{cofr}}_L(C)$
for  every ccdg-coalgebra $C$
\eqalign{
C\otimes \Big(
\xymatrix{ M \ar[r]^-{\p} & M^\pr}\Big)
\rightsquigarrow  \Big( \xymatrix{(C\otimes M, \cp_C\otimes \I_M)\ar[r]^-{\I_C\otimes \p}& 
(C\otimes M^\pr, \cp_C\otimes \I_{M^\pr})}\Big).
}
\end{lemma}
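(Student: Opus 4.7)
The plan is to verify the three parts of the definition of a dg-tensor functor: $C\otimes$ is a functor, it induces chain maps on Hom complexes, and it respects the tensor structure. Each reduces to a short direct check.

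First, I would confirm that for any linear map $\p\colon M\to M^\pr$ the assignment $\p\mapsto \I_C\otimes\p$ produces a morphism in $\gdcat{dgComod}^{\mathit{cofr}}_L(C)$, i.e.\ $\check\mr(\I_C\otimes\p)=0$. This is immediate from the bifunctoriality of $\otimes$: both $(\cp_C\otimes\I_{M^\pr})\circ(\I_C\otimes\p)$ and $(\I_C\otimes(\I_C\otimes\p))\circ(\cp_C\otimes\I_M)$ equal $(\I_C\otimes\I_C\otimes\p)\circ(\cp_C\otimes\I_M)$. Functoriality $(\I_C\otimes\p^\pr)\circ(\I_C\otimes\p)=\I_C\otimes(\p^\pr\circ\p)$ and $\I_C\otimes\I_M=\I_{C\otimes M}$ is automatic (the Koszul sign from $|\I_C|=0$ is trivial).

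Second, I would verify the dg-functor property by showing that $\p\mapsto \I_C\otimes\p$ is a chain map, namely $\rd_{C\otimes M,C\otimes M^\pr}(\I_C\otimes\p)=\I_C\otimes\rd_{M,M^\pr}\p$. Substituting $\rd_{C\otimes M}=\rd_C\otimes\I_M+\I_C\otimes\rd_M$ from \eq{tshmdiff} and similarly on the target, the $\rd_C$-terms cancel through the Koszul sign rule, leaving exactly $\I_C\otimes(\rd_{M^\pr}\circ\p-(-1)^{|\p|}\p\circ\rd_M)$.

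Third, I would check tensor compatibility. On objects and units the assignment is an equality by the definition in Lemma \ref{tensorfrcomod}: $C\otimes(M\otimes M^\pr)$ with cofree coaction coincides with $(C\otimes M)\otimes_{\cp_C}(C\otimes M^\pr)$, and $C\otimes\Bbbk$ is the unit object. On morphisms the nontrivial point is $(\I_C\otimes\p)\otimes_{\cp_C}(\I_C\otimes\p^\pr)=\I_C\otimes(\p\otimes\p^\pr)$. I would establish this through the equivalent characterization via $\check\mq$ in Lemma \ref{tensorfrcomod}: one has $\check\mq(\I_C\otimes\p)=\imath_{M^\pr}\circ(\ep_C\otimes\p)$, and a short calculation using the counit identity $(\ep_C\otimes\ep_C)\circ\cp_C=\ep_C$ (obtainable from $(\I_C\otimes\ep_C)\circ\cp_C=\jmath^{-1}_C$ composed with $\ep_C\otimes\I_\Bbbk$) collapses the expression to $\ep_C(c)(\p\otimes\p^\pr)(m\otimes m^\pr)$, which matches $\check\mq(\I_C\otimes(\p\otimes\p^\pr))$ evaluated at $c\otimes m\otimes m^\pr$. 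Injectivity of $\check\mq$ on $\Hom_{\cp_C}$, coming from $\check\mq\circ\check\mp=\I$ in \eq{coalgextx}, then forces equality of the two morphisms. The only step requiring care is the Koszul sign bookkeeping in this last calculation, but because $\ep_C$ kills elements of nonzero degree the surviving contributions involve only $|c_{(1)}|=|c_{(2)}|=0$, so all nontrivial signs reduce to those already present in $(\p\otimes\p^\pr)(m\otimes m^\pr)$; the argument is entirely routine.
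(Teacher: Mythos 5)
Your proposal is correct and follows the same route as the paper, which simply declares the dg-functor property obvious and reduces the tensor property to the identity $(\I_C\otimes \p)\otimes_{\cp_C}(\I_C\otimes \p^\pr)=\I_C\otimes(\p\otimes\p^\pr)$ — exactly the identity you verify via $\check{\mq}$. One trivial nit: injectivity of $\check{\mq}$ on $\Hom_{\cp_C}$ follows from $\check\mp\circ\check\mq+\check\ms\circ\check\mr=\I$ restricted to the kernel of $\check\mr$ (equivalently, from the Corollary to Lemma \ref{cbasicl}), not from $\check\mq\circ\check\mp=\I$, which only gives surjectivity.
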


\begin{proof}
It is obvious that $C\otimes$ is a dg-functor  whose tensor property follows from
the easy identity 
$
(\I_C\otimes \p)\otimes_{\cp_C\!} (\I_C\otimes \p^\pr) 
=\I_C\otimes \p\otimes \p^\pr: C\otimes M\otimes N\rightarrow
C\otimes M^\pr\otimes N^\pr
$
for all linear maps $\p:M \rightarrow N$ and $\p^\pr:M^\pr \rightarrow N^\pr$.
\qed
\end{proof}

\subsection{Linear representations and their dg-tensor category}

For every ccdg-coalgebra  $C$, we have the following constructions.
\begin{enumerate}
\item
Let
$\End_{\cp_C\!}(C\otimes M):=\Hom_{\cp_C\!}(C\otimes M,C\otimes M)$, 
which is the $\Z$-graded vector space of linear maps 
$\w:C\otimes M \rightarrow C\otimes M$ satisfying 
$(\cp_C\otimes \I_M)\circ \w = (\I_C\otimes \w)\circ (\cp_C\otimes \I_M)$.
Then we have a dg-algebra 
\eqn{EMC}{
\bm{\CE}_{\!\!M}(C)= \big(\End_{\cp_C\!}(C\otimes M), \I_{C\otimes M}, \circ ,\rd_{C\otimes M, C\otimes M}\big).
}

\item
Let $\operatorname{Z_0Aut}_{\cp_C\!}(C\otimes M)$ be the subset of $\End_{\cp_C\!}(C\otimes M)$ 
consisting every degree zero element $\w$ that has a composition inverse 
$\w^{-1}$ and satisfies $\rd_{C\otimes M,C\otimes M}\w=0$. 
Then we have a group 
\eqn{GLMC}{
\bm{\CG\!\ell}_{\!\!M}(C) :=\big(\operatorname{Z_0Aut}_{\cp_C\!}(C\otimes M), \I_{C\otimes M}, \circ\big).
}

\item 
Let $\operatorname{H_0Aut}_{\cp_C\!}(C\otimes M)$ be the set of  homology classes of elements in 
$\operatorname{Z_0Aut}_{\cp_C\!}(C\otimes M)$ 
that $\w, \tilde \w \in \operatorname{Z_0Aut}_{\cp_C\!}(C\otimes M)$
belongs to the same homology class 
$\w\sim \tilde \w$, i.e., $[\w]=[\tilde \w]\in \operatorname{H_0Aut}_{\cp_C\!}(C\otimes M)$, 
if $\tilde \w - \w =\rd_{C\otimes M, C\otimes M}\l$ for some $\l \in \End_{\cp_C\!}(C\otimes M)$. 
We can check that $\w_1\circ \w_2\sim \tilde \w_1\circ \tilde \w_2\in\operatorname{Z_0Aut}_{\cp_C\!}(C\otimes M)$
whenever $\w_1\sim \tilde \w_1,  \w_2\sim \tilde \w_2\in\operatorname{Z_0Aut}_{\cp_C\!}(C\otimes M)$,
and
the $\w^{-1}\sim \tilde \w^{-1}\in\operatorname{Z_0Aut}_{\cp_C\!}(C\otimes M)$
whenever $\w\sim \tilde \w \in\operatorname{Z_0Aut}_{\cp_C\!}(C\otimes M)$. 
Let $[\w_1]\diamond [\w_2]:=[\w_1\circ \w_2]$ and $[\w]^{-1}:= [\w^{-1}]$. Then we have a group
\eqn{MGLMC}{
\bm{\mG\!\ml}_{\!\!M}(C) :=\big(\operatorname{H_0Aut}_{\cp_C\!}(C\otimes M), [\I]_{C\otimes M}, \diamond\big).
}
\end{enumerate}

The above constructions are functorial .

\begin{lemma}\label{cdullpain}
For every chain complex $M$
we have a functor
$\bm{\CE}_{\!\!M}:\mathring{\category{ccdgC}}(\Bbbk) \rightsquigarrow \category{dgA}(\Bbbk)$, sending
each ccdg-coalgebra $C$ to
the dg-algebra $\bm{\CE}_{\!\!M}(C)$,
and
each morphism $f:C\rightarrow C^\pr$ of ccdg-coalgebras to
a morphism  $\bm{\CE}_{\!\!M}(f):\bm{\CE}_{\!\!M}(C^\pr)\rightarrow \bm{\CE}_{\!\!M}(C)$ of dg-algebras
defined by, $\forall \w^\pr \in\grave{\bm{\CE}}_{\!\!M}(C^\pr)= \End_{\cp_{C^\pr}\!}(C^\pr\otimes M)$,
\eqalign{
\bm{\CE}_{\!\!M}(f)(\w^\pr)
:=
&
\check{\mp}\Big(\check{\mq}\big(\w^\pr\big)\circ  (f\otimes \I_M)\big)\Big)
\cr
=
&
\big(\I_C\otimes \check\mq(\w^\pr)\big)
\circ
(\I_{C}\otimes f\otimes \I_M)
\circ (\cp_{C}\otimes \I_M)
\cr
=
&
\big(\I_C\otimes \imath_M\circ (\ep_{C^\pr}\otimes \I_M)\circ \w^\pr\big)
\circ(\I_{C}\otimes f\otimes \I_M)
\circ (\cp_{C}\otimes \I_M)
\cr
:&
\xymatrixcolsep{3pc}
\xymatrix{
C\otimes M\ar[r]^-{\cp_{C}\otimes \I_M} & C\otimes C\otimes M\ar[r]^{\I_{C}\otimes f\otimes I_M}
& C\otimes C^\pr\otimes M \ar[r]^-{\I_C\otimes \check\mq(\w^\pr)} & C\otimes M.
}  
}
That is, we have $\bm{\CE}_{\!\!M}(f)(\w^\pr) \in \grave{\bm{\CE}}_{\!\!M}(C)= \End_{\cp_{C}\!}(C\otimes M)$, and
\begin{enumerate}[label=({\alph*})]

\item $\bm{\CE}_{\!\!M}(f)(\I_{C^\pr\otimes M})= \I_{C\otimes M}$;

\item $\bm{\CE}_{\!\!M}(f)\big(\w^\pr_1\circ \w^\pr_2\big)
=\bm{\CE}_{\!\!M}(f)\big(\w^\pr_1\big)\circ \bm{\CE}_{\!\!M}(f)\big(\w^\pr_2\big)$;

\item $\bm{\CE}_{\!\!M}(f)\circ \rd_{C^\pr\otimes M, C^\pr\otimes M} 
= \rd_{C\otimes M, C\otimes M}\circ \bm{\CE}_{\!\!M}(f)$.

\end{enumerate}
\end{lemma}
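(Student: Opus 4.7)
The plan is to exploit the bijection $\check{\mp}\colon \Hom(C\otimes M, M) \leftrightarrow \End_{\cp_C\!}(C\otimes M) \colon \check{\mq}$ from Lemma \ref{cbasicl}, which encodes every $\w \in \End_{\cp_C\!}(C\otimes M)$ by its ``tail'' $\check{\mq}(\w)\colon C\otimes M \to M$ and recovers $\w = \check{\mp}(\check{\mq}(\w))$ via the identity $\check{\mp}\circ\check{\mq} = \I$ on the kernel of $\check{\mr}$ from \eq{coalgextx}. Under this encoding the defining formula becomes simply $\check{\mq}\bigl(\bm{\CE}_{\!\!M}(f)(\w^\pr)\bigr) = \check{\mq}(\w^\pr)\circ (f\otimes \I_M)$, so every assertion below reduces to a computation of linear maps $C\otimes M \to M$.

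First I would check that $\bm{\CE}_{\!\!M}(f)(\w^\pr)$ actually lies in $\End_{\cp_C\!}(C\otimes M)$, which is immediate from $\check{\mr}\circ\check{\mp} = 0$ in \eq{coalgextx}. Property $(a)$ then follows from the counital compatibility $\ep_{C^\pr}\circ f = \ep_C$: one computes $\check{\mq}(\I_{C^\pr\otimes M}) = \imath_M\circ (\ep_{C^\pr}\otimes \I_M)$, and precomposition with $f\otimes \I_M$ produces $\imath_M\circ(\ep_C\otimes \I_M) = \check{\mq}(\I_{C\otimes M})$. Property $(c)$ follows because $f$ is a chain map while both $\check{\mp}$ and $\check{\mq}$ intertwine $\rd_{-\otimes M,-\otimes M}$ with the ambient differential on $\Hom(-\otimes M, M)$; this is a routine bracket calculation from \eq{tshmdiff} using $\ep_C\circ \rd_C = 0$ and the fact that $f\otimes \I_M$ commutes with the tensor differential.

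The main obstacle is property $(b)$, multiplicativity, since composition in $\End_{\cp_C\!}(C\otimes M)$ is not manifestly preserved by the encoding $\check{\mp}\circ\check{\mq}$. The key intermediate identity to establish is
\[
(f\otimes \I_M)\circ \bm{\CE}_{\!\!M}(f)(\w^\pr) \,=\, \w^\pr\circ (f\otimes \I_M), \qquad \forall\, \w^\pr \in \End_{\cp_{C^\pr}\!}(C^\pr\otimes M),
\]
which I would verify by expanding $\w^\pr = (\I_{C^\pr}\otimes \check{\mq}(\w^\pr))\circ (\cp_{C^\pr}\otimes \I_M)$ and invoking the coalgebra-map relation $\cp_{C^\pr}\circ f = (f\otimes f)\circ \cp_C$. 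With this established, the trivial observation $\check{\mq}(\a\circ \b) = \check{\mq}(\a)\circ \b$, valid for any $\b\colon C\otimes M \to C\otimes M$, produces
\[
\check{\mq}\bigl(\bm{\CE}_{\!\!M}(f)(\w^\pr_1)\circ \bm{\CE}_{\!\!M}(f)(\w^\pr_2)\bigr) \,=\, \check{\mq}(\w^\pr_1)\circ (f\otimes \I_M)\circ \bm{\CE}_{\!\!M}(f)(\w^\pr_2) \,=\, \check{\mq}(\w^\pr_1\circ \w^\pr_2)\circ (f\otimes \I_M),
\]
and $(b)$ then follows from the injectivity of $\check{\mp}$.

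Finally, functoriality is routine: $\bm{\CE}_{\!\!M}(\I_C) = \I_{\bm{\CE}_{\!\!M}(C)}$ is immediate from $\check{\mq}(\w)\circ (\I_C\otimes \I_M) = \check{\mq}(\w)$ combined with $\check{\mp}\circ\check{\mq} = \I$, and $\bm{\CE}_{\!\!M}(f^\pr\circ f) = \bm{\CE}_{\!\!M}(f)\circ \bm{\CE}_{\!\!M}(f^\pr)$ reduces, after applying $\check{\mq}$, to the obvious associativity $\check{\mq}(\w^\ppr)\circ \bigl((f^\pr\circ f)\otimes \I_M\bigr) = \bigl(\check{\mq}(\w^\ppr)\circ (f^\pr\otimes \I_M)\bigr)\circ (f\otimes \I_M)$.
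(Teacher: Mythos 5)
Your proposal is correct, and it is organized rather differently from the paper's proof, which establishes membership in $\End_{\cp_C\!}(C\otimes M)$ and properties $(a)$--$(c)$ by writing out the full composites $C\otimes M\to C\otimes M$ and chasing them through coassociativity, the coalgebra-map relation for $f$, the comodule-morphism property of $\w^\pr_2$, and the counit axiom, all inside one long displayed computation for $(b)$. You instead push everything through the bijection $\check{\mp}\dashv\check{\mq}$ of Lemma \ref{cbasicl}, so that $\bm{\CE}_{\!\!M}(f)$ is literally conjugation of the precomposition map $-\circ(f\otimes\I_M)$ by the chain isomorphism onto $\ker\check{\mr}$; membership and $(c)$ then fall out of \eq{coalgextx} and the chain-map property of $\check{\mp},\check{\mq}$ with no computation, and you also record functoriality in $f$, which the paper's proof does not address explicitly. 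The one genuinely new ingredient you need is the intertwining identity $(f\otimes\I_M)\circ\bm{\CE}_{\!\!M}(f)(\w^\pr)=\w^\pr\circ(f\otimes\I_M)$, which isolates in a reusable form exactly the combination of $\check{\mp}\circ\check{\mq}=\I$ on $\ker\check{\mr}$ and $(f\otimes f)\circ\cp_C=\cp_{C^\pr}\circ f$ that the paper uses in the middle of its computation of $(b)$; I verified it and the ensuing derivation of $(b)$, and they are sound. Two small points of hygiene: the final step of $(b)$ is better attributed to the injectivity of $\check{\mq}$ on $\ker\check{\mr}$ (equivalently, applying $\check{\mp}$ and using $\check{\mp}\circ\check{\mq}=\I$ there) rather than to the injectivity of $\check{\mp}$, and you should note that $\bm{\CE}_{\!\!M}(f)(\w^\pr_1)\circ\bm{\CE}_{\!\!M}(f)(\w^\pr_2)$ lies in $\ker\check{\mr}$ because $\End_{\cp_C\!}(C\otimes M)$ is closed under composition, a fact the paper records just after Lemma \ref{cbasicl}. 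Neither affects the validity of the argument. What your route buys is brevity and reusability (the same encoding reappears in Lemmas \ref{costpda} and \ref{costpdx}); what the paper's route buys is that every identity is verified at the level of explicit composites without appealing to the splitting.
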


\begin{proof}
We already know that $\bm{\CE}_{\!\!M}(C)$ is a dg-algebra \eq{EMC}. 
 We check that $\bm{\CE}_{\!\!M}(f)(\w^\pr) \in  \End_{\cp_{C}\!}(C\otimes M)$, i.e.,
$\check\mr\left(\bm{\CE}_{\!\!M}(f)(\w^\pr)\right)=0$. We have
\eqalign{
\check\mr\left(\bm{\CE}_{\!\!M}(f)(\w^\pr)\right)
:=
&
(\cp_C\otimes \I_{M^\pr})\circ (\I_C\otimes \check\mq(\w^\pr))\circ
(\I_{C}\otimes f\otimes I_M)\circ (\cp_{C}\otimes \I_M)
\cr
&
-
(\I_{C}\otimes\I_C\otimes \check\mq(\w^\pr))
\circ(\I_{C}\otimes\I_{C}\otimes f\otimes I_M)
\circ (\I_{C}\otimes\cp_{C}\otimes \I_M)
\circ (\cp_C\otimes \I_M)
\cr
=
&0,
}
where we used the coassociativity of $\cp_C$.
It remains to show that $\bm{\CE}_{\!\!M}(f)$ is a morphism of dg-algebras---the properties $(a)$, $(b)$ and $(c)$. 
Remind that
$\check{\mq}(\w^\pr):=\imath_M\circ (\ep_{C^\pr}\otimes \I_M)\circ \w^\pr$.

For the property $(a)$, we have
\eqalign{
\bm{\CE}_{\!\!M}(f)(\I_{C^\pr\otimes M})
:=
&
(\I_C\otimes \imath_M\circ (\ep_{C^\pr}\otimes \I_M))\circ
(\I_{C}\otimes f\otimes \I_M)\circ (\cp_{C}\otimes \I_M)
= \I_{C\otimes M}
,
}
where we have used $\ep_{C^\pr}\circ f =\ep_C$
and the counit property of $\cp_C$.
For the property $(b)$, we have
\eqalign{
\bm{\CE}_{\!\!M}(f)&(\w^\pr_1)\circ \bm{\CE}_{\!\!M}(f)(\w^\pr_2)
:=
(\I_C\otimes \imath_M\circ (\ep_{C^\pr}\otimes \I_M)\circ\w^\pr_1)
\circ(\I_{C}\otimes f\otimes \I_M)
\circ (\cp_{C}\otimes \I_M)
\cr
&
\circ
(\I_C\otimes\imath_M\circ (\ep_{C^\pr}\otimes \I_M)\circ\w^\pr_2)
\circ (\I_{C}\otimes f\otimes \I_M)
\circ (\cp_{C}\otimes \I_M)
\cr
=
&
(\I_C\otimes \imath_M\circ (\ep_{C^\pr}\otimes \I_M)\circ\w^\pr_1)
\circ(\I_{C}\otimes f\otimes \I_M)
\circ(\I_C\otimes\I_C\otimes \imath_M\circ (\ep_{C^\pr}\otimes \I_M)\circ\w^\pr_2)
\cr
&
\circ (\I_{C}\otimes\I_C\otimes f\otimes \I_M)
\circ\big({\color{blue}(\cp_{C}\otimes\I_C)\circ\cp_{C}} \otimes \I_M\big)
\cr
=
&
(\I_C\otimes \imath_M\circ (\ep_{C^\pr}\otimes \I_M)\circ\w^\pr_1)
\circ(\I_C\otimes \I_{C^\pr}\otimes \imath_M\circ (\ep_{C^\pr}\otimes \I_M))
\circ(\I_C\otimes \I_{C^\pr}\otimes  \w^\pr_2)
\cr
&
\circ  \big(\I_C\otimes  {\color{blue}(f\otimes f)\circ \cp_C}\otimes \I_M\big)
\circ (\cp_{C}\otimes \I_M)
\cr
=
&
(\I_C\otimes \imath_M\circ (\ep_{C^\pr}\otimes \I_M)\circ\w^\pr_1)
\circ(\I_C\otimes \I_{C^\pr}\otimes \imath_M\circ (\ep_{C^\pr}\otimes \I_M))
\cr
&
\circ
 \big(\I_C\otimes {\color{blue}(\I_{C^\pr}\otimes \w^\pr_2)\circ (\cp_{C^\pr}\otimes \I_M)}\big)
\circ (\I_{C}\otimes f\otimes \I_M)
\circ (\cp_{C}\otimes \I_M)
\cr
=
&
(\I_C\otimes \imath_M\circ (\ep_{C^\pr}\otimes \I_M)\circ\w^\pr_1)
\circ(\I_C\otimes \I_{C^\pr}\otimes \imath_M)
\circ \big(\I_C\otimes {\color{blue}(\I_{C^\pr}\otimes\ep_{C^\pr})\circ  \cp_{C^\pr}}\otimes \I_M\big)
\cr
&
\circ (\I_C\otimes \w^\pr_2)
\circ (\I_{C}\otimes f\otimes \I_M)
\circ (\cp_{C}\otimes \I_M)
\cr
=
&
(\I_C\otimes \imath_M\circ (\ep_{C^\pr}\otimes \I_M)\circ\w^\pr_1\circ \w^\pr_2)
\circ (\I_{C}\otimes f\otimes \I_M)
\circ (\cp_{C}\otimes \I_M)
\cr
=
&\bm{\CE}_{\!\!M}(f)\big(\w_1^\pr\circ \w_2^\pr\big)
,
}
where we have used the coassociativity of $\cp_C$ for the $3$rd equality,  
$f$ being a coalgebra map for the $4$th equality, 
$\w^\pr_2 \in \End_{\cp_{C^\pr}\!}(C^\pr\otimes M)$ that 
$ (\I_{C^\pr}\otimes \w^\pr_2)\circ (\cp_{C^\pr}\otimes \I_M)=(\cp_{C^\pr}\otimes \I_{M})\circ \w^\pr_2$
for the $5$th equality
and the counit property of $\cp_{C^\pr}$ for the $6$th equality,
while all the other moves are plain.
For the property $(c)$, we have
\eqalign{
\rd_{C\otimes M, C\otimes M}&\Big( \bm{\CE}_{\!\!M}(f)(\w^\pr)\Big)
:=
\rd_{C\otimes M}\circ \big(\I_C\otimes \imath_M\circ (\ep_{C^\pr}\otimes \I_M)\circ \w^\pr\big)
\circ(\I_{C}\otimes f\otimes \I_M)
\circ (\cp_{C}\otimes \I_M)
\cr
&
-(-1)^{|\w^\pr|}\big(\I_C\otimes \imath_M\circ (\ep_{C^\pr}\otimes \I_M)\circ \w^\pr\big)
\circ(\I_{C}\otimes f\otimes \I_M)
\circ (\cp_{C}\otimes \I_M)\circ \rd_{C\otimes M}
\cr
=
&
\bm{\CE}_{\!\!M}(f)\Big(\rd_{C\otimes M}\circ \w^\pr\Big) 
-(-1)^{\w^\pr}\bm{\CE}_{\!\!M}(f)\Big(\w^\pr\circ \rd_{C\otimes M}\Big)
=
 \bm{\CE}_{\!\!M}(f)\Big(\rd_{C\otimes M, C\otimes M}\w^\pr\Big)
 ,
}
where we have used properties that $\rd_C$ is a coderivation of $\cp_C$ and $f:C\rightarrow C^\pr$ is a chain map, together with
some obvious moves and cancelations.
\qed
\end{proof}

\begin{lemma}\label{cdullpaina}
For every chain complex $M$ we have 
a  presheaf of groups
$$
\bm{\CG\!\ell}_{\!\!M}:\mathring{\category{ccdgC}}(\Bbbk) \rightsquigarrow \category{Grp},
$$
sending each ccdg-coalgebra $C$ to the  group 
$\bm{\CG\!\ell}_{\!\!M}(C)$ 
and each morphism $f:C\rightarrow C^\pr$ of ccdg-coalgebras
to a  homomorphism
$\bm{\CG\!\ell}_{\!\!M}(f):\bm{\CG\!\ell}_{\!\!M}(C^\pr)\rightarrow \bm{\CG\!\ell}_{\!\!M}(C)$ of groups
defined by $\bm{\CG\!\ell}_{\!\!M}(f) := \bm{\CE}_{\!\!M}(f)$. Moreover, we have
\begin{enumerate}[label=({\alph*})]
\item
$\bm{\CG\!\ell}_{\!\!M}(\tilde f)(\w^\pr) \sim \bm{\CG\!\ell}_{\!\!M}(f)(\w^\pr)  
\in \operatorname{Z_0Aut}_{\cp_{C}\!}(C\otimes M)$
for all $\w^\pr \in \operatorname{Z_0Aut}_{\cp_{C^\pr}\!}(C^\pr\otimes M)$ 
whenever $f \sim \tilde f\in \HOM_{\ccdgc}(C, C^\pr)$, and

\item $\bm{\CG\!\ell}_{\!\!M}( f)(\tilde\w^\pr) \sim \bm{\CG\!\ell}_{\!\!M}( f)(\w^\pr)
\in \operatorname{Z_0Aut}_{\cp_{C}\!}(C\otimes M)$
for all $f \in \HOM_{\ccdgc}(C, C^\pr)$ 
whenever $\w^\pr \sim \tilde\w^\pr \in \operatorname{Z_0Aut}_{\cp_{C^\pr}\!}(C^\pr\otimes M)$.
\end{enumerate}
\end{lemma}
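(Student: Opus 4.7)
The plan is to deduce everything from Lemma \ref{cdullpain} together with the formal observation that a morphism of dg-algebras automatically restricts to a group homomorphism on the groups of invertible degree-zero cycles. First I would check that $\bm{\CG\!\ell}_{\!\!M}(f):=\bm{\CE}_{\!\!M}(f)$ sends $\operatorname{Z_0Aut}_{\cp_{C^\pr}\!}(C^\pr\otimes M)$ into $\operatorname{Z_0Aut}_{\cp_C\!}(C\otimes M)$: since $\bm{\CE}_{\!\!M}(f)$ is a morphism of dg-algebras by Lemma \ref{cdullpain}, property (c) there sends degree-zero cycles to degree-zero cycles, while properties (a) and (b) ensure that the image of an invertible element is invertible (with inverse given by the image of the inverse) and that the group operations are respected. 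Functoriality of $\bm{\CG\!\ell}_{\!\!M}$ then follows at once from the functoriality of $\bm{\CE}_{\!\!M}$.

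The homotopy-invariance statements (a) and (b) are the substantive part. My plan is to introduce, for each degree-zero cycle $\w^\pr\in \End_{\cp_{C^\pr}\!}(C^\pr\otimes M)$, the extended map
\[
\Phi_{\w^\pr}:\Hom(C,C^\pr)\longrightarrow \End_{\cp_C\!}(C\otimes M),\qquad \Phi_{\w^\pr}(h):=\check{\mp}\bigl(\check{\mq}(\w^\pr)\circ(h\otimes \I_M)\bigr),
\]
defined by the same formula as in Lemma \ref{cdullpain} but allowing $h$ to be an \emph{arbitrary} linear map, not just a morphism of ccdg-coalgebras. The identity $\check{\mr}\circ\check{\mp}=0$ from Lemma \ref{cbasicl} guarantees $\Phi_{\w^\pr}(h)\in\End_{\cp_C\!}(C\otimes M)$ without any constraint on $h$. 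A computation that mirrors the one already done in the proof of Lemma \ref{cdullpain}(c), but now exploiting only that $\cp_C$ is a chain map and $\rd_C$ is a coderivation (rather than any coalgebra-map property of $h$), shows that for $\w^\pr$ closed the map $\Phi_{\w^\pr}$ is itself a chain map; on morphisms of ccdg-coalgebras it coincides with $\bm{\CE}_{\!\!M}(\cdot)(\w^\pr)$.

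With this tool in hand, property (b) is immediate from Lemma \ref{cdullpain}(c): if $\tilde{\w}^\pr-\w^\pr=\rd_{C^\pr\otimes M,C^\pr\otimes M}\l^\pr$ for some $\l^\pr\in\End_{\cp_{C^\pr}\!}(C^\pr\otimes M)$, then
\[
\bm{\CG\!\ell}_{\!\!M}(f)(\tilde{\w}^\pr)-\bm{\CG\!\ell}_{\!\!M}(f)(\w^\pr)=\bm{\CE}_{\!\!M}(f)\bigl(\rd_{C^\pr\otimes M,C^\pr\otimes M}\l^\pr\bigr)=\rd_{C\otimes M,C\otimes M}\bm{\CE}_{\!\!M}(f)(\l^\pr),
\]
so $\bm{\CE}_{\!\!M}(f)(\l^\pr)$ is the required chain homotopy. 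For property (a), a homotopy pair $\bigl(f(t),\l(t)\bigr)$ realizing $f\sim \tilde f$ gives $\tilde f-f=\rd_{C,C^\pr}\l$ for $\l:=\int_0^1\l(t)\,dt\in\Hom(C,C^\pr)_1$ (a perfectly well-defined polynomial integral), and the chain-map property of $\Phi_{\w^\pr}$ then yields
\[
\bm{\CG\!\ell}_{\!\!M}(\tilde f)(\w^\pr)-\bm{\CG\!\ell}_{\!\!M}(f)(\w^\pr)=\Phi_{\w^\pr}(\rd_{C,C^\pr}\l)=\rd_{C\otimes M,C\otimes M}\Phi_{\w^\pr}(\l),
\]
exhibiting $\Phi_{\w^\pr}(\l)\in\End_{\cp_C\!}(C\otimes M)$ as the required chain homotopy. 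The only mildly delicate step is verifying the chain-map property of $\Phi_{\w^\pr}$ on arbitrary $h$; this is essentially the same bookkeeping carried out in Lemma \ref{cdullpain}(c), so I do not expect a real obstacle beyond careful tracking of signs and the use of coassociativity.
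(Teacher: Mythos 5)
Your proposal is correct and follows essentially the same route as the paper: the group-homomorphism and functoriality claims are reduced to Lemma \ref{cdullpain}, property (b) is obtained by applying the chain map $\bm{\CE}_{\!\!M}(f)$ to the homotopy $\l^\pr$, and property (a) is obtained by writing $\tilde f - f = \rd_{C,C^\pr}\l$ via the integrated homotopy flow and using that $h\mapsto\check{\mp}\bigl(\check{\mq}(\w^\pr)\circ(h\otimes\I_M)\bigr)$ is a chain map because $\check{\mp}$ and $\check{\mq}$ are chain maps and $\w^\pr$ is closed. The only cosmetic difference is that you package this last observation as a named map $\Phi_{\w^\pr}$ on arbitrary linear maps $h$, whereas the paper uses the identity inline; the exhibited chain homotopies coincide.
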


\begin{proof}
We already know that $\bm{\CG\!\ell}_{\!\!M}(C)$ is a group, \eq{GLMC}. 
We check that $\bm{\CG\!\ell}_{\!\!M}(f)$ is a group homomorphism as follows:
Due to properties $(a)$ and $(b)$ in Lemma \ref{cdullpain}, it is suffice to
check that  $\rd_{C\otimes M, C\otimes M}\Big(\bm{\CG\!\ell}_{\!\!M}(f)(\w^\pr)\Big)=0$ for every
$\w^\pr \in  \grave{\bm{\CG\!\ell}}_{\!\!M}(C^\pr)=\operatorname{Z_0Aut}_{\cp_{C^\pr}\!}(C^\pr\otimes M)$. 
This is obvious by property $(c)$ in Lemma \ref{cdullpain},
since $\bm{\CG\!\ell}_{\!\!M}(f)(\w^\pr):= \bm{\CE}_{\!\!M}(f)(\w^\pr)$
and $\rd_{C\otimes M, C\otimes M}\w^\pr=0$ by definitions.
Therefore $\bm{\CG\!\ell}_{\!\!M}$ is a presheaf of groups on ${\category{ccdgC}}(\Bbbk)$.

Property $(a)$ is checked as follows.
From the condition  $f\sim \tilde f \in \HOM_{\ccdgc}(C,C^\pr)$,  there is 
a homotopy pair $\big(f(t), \l(t)\big)$ on $\HOM_{\ccdgc}(C, C^\pr)$ such that $f(0)=f$, $f(1)=\tilde f$
and $\tilde f = f + \rd_{C\!, C^\pr}\c$, where $\c:=\int^1_0\c(t)\, \mathit{dt} \in \Hom(C,C^\pr)_{1}$.  
Then, for every $\w^\pr \in \operatorname{Z_0Aut}_{\cp_C\!}(C\otimes M)$ we have
\eqalign{
\bm{\CG\!\ell}_{\!\!M}(\tilde f)(\w^\pr) -\bm{\CG\!\ell}_{\!\!M}(f)(\w^\pr)
&=\check{\mp}\Big(\check{\mq}\big(\w^\pr\big)\circ  ( \rd_{C\!, C^\pr}\chi\otimes \I_M)\Big)
\cr
&=\rd_{C\otimes M,C\otimes M}\check{\mp}\Big(\check{\mq}\big(\w^\pr\big)\circ  (\chi\otimes \I_M)\Big),
}
since both $\check{\mp}$ and $\check{\mq}$ are chain maps and $\rd_{C^\pr\otimes M\!,C^\pr\otimes M}\w^\pr=0$.

Property $(b)$ is checked as follows.
From the condition $\w^\pr \sim \tilde\w^\pr \in \operatorname{Z_0Aut}_{\cp_{C^\pr}\!}(C^\pr\otimes M)$,
we have 
$ \tilde\w^\pr -\w^\pr =\rd_{C^\pr\otimes M\!, C^\pr\otimes M}\l$ for some 
$\l \in  \operatorname{End}_{\cp_{C^\pr}\!}(C^\pr\otimes M)$.
Then, for every $f \in \HOM_{\ccdgc}(C, C^\pr)$ we have
\eqalign{
\bm{\CG\!\ell}_{\!\!M}(f)(\tilde\w^\pr) -\bm{\CG\!\ell}_{\!\!M}(f)(\w^\pr)
&=\check{\mp}\Big(\check{\mq}\big(\rd_{C^\pr\otimes M\!, C^\pr\otimes M}\l\big)\circ  (f\otimes \I_M)\Big)
\cr
&=\rd_{C\otimes M,C\otimes M}\check{\mp}\Big(\check{\mq}\big(\l\big)\circ  (f\otimes \I_M)\Big)
,
}
since both $\check{\mp}$ and $\check{\mq}$ are chain maps and $\rd_{C\!,C^\pr}f=0$.
\qed
\end{proof}

\begin{lemma}\label{cdullpainax}
For every chain complex $M$ we have a  presheaf of groups
$$\bm{\mG\!\ml}_{\!\!M}:\mathring{\mathit{ho}\category{ccdgC}}(\Bbbk) \rightsquigarrow \category{Grp}$$
on ${\mathit{ho}\category{ccdgC}}(\Bbbk)$,
sending
each ccdg-coalgebra $C$ to the  group $\bm{\mG\!\ml}_{\!\!M}(C)$ and
each $[f]\in \HOM_{\mathit{ho}\ccdgc}(C, C^\pr)$ to
a group homomorphism
$\bm{\mG\!\ml}_{\!\!M}([f]) :\bm{\mG\!\ml}_{\!\!M}(C^\pr)\rightarrow \bm{\mG\!\ml}_{\!\!M}(C)$
defined by, for all 
$[\w^\pr]\in \operatorname{H_0Aut}_{\cp_{C^\pr}\!}(C^\pr\otimes M)=\grave{\bm{\mG\!\ml}}_{\!\!M}(C) $,
$$
\bm{\mG\!\ml}_{\!\!M}([f])([\w^\pr])
:=\big[\bm{\CG\!\ell}_{\!\!M}(f)(\w^\pr)\big]
,
$$
where $f \in  \HOM_{\ccdgc}(C, C^\pr)$ and $\w^\pr\in \operatorname{Z_0Aut}_{\cp_{C^\pr}\!}(C^\pr\otimes M)$ 
are arbitrary representatives of $[f]$
and $[\w^\pr]$, respectively.
\end{lemma}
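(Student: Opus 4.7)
The plan is to reduce essentially everything to the two preceding lemmas: Lemma \ref{cdullpain}, which gives the functor $\bm{\CE}_{\!\!M}:\mathring{\category{ccdgC}}(\Bbbk)\rightsquigarrow \category{dgA}(\Bbbk)$, and Lemma \ref{cdullpaina}, which already upgrades the restriction of $\bm{\CE}_{\!\!M}(f)$ to $\operatorname{Z_0Aut}_{\cp_C\!}(C\otimes M)$ to a group homomorphism $\bm{\CG\!\ell}_{\!\!M}(f)$ and establishes the two homotopy-invariance statements (a) and (b). The group structure on $\bm{\mG\!\ml}_{\!\!M}(C)=\bm{\mG\!\ml}_{\!\!M}(C)$ was already explained in the paragraph introducing \eqref{MGLMC}, so this is not in dispute.

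First, I would verify that the assignment $[\w^\pr]\mapsto [\bm{\CG\!\ell}_{\!\!M}(f)(\w^\pr)]$ is well defined, independent of the choice of representatives $f$ of $[f]$ and $\w^\pr$ of $[\w^\pr]$. Given $f\sim \tilde f$ in $\HOM_{\ccdgc}(C,C^\pr)$ and $\w^\pr\sim\tilde\w^\pr$ in $\operatorname{Z_0Aut}_{\cp_{C^\pr}\!}(C^\pr\otimes M)$, I chain Lemma \ref{cdullpaina}(b) and (a) to get
\[
\bm{\CG\!\ell}_{\!\!M}(\tilde f)(\tilde\w^\pr)\sim \bm{\CG\!\ell}_{\!\!M}(\tilde f)(\w^\pr)\sim \bm{\CG\!\ell}_{\!\!M}(f)(\w^\pr),
\]
in $\operatorname{Z_0Aut}_{\cp_C\!}(C\otimes M)$, so the homology class depends only on $[f]$ and $[\w^\pr]$.

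Second, I would check that $\bm{\mG\!\ml}_{\!\!M}([f])$ is a group homomorphism. By Lemma \ref{cdullpain}(a) and (b), $\bm{\CE}_{\!\!M}(f)$ preserves the identity $\I_{C^\pr\otimes M}$ and respects composition on the nose. Since the group operation $\diamond$ on $\bm{\mG\!\ml}_{\!\!M}(C)$ is defined by $[\w_1]\diamond[\w_2]=[\w_1\circ\w_2]$ on representatives, and since these operations on representatives descend to the homology quotient (as recalled just before \eqref{MGLMC}), the homomorphism property is immediate:
\[
\bm{\mG\!\ml}_{\!\!M}([f])\!\big([\w^\pr_1]\diamond[\w^\pr_2]\big)
=\big[\bm{\CG\!\ell}_{\!\!M}(f)(\w^\pr_1\!\circ\!\w^\pr_2)\big]
=\bm{\mG\!\ml}_{\!\!M}([f])\!\big([\w^\pr_1]\big)\diamond \bm{\mG\!\ml}_{\!\!M}([f])\!\big([\w^\pr_2]\big),
\]
and $\bm{\mG\!\ml}_{\!\!M}([f])([\I_{C^\pr\otimes M}])=[\I_{C\otimes M}]$.

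Third, I would verify functoriality on the homotopy category. For a composable pair $[f]\in\HOM_{\hccdgc}(C,C^\pr)$, $[f^\pr]\in\HOM_{\hccdgc}(C^\pr,C^\ppr)$, the composition in $\mathit{ho}\category{ccdgC}(\Bbbk)$ is $[f^\pr]\circ_h[f]=[f^\pr\circ f]$, so by the functoriality of $\bm{\CE}_{\!\!M}$ (Lemma \ref{cdullpain}) and the well-definedness established in the first step,
\[
\bm{\mG\!\ml}_{\!\!M}([f^\pr]\circ_h[f])([\w^\ppr])
=[\bm{\CE}_{\!\!M}(f^\pr\!\circ\! f)(\w^\ppr)]
=\bm{\mG\!\ml}_{\!\!M}([f])\circ\bm{\mG\!\ml}_{\!\!M}([f^\pr])([\w^\ppr]),
\]
and preservation of the identity morphism is straightforward from $\bm{\CE}_{\!\!M}(\I_C)=\I$. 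I do not expect any serious obstacle here: the heart of the argument, the chain-homotopy bookkeeping that shows $\bm{\CE}_{\!\!M}$ sends homotopic maps to homologous morphisms, has already been handled in Lemma \ref{cdullpaina}, and the rest is just checking that the quotient by $\sim$ is compatible with the group operations and with composition.
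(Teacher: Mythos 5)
Your proposal is correct and follows essentially the same route as the paper's own proof: both reduce everything to Lemma \ref{cdullpain} (identity, composition, chain-map property of $\bm{\CE}_{\!\!M}(f)$) and Lemma \ref{cdullpaina}(a),(b) (homotopy invariance in $f$ and in $\w^\pr$), with the group structure on $\bm{\mG\!\ml}_{\!\!M}(C)$ taken as already established. You merely spell out the homomorphism and functoriality checks a bit more explicitly than the paper does, which leaves them implicit.
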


\begin{proof}
We already know that $\bm{\CG\!\ell}_{\!\!M}(C)$ is a group, \eq{MGLMC}. 
Due to Lemma \ref{cdullpaina},  it remains to check that 
the homology class $\big[\bm{\CG\!\ell}_{\!\!M}(f)(\w^\pr)\big]$ of $\bm{\CG\!\ell}_{\!\!M}(f)(\w^\pr)$
depends only on the homology class $[\w^\pr]$ of $\w^\pr$ and the homotopy type $[f]$ of $f$,
which are  evident since 
$\bm{\CE}_{\!\!M}(f)\equiv \bm{\CG\!\ell}_{\!\!M}(f)$ is a chain map---property $(c)$ in Lemma  \ref{cdullpain},
and the homology class of $\bm{\CG\!\ell}_{\!\!M}(\tilde f)(\w^\pr)$ depends only on $[f]$ and $[g^\pr]$
due to properties $(a)$ and $(b)$ in Lemma \ref{cdullpaina}.
\qed
\end{proof}

We are ready to define  a linear representation of a representable presheaf of groups 
$\bm{\CP}_{\!\!\O}:\mathring{\category{ccdgC}}(\Bbbk)\rightsquigarrow \category{Grp}$.
\begin{definition}
A linear resentation of a representable presheaf  of groups
$\bm{\CP}_{\!\!\O}$ on the category $\category{ccdgC}(\Bbbk)$ of ccdg-coalgebras
is a pair $\big(M,\bm{\r}_{\!M}\big)$, where $M$ is a chain complex and  
${\bm{\r}_{\!M}: \bm{\CP}_{\!\!\O}\Rightarrow\bm{\CG\!\ell}_{\!\!M}
:\mathring{\category{ccdgC}}(\Bbbk)\rightsquigarrow \category{Grp}}$
is a natural transformation of the presheaves.
\end{definition}

\begin{remark}\label{repyoneda}
Note that $\bm{\r}_{\!M}: \bm{\CP}_{\!\!\O}\Rightarrow\bm{\CG\!\ell}_{\!\!M}$ 
is a natural transformation of contravariant functors:
\begin{itemize}
\item the component $\bm{\r}_{\!M}^{C}$ 
of $\bm{\r}_{\!M}$ at each cdg-coalgebra
$C$ is a homomorphism
$\bm{\r}_{\!M}^{C}:\bm{\CP}_{\!\!\O}(C)\rightarrow \bm{\CG\!\ell}_{\!\!M}(C)$ 
 of groups---we have $\bm{\r}_{\!M}^{C}(g) \in \operatorname{Z_0Aut}_{\cp_C\!}(C\otimes M)$ 
for every $g\in \HOM_{\ccdgc}(C, \O)$,
and  
\item
for every morphism $f:C\rightarrow C^\pr$ of ccdg-coalgebras the diagram commutes
\eqnalign{repnatural}{
\xymatrixrowsep{1.5pc}
\xymatrixcolsep{4pc}
\xymatrix{
\ar[d]_-{\bm{\r}_{\!M}^{C^\pr}}
\bm{\CP}_{\!\!\O}(C^\pr)\ar[r]^-{\bm{\CP}_{\!\!\O}(f)}&\bm{\CP}_{\!\!\O}(C)
\ar[d]^-{\bm{\r}_{\!M}^{C}}
\cr
\bm{\CG\!\ell}_{\!\!M}(C^\pr)\ar[r]^-{\bm{\CG\!\ell}_{\!\!M}(f)}&\bm{\CG\!\ell}_{\!\!M}(C)
},\quad{i.e.,}\quad
\bm{\r}_{\!M}^{C}\circ\bm{\CP}_{\!\!\O}(f) =\bm{\CG\!\ell}_{\!\!M}(f)\circ \bm{\r}_{\!M}^{C^\pr}
.
}
\end{itemize}
Since $\bm{\CP}_{\!\!\O}$ is representable, the Yoneda lemma implies that 
$\bm{\r}_{\!M}:\bm{\CP}_{\!\!\O}\Rightarrow \bm{\CG\!\ell}_{\!\!M}$
is completely determined by the universal element $\bm{\r}_{\!M}^\O(\I_\O):\O\otimes M\to \O\otimes M$.
Indeed,  the naturalness  of $\bm{\r}_{\!M}$ impose that
$\bm{\r}_{\!M}^{C}\big(g\big)=\bm{\r}_{\!M}^{C}\left(\bm{\CP}_{\!\!\O}(g)(\I_\O)\right)
=\bm{\CG\!\ell}_{\!\!M}(g)\left( \bm{\r}_{\!M}^{\O}(\I_\O)\right)$
for every  morphism $C\xrightarrow{g}  \O$ of ccdg-coalgebras.
Explicitly, we have
\eqnalign{univnat}{
\bm{\r}_{\!M}^{C}\big(g\big)
&=\check{\mp}\Big(\check{\mq}\big(\bm{\r}_{\!M}^{\O}(\I_\O)\big)\circ  (g\otimes \I_M)\Big)
\quad\Longleftrightarrow \quad
\check{\mq}\Big(\bm{\r}_{\!M}^{C}\big(g\big)\Big) = \check{\mq}\big(\bm{\r}_{\!M}^{\O}(\I_\O)\big)\circ  (g\otimes \I_M),
}
where $\check{\mp}$ and $\check{\mq}$ are defined in Lemma \ref{cbasicl}.
\end{remark}

\begin{lemma}
A linear representation 
$\bm{\r}_{\!M}: \bm{\CP}_{\!\!\O}\Rightarrow\bm{\CG\!\ell}_{\!\!M}$
of $\bm{\CP}_{\!\!\O}$ induces a natural transformation
$[\bm{\r}]_{\!M}: \bm{\mP}_{\!\O}\Rightarrow\bm{\mG\!\ell}_{\!\!M}
:\mathring{\mathit{ho}\category{ccdgC}}(\Bbbk)\rightsquigarrow \category{Grp}$,
whose component $[\bm{\r}]_{\!M}^{C}$ at each ccdg-coalgebra
$C$ is the  homomorphism  $[\bm{\r}]_{\!M}^{C}:\bm{\mP}_{\!\O}(C)\rightarrow \bm{\mG\!\ell}_{\!\!M}(C)$ of groups 
defined by 
$[\bm{\r}]_{\!M}^{C}([g])=\left[\bm{\r}_{\!M}^{C}(g)\right] \in \operatorname{H_0Aut}_{\cp_C\!}(C\otimes M)$ 
for all  $[g]\in \HOM_{\hccdgc}(C, \O)$,
where $g\in \HOM_{\ccdgc}(C, \O)$ is an arbitrary representative of $[g]$.
\end{lemma}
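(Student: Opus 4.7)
The plan is to deduce both well-definedness and naturalness of $[\bm{\r}]_{\!M}$ on the homotopy category from the universal description of $\bm{\r}_{\!M}$ given in Remark \ref{repyoneda} together with the homotopy-invariance statements in Lemma \ref{cdullpaina}.

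First I would verify that $[\bm{\r}]_{\!M}^C$ is well-defined as a map from $\HOM_{\hccdgc}(C,\O)$ to $\operatorname{H_0Aut}_{\cp_C\!}(C\otimes M)$. Pick two representatives $g\sim \tilde g\in \HOM_{\ccdgc}(C,\O)$. By the Yoneda identity \eq{univnat}, we have $\bm{\r}_{\!M}^{C}(g)=\bm{\CG\!\ell}_{\!\!M}(g)\big(\bm{\r}_{\!M}^{\O}(\I_\O)\big)$ and similarly for $\tilde g$, where $\bm{\r}_{\!M}^{\O}(\I_\O)\in \operatorname{Z_0Aut}_{\cp_\O\!}(\O\otimes M)$. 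Applying Lemma \ref{cdullpaina}(a) with this choice of $\w^\pr$, the condition $g\sim \tilde g$ gives $\bm{\CG\!\ell}_{\!\!M}(g)\big(\bm{\r}_{\!M}^{\O}(\I_\O)\big)\sim \bm{\CG\!\ell}_{\!\!M}(\tilde g)\big(\bm{\r}_{\!M}^{\O}(\I_\O)\big)$, so $\bm{\r}_{\!M}^{C}(g)\sim \bm{\r}_{\!M}^{C}(\tilde g)$. Thus the homology class $[\bm{\r}_{\!M}^{C}(g)]$ depends only on $[g]$.

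Next I would check that $[\bm{\r}]_{\!M}^C:\bm{\mP}_{\!\O}(C)\to \bm{\mG\!\ml}_{\!\!M}(C)$ is a group homomorphism. Since $\bm{\r}_{\!M}^C$ is a group homomorphism (component of a natural transformation into $\category{Grp}$), we have $\bm{\r}_{\!M}^C(g_1\star_{C\!,\O}g_2)=\bm{\r}_{\!M}^C(g_1)\circ \bm{\r}_{\!M}^C(g_2)$ and $\bm{\r}_{\!M}^C(u_\O\circ \ep_C)=\I_{C\otimes M}$. Passing to homology classes and using that the group operations $\ast_{C\!,\O}$ and $\diamond$ on $\bm{\mP}_{\!\O}(C)$ and $\bm{\mG\!\ml}_{\!\!M}(C)$ are defined on representatives, the identities descend to give $[\bm{\r}]_{\!M}^C([g_1]\ast_{C\!,\O}[g_2])=[\bm{\r}]_{\!M}^C([g_1])\diamond [\bm{\r}]_{\!M}^C([g_2])$ and the identity element condition.

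Finally I would establish naturalness on $\mathit{ho}\category{ccdgC}(\Bbbk)$. For any $[f]\in \HOM_{\hccdgc}(C,C^\pr)$, choose a representative $f$ and any $[g^\pr]\in \HOM_{\hccdgc}(C^\pr,\O)$ with representative $g^\pr$. By \eq{repnatural} applied to $f$ and $g^\pr$, we have $\bm{\r}_{\!M}^C\big(g^\pr\circ f\big)=\bm{\CG\!\ell}_{\!\!M}(f)\big(\bm{\r}_{\!M}^{C^\pr}(g^\pr)\big)$, so taking homology classes yields
\[
[\bm{\r}]_{\!M}^C\Big(\bm{\mP}_{\!\O}([f])([g^\pr])\Big)=\big[\bm{\r}_{\!M}^C(g^\pr\circ f)\big]
=\big[\bm{\CG\!\ell}_{\!\!M}(f)\big(\bm{\r}_{\!M}^{C^\pr}(g^\pr)\big)\big]
=\bm{\mG\!\ml}_{\!\!M}([f])\Big([\bm{\r}]_{\!M}^{C^\pr}([g^\pr])\Big),
\]
where the last equality is the definition of $\bm{\mG\!\ml}_{\!\!M}([f])$ from Lemma \ref{cdullpainax}. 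The only subtle point, already handled by the previous two paragraphs, is that every appearance of a homology or homotopy class is independent of the chosen representative. I do not expect any genuine obstacle: the whole argument is essentially an application of Yoneda plus the homotopy invariance lemmas \ref{cdullpaina} and \ref{cdullpainax}, which have already done the hard work.
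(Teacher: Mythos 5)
Your proposal is correct and follows essentially the same route as the paper: both arguments rest on the universal-element description \eqref{univnat} from Remark \ref{repyoneda} to get homotopy invariance of $\bm{\r}_{\!M}^{C}$, descend the group-homomorphism identities to classes, and read off naturalness from \eqref{repnatural}. Your explicit citation of Lemma \ref{cdullpaina}(a) for the well-definedness step is just a packaged form of the same one-line chain-map computation the paper performs directly.
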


\begin{proof}
Let $g \sim \tilde g \in \HOM_{\ccdgc}(C,\O)$. 
Then there is a homotopy pair $\big(g(t),\chi(t)\big)$ on $\HOM_{\ccdgc}(C,\O)$ such
that we have a family $g(t) = g + \rd_{C\!,\O} \int^t_0 \chi(s) \mathit{ds}$ of morphism of ccdg-coalgebras 
satisfying $g(0) = g$ and $g(1)=\tilde g$.
From \eq{univnat} it follows that 
$\bm{\r}_{\!M}^{C}(\tilde g) \sim \bm{\r}_{\!M}^{C}(g)  \in \operatorname{Z_0Aut}_{\cp_C\!}(C\otimes M)$,
since both $\check{\mp}$ and $\check{\mq}$ are chain maps 
and $\bm{\r}_{\!M}^{\O}(\I_\O) \in \operatorname{Z_0Aut}_{\cp_\O\!}(\O\otimes M)$.
Therefore we have 
$\big[\bm{\r}_{\!M}^{C}(\tilde g)\big] 
=\big[\bm{\r}_{\!M}^{C}(g)\big]  \in \operatorname{H_0Aut}_{\cp_C\!}(C\otimes M)$,
so that $[\bm{\r}]_{\!M}^{C}:\bm{\mP}_{\!\O}(C)\rightarrow \bm{\mG\!\ell}_{\!\!M}(C)$  
is well-defined homomorphism of groups for every $C$.
The naturalness  of $[\bm{\r}]_{\!M}$, i.e.,
for every $[f] \in \HOM_{\hccdgc}(C, C^\pr)$ we have 
$[\bm{\r}]_M^{C}\circ\bm{\mP}_{\!\!\O}([f]) =\bm{\mG\!\ell}_{\!\!M}([f])\circ [\bm{\r}]_M^{C^\pr}$,
follows from the naturalness  \eq{repnatural} of $\bm{\r}_{\!M}$ and by the definitions of 
$[\bm{\r}]_{\!M}^{C}$,  $\bm{\mP}_{\!\!\O}([f])$ and $\bm{\mG\!\ell}_{\!\!M}([f])$.
\qed
\end{proof}

\begin{definition}
A linear representation of the presheaf of groups $\bm{\mP}_{\!\O}$ on
the homotopy category $\mathit{ho}\category{ccdgC}(\Bbbk)$ is
a pair $\big(M, [\bm{\r}]_{\!M}\big)$ of chain complex $M$ and 
a natural transformation 
${[\bm{\r}]_{\!M}: \bm{\mP}_{\!\O}\Rightarrow\bm{\mG\!\ell}_{\!\!M}
:\mathring{\mathit{ho}\category{ccdgC}}(\Bbbk)\rightsquigarrow \category{Grp}}$,
which is
{\bf induced} from a linear representation 
$\bm{\r}_{\!M}: \bm{\CP}_{\!\!\O}\Rightarrow\bm{\CG\!\ell}_{\!\!M}
:\mathring{\category{ccdgC}}(\Bbbk)\rightsquigarrow \category{Grp}$
of the presheaf of groups $\bm{\CP}_{\!\!\O}$ on ${\category{ccdgC}}(\Bbbk)$. 
\end{definition}

\begin{remark}
Despite of the above definition we will  work with  linear representations of 
$\bm{\CP}_{\!\O}$ rather than those of $\bm{\mP}_{\!\O}$.
The linear representations   of $\bm{\CP}_{\!\!\O}$ form a dg-tensor category
$\gdcat{Rep}(\bm{\CP}_{\!\!\O})$.
 Working with the dg-tensor category of linear representations of 
$\bm{\CP}_{\!\!\O}$  will be a crucial step for  Tannakian reconstructions 
of both $\bm{\CP}_{\!\!\O}$ and $\bm{\mP}_{\O}$
in the next section.
We may regard $\gdcat{Rep}(\bm{\CP}_{\!\!\O})$ as "the  dg-tensor category"
of linear representations of $\bm{\mP}_{\!\O}$, where the category of linear representations of $\bm{\mP}_{\!\O}$
can be defined as the homotopy category of $\gdcat{Rep}(\bm{\CP}_{\!\!\O})$. We will not elaborate 
this point as we will never use it.
\end{remark}

Here are two basic examples of linear representations of $\bm{\CP}_{\!\!\O}$.

\begin{example}[The trivial representation] \label{trivalrepsh}
The ground field $\Bbbk$ as a chain complex $\Bbbk=(\Bbbk, 0)$ with zero differential
defines the trivial representation  $\big(\Bbbk, \bm{\r}_{\!\Bbbk}\big)$, where the component  
$\bm{\r}^C_{\!\Bbbk}$ 
of $\bm{\r}_{\!\Bbbk}:\bm{\CP}_{\!\!\O}\Rightarrow \bm{\CG\!\ell}_{\!\!\Bbbk}$
at every ccdg-coalgebra $C$ is the 
homomorphism $\bm{\r}^C_{\!\Bbbk}:\bm{\CP}_{\!\!\O}(C)\Rightarrow\bm{\CG\!\ell}_{\!\Bbbk}(C)$ of groups
defined by, $\forall g \in \HOM_{\ccdgc}(C,\O)$,
\eqn{trivialrep}{
\bm{\r}^C_{\!\Bbbk}(g) := \I_C \otimes \I_\Bbbk: C\otimes \Bbbk\rightarrow C\otimes \Bbbk
.
}
\end{example}

\begin{example}[The regular representation] \label{regularrepsh}
Associated to the ccdg-Hopf algebra $\O$ as a chain complex
we have the regular representation  $\big(\O, \bm{\r}_{\!\O}\big)$, where the component  
$\bm{\r}^C_{\!\O}$ 
of $\bm{\r}_{\!\O}:\bm{\CP}_{\!\!\O}\Rightarrow \bm{\CG\!\ell}_{\!\!\O}$
at every ccdg-coalgebra $C$ is the 
homomorphism $\bm{\r}^C_{\!\O}:\bm{\CP}_{\!\!\O}(\O)\Rightarrow\bm{\CG\!\ell}_{\!\!\O}$ of groups
defined by, $\forall g \in \HOM_{\ccdgc}(C,\O)$,
\eqnalign{regularrep}{
\bm{\r}^C_{\!\O}(g)
=
&
(\I_C\otimes m_\O)\circ (\I_C\otimes g\otimes \I_\O)\circ (\cp_C\otimes \I_\O)
=\check{\mp}\big(m_\O\circ(g\otimes\I_\O)\big)
\cr
&
\xymatrixcolsep{3pc}
\xymatrix{
C\otimes \O \ar[r]^-{\cp_C\otimes \I_\O} & C\otimes C \otimes \O \ar[r]^-{\I_C\otimes g\otimes \I_\O} 
& C\otimes \O\otimes \O \ar[r]^-{\I_C\otimes m_\O}& C\otimes \O
}.
}
We can check that $\big(\O, \bm{\r}_{\!\O}\big)$ is a linear representation as follows
\begin{itemize}
\item 
We have $\rd_{C\otimes M,C\otimes M}\bm{\r}^C_{\!\O}(g)
= \check{\mp}\big(m_\O\circ(\rd_{C\!,\O}g\otimes\I_\O)\big)=0$
for  all $g \in \HOM_{\ccdgc}(C,\O)$,
since both $\check{\mp}$ and $m_\O$ are chain maps;

\item
We have $\bm{\r}^C_{\!\O}(u_\O\circ \ep_C)
=  (\I_C\otimes m_\O)\circ \big(\I_C\otimes( u_\O\circ \ep_C) \otimes \I_\O\big)\circ (\cp_C\otimes \I_\O)
=\I_{C\otimes \O}
.
$

\item
For all $g_1,g_2 \in \HOM_\ccdgc(C,\O)$:
\eqalign{
\bm{\r}^C_{\!\O}\big(g_1 &\star_{C\!,\O} g_2\big)
:=
(\I_C\otimes m_\O)\circ \Big(\I_C\otimes \big(m_\O\circ (g_1\otimes g_2)
\circ \cp_C\big)\otimes \I_\O\Big)\circ (\cp_C\otimes \I_\O)
\cr
=
&
(\I_C\otimes m_\O)\circ (\I_C\otimes g_1\otimes \I_\O)\circ (\cp_C\otimes \I_\O)
\circ (\I_C\otimes m_\O)\circ (\I_C\otimes g_2\otimes \I_\O)\circ (\cp_C\otimes \I_\O)
\cr
=
&\bm{\r}^C_{\!\O}(g_1) \circ \bm{\r}^C_{\!\O}(g_2).
}
The 2nd equality is due to coassociativity of $\cp_C$ and the associativity of $m_\O$.
\qed
\end{itemize}

\end{example}

\begin{definition}[Lemma]
\label{repofpshgdcat}
Linear representations
of the representable presheaf  of groups $\bm{\CP}_{\!\!\O}$ 
form a dg-tensor category $\gdcat{{Rep}}\big(\bm{\CP}_{\!\!\O}\big)$
defined as follows.

\begin{enumerate}[label=({\alph*})]
\item
An object is a linear presentation $\big(M,\bm{\r}_{\!M}\big)$ of 
$\bm{\CP}_{\!\!\O}$.

\item
A morphism $\p: \big(M,\bm{\r}_{\!M}\big)\rightarrow \big(M^\pr,\bm{\r}_{\!M^\pr}\big)$ 
of linear representations  of
$\bm{\CP}_{\!\!\O}$ is a linear map $\p:M \rightarrow M^\pr$
making the following diagram commutative
for every ccdg-coalgebra $C$
and every $g \in \bm{\CP}_{\!\!\O}(C)$  
$$
\xymatrixcolsep{3pc}
\xymatrix{
\ar[d]_-{\bm{\r}_{\!M}^{C}(g)}
C\otimes M \ar[r]^-{\I_C\otimes \p} & C\otimes M^\pr 
\ar[d]^-{\bm{\r}_{\!M^\pr}^{C}(g)}
\cr
C\otimes M \ar[r]^-{\I_C\otimes \p} & C\otimes M^\pr
,}
\quad\hbox{i.e.,}\quad
(\I_C\otimes \p)\circ\bm{\r}_{\!M}^{C}(g)
=\bm{\r}_{\!M^\pr}^{C}(g)\circ (\I_C\otimes \p)
,
$$
where  $\bm{\r}_{\!M}^C: \bm{\CP}_{\!\!\O}(C)\rightarrow \bm{\CG\!\ell}_{\!\!M}(C)$
is the  component of the natural transformation $\bm{\r}_{\!M}$ at $C$.

\item 
The differential of a morphism 
$\p: \big(M,\bm{\r}_{\!M}\big)\rightarrow \big(M^\pr,\bm{\r}_{\!M^\pr}\big)$ of linear representations 
is the morphism  
$\rd_{M\!,M^\pr}\p: \big(M,\bm{\r}_{\!M}\big)\rightarrow \big(M^\pr,\bm{\r}_{\!M^\pr}\big)$ of linear representations.

\item The tensor product $\big(M,\bm{\r}_{\!M}\big)\bm{\otimes} \big(M^\pr,\bm{\r}_{\!M^\pr}\big)$ of two objects
is the linear representation $(M\otimes M^\pr, \bm{\r}_{\!M\otimes M^\pr})$, where
$M\otimes M^\pr=(M\otimes M^\pr, \rd_{M\otimes M^\pr})$ is the tensor product of chain complexes
and  $ \bm{\r}_{\!M\otimes M^\pr} : \bm{\CP}_{\!\!\O}\Rightarrow\bm{\CG\!\ell}_{\!\!M\otimes M^\pr}$ is the natural transformation
whose component $\bm{\r}_{\!M\otimes M^\pr}^C:\bm{\CP}_{\!\!\O}(C)\rightarrow\bm{\CG\!\ell}_{\!\!M\otimes M^\pr}(C)$ 
at every ccdg-coalgebra $C$ is the group homomorphism
defined by, $\forall g\in \HOM_{\ccdgc}(C,\O)$,
\eqalign{
\bm{\r}_{\!M\otimes M^\pr}^C(g)
:=& \bm{\r}^C_{\!M}(g)\otimes_{\cp_C}\bm{\r}^C_{\!M^\pr}(g)
.
}
The unit object for the tensor product is the trivial representation 
$(\Bbbk, \bm{\r}_{\!\Bbbk})$ in Example \ref{trivalrepsh}.
\end{enumerate}
\end{definition}

\begin{remark}
The explicit form of $\bm{\r}_{\!M\otimes M^\pr}^C(g):= \bm{\r}^C_{\!M}(g)\otimes_{\cp_C}\bm{\r}^C_{\!M^\pr}(g)$ is
\eqalign{
&\bm{\r}_{\!M\otimes M^\pr}^C(g)
=\big(\bm{\r}^C_{\!M}(g) \otimes \check{\mq}(\bm{\r}^C_{\!M^\pr}(g))\big)\circ (\I_C\otimes \t\otimes \I_{M^\pr})
\circ (\cp_C\otimes \I_M\otimes \I_{M^\pr})
\cr
&\qquad
=(\I_{C}\otimes\I_{ M}\otimes \imath_M\circ (\ep_{C}\otimes \I_{M^\pr}) )\circ
\big(\bm{\r}^C_{M}(g) \otimes \bm{\r}^C_{M^\pr}(g)\big)
\circ (\I_C\otimes \t\otimes \I_{M^\pr})
\circ (\cp_C\otimes \I_M\otimes \I_{M^\pr})
.
}
Equivalently, $\bm{\r}_{\!M\otimes M^\pr}^C(g)$ is determined by the following equality:
\eqnalign{tensor of natural transformations}{
\check{\mq}\big(\bm{\r}_{\!M\otimes M^\pr}^C(g)\big)=\big(\check{\mq}(\bm{\r}_{\!M}^C(g))\otimes\check{\mq}(\bm{\r}_{\!M^\pr}^C(g))\big)\circ(\I_C\otimes\tau\otimes \I_{M^\pr})\circ(\cp_C\otimes\I_{M\otimes M^\pr}).
}
\end{remark}

\begin{proof}
It is trivial to check that $\rd_{M\!,M^\pr}\p$ 
is a linear representation whenever $\p$ is a linear representation. Then it becomes
obvious that $\gdcat{{Rep}}\big(\bm{\CP}_{\!\!\O}\big)$ is a dg-category. It is also trivial to check that the tensor product
and the unit object in $(d)$ endow 
$\gdcat{{Rep}}\big(\bm{\CP}_{\!\!\O}\big)$ with a structure of dg-tensor category. 
\qed
\end{proof}

\subsection{An isomorphism with the dg-tensor category of left dg-modules}

A left dg-module over  the ccdg-Hopf algebra $\O$
is a tuple $(M, {\g}_M)$, where $M=(M, \rd_M)$ is a chain complex
and ${\g}_M: \O\otimes M \rightarrow M$ is a chain map making the diagrams commutative
$$
\xymatrixcolsep{3pc}
\xymatrix{
\O\otimes M \ar[r]^{{\g}_M} & M 
\cr
& \Bbbk\otimes M\ar[ul]^-{u_\O\otimes \I_M}\ar[u]_-{\imath_M}
},
\qquad\quad
\xymatrix{
\ar[d]_-{\I_\O\otimes {\g}_M}
\O\otimes  \O\otimes M \ar[r]^-{m_\O\otimes \I_M} & \O\otimes M
\ar[d]^-{{\g}_M}
\cr
\O\otimes  M \ar[r]^-{{\g}_M} &  M
}
$$
That is 
$$
\g_M\circ \rd_{\O\otimes M} = \rd_M\circ \g_M
,\qquad
\begin{cases}
{\g}_M\circ (u_\O\otimes \I_M)=\imath_M
,\cr
{\g}_M\circ (\I_\O\otimes {\g}_M) = {\g}_M\circ (m_\O\otimes \I_M).
\end{cases}
$$
A morphism $\xymatrix{(M, {\g}_M)\ar[r]^-{\p}& (M^\pr, {\g}_{M^\pr})}$ of left dg-modules over $\O$
is a linear map $\p:M\rightarrow M^\pr$ making the following diagram commutes
$$
\xymatrix{
\ar[d]_{\I_\O\otimes \p}
\O\otimes M \ar[r]^-{{\g}_M} & M \ar[d]^\p
\cr
\O\otimes M^\pr \ar[r]^-{{\g}_{M^\pr}} & M^\pr
},\quad\hbox{i.e.,}\quad  \p\circ {\g}_M ={\g}_{M^\pr}\circ (\I_\O\otimes \p).
$$
It is trivial to check that 
$\rd_{M,M^\pr}\p: (M, {\g}_M)\rightarrow (M^\pr, {\g}_{M^\pr})$ 
is a morphism of left dg-modules over $\O$ whenever $\p$ is so,
and we have $\rd_{M,M^\pr}\circ\rd_{M,M^\pr}=0$. 
For every consecutive morphism 
$\p^\pr: (M^\pr, {\g}_{M^\pr})\rightarrow (M^\ppr, {\g}_{M^\ppr})$ of left dg-modules over $\O$
we also have 
$\rd_{M,M^\ppr}\big(\p^\pr\circ \p \big) =\rd_{M^\pr,M^\ppr}\p^\pr\circ \p +(-1)^{|\p^\pr|}\p^\pr\circ\rd_{M,M^\pr}\p$.
Therefore,  we have a dg-category $\gdcat{dgMod}_L(\O)$ of left dg-modules over $\O$.

The tensor product $(M, {\g}_M)\otimes_{\cp_\O\!} (M^\pr, {\g}_{M^\pr})$ 
of  left dg-modules $(M, {\g}_M)$ and $(M^\pr, {\g}_{M^\pr})$ over $\O$
is the left dg-module  $\big(M\otimes M^\pr, {\g}_{M\otimes_{\cp_\O\!} M^\pr}\big)$  over $\O$, 
where $M\otimes M^\pr$ is the chain complex with the differential $\rd_{M\otimes M^\pr}$ and
\eqnalign{modtensor}{
&{\g}_{M\otimes_{\cp_\O}\! M^\pr}
:=  (\g_M\otimes \g_{M^\pr})\circ (\I_\O\otimes\t\otimes \I_{M^\pr})\circ(\cp_\O\otimes \I_M\otimes \I_{M^\pr})
\cr
&
:\xymatrixcolsep{2.5pc}
\xymatrix{
\O\!\otimes\! M\!\otimes\! M^\pr \ar[r]^-{\cp_\O\!\otimes\! \I_{M\!\otimes\! M^\pr}}& \O\!\otimes\!  \O\!\otimes\! M\!\otimes\! M^\pr
\ar[r]^-{\I_\O\!\otimes\! \t\!\otimes\! \I_{M^\pr}}&\O\!\otimes\! M\!\otimes\! \O\!\otimes\! M^\pr 
\ar[r]^-{{\g}_{M}\!\otimes\! {\g}_{M^\pr}}& M\!\otimes\! M^\pr
.
}
}
The ground field $\Bbbk$ has a structure $(\Bbbk,\g_\Bbbk)$
of left dg-module $\xymatrix{\O\otimes \Bbbk \ar[r]^-{{\g}_\Bbbk}& \Bbbk}$
over $\O$, where ${\g}_\Bbbk :=m_\Bbbk\circ( \ep_\O\otimes \I_\Bbbk)$.
We can check that the dg-category of left dg-modules over $\O$ 
is a dg-tensor category $\left( \gdcat{dgMod}_L(\O), \otimes _{\cp_\O\!}, (\Bbbk,\g_\Bbbk)\right)$
as follows:
\begin{itemize}
\item
For left dg-modules 
$\big(M,\g_M\big)$,  $\big(M^\pr,\g_{M^\pr}\big)$  and $\big(M^{\pr\pr},\g_{M^{\pr\pr}}\big)$ over $\O$,
the isomorphism $(M\otimes M^\pr)\otimes M^{\pr\pr}\cong M\otimes(M^\pr\otimes M^{\pr\pr})$ 
of the underlying chain complexes induces an isomorphism
\[
\Big(\big(M,\g_M\big)\otimes_{\cp_\O}\big(M^\pr,\g_{M^\pr}\big)\Big)\otimes_{\cp_\O}\big(M^{\pr\pr},\g_{M^{\pr\pr}}\big)
\cong
\big(M,\g_M\big)\otimes_{\cp_\O}\Big(\big(M^\pr,\g_{M^\pr}\big)\otimes_{\cp_\O}\big(M^{\pr\pr},\g_{M^{\pr\pr}}\big)\Big)
\]
of  left dg-modules over $\O$, since $\cp_\O$ is coassociative.
 \item
For every left dg-module $\big(M,\g_M\big)$ over $\O$, 
the isomorphisms $M\otimes\Bbbk\cong M\cong \Bbbk\otimes M$ of the underlying chain complexes induce  isomorphisms
$$
\big(M,{\g}_M\big)\otimes_{\cp_\O\!}  \big(\Bbbk, {\g}_\Bbbk\big)\cong \big(M,{\g}_M\big) \cong  \big(\Bbbk, {\g}_\Bbbk\big) \otimes_{\cp_\O\!}  \big(M,{\g}_M\big)
$$
of left dg-modules over $\O$, due to the counit axiom 
$\imath_\O\circ(\ep_\O\otimes \I_\O)\circ\cp_\O=\jmath_\O\circ(\I_\O\otimes\ep_\O)\circ\cp_\O=\I_\O$.
\end{itemize}

%
%

\begin{theorem}\label{repmod}
The dg-category $\gdcat{Rep}\big(\bm{\CP}_{\!\!\O}\big)$ of linear representations
of  $\bm{\CP}_{\!\!\O}$   is isomorphic to the dg-category
of $\gdcat{dgMod}_L(\O)$ of left dg-modules over $\O$
as dg-tensor categories.
Explicitly, 
we have an isomorphism of dg-tensor categories 
$$\xymatrix{{\functor{X}}:\gdcat{Rep}\big(\bm{\CP}_{\!\!\O}\big)\ar@{~>}@/^/[r]
& \ar@{~>}@/^/[l] \gdcat{dgMod}_L(\O): {\functor{Y}}}$$
defined as follows.
\begin{itemize}
\item $\functor{X}$ sends each representation $\big(M,\bm{\r}_{\!M}\big)$ to the left dg-module 
$\big(M,\widebreve\g_{\!M}\big)$, where
\eqnalign{funcx}{
\widebreve\g_{\!M}:=&\check{\mq}\big(\bm{\r}_{\!M}^\O(\I_\O)\big)
\cr
=&\jmath_M \circ (\ep_\O\otimes \I_M)\circ \bm{\r}_{\!M}^\O(\I_{\O}):\O\otimes M\to M,
}
and each morphism $\p:\big(M,\bm{\r}_{\!M}\big)\rightarrow \big(M^\pr,\bm{\r}_{\!M^\pr}\big)$ of representations
to the morphism 
$\p:\big(M,\widebreve{\g}_{\!M}\big)\rightarrow \big(M^\pr,\widebreve{\g}_{\!M^\pr}\big)$ of left dg-modules.

\item $\functor{Y}$ sends each left dg-module 
$\big(M,\g_{\!M}\big)$ to the representation $\big(M,\widebreve{\bm{\r}}_{\!M}\big)$,
where the component  $\widebreve{\bm{\r}}^C_{\!M}$ of $\widebreve{\bm{\r}}_{\!M}$ at a ccdg-coalgebra
$C$ is defined by, $\forall g \in \HOM_{\ccdgc}(C, \O)$,
\eqnalign{funcy}{
\widebreve{\bm{\r}}_{\!M}^{C}(g):=
&\check{\mp}\Big(\g_{\!M}\circ(g\otimes \I_M)\Big)
\cr
=
&(\I_C\otimes {\g}_{\!M})\circ  (\I_C\otimes g\otimes \I_M)\circ (\cp_{\!C}\otimes \I_M)
:C\otimes M\rightarrow C\otimes M,
}
and each morphism $\p:\big(M,\g_{\!M}\big)\rightarrow \big(M^\pr,\g_{\!M^\pr}\big)$ of left dg-modules
to the morphism  $\p:\big(M,\widebreve{\bm{\r}}_{\!M}\big)\rightarrow \big(M^\pr,\widebreve{\bm{\r}}_{\!M^\pr}\big)$ 
of representations.
\end{itemize}

\end{theorem}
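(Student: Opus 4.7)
The plan is to use the Yoneda/universality principle of Remark \ref{repyoneda} and the bijection
$\xymatrix{\check{\mp}:\Hom(C\otimes M,N)\ar@/^/[r]&\ar@/^/[l]\Hom_{\cp_C}(C\otimes M,C\otimes N):\check{\mq}}$
of Lemma \ref{cbasicl} as the bridge between representations and modules. First I will verify that $\functor{X}$ and $\functor{Y}$ are well defined on objects, then that they intertwine morphisms, then that they are mutually inverse, and finally that they preserve the dg-tensor structures.

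For $\functor{Y}$: given a left dg-module $(M,\g_M)$, the formula $\widebreve{\bm{\r}}_M^C(g)=\check{\mp}\big(\g_M\circ(g\otimes \I_M)\big)$ automatically lies in $\operatorname{End}_{\cp_C}(C\otimes M)$, its differential computation reduces to $g$ and $\g_M$ being chain maps, and $\widebreve{\bm{\r}}_M^C(u_\O\circ\ep_C)=\I_{C\otimes M}$ follows from the unit axiom of $\g_M$ together with the counit axiom of $\cp_C$. The multiplicativity $\widebreve{\bm{\r}}_M^C(g_1\star_{C\!,\O}g_2)=\widebreve{\bm{\r}}_M^C(g_1)\circ\widebreve{\bm{\r}}_M^C(g_2)$ is the direct dg analogue of Example \ref{regularrepsh}: it uses the coassociativity of $\cp_C$, the associativity of $\g_M$, and the fact that the left actions combine via the coproduct. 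Naturality in $C$ is an immediate consequence of the formula $\check{\mp}$ being built from $\cp_C$, combined with $f$ being a morphism of ccdg-coalgebras.

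For $\functor{X}$: given $(M,\bm{\r}_M)$, I will check the three module axioms for $\widebreve{\g}_M=\check{\mq}(\bm{\r}_M^\O(\I_\O))$ by pulling back along well-chosen morphisms of ccdg-coalgebras and applying naturality (\ref{repnatural}) together with the group homomorphism property of the component maps. The chain map condition follows from $\bm{\r}_M^\O(\I_\O)\in\operatorname{Z_0Aut}_{\cp_\O}(\O\otimes M)$ and the fact that $\check{\mq}$ is a chain map. For the unit axiom $\widebreve{\g}_M\circ(u_\O\otimes\I_M)=\imath_M$, I apply naturality along $u_\O:\Bbbk^\vee\to\O$ and use that $u_\O$ is the identity element of the group $\bm{\CP}_{\!\!\O}(\Bbbk^\vee)$, so $\bm{\r}_M^{\Bbbk^\vee}(u_\O)=\I_{\Bbbk\otimes M}$. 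For associativity, I use the bialgebra fact that $m_\O:\O\otimes\O\to\O$ is a morphism of ccdg-coalgebras and that under the convolution product on $\bm{\CP}_{\!\!\O}(\O\otimes\O)$ one has $m_\O=\pi_1\star_{\O\otimes\O,\O}\pi_2$ where $\pi_1,\pi_2$ are the two canonical projections to $\O$; applying $\bm{\r}_M^{\O\otimes\O}$ and translating back via $\check{\mq}$ gives exactly $\widebreve{\g}_M\circ(m_\O\otimes\I_M)=\widebreve{\g}_M\circ(\I_\O\otimes\widebreve{\g}_M)$. I expect this associativity check to be the main technical obstacle, as it requires carefully tracking the universality formula \eqref{univnat} through a coalgebra that is not itself representable-looking.

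The mutual inverse relations $\functor{X}\circ\functor{Y}=\I$ and $\functor{Y}\circ\functor{X}=\I$ are then immediate: $\check{\mq}\circ\check{\mp}=\I_{\Hom(C\otimes M,N)}$ from \eqref{coalgextx} handles one direction at $C=\O$, $g=\I_\O$, while the other direction is precisely the universal naturality formula \eqref{univnat} recovering $\bm{\r}_M^C(g)$ from $\bm{\r}_M^\O(\I_\O)$. Since a morphism in either category is a linear map $\p:M\to M^\pr$ and the two intertwining conditions are related by applying $\check{\mp}$ or $\check{\mq}$ (which are natural in the target), $\functor{X}$ and $\functor{Y}$ act as the identity on morphism sets. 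Finally, the compatibility with the tensor products reduces to comparing \eqref{tensor of natural transformations} with the module tensor product formula \eqref{modtensor}: evaluating \eqref{tensor of natural transformations} at $C=\O$, $g=\I_\O$ and applying $\check{\mq}$ reproduces $\g_{M\otimes_{\cp_\O}M^\pr}$ on the nose, and the unit objects correspond under $\functor{X}$ because $\bm{\r}_\Bbbk^\O(\I_\O)=\I_{\O\otimes\Bbbk}$ yields $\widebreve{\g}_\Bbbk=\g_\Bbbk=m_\Bbbk\circ(\ep_\O\otimes\I_\Bbbk)$.
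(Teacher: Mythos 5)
Your proposal is correct and follows essentially the same route as the paper's proof: both reduce everything to the universal element $\bm{\r}_{\!M}^{\O}(\I_\O)$ via the Yoneda formula \eq{univnat} and the bijection $(\check{\mp},\check{\mq})$ of Lemma \ref{cbasicl}, verify the unit axiom by specializing to $C=\Bbbk^\vee$ and the associativity axiom via the projections $\pi_1,\pi_2$ on the ccdg-coalgebra $\O\otimes\O$ (your identity $m_\O=\pi_1\star_{\O\otimes\O,\O}\pi_2$ is precisely the paper's substitution $C=\O\otimes\O$, $g_1=\pi_1$, $g_2=\pi_2$ combined with $(\pi_1\otimes\pi_2)\circ\cp_{\O\otimes\O}=\I_{\O\otimes\O}$), and compare the tensor structures by matching \eq{tensor of natural transformations} against \eq{modtensor}. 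No gaps.
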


\begin{proof}
We need to check that both $\functor{X}$
and $\functor{Y}$ are dg-tensor functors and show that they are inverse to each other.

1. We check that  
$\big(M,\widebreve\g_{\!M}\big)=\functor{X}\big(M,\bm{\r}_{\!M}\big)$ is a left dg-module over $\O$ as follows.

From \eq{univnat}  in Remark  \ref{repyoneda} and the definition \eq{funcx},
we have the following relation for every morphism $g:C\to \O$ of ccdg-coalgebras:
\eqnalign{Yoneda lemma implies}{
\check{\mq}\Big(\bm{\r}_{\!M}^{C}\big(g\big)\Big)=\widebreve\g_{\!M}\circ(g\otimes \I_M):C\otimes M\to M.
}
The component $\bm{\r}_{\!M}^{C}$ of  $\bm{\r}_{\!M}$ at  every ccdg-coalgebra $C$, by definition,
is a morphism
$\bm{\r}_{\!M}^{C}:\bm{\CP}_{\!\!\O}(C)\rightarrow \bm{\CG\!\ell}_{\!\!M}(C)$ of groups, i.e.,
for every pair of morphisms $g_1,g_2:C\to \O$ of ccdg-coalgebras, we have
\eqnalign{group homomorphism condition}{
\bm{\r}_{\!M}^{C}\big(u_\O\circ\ep_C\big)=\I_{C\otimes M},
\qquad
\bm{\r}_{\!M}^{C}\big(g_1\star_{C\!,\O}g_2\big)=\bm{\r}_{\!M}^{C}(g_1)\circ \bm{\r}_{\!M}^{C}(g_2).
}
\begin{itemize}
\item
Applying $\check{\mq}$ on the $1$st equality of \eq{group homomorphism condition}
and using  \eq{Yoneda lemma implies}, we have
\eqnalign{unital action actiom}{
\widebreve\g_{\!M}\circ(u_\O\otimes\I_M)\circ(\ep_C\otimes \I_M)=\ep_C\otimes\I_M.
}
By putting $C=\Bbbk^\vee$, we obtain that  $\widebreve\g_M\circ(u_\O\otimes\I_M)=\imath_M$.

\item
Applying $\check{\mq}$ on the $2$nd equality of \eq{group homomorphism condition}, we have
\eqnalign{showing the action axiom}{
\widebreve\g_{\!M}\circ(m_\O\otimes \I_M)\circ &(g_1\otimes g_2\otimes \I_M)\circ(\cp_C\otimes\I_M)
\cr
&
=\widebreve\g_{\!M}\circ(\I_\O\otimes\widebreve\g_M)\circ(g_1\otimes g_2\otimes \I_M)\circ(\cp_C\otimes\I_M).
}
Consider the ccdg-coalgebra $\O\otimes \O$ and the projection maps $\pi_1,\pi_2:\O\otimes \O\to \O$
\[
\pi_1:=
\xymatrix{
\O\otimes \O\ar[r]^-{\I_\O\otimes\ep_\O}&\O\otimes \Bbbk\ar[r]^-{\jmath_\O}&\O
,
}
\quad\quad
\pi_2:=
\xymatrix{
\O\otimes \O\ar[r]^-{\ep_\O\otimes\I_\O}&\Bbbk\otimes \O\ar[r]^-{\imath_\O}&\O
,
}
\]
which are morphisms of ccdg-coalgebras. 
We can check that $(\pi_1\otimes\pi_2)\circ\cp_{\O\otimes\O}=\I_{\O\otimes\O}$ from an elementary calculation. 
By substituting  $C =\O\otimes \O$,  $g_1=\pi_1$ and $g_2=\pi_2$ in \eq{showing the action axiom} we obtain that
$\widebreve\g_M\circ(m_\O\otimes \I_M)=\widebreve\g_M\circ(\I_\O\otimes\widebreve\g_M)$.

\item
Finally we check that  $\widebreve\g_{\!M}:\O\otimes M\to M$ is a chain map:
\eqalign{
\rd_{\O\otimes M\!,M}\widebreve\g_{\!M}=
\rd_{\O\otimes M\!,M}\check{\mq}\big(\bm{\r}_{\!M}^\O(\I_\O)\big)
=\check{\mq}\big(\rd_{\O\otimes M,\O\otimes M}\bm{\r}_{\!M}^\O(\I_\O)\big)
=0,
}
where we have used the facts that $\check{\mq}$ is a chain map defined in Lemma \ref{cbasicl}
and  $\bm{\r}_{\!M}^{\O}(\I_\O) \in \operatorname{Z_0Aut}_{\cp_\O\!}(\O\otimes M)$.

\end{itemize}

2. We show that $\functor{X}$ is a dg-tensor functor.
Given a morphism $\p:\big(M,\bm{\r}_{\!M}\big)\to \big(M',\bm{\r}_{M'}\big)$ of representations, $\functor{X}(\p)
=\p:\big(M,\widebreve{\g}_{\!M}\big)\rightarrow \big(M^\pr,\widebreve{\g}_{\!M^\pr}\big)$ 
is a morphism of left dg-modules over $\O$,
since the following diagram commutes
\[
\xymatrixrowsep{1.5pc}
\xymatrixcolsep{3.5pc}
\xymatrix{
\O\otimes M \ar[d]^-{\I_\O\otimes\p} \ar[r]^-{\bm{\r}_{\!M}^\O(\I_\O)}&
\O\otimes M \ar[d]^-{\I_\O\otimes\p} \ar[r]^-{\ep_\O\otimes\I_M}&
\Bbbk\otimes M \ar[d]^-{\I_\Bbbk\otimes\p} \ar[r]^-{\imath_M}&
M \ar[d]^-{\p}\\
\O\otimes M^\pr \ar[r]^-{\bm{\r}_{M^\pr}^\O(\I_\O)}&
\O\otimes M^\pr \ar[r]^-{\ep_\O\otimes\I_{M^\pr}}&
\Bbbk\otimes M^\pr\ar[r]^-{\imath_{M^\pr}}&
M'
},
\]
where the very left square commutes since $\p$ is a morphism of representations
and the commutativity of the other squares are obvious---
note that the horizontal compositions are exactly $\widebreve\g_M$ and $\widebreve\g_{M'}$.
It is obvious that 
$\functor{X}(\rd_{M,M^\pr}\p)=\rd_{M,M^\pr}\p=\rd_{M,M^\pr}\big(\functor{X}(\p)\big)$.
Therefore $\functor{X}$ is a dg-functor.
The tensor property of $\functor{X}$ is checked as follows:
\begin{itemize}
\item
From Example \ref{trivalrepsh} we have $\functor{X}\big(\Bbbk,\bm{\r}_\Bbbk\big)=\big(\Bbbk,\g_\Bbbk\big)$.

\item
For two representations $\big(M,\bm{\r}_{\!M}\big)$ and $\big(M',\bm{\r}_{M'}\big)$ of $\bm{\CP}_{\!\!\O}$, we have
\eqalign{
\g_{\functor{X}(M,\bm{\r}_{\!M})\otimes_{\cp_\O}\functor{X}(M^\pr,\bm{\r}_{M^\pr})}
&=
\big(
\g_{\functor{X}(M,\bm{\r}_{\!M})}\otimes\g_{\functor{X}(M^\pr,\g_{M^\pr})}
\big)
\circ(\I_\O\otimes\tau\otimes\I_{M^\pr})\circ(\cp_\O\otimes \I_{M\otimes M^\pr})\\
&=
\g_{\functor{X}\left(
(M,\bm{\r}_{\!M})\otimes(M^\pr,\bm{\r}_{M^\pr})
\right)}
.
}
The $1$st equality is from \eq{modtensor}, and the $2$nd equality is from \eq{tensor of natural transformations}.
We conclude that 
$\functor{X}\Big(\big(M,\bm{\r}_{\!M}\big)\otimes\big(M^\pr,\bm{\r}_{M^\pr}\big)\Big)
=\functor{X}\big(M,\bm{\r}_{\!M}\big)\otimes_{\cp_\O}\! \functor{X}\big(M^\pr,\bm{\r}_{M^\pr}\big).$
\end{itemize}

3. 
We show  that 
$\big(M,\widebreve{\bm{\r}}_{\!M}\big)=\functor{Y}\big(M,\g_{\!M}\big)$ 
is a representation of $\bm{\CP}_{\!\!\O}$ as follows.
We first show that $\widebreve{\bm{\r}}_{\!M}^{C}:\bm{\CP}_{\!\!\O}(C)\rightarrow \bm{\CG\!\ell}_{\!\!M}(C)$ 
is a homomorphism of groups for every  $C$:
\begin{itemize}

\item
We have $\bm{\r}^C_{\!M}(u_\O\circ \ep_C)
=  (\I_C\otimes \g_{\!M})\circ \big(\I_C\otimes( u_\O\circ \ep_C) \otimes \I_M\big)\circ (\cp_C\otimes \I_M)
=\I_{C\otimes M}
$, where we have used the counity of $\cp_C$ and  the property ${\g}_M\circ (u_\O\otimes \I_M)=\imath_M$.

\item
For all $g_1,g_2 \in \HOM_\ccdgc(C,\O)$:
\eqalign{
\bm{\r}^C_{\!M}\big(g_1 &\star_{C\!,\O} g_2\big)
:=
(\I_C\otimes \g_{\!M})\circ \Big(\I_C\otimes \big(m_{\O}\circ (g_1\otimes g_2)\circ \cp_C\big)\otimes \I_M\Big)
\circ (\cp_C\otimes \I_M)
\cr
=
&
(\I_C\otimes \g_M)\circ (\I_C\otimes g_1\otimes \I_M)\circ (\cp_C\otimes \I_M)
\circ (\I_C\otimes \g_M)\circ (\I_C\otimes g_2\otimes \I_M)\circ (\cp_C\otimes \I_M)
\cr
=
&\bm{\r}^C_{\!M}(g_1) \circ \bm{\r}^C_{\!M}(g_2),
}
where
the $2$nd equality is due to coassociativity of $\cp_C$ and the property 
$ {\g}_M\circ (m_\O\otimes \I_M)={\g}_M\circ (\I_\O\otimes {\g}_M)$.

\item We have  $\rd_{C\otimes M,C\otimes M}\bm{\r}^C_{\!M}(g)
= \check{\mp}\big(\g_{\!M}\circ(\rd_{C\!,\O}g\otimes\I_\O)\big)=0$
for  all $g \in \HOM_{\ccdgc}(C,\O)$,
since both $\check{\mp}$ and $\g_{\!M}$ are chain maps.
\end{itemize}
Combining all the above, we conclude that
$\widebreve{\bm{\r}}_M^{C}\big(g\big)\in Z_0\Aut_{\cp_C}(C\otimes M)$ for all $g \in \HOM_{\ccdgc}(C,\O)$
and $\widebreve{\bm{\r}}_{\!M}^{C}$ is a homomorphism of groups.
We check the naturalness  of  $\widebreve{\bm{\r}}_{\!M}$ 
that for every morphism 
$f: C\rightarrow C^\pr$ of ccdg-coalgebras we have  
$\widebreve{\bm{\r}}_M^{C}\circ\bm{\CP}_{\!\!\O}(f) =\bm{\CG\!\ell}_{\!\!M}(f)\circ \widebreve{\bm{\r}}_M^{C^\pr}$
as follows: for all $g^\pr \in \HOM_{\ccdgc}(C^\pr, \O)$ we have
\eqalign{
\bm{\r}_{\!M}^{C}\left(\bm{\CP}_{\!\!\O}(f)(g^\pr)\right)
=&\widebreve{\bm{\r}}_{\!M}^{C}(g^\pr\circ f)
=
\check{\mp}\Big(\g_{\!M}\circ(g^\pr\circ f\otimes \I_M)\Big)
,\cr
\bm{\CG\!\ell}_{\!\!M}(f)\left( \bm{\r}_{\!M}^{C^\pr}(g^\pr)\right)
= &
\check{\mp}\Big(\check{\mq}\left( \bm{\r}_{\!M}^{C^\pr}(g^\pr)\right)\circ  (f\otimes \I_M)\Big)
=
\check{\mp}\Big(
\check{\mq}\left(\check{\mp}\Big(\g_{\!M}\circ(g^\pr\otimes \I_M)\Big)\right)\circ  (f\otimes \I_M)
\Big)
\cr
=
&
\check{\mp}\Big(\g_{\!M}\circ(g^\pr\otimes \I_M)\circ  (f\otimes \I_M)\Big)
=
\check{\mp}\Big(\g_{\!M}\circ(g^\pr\circ f\otimes \I_M)\Big)
,
}
where we have used  $\check\mq\circ \check\mp =\I_{\Hom(C^\pr\otimes M,N)}$.

4. We show that $\functor{Y}$ is a dg-tensor functor.
Given a morphism $\p:\big(M,\g_{\!M}\big)\to\big(M^\pr,\g_{\!M^\pr}\big)$ of left dg-modules over $\O$, 
$\functor{Y}(\p)=\p:\big(M,\widebreve{\bm{\r}}_{\!M}\big)\to\big(M^\pr,\widebreve{\bm{\r}}_{\!M^\pr}\big)$
is a morphism of representations, since the following diagram commutes  for every morphism 
$g:C\to \O$ of ccdg-coalgebras:
\[
\xymatrixcolsep{3.5pc}
\xymatrix{
C\otimes M \ar[r]^-{\cp_C\otimes \I_M} \ar[d]^-{\I_C\otimes\p}&
C\otimes C\otimes M \ar[r]^-{\I_C\otimes g\otimes \I_M} \ar[d]^-{\I_{C\otimes C}\otimes\p}&
C\otimes \O\otimes M \ar[r]^-{\I_C\otimes \g_M} \ar[d]^-{\I_{C\otimes\O}\otimes\p}&
C\otimes M \ar[d]^-{\I_C\otimes\p}\\
C\otimes M^\pr \ar[r]^-{\cp_C\otimes \I_{M^\pr}}&
C\otimes C\otimes M^\pr \ar[r]^-{\I_C\otimes g\otimes \I_{M^\pr}}&
C\otimes \O\otimes M^\pr \ar[r]^-{\I_C\otimes \g_{M^\pr}}&
C\otimes M^\pr.
}
\]
It is obvious that $\functor{Y}(\rd_{M,M^\pr}\p)=\rd_{M,M^\pr}\p=\rd_{M,M^\pr}\big(\functor{Y}(\p)\big)$.
Therefore $\functor{Y}$ is a dg-functor.
The tensor property of $\functor{Y}$ is checked as follows:
\begin{itemize}
\item From Example \ref{trivalrepsh}, we have $\functor{Y}\big(\Bbbk,\g_\Bbbk\big)=\big(\Bbbk,\bm{\r}_{\!\Bbbk}\big)$.

\item
Let $\big(M,\g_M\big)$ and $\big(M',\g_{M'}\big)$ be left dg-modules over 
$\O$, and $g:C\to \O$ be a morphism of ccdg-coalgebras. Then by \eq{tensor of natural transformations}, we have
\eqalign{
\check{\mq}\left(
\bm{\r}^C_{\!\functor{Y}(M,\g_M)\otimes\functor{Y}(M^\pr,\g_{M^\pr})}(g)
\right)
&=
(\g_M\otimes \g_{M^\pr})\circ(\I_C\otimes \tau\otimes \I_{M^\pr})\circ\big(((g\otimes g)
\circ\cp_C)\otimes \I_{M\otimes M^\pr}\big)\\
&=(\g_M\otimes \g_{M^\pr})\circ(\I_C\otimes \tau\otimes \I_{M^\pr})
\circ(\cp_\O\otimes \I_{M\otimes M^\pr})\circ(g\otimes\I_{M\otimes M^\pr})\\
&=\g_{M\otimes_{\cp_\O}M^\pr}\circ(g\otimes\I_{M\otimes M^\pr})
=\check{\mq}\left(
\bm{\r}^C_{\!\functor{Y}(M\otimes M^\pr,\g_{M\otimes_{\cp_\O}M^\pr})}(g)
\right).
}
Therefore we have 
$\functor{Y}\left(\big(M,\g_M\big)\otimes_{\cp_\O}\big(M^\pr,\g_{M^\pr}\big)\right)
=\functor{Y}(M,\g_M)\otimes \functor{Y}(M^\pr,\g_{M^\pr}\big)$.
\end{itemize}

5. It is immediate from the constructions that $\functor{X}$ and $\functor{Y}$ are inverse to each other.
\qed
\end{proof}

\subsection{A ccdg-Hopf algebra versus the dg-category of its left  dg-modules}

Key properties of  the ccdg-Hopf algebra $\O$ are reflected in  the dg-category 
$\gdcat{dgMod}_L(\O)\cong \gdcat{Rep}(\bm{\CP}_{\!\!\O})$,
which phenomena will make our Tannakian reconstruction possible.

Remind that $(\O,m_\O)$ of both left and right dg-modules over $\O$.

The following lemma is due to the  ccdg-bialgebra structure of $\O$.

\begin{lemma}\label{modpr}
We have the following morphisms of left  dg-modules over $\O$:
\begin{enumerate}[label=({\alph*})]

\item  $\xymatrix{\big(\O,m_\O\big)\ar[r]^-{\cp_\O}& \big(\O\otimes \O, \g_{\O\otimes_{\!\cp_\O\!}\O} \big)}$
by  the coproduct $\cp_\O:\O\to\O\otimes\O$;

\item   $\xymatrix{(\O, m_\O)\ar[r]^-{\ep_\O} & (\Bbbk,\g_\Bbbk)}$ by the counit $\ep_\O:\O\rightarrow \Bbbk$; 

\item $\xymatrix{(\O\otimes M, m_\O\otimes \I_M)\ar[r]^-{\g_M}& (M, \g_M)}$
by the action $\g_M:\O\otimes M\to M$ of each left dg-module $(M,\g_M)$ over $\O$.

\end{enumerate}
\end{lemma}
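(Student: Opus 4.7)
The plan is to verify, for each of the three maps, that (i) it is a chain map and (ii) it intertwines the two left $\O$-module actions. The chain map condition is immediate in every case: $\cp_\O$ and $\ep_\O$ are chain maps by definition of a ccdg-coalgebra, while $\g_M$ is a chain map by the definition of a left dg-module. So the content is reduced to checking the module compatibility square in each case, which in turn reduces to either a bialgebra axiom from \eqref{bialgebra} or the left-action axiom of a dg-module.

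For part $(a)$, I need $\cp_\O\circ m_\O = \g_{\O\otimes_{\cp_\O\!}\O}\circ(\I_\O\otimes \cp_\O)$. Expanding the target action using \eqref{modtensor} with $(M,\g_M)=(M^\pr,\g_{M^\pr})=(\O,m_\O)$, I get
\[
\g_{\O\otimes_{\cp_\O\!}\O}=(m_\O\otimes m_\O)\circ(\I_\O\otimes\t\otimes\I_\O)\circ(\cp_\O\otimes\I_\O\otimes\I_\O).
\]
Post-composing with $\I_\O\otimes \cp_\O$ and using coassociativity on the first tensor factor turns the RHS into $m_{\O\otimes\O}\circ(\cp_\O\otimes\cp_\O)$, which equals $\cp_\O\circ m_\O$ by the bialgebra axiom \eqref{bialgebra}. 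For part $(b)$, the compatibility becomes $\ep_\O\circ m_\O=\g_\Bbbk\circ(\I_\O\otimes\ep_\O)=m_\Bbbk\circ(\ep_\O\otimes\ep_\O)$, which is again a direct consequence of \eqref{bialgebra}.

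For part $(c)$, the module-on-the-left structure on $\O\otimes M$ is given by $m_\O\otimes\I_M$, so the required commutativity is $\g_M\circ(m_\O\otimes\I_M)=\g_M\circ(\I_\O\otimes\g_M)$, which is precisely the associativity axiom of a left $\O$-action on $M$. There is essentially no obstacle here; the mild bookkeeping issue is only to expand $\g_{\O\otimes_{\cp_\O\!}\O}$ correctly and track the swap $\t$ in the definition of the tensor product of left dg-modules, so that the appeal to $\cp_\O\circ m_\O=m_{\O\otimes\O}\circ(\cp_\O\otimes\cp_\O)=(m_\O\otimes m_\O)\circ(\I_\O\otimes\t\otimes\I_\O)\circ(\cp_\O\otimes\cp_\O)$ goes through cleanly.
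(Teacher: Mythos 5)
Your proposal is correct and follows essentially the same route as the paper: each part reduces to a bialgebra axiom from \eq{bialgebra} (for $(a)$ and $(b)$) or the left-action associativity axiom (for $(c)$), after unwinding $\g_{\O\otimes_{\cp_\O\!}\O}$ via \eq{modtensor}. One cosmetic remark: in part $(a)$ the step $(\cp_\O\otimes\I_\O\otimes\I_\O)\circ(\I_\O\otimes\cp_\O)=\cp_\O\otimes\cp_\O$ uses only the interchange law for tensor products of maps acting on disjoint factors, not coassociativity; otherwise the argument matches the paper's.
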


\begin{proof}
\emph{(a)} follows from $\cp_\O$ being a morphism of dg-algebras:
$\g_{\O\otimes_{\cp_\O}\O}\circ(\I_\O\otimes\cp_\O)
=(m_\O\otimes m_\O)\circ(\I_\O\otimes\tau\otimes\I_\O)\circ(\cp_\O\otimes\cp_\O)=\cp_\O\circ m_\O$.
Here, we used
$\g_{\O\otimes_{\cp_\O}\O} =m_{\O\otimes \O}\circ (\cp_\O\otimes \I_{M\otimes M})$.
\emph{(b)} follows from $\ep_\O$ being a morphism of dg-algebras:
$\g_\Bbbk\circ(\I_\O\otimes \ep_\O)=m_\Bbbk\circ(\ep_\O\otimes \ep_\O)=\ep_\O\circ m_\O$.
Finally, \emph{(c)} follows from the module axiom that $\g_M$ satisfies:
$\g_M\circ(\I_\O\otimes \g_M)=\g_M\circ(m_\O\otimes \I_M)$.
\qed
\end{proof}

Since the antipode $\vs_\O:\O\rightarrow \O$ is an anti-homomorphism of dg-algebras,
we have a left dg-module  $(\O^*, \g_{\O^*})$ over $\O$,
where $\O^*=\O$ as a chain complex but  with the following alternative action:
$$
\g_{\O^*}:= m_\O\circ (\I_\O\otimes \vs_\O)\circ \t :\O\otimes \O^*\rightarrow \O^*
,\quad\hbox{i.e.,}\quad
y\otimes z \mapsto (-1)^{|y||z|}m_\O\big(z\otimes \vs_\O( y)\big).
$$
For every left dg-module $\big(M,\g_M\big)$ over $\O$,
we can associate another left dg-module $\big(M_*,\g_{M_*}\big)$ over $\O$ 
using the counit $\ep_\O:\O\rightarrow \Bbbk$,
where $M_*=M$ as a chain complex while the left action is given by
$\xymatrixcolsep{2.5pc}
\g_{M_*}:=\imath_M\circ (\ep_\O\otimes \I_M):
\xymatrix{\O\otimes M \ar[r]^-{\ep_\O\otimes \I_M} &\Bbbk \otimes M \ar[r]^-{\imath_M} & M}$.

The following lemma is due to  the antipode $\vs_\O$ of $\O$.

\begin{lemma} \label{Three module maps about O*}
We have following morphisms of left  dg-modules over $\O$:
\begin{enumerate}[label=({\alph*})]

\item 
$\xymatrix{(\O^*,\g_{\O^*})\ar[r]^-{\cp_\O}&(\O^*\otimes\O^*,\g_{\O^*\otimes_{\cp_\O}\O^*})}$ 
by  the coproduct $\cp_\O:\O\to\O\otimes\O$;

\item $\xymatrix{(\O^*,\g_{\O^*})\ar[r]^-{\ep_\O}&(\Bbbk,\g_\Bbbk)}$ by the counit $\ep_\O:\O\to \Bbbk$;

\item $\xymatrix{(\O^*\otimes M,\g_{\O^*\otimes_{\cp_\O}M}) \ar[r]^-{\g_M}&(M_*,\g_{M_*})}$ 
by the action $\g_M:\O\otimes M\to M$ of each left dg-module $(M,\g_M)$ over $\O$.
\end{enumerate}
\end{lemma}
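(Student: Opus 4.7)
The plan is to verify each of (a), (b), (c) by direct diagrammatic calculation, mirroring the pattern of Lemma \ref{modpr} but exchanging the bialgebra identities used there for the antipode-specific properties of $\vs_\O$: namely the antipode axiom \eqref{antipodeaxiom}, the fact that $\vs_\O$ is a coalgebra map $\cp_\O\circ\vs_\O=(\vs_\O\otimes\vs_\O)\circ\cp_\O$, and $\ep_\O\circ\vs_\O=\ep_\O$, all of which are collected in \eqref{antialgandcoalg}. Since $\O^*=\O$ and $M_*=M$ as chain complexes, the underlying maps $\cp_\O$, $\ep_\O$, $\g_M$ are already chain maps, so only the module-compatibility square needs to be checked in each case.

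For (a), I would expand both sides of $\cp_\O\circ\g_{\O^*}=\g_{\O^*\otimes_{\cp_\O}\O^*}\circ(\I_\O\otimes\cp_\O)$ using the definition $\g_{\O^*}=m_\O\circ(\I_\O\otimes\vs_\O)\circ\t$ and the tensor-module formula \eqref{modtensor}. The left-hand side becomes $\cp_\O\circ m_\O\circ(\I_\O\otimes\vs_\O)\circ\t$; applying $\cp_\O\circ m_\O=m_{\O\otimes\O}\circ(\cp_\O\otimes\cp_\O)$ (the bialgebra identity \eqref{bialgebra}) and then $\cp_\O\circ\vs_\O=(\vs_\O\otimes\vs_\O)\circ\cp_\O$, a reassociation of the braidings matches this with the right-hand side. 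Part (b) is immediate: since $\ep_\O\circ m_\O=m_\Bbbk\circ(\ep_\O\otimes\ep_\O)$ and $\ep_\O\circ\vs_\O=\ep_\O$, the composite $\ep_\O\circ\g_{\O^*}$ collapses to $m_\Bbbk\circ(\ep_\O\otimes\ep_\O)\circ\t=m_\Bbbk\circ(\ep_\O\otimes\ep_\O)$, which is precisely $\g_\Bbbk\circ(\I_\O\otimes\ep_\O)$.

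Part (c) is where the antipode axiom itself enters and is the main content of the lemma. The required identity is $\g_M\circ\g_{\O^*\otimes_{\cp_\O}M}=\g_{M_*}\circ(\I_\O\otimes\g_M)$. Expanding the left-hand side on an input $y\otimes x\otimes m$ (using Sweedler-style notation with the appropriate Koszul signs), the tensor action $\g_{\O^*\otimes_{\cp_\O}M}$ sends $y\otimes x\otimes m$ to $x\cdot\vs_\O(y_{(1)})\otimes(y_{(2)}\cdot m)$, and applying $\g_M$ together with the associativity $\g_M\circ(\I_\O\otimes\g_M)=\g_M\circ(m_\O\otimes\I_M)$ produces $(x\cdot\vs_\O(y_{(1)})\cdot y_{(2)})\cdot m$. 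The inner factor $m_\O\circ(\vs_\O\otimes\I_\O)\circ\cp_\O$ is exactly the antipode axiom and collapses to $u_\O\circ\ep_\O$, which moves the scalar $\ep_\O(y)$ out and yields $\ep_\O(y)\cdot(x\cdot m)=\g_{M_*}\circ(\I_\O\otimes\g_M)(y\otimes x\otimes m)$, as required.

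The only real obstacle is bookkeeping: the definition of $\g_{\O^*}$ carries a Koszul twist $\t$, the tensor-module structure \eqref{modtensor} carries another $\t$, and the computation in (c) slides $\vs_\O(y_{(1)})$ past $y_{(2)}$ under the product. The signs have to be tracked carefully to confirm that the cancelations take place where claimed, and it is here that the cocommutativity of $\cp_\O$ on $\O$ is used implicitly in rearranging the two braidings. Beyond this sign analysis, no ingredient beyond the Hopf axioms \eqref{bialgebra}, \eqref{antipodeaxiom}, \eqref{antialgandcoalg} is required, so the proof is routine once the diagrams are drawn. I would suppress the explicit sign computations for the sake of space, exactly as the authors do for the analogous verifications elsewhere in the paper.
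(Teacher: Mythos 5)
Your proof is correct and follows the same route as the paper's: parts (a) and (b) come from the bialgebra identities together with $\cp_\O\circ\vs_\O=(\vs_\O\otimes\vs_\O)\circ\cp_\O$ and $\ep_\O\circ\vs_\O=\ep_\O$, and part (c) from the module axiom, the associativity of $m_\O$, and the antipode axiom, exactly as in the paper. The only difference is cosmetic: you compute in Sweedler notation with explicit Koszul signs (which do come out uniform, equal to $(-1)^{|y||x|}$, so the antipode axiom can be factored out as you claim), whereas the paper writes the same chain of identities as compositions of maps, where the signs are handled automatically.
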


\begin{proof}
We shall see that \emph{(a)} 
and \emph{(b)} follows from $\vs_\O$ being a morphism of ccdg-coalgebras
and $\emph{(c)}$ follows from the antipode axiom for $\vs_\O$.
\begin{enumerate}[label=$({\alph*})$,leftmargin=.8cm]
\item $\g_{\O^*\otimes_{\cp_\O}\O^*}\circ(\I_\O\otimes\cp_\O)=\cp_\O\circ\g_{\O^*}$:
We have
\eqalign{
\g_{\O^*\otimes_{\cp_\O}\O^*}\circ(\I_\O\otimes\cp_\O)
:=
&(m_\O\otimes m_\O)\circ(\I_\O\otimes\vs_\O\otimes\I_\O\otimes\vs_\O)
\circ(\tau\otimes\tau)\circ(\I_\O\otimes\tau\otimes\I_\O)\circ(\cp_\O\otimes\cp_\O)
\cr
=
&(m_\O\otimes m_\O)\circ\sigma\circ(\vs_\O\otimes\vs_\O\otimes\I_\O\otimes\I_\O)\circ(\cp_\O\otimes\cp_\O)
,\cr
\cp_\O\circ\g_{\O^*}
:=
&
\cp_\O\circ m_\O\circ(\I_\O\otimes\vs_\O)\circ\tau
\cr
=
&
(m_\O\otimes m_\O)\circ(\I_\O\otimes\tau\otimes\I_\O)\circ(\cp_\O\otimes\cp_\O)
\circ(\I_\O\otimes\vs_\O)\circ\tau
\cr
=
&
(m_\O\otimes m_\O)\circ\sigma\circ(\cp_\O\otimes\cp_\O)\circ(\vs_\O\otimes\I_\O),
}
where $\sigma:=(\t\otimes \t)\circ (\I_\O\otimes \t\otimes \I_\O):\O^{\otimes4}\to\O^{\otimes4}$.
From the property $(\vs_\O\otimes\vs_\O)\circ\cp_\O=\cp_\O\circ\vs_\O$, 
we obtain that $\g_{\O^*\otimes_{\cp_\O}\O^*}\circ(\I_\O\otimes\cp_\O)=\cp_\O\circ\g_{\O^*}$. 

\item $\ep_\O\circ\g_{\O^*}=\g_\Bbbk\circ(\I_\O\otimes\ep_\O)$:
We have
\eqalign{
\ep_\O\circ\g_{\O^*}=
&
\ep_\O\circ m_\O\circ(\I_\O\otimes\vs_\O)\circ\tau
=
m_\Bbbk\circ(\ep_\O\otimes\ep_\O)\circ(\I_\O\otimes\vs_\O)\circ\tau
=
m_\Bbbk\circ(\ep_\O\otimes\ep_\O)
\cr
=
&
\g_\Bbbk\circ(\I_\O\otimes\ep_\O),
}
where we used the property $\ep_\O=\ep_\O\circ\vs_\O$ and commutativity of 
$m_\Bbbk$.

\item $\g_M \circ\g_{\O^*\otimes_{\cp_\O}M}=\g_{M_*}\circ(\I_\Omega\otimes\g_M)$:
We have
\eqalign{
\g_M & \circ\g_{\O^*\otimes_{\cp_\O}M}
\cr
:=
&
\g_M\circ(\I_\O\otimes\g_M)\circ(\g_{\O^*}\otimes\I_{\O}\otimes \I_{M})
\circ(\I_\O\otimes\tau\otimes \I_M)\circ(\cp_\O\otimes \I_{\O\otimes M})
\cr
=
&
\g_M\circ(\I_\O\otimes\g_M)\circ(m_\O\otimes \I_{\O}\otimes \I_{M})
\circ(\I_\O\otimes\vs_\O\otimes \I_\O\otimes \I_M)\circ(\I_\O\otimes\cp_\O\otimes \I_M)\circ(\tau\otimes \I_M)
\cr
=
&
\g_M\circ(m_\O\otimes\I_M)\circ(m_\O\otimes \I_{\O}\otimes\I_{M})
\circ(\I_\O\otimes\vs_\O\otimes \I_\O\otimes \I_M)\circ(\I_\O\otimes\cp_\O\otimes \I_M)\circ(\tau\otimes \I_M)
\cr
=
&\g_M\circ(m_\O\otimes\I_M)\circ(\I_\O\otimes m_\O\otimes \I_M)
\circ(\I_\O\otimes\vs_\O\otimes \I_\O\otimes \I_M)\circ(\I_\O\otimes\cp_\O\otimes \I_M)\circ(\tau\otimes \I_M)
\cr
=
&\g_M\circ(m_\O\otimes \I_M)\circ\big(\I_\O\otimes(u_\O\circ\epsilon_\O)\otimes\ \I_M\big)
\circ(\tau\otimes \I_M)
\cr
=
&\g_M\circ(\I_\O\otimes\g_M)\circ(\I_\Omega\otimes u_\Omega\otimes \I_M)
\circ(\I_\Omega\otimes\epsilon_\Omega\otimes \I_M)\circ(\tau\otimes \I_M)
\cr
=
&\g_M\circ(\I_\O\otimes \imath_M)\circ(\I_\O\otimes\epsilon_\O\otimes \I_M)\circ(\tau\otimes \I_M)
\cr
=
&\imath_M\circ(\epsilon_\O\otimes \I_M)\circ(\I_\O\otimes\g_M)
\cr
=
&\g_{M_*}\circ(\I_\Omega\otimes\g_M).
}
In the above we have used 
$\g_M\circ(\I_\O\otimes \g_M)=\g_M\circ(m_\O\otimes \I_M)$
for the $3$rd and the $6$th equalities, 
the  associativity of $m_\O$
for the $4$th equality, 
and the antipode axiom $m_\O\circ(\vs_\O\otimes \I_\O)\circ\cp_\O=u_\O\circ\ep_\O$
for the $5$th equality. 
The rest equalities are straightforward.
\end{enumerate}
\qed
\end{proof}

The morphisms of left  dg-modules over $\O$
in Lemmas \ref{modpr} and \ref{Three module maps about O*} shall be used  crucially  in the next section.

\section{Tannakian reconstruction theorem
}


Let $\O=(\O, u_\O, m_\O, \ep_\O, \cp_\O, \vs_\O, \rd_\O)$ be a ccdg-Hopf algebra.
Consider the forgetful  functor $\bm{\o}: \gdcat{dgMod}_L(\O)\rightsquigarrow \gdcat{Ch}(\Bbbk)$
from the dg-category of left dg-modules over $\O$ to the dg-category of chain complexes over $\Bbbk$.
The functor
$\bm{\o}$ sends a left dg-module $\big(M, {\g}_M\big)$ over $\O$ to its underlying chain complex $M$,
and a morphism $\p: \big(M, {\g}_M\big)\rightarrow \big(M^\pr, {\g}_{M^\pr}\big)$ of left dg-modules over $\O$
to the underlying $\Bbbk$-linear map $\p: M\rightarrow M^\pr$.

Out of $\bm{\o}$,  we will construct the following three presheaves  
\eqalign{
\bm{\CE}_{\!\!\bm{\o}}:\mathring{\category{ccdgC}}(\Bbbk) \rightsquigarrow \category{dgA}(\Bbbk)
,\qquad
\bm{\CP}^\otimes_{\!\!\bm{\o}}:\mathring{\category{ccdgC}}(\Bbbk) \rightsquigarrow \category{Grp}
,\qquad
\bm{\mP}^\otimes_{\!\bm{\o}}:\mathring{\mathit{ho}\category{ccdgC}}(\Bbbk) \rightsquigarrow \category{Grp},
}
in turns 
and establish natural isomorphisms:
\eqalign{
\bm{\CE}_{\!\!\bm{\o}}\cong \bm{\CE}_{\O}
,\qquad
\bm{\CP}^\otimes_{\!\!\bm{\o}}\cong \bm{\CP}_{\!\!\O}
,\qquad
\bm{\mP}^\otimes_{\!\bm{\o}}\cong \bm{\mP}_{\!\O},
}
which constitute our reconstruction theorem.
The forgetful functor $\bm{\o}$ is a dg-tensor functor
sending 
\begin{itemize}
\item
$\big(\Bbbk,\g_\Bbbk\big)$ to $\Bbbk$,
\item
the tensor product 
$\big(M,\g_M\big)\otimes_{\cp_\O}\big(M^\pr,\g_{M^\pr}\big)$ of left dg-modules over $\O$
 to the tensor product $M\otimes M^\pr$ of the underlying chain complexes, and

\item the following isomorphisms of left dg-modules over $\O$
\eqalign{ 
&\Big(\big(M,\g_M\big)\otimes_{\cp_\O}\big(M^\pr,\g_{M^\pr}\big)\Big)
\otimes_{\cp_\O}\big(M^{\pr\pr},\g_{M^{\pr\pr}}\big)
\cong \big(M,\g_M\big)\otimes_{\cp_\O}\Big(\big(M^\pr,\g_{M^\pr}\big)
\otimes_{\cp_\O}\big(M^{\pr\pr},\g_{M^{\pr\pr}}\big)\Big)
,\cr
&\big(M,\g_M\big)\otimes_{\cp_\O}\big(\Bbbk,\g_\Bbbk\big)
\cong\big(M,\g_M\big)\cong\big(\Bbbk,\g_\Bbbk\big)\otimes_{\cp_\O}\big(M,\g_M\big)
}
to the corresponding isomorphisms 
$(M\otimes M^\pr)\otimes M^{\pr\pr}\cong M\otimes(M^\pr\otimes M^{\pr\pr})$
and
$M\otimes \Bbbk\cong M\cong \Bbbk\otimes M$
of the underlying chain complexes.
\end{itemize}

In Lemma \ref{ctensoring}, we have defined the dg-tensor functor 
$C\otimes :\gdcat{Ch}(\Bbbk) \rightsquigarrow \gdcat{dgComod}^{\mathit{cofr}}_L(C)$ 
for  each ccdg-coalgebra $C$.  
By composing it with $\bm{\o}$, we get a dg-tensor functor
$$
C\!\otimes \bm{\o}
: \gdcat{dgMod}_L(\O)\rightsquigarrow \gdcat{Ch}(\Bbbk)\rightsquigarrow\gdcat{dgComod}^{\mathit{cofr}}_L(C)
,
$$
sending
\begin{itemize}
\item each left dg-module $\big(M, {\g}_M\big)$ over $\O$ to a cofree left dg-comodule  
$\big(C\otimes M,  \cp_C\otimes\I_M\big)$ over $C$, and
\item each morphism $\p:\big(M, {\g}_M\big)\rightarrow \big(M^\pr, {\g}_{M^\pr}\big)$ 
of left dg-modules over $\O$ to a morphism 
$\I_C\otimes \p: \big(C\otimes M,  \cp_C\otimes \I_M\big)
\rightarrow \big(C\otimes M^\pr,  \cp_C\otimes \I_{M^\pr}\big)$ of left dg-comodules over $C$.
\end{itemize}


Let $\mathsf{End}\big(C\!\otimes \!\bm{\o}\big):=\mathsf{Nat}\big(C\!\otimes\! \bm{\o}, C\!\otimes\! \bm{\o}\big)$
be the set of natural endomorphisms of the functor $C\!\otimes\! \bm{\o}$.
 We write an element in $\mathsf{End}\big(C\!\otimes \!\bm{\o}\big)$ as $\eta^C$, 
 and denote $\eta^C_M$ as its component at  a left dg-module $\big(M,\g_M\big)$ over $\O$.
The component of $\eta^C$ at the tensor product $\big(M,\g_M\big)\otimes_{\cp_\O\!}\big(M^\pr,\g_{M^\pr}\big)$ 
is denoted by $\eta^C_{M\otimes_{\cp_\O}M'}$.
Be aware that for a chain complex $M$, the component   of $\eta^C$ at the free left dg-module 
$\big(\O\otimes M,m_\O\otimes\I_M\big)$ over $\O$ is denoted by $\eta^C_{\O\otimes M}$.
We have the following structure of dg-algebra on $\mathsf{End}\big(C\!\otimes\! \bm{\o}\big)$:
\eqn{costpd}{
\bm{\CE}_{\!\!\bm{\o}}(C):=
\big(\mathsf{End}\big(C\!\otimes\! \bm{\o}\big), \I^C, \circ , \d^C\big),
}
where $\I^C:=\I_{C\!\otimes\! \bm{\o}}$ is the identity natural transformation, 
$\circ$ is the composition and $\d^C$ is 
the differential given by
$\big(\d^C \eta^C\big)_M :=\rd_{C\otimes M, C\otimes M}\eta^C_M$.

\begin{lemma}\label{costpda}
We have a presheaf of dg-algebras
$\bm{\CE}_{\!\!\bm{\o}}:\mathring{\category{ccdgC}}(\Bbbk) \rightsquigarrow \category{dgA}(\Bbbk)$
on ${\category{ccdgC}}(\Bbbk)$,  sending
\begin{itemize}
\item each ccdg-coalgebra $C$ to the dg-algebra 
$\bm{\CE}_{\!\!\bm{\o}}(C)$, and
\item
each morphism $C\xrightarrow{f} C^\pr$ of ccdg-coalgebras
to a morphism
$\bm{\CE}_{\!\!\bm{\o}}(f):\bm{\CE}_{\!\!\bm{\o}}(C^\pr)\rightarrow \bm{\CE}_{\!\!\bm{\o}}(C)$ of dg-algebras,
where  for every $\eta^{C^\pr}\in\mathsf{End}(C^\pr\otimes\bm{\o})$ the component
of  $\bm{\CE}_{\!\!\bm{\o}}(f)\big(\eta^{C^\pr}\big)$ at each left dg-module $\big(M,\g_M\big)$ over $\O$
is defined by
\eqalign{
\bm{\CE}_{\!\!\bm{\o}}(f)\big(\eta^{C^\pr}\big)_M:=
&
\check{\mp}\Big( \check{\mq}\big(\eta^{C^\pr}_M\big)\circ(f\otimes \I_M)\Big)
\cr
=
&
\big(\I_C\otimes \check{\mq}\big(\eta^{C^\pr}_M\big)\big)\circ (\I_C\otimes  f\otimes \I_M)\circ (\cp_C\otimes \I_M)
\cr
=
&
\left(\I_C\otimes\big( \imath_M\circ (\ep_{C^\pr}\otimes \I_M)\circ\eta^{C^\pr}_M\big)\right)
\circ (\I_C\otimes  f\otimes \I_M)\circ (\cp_C\otimes I_M)
.
}
\end{itemize}

\end{lemma}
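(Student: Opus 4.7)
The plan is to reduce the lemma, componentwise, to the already-established Lemma \ref{cdullpain}. Observe that for every chain complex $M$ the component $\eta^{C^\pr}_M$ of any $\eta^{C^\pr}\in\mathsf{End}(C^\pr\!\otimes\!\bm{\o})$ at $(M,\g_M)$ lies in $\End_{\cp_{C^\pr}\!}(C^\pr\!\otimes\! M) = \grave{\bm{\CE}}_{\!\!M}(C^\pr)$, and the defining formula gives
\[
\bm{\CE}_{\!\!\bm{\o}}(f)\big(\eta^{C^\pr}\big)_M
\;=\;
\bm{\CE}_{\!\!M}(f)\big(\eta^{C^\pr}_M\big).
\]
Thus each componentwise check needed here is exactly one that has already been done in Lemma \ref{cdullpain} for the functor $\bm{\CE}_{\!\!M}$.

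First I would record that $\bm{\CE}_{\!\!\bm{\o}}(C)$ is a dg-algebra for every $C$: this is an instance of the canonical dg-algebra structure on $\mathsf{End}(\functor{F})$ for a dg-functor $\functor{F}$, already noted in the section on dg-categories, applied to $\functor{F}=C\!\otimes\!\bm{\o}$. Next I would verify that $\bm{\CE}_{\!\!\bm{\o}}(f)(\eta^{C^\pr})$ is indeed a natural endomorphism of $C\!\otimes\!\bm{\o}$. For each $(M,\g_M)$, $\bm{\CE}_{\!\!M}(f)(\eta^{C^\pr}_M)$ belongs to $\End_{\cp_C\!}(C\!\otimes\! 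M)$ by Lemma \ref{cdullpain}, so each component is a morphism in $\gdcat{dgComod}^{\mathit{cofr}}_L(C)$. The naturality square, for a morphism $\p:(M,\g_M)\to(M^\pr,\g_{M^\pr})$ of left dg-modules over $\O$, reduces via the definition to
\[
(\I_C\otimes\p)\circ \check{\mp}\big(\check{\mq}(\eta^{C^\pr}_M)\circ(f\otimes\I_M)\big)
=
\check{\mp}\big(\check{\mq}(\eta^{C^\pr}_{M^\pr})\circ(f\otimes\I_{M^\pr})\big)\circ (\I_C\otimes\p),
\]
which follows by an easy computation using Lemma \ref{cbasicl} from the naturality of $\eta^{C^\pr}$ at $\p$, i.e.\ from $(\I_{C^\pr}\otimes\p)\circ\eta^{C^\pr}_M = \eta^{C^\pr}_{M^\pr}\circ(\I_{C^\pr}\otimes\p)$, applied after $\cp_C\otimes\I_M$ and $\I_C\otimes f\otimes\I_M$ have been inserted in front; the $(-1)^{|\p|}$ signs take care of themselves since $\check{\mp},\check{\mq}$ are chain maps.

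Then I would check that $\bm{\CE}_{\!\!\bm{\o}}(f)$ is a morphism of dg-algebras. Because the unit, multiplication and differential of $\bm{\CE}_{\!\!\bm{\o}}(C)$ are all defined componentwise, the three identities
\[
\bm{\CE}_{\!\!\bm{\o}}(f)(\I^{C^\pr})=\I^C,
\qquad
\bm{\CE}_{\!\!\bm{\o}}(f)(\eta_1^{C^\pr}\!\circ\eta_2^{C^\pr})
=\bm{\CE}_{\!\!\bm{\o}}(f)(\eta_1^{C^\pr})\circ\bm{\CE}_{\!\!\bm{\o}}(f)(\eta_2^{C^\pr}),
\qquad
\bm{\CE}_{\!\!\bm{\o}}(f)\circ\d^{C^\pr}=\d^C\circ\bm{\CE}_{\!\!\bm{\o}}(f)
\]
reduce, component by component at each $(M,\g_M)$, to parts $(a)$, $(b)$ and $(c)$ of Lemma \ref{cdullpain} applied to $\eta^{C^\pr}_M$. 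Finally, functoriality $\bm{\CE}_{\!\!\bm{\o}}(\I_C)=\I$ and $\bm{\CE}_{\!\!\bm{\o}}(f^\pr\circ f) = \bm{\CE}_{\!\!\bm{\o}}(f)\circ\bm{\CE}_{\!\!\bm{\o}}(f^\pr)$ again follows componentwise from the functoriality of $\bm{\CE}_{\!\!M}$ in Lemma \ref{cdullpain}.

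The only mildly delicate point is the naturality step just described; the rest is purely bookkeeping. In particular, the $\otimes$-structure on left dg-modules plays no role in the present statement (it will only enter the subsequent analysis of $\bm{\CP}^\otimes_{\!\!\bm{\o}}$), so I would not invoke it here. The main obstacle, such as it is, is psychological rather than technical: one must remember that $\mathsf{End}(C\!\otimes\!\bm{\o})$ is potentially a proper class of data (one component per object of $\gdcat{dgMod}_L(\O)$), but since every operation we perform is coordinatewise and reduces to the already-proved Lemma \ref{cdullpain}, no set-theoretic issue arises and the proof is a direct transcription.
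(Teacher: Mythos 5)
Your proof is correct and follows essentially the same route as the paper's: you verify naturality of $\bm{\CE}_{\!\!\bm{\o}}(f)\big(\eta^{C^\pr}\big)$ directly from the naturality of $\eta^{C^\pr}$, and reduce the unit, multiplicativity, chain-map and functoriality properties componentwise to Lemma \ref{cdullpain} via the identity $\bm{\CE}_{\!\!\bm{\o}}(f)\big(\eta^{C^\pr}\big)_M=\bm{\CE}_{\!\!M}(f)\big(\eta^{C^\pr}_M\big)$, exactly as the paper does. The only nitpick is that the naturality square for $\eta^{C^\pr}$ should carry the Koszul sign $(-1)^{|\eta^{C^\pr}||\p|}$, which you acknowledge in words but omit from the displayed identity.
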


\begin{proof} 
For every $\eta^{C^\pr}\in\mathsf{End}\big(C^\pr\!\otimes\! \bm{\o}\big)$
and a morphism $f:C\to C^\pr$ of ccdg-coalgebras, we can check that
$\bm{\CE}_{\!\!\bm{\o}}(f)\big(\eta^{C^\pr}\big)\in\mathsf{End}\big(C\!\otimes\! \bm{\o}\big)$
of degree $|\eta^{C^\pr}|$
 as follows. 
For a morphism $\p:\big(M,\g_M\big)\to \big(M^\pr,\g_{M^\pr}\big)$ of left dg-modules over $\O$, 
the following diagram commutes since $\eta^{C^\pr}$ is a natural transformation:
\[
\xymatrixrowsep{1.3pc}
\xymatrixcolsep{3pc}
\xymatrix{
C^\pr\otimes M \ar[r]^-{\I_{C^\pr}\otimes \p} \ar[d]_{\eta^{C^\pr}_M}&
C^\pr\otimes M^\pr \ar[d]^{\eta^{C^\pr}_{M^\pr}}\\
C^\pr\otimes M \ar[r]^-{\I_{C^\pr}\otimes \p}&
C^\pr\otimes M^\pr
}
,\qquad\text{i.e.},\qquad
(\I_{C^\pr}\otimes\p)\circ\eta^{C^\pr}_M=(-1)^{|\eta^{C^\pr}||\p|}\eta^{C^\pr}_{M^\pr}\circ(\I_{C^\pr}\otimes\p).
\]
 Therefore we have
\eqalign{
\bm{\CE}_{\!\!\bm{\o}}(f)\big(\eta^{C^\pr}\big)_{M^\pr}\circ(\I_C\otimes\p)
&=\Big(\I_C\otimes\big(\imath_M\circ(\ep_{C^\pr}\otimes\I_M)
\circ\eta^{C^\pr}_{M^\pr}\circ(f\otimes \p)\big)\Big)\circ(\cp_C\otimes\I_M)
\cr
&=(-1)^{|\eta^{C^\pr}||\p|}\Big(\I_C\otimes\big(\imath_M\circ(\ep_{C^\pr}\otimes\p)
\circ\eta^{C^\pr}_M\circ(f\otimes \I_M)\big)\Big)\circ(\cp_C\otimes\I_M)
\cr
&=(-1)^{|\eta^{C^\pr}||\p|}(\I_C\otimes\p)\circ\bm{\CE}_{\!\!\bm{\o}}(f)\big(\eta^{C^\pr}\big)_{M}.
}
It remains to show that
\begin{itemize}
\item $\bm{\CE}_{\!\!\bm{\o}}(\I_C)=\I^C$, 
\item $\bm{\CE}_{\!\!\bm{\o}}(f)$ is a morphism of dg-algebras, and
\item $\bm{\CE}_{\!\!\bm{\o}}(g\circ f)=\bm{\CE}_{\!\!\bm{\o}}(f)\circ\bm{\CE}_{\!\!\bm{\o}}(g)$ for another morphism $g:C^\pr\to C^{\pr\pr}$ of ccdg-coalgebras.
\end{itemize}
These follow from the analogous properties of $\bm{\CE}_{M}$
 for chain complexes $M=\bm{\o}\big(M,\g_M\big)$, as stated in Lemma \ref{cdullpain}.
\qed
\end{proof}


Now we turn to construct 
the presheaf  of groups $\bm{\CP}_{\!\bm{\o}}^{\otimes} :\mathring{\category{ccdgC}}(\Bbbk) \rightsquigarrow \category{Grp}$
after some preparations.

\begin{definition}
We consider the following subsets  of $\mathsf{End}(C\otimes \bm{\o})$:

\begin{enumerate}[label=$({\alph*})$,leftmargin=.6cm]

\item $Z_0\mathsf{End}(C\otimes \bm{\o})$
consisting of every element  $\eta^C \in \mathsf{End}(C\otimes \bm{\o})_0$ satisfying $\d^C\eta^C=0$.

\item  $\mathsf{End}^\otimes(C\otimes \bm{\o})$  
consisting of every element  $\eta^C \in \mathsf{End}(C\otimes \bm{\o})_0$ satisfying the conditions
\eqn{ctensorial}{ 
\begin{aligned}
\eta^C_\Bbbk&=\I^C_\Bbbk:=\I_{C\otimes\Bbbk}
,\cr
\eta^C_{M\otimes_{\cp_\O\!} M^\pr} &=\eta^C_M\otimes_{\cp_C} \eta^C_{M^\pr}
:= \big(\eta^C_M \otimes \check{\mq}(\eta^C_{M^\pr})\big)\circ (\I_C\otimes \t\otimes \I_{M^\pr})
\circ (\cp_C\otimes \I_{M\otimes M^\pr}),
\end{aligned}
}
where the $2$nd relation hold for
all left dg-modules $\big(M,\g_M\big)$ and $\big(M^\pr,\g_{M^\pr}\big)$ over $\O$.

\item $Z_0\mathsf{End}^\otimes(C\otimes \bm{\o})
:=Z_0\mathsf{End}(C\otimes \bm{\o})\cap\mathsf{End}^\otimes(C\otimes \bm{\o})$.
\end{enumerate}
\end{definition}

We say an  $\eta^C\in \mathsf{End}^\otimes\big(C\!\otimes\! \bm{\o}\big)$ a \emph{tensor} natural transformation
and an $\eta^C\in \mathsf{Z_0End}^\otimes(C\otimes \bm{\o})$  a \emph{dg-tensor} natural transformation.

\begin{lemma}\label{ctensorderi}
If $\eta^C \in \mathsf{End}^\otimes \big(C\!\otimes\! \bm{\o}\big)$, then 
for every pair of left dg-modules $\big(M,\g_M\big)$ and $\big(M^\pr,\g_{M^\pr}\big)$ over $\O$, we have
$$
\big(\d^C\eta^C\big)_{M\otimes_{\cp_\O\!} M^\pr} =\big(\d^C\eta^C\big)_M\otimes_{\cp_C} \eta^C_{M^\pr} 
+\eta^C_M\otimes_{\cp_C} \big(\d^C\eta^C\big)_{M^\pr}
.
$$
\end{lemma}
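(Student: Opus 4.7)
The plan is to unfold both sides via the defining properties of a tensor natural transformation and the Leibniz rule for the differential on the tensor product of morphisms in $\gdcat{dgComod}^{\mathit{cofr}}_L(C)$. By definition of the dg-algebra structure on $\mathsf{End}(C\otimes \bm{\o})$ in \eq{costpd}, the component of $\d^C \eta^C$ at any left dg-module $(N,\g_N)$ over $\O$ is $(\d^C\eta^C)_N=\rd_{C\otimes N,C\otimes N}\,\eta^C_N$. Applying this at $N=M\otimes M^\pr$ with the tensor module structure $\g_{M\otimes_{\cp_\O\!}M^\pr}$ gives
\[
(\d^C\eta^C)_{M\otimes_{\cp_\O\!}M^\pr} = \rd_{C\otimes M\otimes M^\pr,\,C\otimes M\otimes M^\pr}\,\eta^C_{M\otimes_{\cp_\O\!}M^\pr}.
\]

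Next I would invoke the tensorial property \eq{ctensorial} of $\eta^C$, which asserts
\[
\eta^C_{M\otimes_{\cp_\O\!}M^\pr}=\eta^C_M\otimes_{\cp_C}\!\eta^C_{M^\pr}.
\]
Substituting this identity into the preceding equation reduces the claim to the Leibniz rule for $\rd_{C\otimes M\otimes M^\pr,\,C\otimes M\otimes M^\pr}$ acting on $\eta^C_M\otimes_{\cp_C}\!\eta^C_{M^\pr}$, which is exactly property (3) of Lemma \ref{tensorfrcomod}. Since $\eta^C$ is of degree zero, so are the components $\eta^C_M$ and $\eta^C_{M^\pr}$, and the sign $(-1)^{|\eta^C_M|}$ arising in that Leibniz rule equals $+1$. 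Therefore
\[
\rd(\eta^C_M\otimes_{\cp_C}\!\eta^C_{M^\pr})=\rd\,\eta^C_M\otimes_{\cp_C}\!\eta^C_{M^\pr}+\eta^C_M\otimes_{\cp_C}\!\rd\,\eta^C_{M^\pr},
\]
which, after recognizing $\rd\,\eta^C_M=(\d^C\eta^C)_M$ and $\rd\,\eta^C_{M^\pr}=(\d^C\eta^C)_{M^\pr}$, is the desired identity.

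There is no serious obstacle here: the statement is a direct consequence of combining the defining compatibility \eq{ctensorial} of a tensor natural transformation with the already-established graded Leibniz rule in the dg-tensor category $\gdcat{dgComod}^{\mathit{cofr}}_L(C)$. The only point requiring a little care is keeping the bookkeeping of which module structure is used where, namely ensuring that the component $\eta^C_{M\otimes_{\cp_\O\!}M^\pr}$ is evaluated at the tensor product of left dg-$\O$-modules (using $\cp_\O$), while the outer symbol $\otimes_{\cp_C}$ refers to the tensor product of morphisms of cofree left dg-$C$-comodules from Lemma \ref{tensorfrcomod}; but this is exactly the matching that \eq{ctensorial} is designed to provide.
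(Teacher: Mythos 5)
Your proof is correct and follows essentially the same route as the paper's: unfold $\d^C$ by its definition, substitute the tensoriality condition \eq{ctensorial}, and apply the Leibniz rule of Lemma \ref{tensorfrcomod}(3), with the sign trivial because elements of $\mathsf{End}^\otimes(C\otimes\bm{\o})$ are by definition of degree zero. No gaps.
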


\begin{proof}
Since $\eta^C$ is a tensor natural transformation, we have
\[
\begin{aligned}
\big(\d^C\eta^C\big)_{M\otimes_{\cp_\O}M^\pr}
&=\rd_{C\otimes M\otimes M^\pr,C\otimes M\otimes M^\pr}\eta^C_{M\otimes_{\cp_\O}M^\pr}\\
&=\rd_{C\otimes M\otimes M^\pr,C\otimes M\otimes M^\pr}\eta^C_M\otimes_{\cp_C}\eta^C_{M^\pr}\\
&=\big(\rd_{C\otimes M,C\otimes M}\eta^C_M\big)\otimes_{\cp_C}\eta^C_{M^\pr}+(-1)^{|\eta^C|}\eta^C_M\otimes_{\cp_C}\big(\rd_{C\otimes M^\pr,C\otimes M^\pr}\eta^C_{M^\pr}\big)\\
&=\big(\d^C\eta^C\big)_M\otimes_{\cp_C} \eta^C_{M^\pr}+\eta^C_M\otimes_{\cp_C} \big(\d^C\eta^C\big)_{M^\pr}.
\end{aligned}
\]
\qed
\end{proof}

Clearly, $Z_0\mathsf{End}^\otimes\big(C\!\otimes\! \bm{\o}\big)$ is closed under composition and contains 
$\I^C=\I_{C\!\otimes\!\bm{\o}}$. 
Thus, we have a monoid
\eqn{costpdex}{
\bm{\CP}^\otimes_{\!\!\bm{\o}}(C):=
\big(Z_0\mathsf{End}^\otimes\big(C\!\otimes\! \bm{\o}\big),\I^C,\circ\;\big).
} 
We shall show that this is in fact a group. 
We begin with a technical lemma.

\begin{lemma} \label{free modules and eta}
For every $\eta^C\in \mathsf{End}\big(C\!\otimes\! \bm{\o}\big)$  its component $\eta^C_{\O\otimes M}$ at 
the free left $\O$ dg-module $\big(\O\otimes M,m_\O\otimes \I_M\big)$ generated by a chain complex $M$ is
$\eta^C_{\O\otimes M}=\eta^C_{\O}\otimes \I_M$.
\end{lemma}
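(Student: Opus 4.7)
The plan is to deduce the equality from the naturalness of $\eta^C$ along a family of morphisms of left dg-modules out of the regular module $(\O,m_\O)$ that collectively exhaust $(\O\otimes M,m_\O\otimes \I_M)$. Concretely, for each homogeneous $m\in M$, I would introduce the graded linear map $\varphi_m:\O\to \O\otimes M$ defined by $\varphi_m(x):=(-1)^{|x||m|}x\otimes m$, or equivalently as the composition $(\I_\O\otimes \bar m)\circ \jmath_\O^{-1}$, where $\bar m:\Bbbk\to M$ sends $1$ to $m$.

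The first key point is that $\varphi_m$ is a morphism in the dg-category $\gdcat{dgMod}_L(\O)$ of degree $|m|$, even though it need not be a chain map when $\rd_M m\neq 0$. A direct computation using the Koszul sign rule shows that $\varphi_m$ intertwines the free actions, that is, $\varphi_m\circ m_\O=(m_\O\otimes \I_M)\circ(\I_\O\otimes \varphi_m)$, which is all that is required for $\varphi_m$ to be a morphism of dg-modules in the dg-category sense; the condition of being a chain map only enters when singling out the degree-zero cycles of the relevant $\HOM$ complex.

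Next, I would apply the naturalness of $\eta^C$ (with the Koszul sign permitted by the degrees of $\eta^C$ and $\varphi_m$) to $\varphi_m$, yielding
$$
\eta^C_{\O\otimes M}\circ(\I_C\otimes \varphi_m)=(-1)^{|\eta^C||m|}(\I_C\otimes \varphi_m)\circ \eta^C_\O.
$$
Evaluating both sides on an arbitrary homogeneous element $c\otimes x\in C\otimes \O$ and expanding $\eta^C_\O(c\otimes x)$ as a sum of pure tensors in $C\otimes \O$, a straightforward bookkeeping of Koszul signs shows that both sides collapse to $\eta^C_\O(c\otimes x)\otimes m$ multiplied by the same overall sign. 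As $m$ varies over a homogeneous basis of $M$ and $c\otimes x$ over $C\otimes \O$, every homogeneous element of $C\otimes \O\otimes M$ arises, and the desired identity $\eta^C_{\O\otimes M}=\eta^C_\O\otimes \I_M$ follows.

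The main obstacle is the consistent handling of Koszul signs: one must verify that the signs appearing in the definition of $\varphi_m$, in the graded tensor product $\I_C\otimes \varphi_m$, and in the sign on the right-hand side of the naturalness square all cancel to give the clean equality $\eta^C_{\O\otimes M}(c\otimes x\otimes m)=\eta^C_\O(c\otimes x)\otimes m$. Once this is dispatched, the argument is merely the graded reformulation of the classical observation that a module map out of a free module is determined by its restriction to the generators $1_\O\otimes m$.
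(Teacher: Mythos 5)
Your proposal is correct and coincides with the paper's own argument: the paper likewise introduces, for each homogeneous $z\in M$, the degree-$|z|$ module morphism $f_z:\O\to\O\otimes M$, $f_z(a)=(-1)^{|a||z|}a\otimes z$, applies the supercommuting naturalness of $\eta^C$ to $\I_C\otimes f_z$, and evaluates on elements to conclude $\eta^C_{\O\otimes M}=\eta^C_\O\otimes\I_M$. Your remark that $\varphi_m$ need only be a morphism in the dg-category sense (not a chain map) is exactly the right justification for invoking naturalness in nonzero degrees.
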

\begin{proof}
For each $z\in M$, define a linear map $f_z:\O\to\O\otimes M$ of degree $|z|$ by 
$f_z(a):=(-1)^{|a||z|}a\otimes z$ for all  $a\in \O$. 
Then $f_z:\big(\O,m_\O\big)\to\big(\O\otimes M,m_\O\otimes\I_M\big)$ is a morphism of left dg-modules over $\O$.
Since $\eta^C$ is a natural transformation, the following diagram commutes
\[
\xymatrixrowsep{1.3pc}
\xymatrixcolsep{3pc}
\xymatrix{
C\otimes\O \ar[r]^-{\I_C\otimes f_z} \ar[d]_{\eta^C_\O}&
C\otimes\O\otimes M \ar[d]^{\eta^C_{\O\otimes M}}\\
C\otimes\O \ar[r]^-{\I_C\otimes f_z}&
C\otimes\O\otimes M
}
,\qquad\text{i.e.},\qquad
\eta^C_{\O\otimes M}\circ(\I_C\otimes f_z)=(-1)^{|\eta^C||z|}(\I_C\otimes f_z)\circ\eta^C_\O.
\]
For every $c\in C$ and $a\in \O$ we can write $\eta^C_\O(c\otimes a)$
as a finite sum $\eta^C_\O(c\otimes a)=\sum_{i}c_i\otimes a_i$
for some $c_i\in C$ and $a_i\in \O$, where $|c_i|+|a_i|=|c|+|a|+|\eta^C|$.  
Then we obtain that 
\eqalign{
\eta^C_{\O\otimes M}\big(c\otimes a\otimes z\big)
&=(-1)^{(|a|+|c|)|z|}\eta^C_{\O\otimes M}\circ(\I_C\otimes f_z)\big(c\otimes a\big)
\\
&=(-1)^{(|a|+|c|+|\eta^C|)|z|}(\I_C\otimes f_z)\circ\eta^C_{\O}\big(c\otimes a\big)
\\
&=(-1)^{(|a|+|c|+|\eta^C|)|z|}\sum_i(\I_C\otimes f_z)\big(c_i\otimes a_i\big)
\\
&=\sum_i c_i\otimes a_i\otimes z
=(\eta^C_\O\otimes \I_M)\big(c\otimes a\otimes z\big).
}
It follows that ${\eta}^C_{\O\otimes M}={\eta}^C_{\O}\otimes \I_M$
since the above equality holds for all $c$, $a$ and $z$.
\qed
\end{proof}

\begin{proposition} \label{group-valued}
$\bm{\CP}^\otimes_{\!\!\bm{\o}}(C)=\big(Z_0\mathsf{End}^{\otimes}\big(C\!\otimes\! \bm{\o}\big), \I^C, \circ\big)$
 is a group
for every ccdg-coalgebra $C$.
\end{proposition}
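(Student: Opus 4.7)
My plan is to first dispose of the monoid structure, which is immediate, and then construct inverses using the antipode of $\O$.

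The identity $\I^C$ lies in $Z_0\mathsf{End}^\otimes(C\otimes\bm{\o})$ by inspection; closure under composition follows from the Leibniz identity $\d^C(\eta\circ\mu) = (\d^C\eta)\circ\mu + \eta\circ\d^C\mu$ (so the composite of two cycles is a cycle) combined with the bifunctoriality of the tensor operation $\otimes_{\cp_C}$ of Lemma \ref{tensorfrcomod}. Indeed, for $\eta^C,\mu^C\in Z_0\mathsf{End}^\otimes(C\otimes\bm{\o})$ we get
\[
(\eta^C\!\circ\mu^C)_{M\otimes_{\cp_\O}M^\pr}
= (\eta^C_M\otimes_{\cp_C}\eta^C_{M^\pr})\circ(\mu^C_M\otimes_{\cp_C}\mu^C_{M^\pr})
= (\eta^C_M\!\circ\mu^C_M)\otimes_{\cp_C}(\eta^C_{M^\pr}\!\circ\mu^C_{M^\pr}),
\]
the second equality being functoriality of $\otimes_{\cp_C}$ applied to degree zero morphisms (no Koszul signs). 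Associativity is inherited from composition of natural transformations, and $\I^C$ is a two-sided unit. It remains to exhibit two-sided inverses.

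To each $\eta^C\in\bm{\CP}^\otimes_{\!\bm{\o}}(C)$ I associate a \emph{symbol}
\[
g_{\eta^C} := \imath_\O\circ(\ep_C\otimes\I_\O)\circ\eta^C_\O\circ(\I_C\otimes u_\O)\circ\jmath_C^{-1}\;:\;C\longrightarrow\O,
\]
that is, $\check{\mq}(\eta^C_\O)$ composed with the obvious unit maps. Since $\d^C\eta^C=0$ and $\ep_C$, $u_\O$ are chain maps, $g_{\eta^C}$ is a chain map of degree zero. Compatibility with counits is immediate from naturality of $\eta^C$ applied to the module map $\ep_\O:(\O,m_\O)\to(\Bbbk,\g_\Bbbk)$ of Lemma \ref{modpr}(b). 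For compatibility with coproducts I will apply naturality of $\eta^C$ to the module map $\cp_\O:(\O,m_\O)\to(\O\otimes\O,\g_{\O\otimes_{\cp_\O}\O})$ of Lemma \ref{modpr}(a), combined with the tensor axiom \eqref{ctensorial} evaluated at the pair $(\O,\O)$ and Lemma \ref{free modules and eta} which pins down $\eta^C_{\O\otimes\O}$ purely in terms of $\eta^C_\O$. The conclusion is that $g_{\eta^C}\in\HOM_\ccdgc(C,\O)$.

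I then define the candidate inverse via the prescription $\widebreve{\bm{\r}}^C$ of \eqref{funcy} in Theorem \ref{repmod}, applied to $\vs_\O\circ g_{\eta^C}$:
\[
\eta^{C,-1}_M := (\I_C\otimes\g_M)\circ\big(\I_C\otimes(\vs_\O\circ g_{\eta^C})\otimes\I_M\big)\circ(\cp_C\otimes\I_M),
\]
for every left dg-module $(M,\g_M)$ over $\O$. By the functor $\functor{Y}$ of Theorem \ref{repmod} this is automatically a natural, tensor, degree zero transformation; that it is a $\d^C$-cycle follows from $\vs_\O\circ g_{\eta^C}$ being a chain map. For the two-sided inverse property $\eta^C\circ\eta^{C,-1}=\I^C=\eta^{C,-1}\circ\eta^C$, I will first check the identity at the regular module $(\O,m_\O)$, where by the explicit form of $\widebreve{\bm{\r}}^C_\O$ it reduces to the convolution relations
\[
g_{\eta^C}\star_{C\!,\O}(\vs_\O\circ g_{\eta^C})=u_\O\circ\ep_C=(\vs_\O\circ g_{\eta^C})\star_{C\!,\O}g_{\eta^C},
\]
i.e., the antipode axiom \eqref{antipodeaxiom} pulled back along $g_{\eta^C}$. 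Naturality with respect to the surjective module map $\g_M:(\O\otimes M,m_\O\otimes\I_M)\to(M,\g_M)$ of Lemma \ref{modpr}(c), together with Lemma \ref{free modules and eta} reducing both composites on the free module $(\O\otimes M,m_\O\otimes\I_M)$ to $\eta^C_\O\otimes\I_M$ and $\eta^{C,-1}_\O\otimes\I_M$ respectively, then propagates the identity to every left dg-module.

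The main obstacle is the second step: showing that $g_{\eta^C}$ is a coalgebra map. The tensor axiom \eqref{ctensorial} is essential there---without it the symbol would only be a chain map, and the construction would produce no group inverse; this is precisely the place where the \emph{tensor} structure on $\bm{\o}$ is used in the reconstruction. The remaining verifications, while occasionally lengthy, are diagrammatic and parallel the computations already carried out in the proof of Theorem \ref{repmod}. As a byproduct, the assignment $\eta^C\mapsto g_{\eta^C}$ will be a group homomorphism $\bm{\CP}^\otimes_{\!\bm{\o}}(C)\to\bm{\CP}_{\!\O}(C)$, foreshadowing the natural isomorphism $\bm{\CP}^\otimes_{\!\bm{\o}}\cong\bm{\CP}_{\!\O}$ that is the ultimate Tannakian reconstruction of this section.
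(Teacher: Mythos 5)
Your proof is correct, but it takes a genuinely different route from the paper's. The paper constructs a left inverse of $\eta^C$ directly as a natural transformation,
$\vs\big(\eta^C\big)_M:=(\I_C\otimes \g_M)\circ (\eta^C_{\O^*}\otimes \I_M)\circ (\I_C\otimes u_\O\otimes \I_M)\circ (\I_C\otimes \imath^{-1}_M)$,
built from the component of $\eta^C$ at the antipode-twisted regular module $(\O^*,\g_{\O^*})$; membership in $Z_0\mathsf{End}^\otimes(C\otimes\bm{\o})$ and the left-inverse identity are checked against the three module morphisms of Lemma \ref{Three module maps about O*}, and the argument closes with the observation that a monoid with all left inverses is a group. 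You instead extract the coalgebra morphism $g_{\eta^C}=\check{\mq}(\eta^C_\O)\circ(\I_C\otimes u_\O)\circ\jmath_C^{-1}$, show that $\eta^C$ is recovered from it via naturality at the action maps $\g_M$ together with Lemma \ref{free modules and eta}, and transport the group structure of $\bm{\CP}_{\!\!\O}(C)$ across this correspondence---in effect proving the $C$-componentwise content of Lemmas \ref{ctanha} and \ref{ctanhc} (hence of Proposition \ref{homainpr}) ahead of time. There is no circularity, since none of the results you invoke depend on this proposition; your route produces a two-sided inverse at once and yields the reconstruction isomorphism as a byproduct, while the paper's route keeps the proposition self-contained at the level of natural transformations and defers the identification with $\bm{\CP}_{\!\!\O}$. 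One small correction: in your verification that $g_{\eta^C}$ intertwines coproducts, the component to be controlled is the one at the \emph{tensor-product} module $(\O\otimes\O,\g_{\O\otimes_{\cp_\O}\O})$, which is pinned down by the tensor axiom \eq{ctensorial}, not by Lemma \ref{free modules and eta} (that lemma governs the free module $(\O\otimes M,m_\O\otimes\I_M)$ and is the right tool only for your propagation step); since you also cite the tensor axiom there, the argument stands.
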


\begin{proof}
Associated with  each $\eta^C\in Z_0\mathsf{End}^{\otimes}\big(C\!\otimes\! \bm{\o}\big)$,  
we introduce a natural endomorphism $\vs\big(\eta^C\big)\in\mathsf{End}\big(C\!\otimes\! \bm{\o}\big)$,
whose component $\vs\big(\eta^C\big)_M$ at each left dg-module $\big(M,\g_M\big)$ over $\O$ is defined by
\eqalign{
\vs\big(\eta^C\big)_M
:=& (\I_C\otimes \g_M)\circ (\eta^C_{\O^*}\otimes \I_M)\circ (\I_C\otimes u_\O\otimes \I_M)
\circ (\I_C\otimes \imath^{-1}_M)
:C\otimes M\to C\otimes M.
}
We verify that  $\vs\big(\eta^C\big)$ is  a natural transformation, since for every morphism  
$\p: \big(M, \g_M\big)\to \big(M^\pr, \g_{M^\pr}\big)$ of left dg-modules over $\O$,
the following diagram commutes
\[
\xymatrixrowsep{3pc}
\xymatrixcolsep{3.3pc}
\xymatrix{
\ar@/^1.2pc/[rrrr]^{\vs(\eta^C)_M}
C{\!\otimes\!} M \ar[r]_{\I_C\otimes \imath^{-1}_M} \ar[d]_{\I_C\otimes \p}&
C{\!\otimes} \Bbbk {\otimes\!} M \ar[r]_-{\I_C\otimes u_\O\otimes \I_M} \ar[d]^{\I_{C\otimes \Bbbk}\otimes \p}&
C{\!\otimes\!} \O{\!\otimes\!} M \ar[r]_-{\eta^C_{\O^*}\otimes \I_M} \ar[d]^{\I_{C\otimes \O}\otimes \p}&
C{\!\otimes\!} \O{\!\otimes\!} M\ar[r]_-{\I_C\otimes \g_M} \ar[d]^{\I_{C\otimes \O}\otimes \p}&
C{\!\otimes\!} M \ar[d]^{\I_C\otimes \p}
\cr
\ar@/_1.2pc/[rrrr]_{\vs(\eta^C)_{M^\pr}}
C{\!\otimes\!} M^\pr \ar[r]^{\I_C\otimes \imath^{-1}_{M^\pr}}&
C{\!\otimes} \Bbbk {\otimes\!} M^\pr \ar[r]^-{\I_C\otimes u_\O\otimes \I_{M^\pr}}&
C{\!\otimes\!} \O{\!\otimes\!} M^\pr \ar[r]^-{\eta^C_{\O^*}\otimes \I_{M^\pr}}&
C{\!\otimes\!} \O{\!\otimes\!} M^\pr\ar[r]^-{\I_C\otimes \g_{M^\pr}}&
C{\!\otimes\!} M^\pr.
}
\]
We claim that $\vs\big(\eta^C\big)$ is also in $Z_0\mathsf{End}^{\otimes}(C\otimes\bm{\o})$. 
First, note that $\vs\big(\eta^C\big)$ is in $Z_0\mathsf{End}(C\otimes\bm{\o})$. 
This is because for each left dg-module $\big(M,\g_M\big)$ over $\O$, all the maps 
$\I_C\otimes \g_M$, $\eta^C_{\O^*}\otimes \I_M$, $\I_C\otimes u_\O\otimes \I_M$ and $\I_C\otimes \imath^{-1}_M$ 
are of degree $0$ and in the kernels of differentials. 
Next, we show $\vs\big(\eta^C\big)$ is a tensor natural transformation. From Lemma \ref{Three module maps about O*}\emph{(a)}, 
the coproduct 
$\cp_\O:\big(\O^*,\g_{\O^*}\big)\to\big(\O^*\otimes\O^*,\g_{\O^*\otimes_{\cp_\O}\O^*}\big)$
is a morphism of left dg-modules over $\O$.
Since $\eta^C$ is a tensor natural transformation, we have
\[
\xymatrixrowsep{1.3pc}
\xymatrixcolsep{3pc}
\xymatrix{
C\otimes\O \ar[r]^{\I_C\otimes\cp_\O} \ar[d]_{\eta^C_{\O^*}}&
C\otimes\O\otimes\O \ar[d]^{\eta^C_{\O^*\otimes_{\cp_\O}\O^*}=\eta^C_{\O^*}\otimes_{\cp_C}\eta^C_{\O^*}}\\
C\otimes\O \ar[r]^{\I_C\otimes\cp_\O}&
C\otimes\O\otimes\O
}
\quad\hbox{i.e.,}\quad
(\eta^C_{\O^*}\otimes_{\cp_C\!}\eta^C_{\O^*})\circ(\I_C\otimes\cp_\O)=(I_C\otimes\cp_\O)\circ\eta^C_{\O^*}.
\]
Thus for left dg-modules $\big(M,\g_M\big)$ and $\big(M^\pr,\g_{M^\pr}\big)$ over $\O$, we have
\[
\begin{aligned}
\vs\big(\eta^C\big)_M\otimes_{\cp_C}\vs\big(\eta^C\big)_{M^\pr}
=&(\I_C\otimes\g_M\otimes\g_{M^\pr})\circ(\I_{C\otimes \O}\otimes\tau\otimes\I_{M^\pr})\\
&\circ\Big(\big((\eta^C_{\O^*}\otimes_{\cp_C}\eta^C_{\O^*})\circ(\I_C\otimes\cp_\O)
\circ(\I_C\otimes u_\O)\big)\otimes\I_{M\otimes M^\pr}\Big)\circ(\I_C\otimes\imath^{-1}_M\otimes\I_{M^\pr})\\
=&(\I_C\otimes\g_M\otimes\g_{M^\pr})\circ(\I_{C\otimes \O}\otimes\tau\otimes\I_{M^\pr})\\
&\circ\Big(\big((I_C\otimes\cp_\O)\circ\eta^C_{\O^*}\circ(\I_C\otimes u_\O)\big)\otimes\I_{M\otimes M^\pr}\Big)
\circ(\I_C\otimes\imath^{-1}_M\otimes\I_{M^\pr})\\
=&(\I_C\otimes\g_{M\otimes_{\cp_\O}M^\pr})
\circ\Big(\big(\eta^C_{\O^*}\circ(\I_C\otimes u_\O)\big)\otimes\I_{M\otimes M^\pr}\Big)
\circ(\I_C\otimes\imath^{-1}_M\otimes\I_{M^\pr})\\
=&\vs\big(\eta^C\big)_{M\otimes_{\cp_\O}M^\pr}.
\end{aligned}
\]

Moreover, from Lemma \ref{Three module maps about O*}\emph{(b)},  
the counit $\ep_\O:\big(\O^*,\g_{\O^*}\big)\to\big(\Bbbk,\g_\Bbbk\big)$ 
is also a morphism of left dg-modules over $\O$. 
Since $\eta^C$ is a tensor natural transformation, we have
\[
\xymatrixrowsep{1.3pc}
\xymatrixcolsep{3pc}
\xymatrix{
C\otimes\O \ar[r]^{\I_C\otimes\ep_\O} \ar[d]_{\eta^C_{\O^*}}&
C\otimes\Bbbk \ar[d]^{\eta^C_\Bbbk=\I^C_\Bbbk}\\
C\otimes\O \ar[r]^{\I_C\otimes\ep_\O}&
C\otimes\Bbbk
}
\qquad\hbox{i.e.,}\qquad
(\I_C\otimes\ep_\O)=(\I_C\otimes\ep_\O)\circ\eta^C_{\O^*}.
\]
Therefore we have $\vs\big(\eta^C\big)_\Bbbk=(\I_C\otimes\ep_\O)\circ\eta^C_{\O^*}\circ(\I_C\otimes u_\O)
=(\I_C\otimes\ep_\O)\circ(\I_C\otimes u_\O)=\I^C_{\Bbbk}$. 
This shows $\vs\big(\eta^C\big)\in Z_0\mathsf{End}^{\otimes}\big(C\!\otimes\!\bm{\o}\big)$. 

Finally, we show that $\vs\big(\eta^C\big)$ is the left inverse of $\eta^C$. 
Lemma \ref{Three module maps about O*}\emph{(c)} states that for each left dg-module $\big(M, \g_M\big)$ over $\O$, 
the action map $\g_M:\big(\O^*\otimes M,\g_{\O^*\otimes_{\cp_\O}M}\big)\rightarrow \big(M_*,\g_{M_*}\big)$
is a morphism of left dg-modules over $\O$.
Since $\eta^C$ is a tensor natural transformation, we have
$$
\xymatrixrowsep{1.3pc}
\xymatrixcolsep{2.5pc}
\xymatrix{
\ar[d]_-{\eta^C_{\O^*\otimes_{\cp_\O}M}=\eta^C_{\O^*}\otimes_{\cp_C} \eta^C_M}
C\!\otimes\!\O\!\otimes\! M \ar[r]^-{\I_C\otimes \g_M} & C\!\otimes\! M
\ar[d]^-{\eta^C_{M_*}}
\cr
C\!\otimes\! \O\!\otimes\! M \ar[r]^-{\I_C\otimes \g_M} & C\!\otimes\! M
}
\quad\hbox{i.e.,}\quad
(\I_C\otimes \g_M)\circ(\eta^C_{\O^*}\otimes_{\cp_C}\!\eta^C_M) 
= \eta^C_{M_*}\circ (\I_C\otimes \g_M).
$$

Note that $\eta^C_{M_*}=\I^C_M$. Indeed, by Lemma \ref{modpr}, 
the action map $\g_{M_*}:\big(\O\otimes M,m_\O\otimes \I_M\big)\to\big(M_*,\g_{M_*}\big)$
and the counit $\ep_\O:\big(\O,m_\O\big)\to\big(\Bbbk,\g_\Bbbk\big)$ are morphisms of left dg-modules over $\O$. 
Since $\eta^C$ is a tensor natural transformation, the following diagrams commute:
\[
\xymatrixrowsep{1.3pc}
\xymatrix{
C\otimes \O\otimes M \ar[r]^{\I_C\otimes \g_{M_*}} \ar[d]_{\eta^C_{\O\otimes M}=\eta^C_\O\otimes \I_M}&
C\otimes M\ar[d]^{\eta^C_{M_*}}\\
C\otimes \O\otimes M \ar[r]^{\I_C\otimes \g_{M_*}}&
C\otimes M
},
\qquad\qquad
\xymatrix{
C\otimes\O \ar[r]^{\I_C\otimes\ep_\O} \ar[d]_{\eta^C_\O}&
C\otimes\Bbbk \ar[d]^{\eta^C_\Bbbk=\I^C_\Bbbk}\\
C\otimes\O \ar[r]^{\I_C\otimes\ep_\O}&
C\otimes\Bbbk
}.
\]
The equality on the left diagram is due to Lemma \ref{free modules and eta}. Therefore,
we have
\[
\begin{aligned}
\eta^C_{M_*}
=&\eta^C_{M_*}\circ\Big(\I_C\otimes\big(\g_{M_*}\circ(u_\O\otimes\I_M)\circ\imath^{-1}_M\big)\Big)\\
=&\eta^C_{M_*}\circ(\I_C\otimes\g_{M_*})\circ(\I_C\otimes u_\O\otimes \I_M)\circ(\I_C\otimes\imath^{-1}_M)\\
=&(\I_C\otimes\g_{M_*})\circ(\eta^C_\O\otimes\I_M)\circ(\I_C\otimes u_\O\otimes\I_M)
\circ(\I_C\otimes\imath^{-1}_M)\\
=&(\I_C\otimes\imath_M)\circ(\I_C\otimes\ep_\O\otimes\I_M)\circ (\eta^C_\O\otimes \I_M)
\circ(\I_C\otimes u_\O\otimes\I_M)\circ(\I_C\otimes\imath^{-1}_M)\\
=&(\I_C\otimes\imath_M)\circ(\I_C\otimes\ep_\O\otimes\I_M)
\circ(\I_C\otimes u_\O\otimes\I_M)\circ(\I_C\otimes\imath^{-1}_M)=\I^C_M.
\end{aligned}
\]

Using \eq{ctensorial}, we finally prove that $\vs\big(\eta^C\big)\circ\eta^C=\I^C$:
\eqalign{
\vs\big(\eta^C\big)_M\circ\eta^C_M
=&\vs\big(\eta^C\big)_M\circ\check{\mp}(\check{\mq}(\eta^C_M))\\
=&\vs\big(\eta^C\big)_M\circ(\I_C\otimes\check{\mq}(\eta^C_{M}))\circ(\cp_C\otimes\I_M)\\
=&(\I_C\otimes\g_M)\circ(\eta^C_{\O^*}\otimes_{\cp_C}\eta^C_M)
\circ(\I_C\otimes u_\O\otimes I_M)\circ(\I_C\otimes\imath^{-1}_M)\\
=&\eta^C_{M_*}\circ (\I_C\otimes \g_M)\circ(\I_C\otimes u_\O\otimes I_M)\circ(\I_C\otimes\imath^{-1}_M)\\
=&(\I_C\otimes \g_M)\circ(\I_C\otimes u_\O\otimes I_M)\circ(\I_C\otimes\imath^{-1}_M)=\I^C_M.
}
We conclude that 
$\bm{\CP}^\otimes_{\!\!\bm{\o}}(C)
=Z_0\mathsf{End}^\otimes(C\otimes\bm{\o})=Z_0\mathsf{Aut}^\otimes(C\otimes\bm{\o})$ is a group,
since every monoid with all left inverses is a group.
%
%
%
%
%
\qed
\end{proof}

The following lemma shows that the above construction is functorial.

\begin{lemma}\label{costpdx}
We have a presheaf of groups
$\bm{\CP}^\otimes_{\!\!\bm{\o}}:\mathring{\category{ccdgC}}(\Bbbk) \rightsquigarrow \category{Grp}$ 
on the category $\ccdgc$ of ccdg-coalgebras, sending
\begin{itemize}
\item
each ccdg-coalgebra $C$ to the group $\bm{\CP}^\otimes_{\!\!\bm{\o}}(C)$, and
\item
each morphism $f:C\rightarrow C^\pr$ of ccdg-coalgebras to a homomorphism
$\bm{\CP}^\otimes_{\!\!\bm{\o}}(f)
:\bm{\CP}^\otimes_{\!\!\bm{\o}}(C^\pr)\rightarrow \bm{\CP}^\otimes_{\!\!\bm{\o}}(C)$
of groups
defined by $\bm{\CP}^\otimes_{\!\!\bm{\o}}(f):=\bm{\CE}_{\!\!\bm{\o}}(f)$.
\end{itemize}
\end{lemma}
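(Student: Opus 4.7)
The plan is to reduce this to what has already been established for $\bm{\CE}_{\!\!\bm{\o}}$ in Lemma \ref{costpda}. Since $\bm{\CE}_{\!\!\bm{\o}}(f):\bm{\CE}_{\!\!\bm{\o}}(C^\pr)\to\bm{\CE}_{\!\!\bm{\o}}(C)$ is a morphism of dg-algebras, it preserves the identity, the composition and the kernel of the differential; hence, once we show that $\bm{\CE}_{\!\!\bm{\o}}(f)$ sends the subset $\mathsf{End}^\otimes(C^\pr\!\otimes\!\bm{\o})$ into $\mathsf{End}^\otimes(C\!\otimes\!\bm{\o})$, it restricts to a well-defined group homomorphism $\bm{\CP}^\otimes_{\!\!\bm{\o}}(f):\bm{\CP}^\otimes_{\!\!\bm{\o}}(C^\pr)\to\bm{\CP}^\otimes_{\!\!\bm{\o}}(C)$, and the functoriality identities $\bm{\CP}^\otimes_{\!\!\bm{\o}}(\I_C)=\I$ and $\bm{\CP}^\otimes_{\!\!\bm{\o}}(g\circ f)=\bm{\CP}^\otimes_{\!\!\bm{\o}}(f)\circ\bm{\CP}^\otimes_{\!\!\bm{\o}}(g)$ will be immediate from the corresponding identities in Lemma \ref{costpda}.

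Thus the only substantive content is to verify the tensor conditions in \eq{ctensorial} for $\bm{\CE}_{\!\!\bm{\o}}(f)(\eta^{C^\pr})$, assuming they hold for $\eta^{C^\pr}$. The unit condition $\bm{\CE}_{\!\!\bm{\o}}(f)(\eta^{C^\pr})_\Bbbk=\I^C_\Bbbk$ is straightforward, since $\check{\mq}(\I^{C^\pr}_\Bbbk)=\ep_{C^\pr}\otimes\I_\Bbbk$ and the counit compatibility $\ep_{C^\pr}\circ f=\ep_C$ gives $\check{\mq}(\I^{C^\pr}_\Bbbk)\circ(f\otimes\I_\Bbbk)=\ep_C\otimes\I_\Bbbk=\check{\mq}(\I^C_\Bbbk)$, followed by applying $\check{\mp}$.

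The key step is the multiplicativity condition. Using the defining formula
$\check{\mq}\big(\bm{\CE}_{\!\!\bm{\o}}(f)(\eta^{C^\pr})_M\big)=\check{\mq}(\eta^{C^\pr}_M)\circ(f\otimes\I_M)$
together with the expression for $\check{\mq}$ of a tensor product from Lemma \ref{tensorfrcomod}, I would compute
\begin{align*}
\check{\mq}\big(\bm{\CE}_{\!\!\bm{\o}}(f)(\eta^{C^\pr})_{M\otimes_{\cp_\O\!}M^\pr}\big)
&=\check{\mq}\big(\eta^{C^\pr}_M\otimes_{\cp_{C^\pr}}\eta^{C^\pr}_{M^\pr}\big)\circ(f\otimes\I_{M\otimes M^\pr})\\
&=\big(\check{\mq}(\eta^{C^\pr}_M)\otimes\check{\mq}(\eta^{C^\pr}_{M^\pr})\big)\circ(\I_{C^\pr}\otimes\tau\otimes\I_{M^\pr})\circ(\cp_{C^\pr}\circ f\otimes\I_{M\otimes M^\pr}),
\end{align*}
and then use the coalgebra map property $\cp_{C^\pr}\circ f=(f\otimes f)\circ\cp_C$ to split the $f$'s between the two tensor factors, recovering $\check{\mq}\big(\bm{\CE}_{\!\!\bm{\o}}(f)(\eta^{C^\pr})_M\otimes_{\cp_C}\bm{\CE}_{\!\!\bm{\o}}(f)(\eta^{C^\pr})_{M^\pr}\big)$. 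Because $\check{\mq}$ is injective on cofree morphisms (Lemma \ref{cbasicl}: $\check{\mq}\circ\check{\mp}=\I$ and every cofree morphism is $\check{\mp}\circ\check{\mq}$ of itself), the conclusion follows.

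The expected obstacle is purely bookkeeping: keeping track of which side of $(f\otimes f)\circ\cp_C=\cp_{C^\pr}\circ f$ sits where in a four-fold tensor with a braiding $\tau$, and making sure the injectivity of $\check{\mq}$ on the relevant subspace is invoked correctly. I do not foresee any conceptual difficulty beyond this algebraic diagram chase.
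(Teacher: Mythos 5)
Your proposal is correct and follows essentially the same route as the paper's proof: reduce everything to the dg-algebra morphism properties of $\bm{\CE}_{\!\!\bm{\o}}(f)$ from Lemma \ref{costpda}, then verify the unit condition via $\ep_{C^\pr}\circ f=\ep_C$ and the tensoriality condition by applying $\check{\mq}$ to both sides and using $(f\otimes f)\circ\cp_C=\cp_{C^\pr}\circ f$, with injectivity of $\check{\mq}$ on $\Hom_{\cp_C}$ closing the argument. The paper performs the identical three-part check, merely writing the $\check{\mq}$ computation in the opposite direction.
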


\begin{proof}
It suffices to check that for every morphism $f:C\to C^\pr$ of ccdg-coalgebras,
we have
$\bm{\CE}_{\!\!\bm{\o}}(f)\big(\eta^{C^\pr}\big)\in Z_0\mathsf{End}^\otimes \big(C\!\otimes\! \bm{\o}\big)$
whenever
$\eta^{C^\pr}\in Z_0\mathsf{End}^\otimes \big(C^\pr\!\otimes\! \bm{\o}\big)$, i.e.,
\begin{enumerate}[label=(\arabic*),leftmargin=.8cm]

\item $\bm{\CE}_{\!\!\bm{\o}}(f)\big(\eta^{C^\pr}\big)\in Z_0\mathsf{End}\big(C\!\otimes\! \bm{\o}\big)$;

\item 
$\bm{\CE}_{\!\!\bm{\o}}(f)\big(\eta^{C^\pr}\big)_\Bbbk=\I^C_\Bbbk$;

\item 
$\bm{\CE}_{\!\!\bm{\o}}(f)\big(\eta^{C^\pr}\big)_{M\otimes_{\cp_\O}M^\pr}
=\bm{\CE}_{\!\!\bm{\o}}(f)\big(\eta^{C^\pr}\big)_M
\otimes_{\cp_C}\bm{\CE}_{\!\!\bm{\o}}(f)\big(\eta^{C^\pr}\big)_{M^\pr}$
for all left dg-modules $\big(M,\g_M\big)$ and $\big(M^\pr,\g_{M^\pr}\big)$ over $\O$.
\end{enumerate}
Property $(1)$ is obvious since $\bm{\CE}_{\!\!\bm{\o}}(f)$ is a chain map.
Property $(2)$ follows from  $\eta^{C^\pr}_\Bbbk=\I^{C^\pr}_\Bbbk$ and $\ep_{C^\pr}\circ f=\ep_C$, since
we have
\[
\bm{\CE}_{\!\!\bm{\o}}(f)\big(\eta^{C^\pr}\big)_\Bbbk
=(\I_C\otimes m_\Bbbk)\circ\Big(\I_C\otimes\big((\ep_{C^\pr}\otimes \I_\Bbbk)
\circ \eta^{C^\pr}_\Bbbk\circ(f\otimes\I_\Bbbk)\big)\Big)\circ(\cp_C\otimes\I_\Bbbk)=\I^C_\Bbbk.
\]
Note that  Property $(3)$ is equivalent to the condition 
$$
\check{\mq}\Big(\bm{\CE}_{\!\!\bm{\o}}(f)\big(\eta^{C^\pr}\big)_{M\otimes_{\cp_\O}M^\pr}\Big)
=\check{\mq}\Big(\bm{\CE}_{\!\!\bm{\o}}(f)\big(\eta^{C^\pr}\big)_M
\otimes_{\cp_C}\bm{\CE}_{\!\!\bm{\o}}(f)\big(\eta^{C^\pr}\big)_{M^\pr}\Big)
,
$$
which can be checked as follows:
\eqalign{
\check{\mq}\Big(
\bm{\CE}_{\!\!\bm{\o}}(f)& \big(\eta^{C^\pr}\big)_M
\otimes_{\cp_C}\bm{\CE}_{\!\!\bm{\o}}(f)\big(\eta^{C^\pr}\big)_{M^\pr}
\Big)\\
&=\Big(\check{\mq}\big(\bm{\CE}_{\!\!\bm{\o}}(f) \big(\eta^{C^\pr}\big)_M\big)
\otimes\check{\mq}\big(\bm{\CE}_{\!\!\bm{\o}}(f)\big(\eta^{C^\pr}\big)_{M^\pr}\big)\Big)
\circ(\I_C\otimes\t\otimes \I_{M^\pr})\circ(\cp_C\otimes\I_{M\otimes M^\pr})\\
&=\Big(\check{\mq}\big(\eta^{C^\pr}_M\big)\otimes\check{\mq}\big(\eta^{C^\pr}_{M^\pr}\big)\Big)
\circ(f\otimes \I_M\otimes f\otimes \I_{M^\pr})\circ(\I_C\otimes\t\otimes \I_{M^\pr})\circ(\cp_C\otimes\I_{M\otimes M^\pr})\\
&=\Big(\check{\mq}\big(\eta^{C^\pr}_M\big)\otimes\check{\mq}\big(\eta^{C^\pr}_{M^\pr}\big)\Big)
\circ(\I_{C^\pr}\otimes\t\otimes \I_{M^\pr})\circ(\cp_{C^\pr}\otimes\I_{M\otimes M^\pr})\circ(f\otimes\I_{M\otimes M^\pr})\\
&=\check{\mq}\big(\eta^{C^\pr}_M\otimes_{\cp_{C^\pr}}\eta^{C^\pr}_{M^\pr}\big)
\circ(f\otimes\I_{M\otimes M^\pr})
=\check{\mq}\big(\eta^{C^\pr}_{M\otimes_{\cp_\O}M^\pr}\big)
\circ(f\otimes\I_{M\otimes M^\pr})\\
&=\check{\mq}\Big(
\bm{\CE}_{\!\!\bm{\o}}(f)\big(\eta^{C^\pr}\big)_{M\otimes_{\cp_\O}M^\pr}
\Big)
,
}
where we have used $(f\otimes f)\circ\cp_C=\cp_{C^\pr}\circ f$ on the $3$rd equality. \qed
\end{proof}

Now we turn to construct 
the presheaf  of groups 
$\bm{\mP}_{\!\bm{\o}}^{\otimes} :\mathring{\mathit{ho}\category{ccdgC}}(\Bbbk) \rightsquigarrow \category{Grp}$ 
on the homotopy category ${\mathit{ho}\category{ccdgC}}(\Bbbk)$.
Later in this section we shall construct an isomorphism
$\bm{\CP}^\otimes_{\!\bm{\o}}\cong \bm{\CP}_{\!\O} :\mathring{\category{ccdgC}}(\Bbbk) \rightsquigarrow \category{Grp}$,
where $\bm{\CP}_{\!\O}$  is the representable presheaf of groups which
is represented by the ccdg-Hopf algebra $\O$ and induces
$\bm{\mP}_{\!\O}:\mathring{\mathit{ho}\category{ccdgC}}(\Bbbk) \rightsquigarrow \category{Grp}$.
Similarly, $\bm{\CP}^\otimes_{\!\bm{\o}}$ shall induces $\bm{\mP}_{\!\bm{\o}}^{\otimes}$ on 
${\mathit{ho}\category{ccdgC}}(\Bbbk)$.

Remind
that $\bm{\CP}_{\!\O}(C)$ is the group formed by the set $\HOM_{\ccdgc}(C,\O)$ 
of all morphisms of ccdg-coalgebras, while
$\bm{\mP}_{\!\O}(C)$ is the group formed 
by the set $\HOM_{\mathit{ho}\ccdgc}(C,\O)$ of homotopy types 
of elements in  $\HOM_{\ccdgc}(C,\O)$.  
Likewise, we need to define homotopy types of elements in 
$Z_0\mathsf{End}^{\otimes}\big(C\!\otimes\! \bm{\o}\big)$--taking homology classes 
is not compatible with the tensor condition \eq{ctensorial}: let $\eta^C \in  Z_0\mathsf{End}^\otimes\big(C\!\otimes\! \bm{\o}\big)$ and  
$\tilde\eta^C = \eta^C +\d^C\l^C$ for some $\l^C \in \mathsf{End}\big(C\!\otimes\! \bm{\o}\big)$ of degree $1$,
then $\tilde\eta^C$ and $\eta^C$ belong to the same homology class but $\tilde\eta^C$, in general, is not
a tensor natural transformation.

\begin{definition}
A homotopy pair  on $Z_0\mathsf{End}^{\otimes}\big(C\!\otimes\! \bm{\o}\big)$
is  a pair $\big(\eta(t)^C, \l(t)^C\big)$  of $\eta^C(t)\in  \mathsf{End}\big(C\!\otimes\! \bm{\o}\big)_0[t]$
and $\l(t)^C\in   \mathsf{End}\big(C\!\otimes\! \bm{\o}\big)_{1}[t]$, 
where $t$ is a polynomial time variable of degree $0$, 
satisfying the   homotopy flow equation $\Fr{d}{dt}\eta(t)^C= \d^C\l(t)^C$ 
generated by $\l(t)^C$ subject to the following
conditions:
\eqalign{
\eta(0)^C\in  Z_0\mathsf{End}^\otimes \big(C\!\otimes\! \bm{\o}\big)
,\quad
\begin{cases}
\l(t)^C_{\Bbbk}=0,\cr
\l(t)^C_{M\otimes_{\cp_\O\!} M^\pr} 
= \l(t)^C_M\otimes_{\cp_C} \eta(t)^C_{M^\pr} +\eta(t)^C_M\otimes_{\cp_C} \l(t)^C_{M^\pr}
.
\end{cases}
}
\end{definition}

Let $\big(\eta(t)^C, \l(t)^C\big)$ be a homotopy pair on 
$Z_0\mathsf{End}^{\otimes}\big(C\!\otimes\! \bm{\o}\big)$.
It follows from the homotopy flow equation that $\eta(t)^C$ is uniquely determined by
$\eta(t)^C= \eta(0)^C + \d^C\int^t_0\l(s)^C \mathit{ds}$, and we have $\d^C\eta(t)^C=0$ since $\d^C\eta(0)^C=0$.
From the condition $\eta^C(0)^{\Bbbk}=\I_{C\otimes \Bbbk}$ and $\l(t)^C_{\Bbbk}=0$, we have
$\eta^C_{\Bbbk}(t)=\I_{C\otimes \Bbbk}$.
By applying  Lemma \ref{ctensorderi}, we also have
\eqalign{
\Fr{d}{dt}\Big(\eta(t)^C_{M\otimes_{\cp_\O}\! M^\pr}&-\eta(t)^C_M\otimes_{\cp_C}\! \eta(t)^C_{M^\pr}\Big)
\cr
&
=\d^C\left(\l(t)^C_{M\otimes_{\cp_\O}\! M^\pr} 
- \l(t)^C_M\otimes_{\cp_C}\! \eta(t)^C_{M^\pr} -\eta(t)^C_M\otimes_{\cp_C}\! \l(t)^C_{M^\pr}\right)
\cr
&
=0,
}
so that $\eta(t)^C_{M\otimes_{\cp_\O\!} M^\pr} =\eta(t)^C_M\otimes_{\cp_C} \eta(t)^C_{M^\pr}$
for all $t$ since $\eta(0)^C_{M\otimes_{\cp_\O\!} M^\pr} =\eta(0)^C_M\otimes_{\cp_C} \eta(0)^C_{M^\pr}$.
Therefore  $\eta(t)^C$ is a family of elements in $Z_0\mathsf{End}^{\otimes}\big(C\!\otimes\! \bm{\o}\big)$.
Then,  we declare that $\eta(1)^C$ is homotopic to $\eta(0)^C$ by the homotopy $\int^1_0\l(t)^C \mathit{dt}$,
and denote $\eta(0)^C\sim \eta(1)^C$, which is clearly an equivalence relation.
In other words,  two elements $\eta^C$ and $\tilde\eta^C$ 
in the set $Z_0\mathsf{End}^{\otimes}\big(C\!\otimes\! \bm{\o}\big)$ 
are homotopic, $\eta^C\sim \tilde\eta^C$, if there is a homotopy pair connecting  them (by the time $1$ map).  
Then,  we also say that $\eta^C$ and $\tilde\eta^C$ have the same homotopy type, 
and denote it as $[\eta^C]=[\tilde \eta^C]$.

Let $\mathit{ho}Z_0\mathsf{End}^{\otimes}\big(C\!\otimes\! \bm{\o}\big)$ be the set of homotopy types of elements in
$Z_0\mathsf{End}^{\otimes}\big(C\!\otimes\! \bm{\o}\big)$.
It is a routine check that 
$\eta^{\pr C}\circ \eta^C \sim \tilde\eta^{\pr C}\circ \tilde\eta^C 
\in Z_0\mathsf{End}^{\otimes}\big(C\!\otimes\! \bm{\o}\big)$
whenever $\eta^{\pr C}\sim \tilde\eta^{\pr C}, \eta^C \sim \tilde\eta^C 
\in Z_0\mathsf{End}^{\otimes}\big(C\!\otimes\! \bm{\o}\big)$
and the homotopy type of $\eta^{\pr C}\circ \eta^C$ 
depends only on the homotopy types of $\eta^{\pr C}$ and $\eta^C$.
Therefore we have well-defined associative composition 
$[\eta^{\pr C}]\diamond [\eta^C]:= [\eta^{\pr C}\circ \eta^C]$. 
This shows that 
we have a group
\eqn{costpdexxx}{
\bm{\mP}^\otimes_{\!\!\bm{\o}}(C):=
\big( \mathit{ho}Z_0\mathsf{End}^{\otimes}\big(C\!\otimes\! \bm{\o}\big), [\I^C], \diamond\;\big).
} 
The following lemma shows that this construction is functorial.

\begin{lemma}\label{costpdy}
We have a presheaf of groups
$\bm{\mP}^\otimes_{\!\bm{\o}}:\mathring{\mathit{ho}\category{ccdgC}}(\Bbbk) \rightsquigarrow \category{Grp}$
on the homotopy category $\mathit{ho}{\category{ccdgC}}(\Bbbk)$ of ccdg-coalgebras, sending
\begin{itemize}
\item
each ccdg-coalgebra $C$ to the group 
$\bm{\mP}^\otimes_{\!\bm{\o}}(C)
:=\big( \mathit{ho}Z_0\mathsf{End}^{\otimes}\big(C\!\otimes\! \bm{\o}\big), [\I^C], \diamond\big)$, and
\item
each morphism $f:C\rightarrow C^\pr$ of dg-coalgebras
to the group homomorphism 
$\bm{\mP}^\otimes_{\!\bm{\o}}([f]):\bm{\mP}^\otimes_{\!\bm{\o}}(C^\pr)\rightarrow \bm{\mP}^\otimes_{\!\bm{\o}}(C)$
defined by
\[
\bm{\mP}^\otimes_{\!\bm{\o}}([f])\big([\eta^{C^\pr}]\big):=
\Big[\bm{\CP}^\otimes_{\!\!\bm{\o}}(f)\big(\eta^{C^\pr}\big)\Big].
\]
\end{itemize}
\end{lemma}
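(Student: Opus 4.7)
The plan is to reduce the lemma to three assertions: that $\bm{\mP}^\otimes_{\!\bm{\o}}([f])([\eta^{C^\pr}]):=[\bm{\CP}^\otimes_{\!\!\bm{\o}}(f)(\eta^{C^\pr})]$ is independent of the chosen representatives $f$ and $\eta^{C^\pr}$; that the resulting map is a homomorphism of groups; and that the assignment $[f]\mapsto\bm{\mP}^\otimes_{\!\bm{\o}}([f])$ is contravariantly functorial on the homotopy category. The group structure on $\bm{\mP}^\otimes_{\!\bm{\o}}(C)$ and the well-definedness of $\diamond$ on homotopy classes were settled in the discussion leading up to the lemma, and the latter two assertions will follow almost immediately from Lemma \ref{costpdx} once the first is established.

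For well-definedness, I assume $f\sim\tilde f\in \HOM_{\ccdgc}(C,C^\pr)$ witnessed by a homotopy pair $\bigl(f(t),\l(t)\bigr)$, and $\eta^{C^\pr}\sim\tilde\eta^{C^\pr}\in Z_0\mathsf{End}^\otimes(C^\pr\!\otimes\!\bm{\o})$ witnessed by a homotopy pair $\bigl(\eta^{C^\pr}(t),\mu^{C^\pr}(t)\bigr)$, and I construct a homotopy pair $\bigl(\eta^C(t),\mu^C(t)\bigr)$ on $Z_0\mathsf{End}^\otimes(C\!\otimes\!\bm{\o})$ interpolating $\bm{\CP}^\otimes_{\!\!\bm{\o}}(f)(\eta^{C^\pr})$ and $\bm{\CP}^\otimes_{\!\!\bm{\o}}(\tilde f)(\tilde\eta^{C^\pr})$. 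The obvious choice is $\eta^C(t):=\bm{\CP}^\otimes_{\!\!\bm{\o}}(f(t))(\eta^{C^\pr}(t))$ defined by applying the formula of Lemma \ref{costpda} at each $t$, and $\mu^C(t)$ defined componentwise as the sum of two terms: the first is obtained from the formula of Lemma \ref{costpda} applied to $\mu^{C^\pr}(t)$ with $f(t)$ in the middle slot, the second is obtained from the same formula applied to $\eta^{C^\pr}(t)$ with $f(t)$ replaced by $\l(t)$. Differentiating $\eta^C(t)_M$ in $t$ then yields $\d^C\mu^C(t)_M$ directly from the product rule, the chain-map property of $\check{\mq}$ in Lemma \ref{cbasicl}, and the generating equations $\dot f(t)=\rd_{C\!,C^\pr}\l(t)$ and $\dot\eta^{C^\pr}(t)=\d^{C^\pr}\mu^{C^\pr}(t)$.

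What remains, and what I expect to be the main obstacle, is to verify that $\mu^C(t)$ satisfies the two structural constraints on the generator of a homotopy pair on $Z_0\mathsf{End}^\otimes(C\!\otimes\!\bm{\o})$: namely $\mu^C(t)_\Bbbk=0$ and the tensor Leibniz identity $\mu^C(t)_{M\otimes_{\cp_\O\!}M^\pr} = \mu^C(t)_M\otimes_{\cp_C}\eta^C(t)_{M^\pr}+\eta^C(t)_M\otimes_{\cp_C}\mu^C(t)_{M^\pr}$. The vanishing on $\Bbbk$ follows from $\ep_{C^\pr}\circ f(t)=\ep_C$ together with $\mu^{C^\pr}(t)_\Bbbk=0$ and $\eta^{C^\pr}(t)_\Bbbk=\I^{C^\pr}_\Bbbk$. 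The tensor Leibniz identity is the main computation: expanding both sides via $\check{\mq}$ and using the coalgebra-map identity $(f(t)\otimes f(t))\circ\cp_C=\cp_{C^\pr}\circ f(t)$ together with the cocommutativity of $\cp_C$ reduces it to the corresponding tensor Leibniz identity for $\bigl(\eta^{C^\pr}(t),\mu^{C^\pr}(t)\bigr)$, which holds by hypothesis. This runs parallel to step $(3)$ in the proof of Lemma \ref{costpdx}, differing only by the extra summand coming from $\l(t)$; I expect it to be mechanical but somewhat lengthy.

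Once well-definedness is in hand, the remaining assertions are formal. That $\bm{\mP}^\otimes_{\!\bm{\o}}([f])$ is a group homomorphism sending $[\I^{C^\pr}]$ to $[\I^C]$ descends directly from the strict versions of these statements for $\bm{\CP}^\otimes_{\!\!\bm{\o}}(f)$ established in Lemma \ref{costpdx}. Functoriality $\bm{\mP}^\otimes_{\!\bm{\o}}([g]\circ_h[f])=\bm{\mP}^\otimes_{\!\bm{\o}}([f])\circ\bm{\mP}^\otimes_{\!\bm{\o}}([g])$ for composable $[f]\colon C\to C^\pr$ and $[g]\colon C^\pr\to C^{\pr\pr}$ follows from the strict identity $\bm{\CP}^\otimes_{\!\!\bm{\o}}(g\circ f)=\bm{\CP}^\otimes_{\!\!\bm{\o}}(f)\circ\bm{\CP}^\otimes_{\!\!\bm{\o}}(g)$ inherited from Lemma \ref{costpda}.
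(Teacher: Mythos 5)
Your proposal is correct and follows essentially the same route as the paper: you construct the identical interpolating homotopy pair $\bigl(\bm{\CE}_{\!\!\bm{\o}}(f(t))(\eta^{C^\pr}(t)),\ \bm{\CE}_{\!\!\bm{\o}}(f(t))(\mu^{C^\pr}(t))+\bm{\CE}_{\!\!\bm{\o}}(\l(t))(\eta^{C^\pr}(t))\bigr)$ and verify the same four conditions (flow equation, initial condition, vanishing on $\Bbbk$, tensor Leibniz identity), with the last reduced via $\check{\mq}$ and the coalgebra-map identities exactly as in the paper. The formal descent of the homomorphism and functoriality properties from Lemmas \ref{costpda} and \ref{costpdx} is also how the paper disposes of those points.
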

\begin{proof}
All we need to show is that 
$\bm{\CP}^\otimes_{\!\!\bm{\o}}(f)\big(\eta^{C^\pr}\big)
\sim
\bm{\CP}^\otimes_{\!\!\bm{\o}}(\tilde{f})\big(\tilde{\eta}^{C^\pr}\big)
\in Z_0\mathsf{End}^{\otimes}\big(C\!\otimes\! \bm{\o}\big)$
whenever $f\sim \tilde{f} \in \HOM_{\ccdgc}(C, C^\pr)$ and $\eta^{C^\pr}\sim\tilde{\eta}^{C^\pr}
\in Z_0\mathsf{End}^{\otimes}\big(C^\pr\!\otimes\! \bm{\o}\big)$.
It suffices to show the following statement: 
Let $\big(f(t),s(t)\big)$ be a homotopy pair on $\HOM_\ccdgc(C,C^\pr)$ and $\big(\eta(t)^{C^\pr},\l(t)^{C^\pr}\big)$ 
be a homotopy pair on $Z_0\mathsf{End}^{\otimes}\big(C^\pr\!\otimes\! \bm{\o}\big)$. Then the pair
\[
\Big(
\xi(t)^C:=
\bm{\CE}_{\!\!\bm{\o}}\big(f(t)\big)\big(\eta(t)^{C^\pr}\big),
\;
\chi(t)^C:=
\bm{\CE}_{\!\!\bm{\o}}\big(f(t)\big)\big(\l(t)^{C^\pr}\big)+
\bm{\CE}_{\!\!\bm{\o}}\big(s(t)\big)\big(\eta(t)^{C^\pr}\big)
\Big)
\]
is a homotopy pair on $Z_0\mathsf{End}^{\otimes}\big(C\!\otimes\! \bm{\o}\big)$
that the pair $\big(\xi(t)^C, \chi(t)^C\big)$ has the following properties:

\begin{enumerate}[label=(\arabic*),leftmargin=.8cm]
\item
$\frac{d}{dt}\xi(t)^C=\d^C\chi(t)^C$;

\item
$\xi(0)^C\in Z_0\mathsf{End}^{\otimes}\big(C\!\otimes\! \bm{\o}\big)$;

\item
$\chi(t)^C_\Bbbk=0$;

\item
$\chi(t)^C_{M\otimes_{\cp_\O}M^\pr}
=\chi(t)^C_M\otimes_{\cp_C}\xi(t)^C_{M^\pr}+\xi(t)^C_M\otimes_{\cp_C}\chi(t)^C_{M^\pr}$
for all left dg-modules $\big(M,\g_M\big)$ and $\big(M^\pr,\g_{M^\pr}\big)$ over $\O$.
\end{enumerate}

For Property $(1)$, let $\xi(t)^C_M$ be the component of $\xi(t)^C$ at 
 a left dg-module $\big(M,\g_{\!M}\big)$ over $\O$. Then we have
\eqalign{
\frac{d}{dt}\xi(t)^C_M
=&\frac{d}{dt}
\check{\mp}
\Big(
\imath_M\circ\big(\ep_{C^\pr}\otimes\I_M\big)\circ\eta(t)^{C^\pr}_M\circ\big(f(t)\otimes\I_M\big)
\Big)\\
=&\check{\mp}
\Big(
\imath_M\circ\big(\ep_{C^\pr}\otimes\I_M\big)
\circ\rd_{C^\pr\otimes M,C^\pr\otimes M}\l(t)^{C^\pr}_M\circ\big(f(t)\otimes\I_M\big)
\Big)\\
&+\check{\mp}
\Big(
\imath_M\circ\big(\ep_{C^\pr}\otimes\I_M\big)\circ\eta(t)^{C^\pr}_M\circ\big(\rd_{C,C^\pr}s(t)\otimes\I_M\big)
\Big)\\
=&\big(\d^C\chi(t)^C\big)_M,
}
where we have used
$\rd_{C,C^\pr}f(t)=0$ and $\rd_{C^\pr\otimes M,C^\pr\otimes M}\eta(t)^{C^\pr}_M=0$ 
for the $3$rd equality.
Property $(2)$ is obvious since
$f(0):C\to C^\pr$ is a morphism of ccdg-coalgebras 
and $\eta(0)^{C^\pr}$ is in $Z_0\mathsf{End}^{\otimes}\big(C^\pr\!\otimes\! \bm{\o}\big)$.
Property $(3)$ follows from  $\l(t)^{C^\pr}_\Bbbk=0$, $\eta(t)^{C^\pr}_\Bbbk=\I^C_\Bbbk$ and $\ep_{C^\pr}\circ s(t)=0$, 
since we have
\eqalign{
\chi(t)^C_\Bbbk
=&
\check{\mp}
\Big(
m_\Bbbk\circ\big(\ep_{C^\pr}\otimes\I_\Bbbk\big)\circ \l(t)^{C^\pr}_\Bbbk\circ\big(f(t)\otimes\I_\Bbbk\big)
+m_\Bbbk\circ\big(\ep_{C^\pr}\otimes\I_\Bbbk\big)\circ \eta(t)^{C^\pr}_\Bbbk\circ\big(s(t)\otimes\I_\Bbbk\big)
\Big)\\
=&
\check{\mp}
\Big(
m_\Bbbk\circ\big(\ep_{C^\pr}\otimes\I_\Bbbk\big)\circ\big(s(t)\otimes\I_\Bbbk\big)
\Big)=0.
}
We note that Property $(4)$ is equivalent to the condition
\eqn{sdrwb}{
\check{\mq}\Big(\chi(t)^C_{M\otimes_{\cp_\O}M^\pr}\Big)
=\check{\mq}\Big(\chi(t)^C_M\otimes_{\cp_C}\xi(t)^C_{M^\pr}+\xi(t)^C_M\otimes_{\cp_C}\chi(t)^C_{M^\pr}\Big),
}
which can be checked as follows.
We consider the $1$st term in the RHS of \eq{sdrwb}:
\eqalign{
\check{\mq}\left(\chi(t)^C_M\otimes_{\cp_C}\xi(t)^C_{M^\pr}\right)
=&
\Big(\check{\mq}\big(\chi(t)^C_M\big)\otimes\check{\mq}\big(\xi(t)^C_{M^\pr}\big)\Big)
\circ(\I_C\otimes \t\otimes \I_{M^\pr})\circ(\cp_C\otimes\I_{M\otimes M^\pr})\\
=&
\Big(
\check{\mq}\big(\eta(t)^{C^\pr}_M\big)\otimes\check{\mq}\big(\eta(t)^{C^\pr}_{M^\pr}\big)
\Big)\\
&\circ
\big(s(t)\otimes\I_M\otimes f(t)\otimes\I_{M^\pr}\big)
\circ(\I_C\otimes \t\otimes \I_{M^\pr})\circ(\cp_C\otimes\I_{M\otimes M^\pr})\\
+&
\Big(
\check{\mq}\big(\l(t)^{C^\pr}_M\big)\otimes\check{\mq}\big(\eta(t)^{C^\pr}_{M^\pr}\big)
\Big)\\
&\circ
\big(f(t)\otimes\I_M\otimes f(t)\otimes\I_{M^\pr}\big)
\circ(\I_C\otimes \t\otimes \I_{M^\pr})\circ(\cp_C\otimes\I_{M\otimes M^\pr}).
}
Combining with the similar calculation for the $2$nd term in the RHS of \eq{sdrwb}, we obtain that
\[
\begin{aligned}
\check{\mq}\Big(\chi(t)^C_M & \otimes_{\cp_C}\xi(t)^C_{M^\pr}+\xi(t)^C_M\otimes_{\cp_C}\chi(t)^C_{M^\pr}\Big)\\
=&
\left(
\check{\mq}\big(\l(t)^{C^\pr}_M\big)\otimes \check{\mq}\big(\eta(t)^{C^\pr}_{M^\pr}\big)
+
\check{\mq}\big(\eta(t)^{C^\pr}_M\big)\otimes \check{\mq}\big(\l(t)^{C^\pr}_{M^\pr}\big)
\right)
\circ\big(f(t)\otimes\I_M\otimes f(t)\otimes \I_M\big)\\
&\circ(\I_C\otimes \t\otimes \I_{M^\pr})\circ(\cp_C\otimes\I_{M\otimes M^\pr})\\
+&\left(\check{\mq}\big(\eta(t)^{C^\pr}_M\big)\otimes \check{\mq}\big(\eta(t)^{C^\pr}_{M^\pr}\big)\right)
\circ
\big(s(t)\otimes\I_M\otimes f(t)\otimes \I_{M^\pr}+f(t)\otimes\I_M\otimes s(t)\otimes\I_{M^\pr}\big)\\
&\circ(\I_C\otimes \t\otimes \I_{M^\pr})\circ(\cp_C\otimes\I_{M\otimes M^\pr})\\
=&
\left(
\check{\mq}\big(\l(t)^{C^\pr}_M\big)\otimes \check{\mq}\big(\eta(t)^{C^\pr}_{M^\pr}\big)
+
\check{\mq}\big(\eta(t)^{C^\pr}_M\big)\otimes \check{\mq}\big(\l(t)^{C^\pr}_{M^\pr}\big)
\right)\\
&\circ
(\I_{C^\pr}\otimes\t\otimes\I_{M^\pr})\circ(\cp_{C^\pr}\otimes\I_{M\otimes M^\pr})\circ\big(f(t)\otimes\I_{M\otimes M^\pr}\big)\\
+&\left(\check{\mq}\big(\eta(t)^{C^\pr}_M\big)\otimes \check{\mq}\big(\eta(t)^{C^\pr}_{M^\pr}\big)\right)
\circ
(\I_{C^\pr}\otimes\t\otimes\I_{M^\pr})\circ(\cp_{C^\pr}\otimes\I_{M\otimes M^\pr})\circ\big(s(t)\otimes\I_{M\otimes M^\pr}\big)\\
=&
\check{\mq}\big(
\l(t)^{C^\pr}_M\otimes_{\cp_{C^\pr}}\eta(t)^{C^\pr}_{M^\pr}+
\eta(t)^{C^\pr}_M\otimes_{\cp_{C^\pr}}\l(t)^{C^\pr}_{M^\pr}
\big)
\circ\big(f(t)\otimes\I_{M\otimes M^\pr}\big)\\
+&\check{\mq}\big(\eta(t)^{C^\pr}_M\otimes_{\cp_{C^\pr}}\eta(t)^{C^\pr}_{M^\pr}\big)
\circ\big(s(t)\otimes\I_{M\otimes M^\pr}\big)\\
=&
\check{\mq}\big(\l(t)^{C^\pr}_{M\otimes_{\cp_\O}M^\pr}\big)\circ\big(f(t)\otimes \I_{M\otimes M^\pr}\big)
+\check{\mq}\big(\eta(t)^{C^\pr}_{M\otimes_{\cp_\O}M^\pr}\big)\circ\big(s(t)\otimes \I_{M\otimes M^\pr}\big)\\
=&
\check{\mq}\big(\chi(t)^{C^\pr}_{M\otimes_{\cp_\O}M^\pr}\big).
\end{aligned}
\]
In the above, 
we used $\big(s(t)\otimes f(t)+f(t)\otimes s(t)\big)\circ\cp_C=\cp_{C^\pr}\circ s(t)$ and
$\big(f(t)\otimes f(t)\big)\circ\cp_C=\cp_{C^\pr}\circ f(t)$ on the $2$nd equality, and used
$\l(t)^{C^\pr}_M\otimes\eta(t)^{C^\pr}_{M^\pr}+\eta(t)^{C^\pr}_M\otimes \l(t)^{C^\pr}_{M^\pr}
=\l(t)^{C^\pr}_{M\otimes_{\cp_\O}M^\pr}$ and
$\eta(t)^{C^\pr}_M\otimes_{\cp_{C^\pr}}\eta(t)^{C^\pr}_{M^\pr}=\eta(t)^{C^\pr}_{M\otimes_{\cp_\O}M^\pr}$
on the $4$th equality.
\qed
\end{proof}


Now we are ready to state the main theorem in this section.

\begin{theorem}\label{homainth}
We have a natural isomorphism of presheaves of groups
$$
\bm{\mP}^\otimes_{\!\bm{\o}}\cong \bm{\mP}_{\!\O}:
\mathring{\mathit{ho}\category{ccdgC}}(\Bbbk) \rightsquigarrow \category{Grp}
$$
on the homotopy category of ccdg-coalgebras.
Equivalently, the presheaf of groups 
$\bm{\mP}^\otimes_{\!\bm{\o}}$ on ${\mathit{ho}\category{ccdgC}}(\Bbbk)$ is representable
and represented by the ccdg-Hopf algebra $\O$.
\end{theorem}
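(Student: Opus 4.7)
The plan is to break the isomorphism into two stages. In the first stage, I would construct a natural isomorphism $\bm{\CP}^\otimes_{\!\!\bm{\o}}\cong \bm{\CP}_{\!\!\O}:\mathring{\category{ccdgC}}(\Bbbk)\rightsquigarrow \category{Grp}$ of presheaves of groups already on the category $\category{ccdgC}(\Bbbk)$. The forward map $\Phi:\bm{\CP}_{\!\!\O}\Rightarrow\bm{\CP}^\otimes_{\!\!\bm{\o}}$ is supplied by the functor $\functor{Y}$ of Theorem \ref{repmod}: for $g\in\HOM_{\ccdgc}(C,\O)$, define $\Phi^C(g)\in\mathsf{End}(C\otimes\bm{\o})$ to have component
$\Phi^C(g)_M:=\widebreve{\bm{\r}}^C_M(g)=\check{\mp}\big(\g_M\circ(g\otimes\I_M)\big)$
at each $(M,\g_M)\in\gdcat{dgMod}_L(\O)$. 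Naturality in $M$ follows because a morphism $\p:(M,\g_M)\to(M^\pr,\g_{M^\pr})$ of left dg-modules intertwines the actions; the tensor axiom \eq{ctensorial} follows directly from $g$ being a coalgebra map (and a routine use of cocommutativity of $\cp_C$); and $\d^C\Phi^C(g)=0$ is automatic since $g$ and $\g_M$ are chain maps. The inverse $\Psi:\bm{\CP}^\otimes_{\!\!\bm{\o}}\Rightarrow\bm{\CP}_{\!\!\O}$ is obtained by evaluating on the regular module: for $\eta^C\in Z_0\mathsf{End}^\otimes(C\otimes\bm{\o})$, set
$\Psi^C(\eta^C):=\check{\mq}(\eta^C_\O)\circ(\I_C\otimes u_\O)\circ\jmath_C^{-1}:C\to\O.$
The coalgebra-map, counit and chain-map properties of $\Psi^C(\eta^C)$ are then forced by naturality of $\eta^C$ with respect to the three module maps of Lemma \ref{modpr}---the coproduct $\cp_\O:(\O,m_\O)\to(\O\otimes\O,\g_{\O\otimes_{\cp_\O}\O})$, the counit $\ep_\O:(\O,m_\O)\to(\Bbbk,\g_\Bbbk)$, and the action---combined with the tensor axiom \eq{ctensorial}.

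The main obstacle is verifying $\Phi^C\circ\Psi^C=\I$, which is a Yoneda-type rigidity statement: a dg-tensor natural endomorphism of $C\otimes\bm{\o}$ is recovered from its component at $(\O,m_\O)$. To prove this I would invoke Lemma \ref{free modules and eta} to write $\eta^C_{\O\otimes M}=\eta^C_\O\otimes\I_M$ for any chain complex $M$, and then use naturality of $\eta^C$ against the module map $\g_M:(\O\otimes M,m_\O\otimes\I_M)\to(M,\g_M)$ from Lemma \ref{modpr}(c) to obtain the key identity
$\eta^C_M\circ(\I_C\otimes\g_M)=(\I_C\otimes\g_M)\circ(\eta^C_\O\otimes\I_M).$
Unraveling this with the explicit formula for $\Psi^C(\eta^C)$ yields precisely $\eta^C_M=\widebreve{\bm{\r}}^C_M(\Psi^C(\eta^C))=\Phi^C(\Psi^C(\eta^C))_M$. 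The reverse composite $\Psi^C\circ\Phi^C=\I$ is then immediate from the unit axiom $\g_M\circ(u_\O\otimes\I_M)=\imath_M$. Compatibility of $\Phi^C$ with group structure amounts to the identity
$\widebreve{\bm{\r}}^C_M(g_1\star_{C\!,\O}g_2)=\widebreve{\bm{\r}}^C_M(g_1)\circ\widebreve{\bm{\r}}^C_M(g_2),$
which is verified in the proof of Theorem \ref{repmod} (using coassociativity of $\cp_C$ and the action axiom for $\g_M$), while naturality in $C$ is clear from both presheaves being defined by precomposition with $f:C\to C^\pr$.

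For the second stage, I would show that $\Phi$ and $\Psi$ are compatible with the notion of homotopy types on the two sides, so they descend to an isomorphism $\bm{\mP}^\otimes_{\!\!\bm{\o}}\cong\bm{\mP}_{\!\!\O}$ on $\mathit{ho}\category{ccdgC}(\Bbbk)$. Given a homotopy pair $(g(t),\chi(t))$ on $\HOM_{\ccdgc}(C,\O)$ in the sense of Sect.\ 3, set $\eta(t)^C:=\Phi^C(g(t))$ and define $\l(t)^C$ componentwise by
$\l(t)^C_M:=\check{\mp}\big(\g_M\circ(\chi(t)\otimes\I_M)\big).$
The homotopy flow equation $\tfrac{d}{dt}\eta(t)^C=\d^C\l(t)^C$ is immediate from $\tfrac{d}{dt}g(t)=\rd_{C\!,\O}\chi(t)$ since $\check{\mp}$ and $\g_M$ are chain maps; the condition $\l(t)^C_\Bbbk=0$ follows from $\ep_\O\circ\chi(t)=0$ via $\g_\Bbbk=m_\Bbbk\circ(\ep_\O\otimes\I_\Bbbk)$; and the tensor derivation identity for $\l(t)^C$ follows from the coproduct condition on $\chi(t)$ relative to $g(t)$, combined with cocommutativity of $\cp_C$. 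Conversely, from a homotopy pair $(\eta(t)^C,\l(t)^C)$ on $Z_0\mathsf{End}^\otimes(C\otimes\bm{\o})$, I would extract $\big(g(t),\chi(t)\big):=\big(\Psi^C(\eta(t)^C),\check{\mq}(\l(t)^C_\O)\circ(\I_C\otimes u_\O)\circ\jmath_C^{-1}\big)$ and verify it is a homotopy pair on $\HOM_{\ccdgc}(C,\O)$; the conditions on $\chi(t)$ again follow by naturality of $\l(t)^C$ against the module maps of Lemma \ref{modpr}, using $\l(t)^C_\Bbbk=0$ and the tensor derivation identity. Once this two-way homotopy correspondence is established, it follows that the isomorphism $\Phi\leftrightarrow\Psi$ induces a well-defined natural isomorphism $\bm{\mP}^\otimes_{\!\!\bm{\o}}\cong\bm{\mP}_{\!\!\O}$ of presheaves of groups on $\mathit{ho}\category{ccdgC}(\Bbbk)$, which completes the Tannakian reconstruction.
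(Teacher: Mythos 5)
Your proposal is correct and follows essentially the same route as the paper: your $\Phi^C$ and $\Psi^C$ are exactly the paper's $\bm{\widebreve{\eta}}^C$ and $\bm{\widebreve{g}}^C$ from Lemmas \ref{ctanha}--\ref{ctanhc} (Proposition \ref{homainpr}), the rigidity step $\Phi^C\circ\Psi^C=\I$ is proved there by the same combination of Lemma \ref{free modules and eta} and naturality against the action map of Lemma \ref{modpr}(c), and your descent to the homotopy category via the componentwise homotopy pair $\l(t)^C_M=\check{\mp}\big(\g_M\circ(\chi(t)\otimes\I_M)\big)=\bm{\widebreve{\eta}}^C(\chi(t))_M$ is precisely the paper's proof of Theorem \ref{homainth}.
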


The remaining part of this section is devoted to the proof of the above theorem, which
is  divided into several pieces.

\begin{proposition}\label{homainpr}
We have natural isomorphisms of presheaves
\eqalign{
\bm{\CE}_{\!\!\bm{\o}}\cong \bm{\CE}_{\O}:\mathring{\category{ccdgC}}(\Bbbk) \rightsquigarrow \category{dgA}(\Bbbk)
,\quad
\bm{\CP}^\otimes_{\!\!\bm{\o}}\cong \bm{\CP}_{\!\!\O}
:\mathring{\category{ccdgC}}(\Bbbk) \rightsquigarrow \category{Grp}
.
}
In particular the presheaf of groups $\bm{\CP}^\otimes_{\!\!\bm{\o}}$ on $\category{ccdgC}(\Bbbk)$ is representable
and represented by the ccdg-Hopf algebra $\O$.
\end{proposition}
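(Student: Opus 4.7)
The plan is to construct, for each ccdg-coalgebra $C$, an explicit dg-algebra isomorphism
$\Phi_C:\bm{\CE}_{\O}(C)\isoto\bm{\CE}_{\!\!\bm{\o}}(C)$
that is natural in $C$ and that restricts to a group isomorphism
$\bm{\CP}_{\!\!\O}(C)\isoto\bm{\CP}^\otimes_{\!\!\bm{\o}}(C)$.
The definition is modelled on the construction of the regular representation in Example~\ref{regularrepsh} and the functor $\functor{Y}$ of Theorem~\ref{repmod}: for $\a\in\Hom(C,\O)$ and each left dg-module $(M,\g_M)$ over $\O$, set
\[
\Phi_C(\a)_M:=\check{\mp}\big(\g_M\circ(\a\otimes\I_M)\big)
=(\I_C\otimes\g_M)\circ(\I_C\otimes\a\otimes\I_M)\circ(\cp_C\otimes\I_M).
\]
Naturality of $\Phi_C(\a)$ with respect to a morphism $\p:(M,\g_M)\to(M^\pr,\g_{M^\pr})$ of left dg-modules follows from $\p\circ\g_M=\g_{M^\pr}\circ(\I_\O\otimes\p)$, so $\Phi_C(\a)\in\bm{\CE}_{\!\!\bm{\o}}(C)$.

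The candidate inverse is
\[
\Psi_C(\eta^C):=\check{\mq}(\eta^C_\O)\circ(\I_C\otimes u_\O)\circ\jmath^{-1}_C
=\jmath_\O\circ(\ep_C\otimes\I_\O)\circ\eta^C_\O\circ(\I_C\otimes u_\O)\circ\jmath^{-1}_C:C\to\O.
\]
The identity $\Psi_C\circ\Phi_C=\I$ is a direct calculation using the unit and counit axioms. The identity $\Phi_C\circ\Psi_C=\I$ is the crux and I would argue it in two steps: first, Lemma~\ref{free modules and eta} shows $\eta^C_{\O\otimes M}=\eta^C_\O\otimes\I_M$, so the component at every \emph{free} dg-module over $\O$ is determined by $\eta^C_\O$ and hence by $\Psi_C(\eta^C)$; second, by Lemma~\ref{modpr}\emph{(c)} the action $\g_M:(\O\otimes M,m_\O\otimes\I_M)\to(M,\g_M)$ is a morphism of left dg-modules, and the module-unit identity $\g_M\circ(u_\O\otimes\I_M)=\imath_M$ together with naturality of $\eta^C$ along $\g_M$ propagates the formula from free modules to all modules.

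Next I would check that $\Phi_C$ is a morphism of dg-algebras. The unit statement $\Phi_C(u_\O\circ\ep_C)=\I^C$ uses $\g_M\circ(u_\O\otimes\I_M)=\imath_M$ and the counit axiom of $\cp_C$. Multiplicativity $\Phi_C(\a\star_{C,\O}\b)_M=\Phi_C(\a)_M\circ\Phi_C(\b)_M$ follows as in Example~\ref{regularrepsh}, using coassociativity of $\cp_C$ and the module axiom $\g_M\circ(m_\O\otimes\I_M)=\g_M\circ(\I_\O\otimes\g_M)$. Compatibility with the differential is immediate since $\check{\mp}$ and $\g_M$ are chain maps. Naturality in $C$ is read off by comparing the explicit formulas defining $\bm{\CE}_{\O}(f)$ in Lemma~\ref{presheafone} and $\bm{\CE}_{\!\!\bm{\o}}(f)$ in Lemma~\ref{costpda}, both of which are built from $\check{\mp}$, $\check{\mq}$ and composition with $f\otimes\I_M$.

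For the group statement I would show that $g\in\HOM_{\ccdgc}(C,\O)\subset\Hom(C,\O)$ if and only if $\Phi_C(g)\in Z_0\mathsf{End}^\otimes(C\otimes\bm{\o})$, treating the three defining conditions one at a time. The cycle condition for $\Phi_C(g)$ is equivalent to $g$ being a chain map, since $\Phi_C$ is a chain map. The condition $\Phi_C(g)_\Bbbk=\I^C_\Bbbk$ unwinds, via $\g_\Bbbk=\imath_\Bbbk\circ(\ep_\O\otimes\I_\Bbbk)$ and the counit of $\cp_C$, to $\ep_\O\circ g=\ep_C$. The hardest point, and in my view the main obstacle, is to show that the tensorial condition $\Phi_C(g)_{M\otimes_{\cp_\O} M^\pr}=\Phi_C(g)_M\otimes_{\cp_C}\Phi_C(g)_{M^\pr}$ for all pairs $(M,\g_M),(M^\pr,\g_{M^\pr})$ is equivalent to the coalgebra-map identity $\cp_\O\circ g=(g\otimes g)\circ\cp_C$. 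The easy direction uses cocommutativity of $\cp_C$ and the explicit form of $\g_{M\otimes_{\cp_\O}M^\pr}$ in \eq{modtensor}. For the converse I would specialise to $(M,\g_M)=(M^\pr,\g_{M^\pr})=(\O,m_\O)$, apply $\check{\mq}$ to both sides of the tensor identity, and then pre-compose with $\jmath_\O^{-1}\otimes u_\O\otimes u_\O$ in order to strip off the module structures and isolate the equation $\cp_\O\circ g=(g\otimes g)\circ\cp_C$. Once this equivalence of conditions is established, the restriction of $\Phi_C$ yields the desired natural isomorphism $\bm{\CP}^\otimes_{\!\!\bm{\o}}\cong\bm{\CP}_{\!\!\O}$, and representability of $\bm{\CP}^\otimes_{\!\!\bm{\o}}$ by $\O$ follows from Lemma~\ref{presheaftwo}.
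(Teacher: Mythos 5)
Your proposal is correct and follows essentially the same route as the paper: the maps $\Phi_C$ and $\Psi_C$ coincide with the paper's $\bm{\widebreve{\eta}}^C$ and $\bm{\widebreve{g}}^C$ (Lemma \ref{ctanha}), the identity $\Phi_C\circ\Psi_C=\I$ is established exactly as you describe via Lemma \ref{free modules and eta} and naturality along the action map of Lemma \ref{modpr}, and the restriction to the group level (Lemma \ref{ctanhc}) is obtained from the same equivalences between the ccdg-coalgebra-morphism conditions on $g$ and the cycle, unit and tensoriality conditions on $\Phi_C(g)$, your specialisation to $(\O,m_\O)$ being the same computation as the paper's use of naturality along $\cp_\O$. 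Only trivial slips ($\jmath_\O$ for $\imath_\O$ in the formula for $\Psi_C$, and cocommutativity is not actually needed in the easy direction of the tensoriality equivalence) separate it from the paper's argument.
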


The proof of this proposition is based on the forthcoming two lemmas.
Remind that in Lemma \ref{presheafone}, we defined the dg-algebra
$\bm{\CE}_{\!\!\O}(C)=\big(\Hom(C, \O), u_\O\circ \ep_C, \star_{C\!,\O},\rd_{C\!,\O}\big)$
for every ccdg-coalgebra $C$.

\begin{lemma}\label{ctanha}
 We have an isomorphism
$\xymatrix{\bm{\widebreve{\eta}}^C: \bm{\CE}_{\!\!\O}(C)
\ar@/^/[r] & \ar@/^/[l] \bm{\CE}_{\!\!\bm{\o}}(C):\bm{\widebreve{g}}^C}
$ 
of dg-algebras for every ccdg-coalgebra $C$, where
\begin{itemize}

\item
for each $\a\in \Hom(C,\O)$, the component of
$\bm{\widebreve{\eta}}^C(\a) \in \mathsf{End}\big(C\!\otimes\! \bm{\o}\big)$
at a left dg-module $\big(M,\g_M\big)$ over $\O$ is defined by
\eqalign{
\bm{\widebreve{\eta}}^C(\a)_{M}
:=&\check{\mp}\big(\g_M\circ(\alpha\otimes\I_M)\big)\\
=& (\I_C\otimes {\g}_M)\circ (\I_C\otimes \a\otimes \I_M)\circ (\cp_C\otimes \I_M):C\otimes M\to C\otimes M.
}

\item for  each ${\eta}^C \in\mathsf{End}\big(C\!\otimes\! \bm{\o}\big)$,
the linear map $\bm{\widebreve{g}}^C(\eta^C) \in \Hom(C, \O)$ is defined by
\eqalign{
\bm{\widebreve{g}}^C({\eta}^C)
:=&\check\mq(\eta^C_\O)\circ (\I_C\otimes u_\O)\circ \jmath^{-1}_C\\
=& \imath_\O\circ (\ep_C\otimes \I_\O)\circ {\eta}^C_{\O} \circ (\I_C\otimes u_\O)\circ \jmath^{-1}_C
:C\to \O.
}
\end{itemize}
\end{lemma}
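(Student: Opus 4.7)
The plan is to verify separately (i) that $\bm{\widebreve{\eta}}^C(\a)$ really defines a natural transformation of $C\otimes\bm{\o}$ for each $\a\in\Hom(C,\O)$, (ii) that $\bm{\widebreve{\eta}}^C$ is a morphism of dg-algebras, (iii) that $\bm{\widebreve g}^C$ is a chain map, and (iv) that the two maps are mutually inverse. For the naturality in (i), given a morphism $\p:(M,\g_M)\to(M^\pr,\g_{M^\pr})$ of left dg-modules over $\O$, the square asserting $(\I_C\otimes\p)\circ\bm{\widebreve{\eta}}^C(\a)_M=(-1)^{|\a||\p|}\bm{\widebreve{\eta}}^C(\a)_{M^\pr}\circ(\I_C\otimes\p)$ follows directly from the module-map identity $\p\circ\g_M=\g_{M^\pr}\circ(\I_\O\otimes\p)$. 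The formula here is formally the one defining $\functor Y$ in Theorem \ref{repmod}, but applied to an arbitrary linear map $\a$ rather than a coalgebra morphism, so the naturality argument is literally the one already given there.

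Next I would verify the dg-algebra conditions in (ii). The unit $u_\O\circ\ep_C$ maps to $\I^C$ by combining the counit axiom of $\cp_C$ with $\g_M\circ(u_\O\otimes\I_M)=\imath_M$, as in Example \ref{regularrepsh} and the proof of Theorem \ref{repmod}. The convolution product translates to composition by exactly the same computation used there: coassociativity of $\cp_C$ together with the module associativity $\g_M\circ(\I_\O\otimes\g_M)=\g_M\circ(m_\O\otimes\I_M)$ yield $\bm{\widebreve\eta}^C(\a_1\star_{C\!,\O}\a_2)_M=\bm{\widebreve\eta}^C(\a_1)_M\circ\bm{\widebreve\eta}^C(\a_2)_M$. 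The chain map condition $\bm{\widebreve\eta}^C\circ\rd_{C\!,\O}=\d^C\circ\bm{\widebreve\eta}^C$ is immediate since $\cp_C$ and $\g_M$ are chain maps, so only the differential of $\a$ contributes to $\rd_{C\otimes M,C\otimes M}\bm{\widebreve\eta}^C(\a)_M$. For (iii), linearity of $\bm{\widebreve g}^C$ and compatibility with differentials are immediate from the fact that $\check\mq$ is a chain map and that $u_\O$, $\ep_C$, $\jmath^{-1}_C$ are chain maps of degree zero.

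For the mutual invertibility in (iv), the easy direction $\bm{\widebreve g}^C\circ\bm{\widebreve{\eta}}^C=\I_{\Hom(C,\O)}$ is a direct computation on the regular left dg-module $(\O,m_\O)$: expanding $\bm{\widebreve g}^C(\bm{\widebreve\eta}^C(\a))$ and using the counit property of $\cp_C$ on the left tensor factor together with the unit axiom $m_\O\circ(\I_\O\otimes u_\O)=\jmath_\O$ on the right, the expression collapses to $\a$. The main obstacle is the reverse direction $\bm{\widebreve\eta}^C\circ\bm{\widebreve g}^C=\I_{\mathsf{End}(C\otimes\bm{\o})}$, which requires reconstructing the component $\eta^C_M$ at an arbitrary left dg-module $(M,\g_M)$ over $\O$ from the single datum $\eta^C_\O$. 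The plan here is to invoke Lemma \ref{modpr}\emph{(c)}, which says that $\g_M:(\O\otimes M,m_\O\otimes\I_M)\to(M,\g_M)$ is a morphism of left dg-modules, together with Lemma \ref{free modules and eta} giving $\eta^C_{\O\otimes M}=\eta^C_\O\otimes\I_M$. Naturality of $\eta^C$ along $\g_M$ then forces
\[
(\I_C\otimes\g_M)\circ(\eta^C_\O\otimes\I_M)=\eta^C_M\circ(\I_C\otimes\g_M),
\]
and after precomposing both sides with $(\I_C\otimes u_\O\otimes\I_M)\circ(\I_C\otimes\imath^{-1}_M)$, the identity $\g_M\circ(u_\O\otimes\I_M)=\imath_M$ collapses the right-hand side to $\eta^C_M$ while the left-hand side is visibly $\bm{\widebreve\eta}^C(\bm{\widebreve g}^C(\eta^C))_M$. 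This manoeuvre is entirely parallel to the one establishing $\eta^C_{M_*}=\I^C_M$ inside the proof of Proposition \ref{group-valued}, and is the one nontrivial ingredient of the argument.
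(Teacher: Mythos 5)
Your proposal is correct and follows essentially the same route as the paper's proof: naturality of $\bm{\widebreve{\eta}}^C(\a)$ from the module-map identity, the dg-algebra axioms for $\bm{\widebreve{\eta}}^C$ via the counit/coassociativity of $\cp_C$ and the unit/associativity axioms of $\g_M$, the easy inverse direction by $\check{\mq}\circ\check{\mp}=\I$ on the regular module, and the hard direction by combining Lemma \ref{free modules and eta} with the naturality of $\eta^C$ along the action map $\g_M:(\O\otimes M,m_\O\otimes\I_M)\to(M,\g_M)$. The only (harmless) deviations are that you verify directly that $\bm{\widebreve{g}}^C$ is a chain map, whereas the paper deduces this from $\bm{\widebreve{g}}^C$ being the two-sided inverse of the dg-algebra morphism $\bm{\widebreve{\eta}}^C$, and that you cite Lemma \ref{modpr}(c) for the action map being a module morphism, which is in fact the correct part of that lemma.
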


\begin{proof}
The map $\bm{\widebreve{g}}^C$ is well-defined, 
since $\bm{\widebreve{g}}^C({\eta}^C)$ is obviously a $\Bbbk$-linear map.
We can check that  the map $\bm{\widebreve{\eta}}^C$ is also well-defined as follows.
For every morphism $\p:\big(M,\g_M\big)\to\big(M^\pr,\g_{M^\pr}\big)$ of left dg-modules over $\O$, 
the following  diagram commutes
\[
\xymatrixrowsep{3pc}
\xymatrixcolsep{3.3pc}
\xymatrix{
\ar@/^1.2pc/[rrr]^-{\bm{\widebreve{\eta}}^C(\alpha)_M}
C\otimes M \ar[r]_-{\cp_C\otimes\I_M} \ar[d]_-{\I_C\otimes \p}&
C\otimes C\otimes M \ar[r]_-{\I_C\otimes \a\otimes \I_M} \ar[d]^-{\I_{C\otimes C}\otimes \p}&
C\otimes \O\otimes M \ar[r]_-{\I_C\otimes \g_M} \ar[d]^-{\I_{C\otimes\O}\otimes \p}&
C\otimes M \ar[d]^-{\I_C\otimes \p}
\\
\ar@/_1.2pc/[rrr]_-{\bm{\widebreve{\eta}}^C(\alpha)_{M^\pr}}
C\otimes M^\pr \ar[r]^-{\cp_C\otimes\I_{M^\pr}}&
C\otimes C\otimes M^\pr \ar[r]^-{\I_C\otimes \a\otimes \I_{M^\pr}}&
C\otimes \O\otimes M^\pr \ar[r]^-{\I_C\otimes \g_{M^\pr}}&
C\otimes M^\pr
}
\]
so that $\bm{\widebreve{\eta}}^C(\alpha)$ is a natural transformation. 

Now we check that $\bm{\widebreve{g}}^C$ and $\bm{\widebreve{\eta}}^C$ are inverse to each other. 
\begin{itemize}
\item
$\bm{\widebreve{g}}^C\big(\bm{\widebreve{\eta}}^C(\a)\big)=\a$ holds for all $\a\in \Hom(C,\O)$:
\[
\begin{aligned}
\bm{\widebreve{g}}^C\big(\bm{\widebreve{\eta}}^C(\a)\big)
&=\check{\mq}\big(\bm{\widebreve{\eta}}^C(\a)_\O\big)\circ(\I_C\otimes u_\O)\circ\jmath^{-1}_C
=\check{\mp}\big(\check{\mq}\big(m_\O\circ(\alpha\otimes \I_\O)\big)\big)\circ(\I_C\otimes u_\O)\circ\jmath^{-1}_C\\
&=m_\O\circ(\alpha\otimes \I_\O)\circ(\I_C\otimes u_\O)\circ\jmath^{-1}_C=\alpha.
\end{aligned}
\]

\item
$\bm{\widebreve{\eta}}^C\big(\bm{\widebreve{g}}^C(\eta^C)\big)
=\eta^C$ holds for all $\eta^C\in\mathsf{End}\big(C\!\otimes\! \bm{\o}\big)$:
Let $\big(M,\g_M\big)$ be a left dg-module over $\O$.  Lemma \ref{modpr}\emph{(a)} states that 
$\g_M:\big(\O\otimes M,m_\O\otimes\I_M\big)\to\big(M,\g_M\big)$
is a morphism of left dg-modules over $\O$. Since $\eta^C$ is a natural transformation, the following diagram commutes
$$
\xymatrixrowsep{1.3pc}
\xymatrixcolsep{3pc}
\xymatrix{
\ar[d]_-{{\eta}^C_{\O\otimes M}=\eta^C_\O\otimes\I_M}
C\otimes \O\otimes M\ar[r]^-{\I_C\otimes {\g}_M} & C\otimes M
\ar[d]^-{{\eta}^C_{M}}
\cr
C\otimes \O\otimes M\ar[r]^-{\I_C\otimes{\g}_M} & C\otimes M
},\qquad\hbox{i.e.,}\qquad 
(\I_C\otimes {\g}_M)\circ ({\eta}^C_\O\otimes \I_M)= \eta^C_M \circ (\I_C\otimes {\g}_M).
$$
The equality on the diagram is by Lemma \ref{free modules and eta}. Thus we have
\eqalign{
\bm{\widebreve{\eta}}^C\big(\bm{\widebreve{g}}^C(\eta^C)\big)_M
=
&
(\I_C\otimes {\g}_M)\circ \Big(\I_C\otimes  \big( \hat\mq({\eta}^C_{\O})  
\circ (\I_C\otimes u_\O)\circ \jmath^{-1}_C\big)\otimes \I_M\Big)\circ (\cp_C\otimes \I_M)
\cr
=
&
(\I_C\otimes {\g}_M)\circ \Big( \hat\mp\big(\hat\mq({\eta}^C_{\O})\big)\otimes \I_M\Big)
\circ (\I_C\otimes u_\O\otimes \I_M)
\circ ( \jmath^{-1}_C\otimes \I_M)
\cr
=
&
(\I_C\otimes {\g}_M)\circ (\eta_\O^C\otimes \I_M)
\circ (\I_C\otimes u_\O\otimes \I_M)\circ (\jmath^{-1}_C\otimes \I_M)
\cr
=
&
\eta^C_M\circ (\I_C\otimes {\g}_M)
\circ (\I_C\otimes u_\O\otimes \I_M)\circ (\jmath^{-1}_C\otimes \I_M)
=\eta^C_M.
}
\end{itemize}

We are left to show that $\bm{\widebreve{\eta}}^C$ and $\bm{\widebreve{g}}^C$ are morphisms of dg-algebras.
Since they are inverse to each other, 
it suffices to show that $\bm{\widebreve{\eta}}^C$ is a morphism of dg-algebras. 
Clearly, $\bm{\widebreve{\eta}}^C$ is a $\Bbbk$-linear map of degree $0$.
Let $\big(M,\g_M\big)$ be a left dg-module over $\O$.
\begin{itemize}
\item
$\bm{\widebreve{\eta}}^C$ is a chain map, 
i.e. $\d^C\circ\bm{\widebreve{\eta}}^C=\bm{\widebreve{\eta}}^C\circ\rd_{C\!,\O}$. Indeed, for $\a \in \Hom(C,\O)$,
\[
\begin{aligned}
\d^C\big(\bm{\widebreve{\eta}}^C(\alpha)\big)_M
&=\rd_{C\otimes M,C\otimes M}\big((\I_C\otimes {\g}_M)\circ (\I_C\otimes \a\otimes \I_M)\circ (\cp_C\otimes \I_M)\big)\\
&=(\I_C\otimes {\g}_M)\circ \big(\I_C\otimes \rd_{C\!,\O}\a\otimes \I_M\big)\circ (\cp_C\otimes \I_M)\\
&=\bm{\widebreve{\eta}}^C(\rd_{C\!,\O}\alpha)_M.
\end{aligned}
\]
The $2$nd equality follows from the properties 
$\rd_{\O\otimes M,M}\g_M=0$ and $\rd_{C,C\otimes C}\cp_C=0$.

\item
$\bm{\widebreve{\eta}}^C$ sends the identity to the identity, i.e. $\bm{\widebreve{\eta}}^C(u_\O\circ \ep_C)=\I^C$:
$$
\bm{\widebreve{\eta}}^C(u_\O\circ \ep_C)_{M} 
:=  (\I_C\otimes {\g}_M)\circ \big(\I_C\otimes( u_\O\circ \ep_C) \otimes \I_M\big)\circ (\cp_C\otimes \I_M)
=\I_{C\otimes M}
=\I^C_M
.
$$

\item
$\bm{\widebreve{\eta}}^C$ preserves the binary operations, 
i.e. $\bm{\widebreve{\eta}}^C(\a_1\star_{C\!,\O} \a_2)
=\bm{\widebreve{\eta}}^C(\a_1)\circ \bm{\widebreve{\eta}}^C(\a_2)$
for all $\a_1,\a_2 \in \Hom(C,\O)$:
\eqalign{
\bm{\widebreve{\eta}}^C \big(\a_1 &\star_{C\!,\O} \a_2\big)_{M} 
:=
(\I_C\otimes {\g}_M)\circ \Big(\I_C\otimes \big(m_\O\circ (\a_1\otimes \a_2)\circ \cp_C\big)\otimes \I_M\Big)
\circ (\cp_C\otimes \I_M)
\cr
=
&
(\I_C\otimes {\g}_M)\circ (\I_C\otimes \a_1\otimes \I_M)\circ (\cp_C\otimes \I_M)\circ (\I_C\otimes {\g}_M)
\circ (\I_C\otimes \a_2\otimes \I_M)\circ (\cp_C\otimes \I_M)
\cr
=
&\bm{\widebreve{\eta}}^C\big(\a_1\big)_{M} \circ \bm{\widebreve{\eta}}^C\big(\a_2\big)_{M}.
}
The $2$nd equality is due to the coassociativity of $\cp_C$ and the action axiom of $\g_M$.
\end{itemize}
\qed
\end{proof}

In Lemma \ref{presheaftwo}, we showed that
$\bm{\CP}_{\!\!\O}(C)=\big(\HOM_\ccdgc(C,\O), u_\O\circ\ep_C, \star_{C\!,\O}\big)$
is a group for every ccdg-coalgebra $C$. 
The inverse of $g \in \HOM_\ccdgc(C,\O)$ is given by $g^{-1}:=\vs_\O\circ g$.
Remind that 
$\HOM_\ccdgc(C,\O)$ is the subset of $\Hom(C,\O)$ consisting the morphisms of ccdg-coalgebras:
$$
\HOM_\ccdgc(C,\O)
=\Big\{ g \in \Hom(C, \O)_0\Big|\rd_{C\!,\O} g =0, 
\;\cp_\O\circ g =(g\otimes g)\circ \cp_C,
\;   \ep_\O\circ g =\ep_C \Big\}.
$$

\begin{lemma}\label{ctanhc}
For  every ccdg-coalgebra $C$, the isomorphism in Lemma \ref{ctanha} gives an isomorphism
$\xymatrix{\bm{\widebreve{\eta}}^C: \bm{\CP}_{\!\!\O}(C)\ar@/^/[r] 
& \ar@/^/[l] \bm{\CP}^\otimes_{\!\!\bm{\o}}(C):\bm{\widebreve{g}}^C}$ 
of groups.
\end{lemma}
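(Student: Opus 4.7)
The plan is to leverage Lemma \ref{ctanha}: $\bm{\widebreve{\eta}}^C$ and $\bm{\widebreve{g}}^C$ are already mutually inverse isomorphisms of dg-algebras between $\bm{\CE}_{\!\!\O}(C)$ and $\bm{\CE}_{\!\!\bm{\o}}(C)$. Since the group structures on $\bm{\CP}_{\!\!\O}(C)$ and $\bm{\CP}^\otimes_{\!\!\bm{\o}}(C)$ are given by the dg-algebra operations $(\star_{C\!,\O}, u_\O \circ \ep_C)$ and $(\circ\, , \I^C)$ restricted to the distinguished subsets, it suffices to show that $\bm{\widebreve{\eta}}^C$ restricts to a bijection between $\HOM_{\ccdgc}(C, \O)$ and $Z_0\mathsf{End}^\otimes(C \otimes \bm{\o})$. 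Unwinding the definitions, this reduces to verifying that for $g \in \Hom(C, \O)_0$, the three defining conditions of $\HOM_{\ccdgc}(C, \O)$ correspond term by term with the three conditions defining $Z_0\mathsf{End}^\otimes(C \otimes \bm{\o})$ applied to $\bm{\widebreve{\eta}}^C(g)$.

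The first two equivalences are relatively painless. The conditions $g \in \Hom(C,\O)_0$ and $\rd_{C\!,\O}\, g = 0$ transfer directly to $\bm{\widebreve{\eta}}^C(g) \in \mathsf{End}(C \otimes \bm{\o})_0$ with $\d^C \bm{\widebreve{\eta}}^C(g) = 0$ because $\bm{\widebreve{\eta}}^C$ is a chain map of degree zero. For the counit condition, the identity $\check{\mq}(\bm{\widebreve{\eta}}^C(g)_M) = \g_M \circ (g \otimes \I_M)$, obtained from \eq{funcy} and the relation $\check{\mq} \circ \check{\mp} = \I$ in Lemma \ref{cbasicl}, specialized to $M = \Bbbk$ with $\g_\Bbbk = m_\Bbbk \circ (\ep_\O \otimes \I_\Bbbk)$, shows that $\bm{\widebreve{\eta}}^C(g)_\Bbbk = \I^C_\Bbbk$ iff $\ep_\O \circ g = \ep_C$.

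The main obstacle is the equivalence between the coproduct condition $\cp_\O \circ g = (g \otimes g) \circ \cp_C$ and the tensor condition $\bm{\widebreve{\eta}}^C(g)_{M \otimes_{\cp_\O} M'} = \bm{\widebreve{\eta}}^C(g)_M \otimes_{\cp_C} \bm{\widebreve{\eta}}^C(g)_{M'}$ for all left dg-modules $(M,\g_M)$ and $(M',\g_{M'})$ over $\O$. Applying $\check{\mq}$ to both sides and using \eq{modtensor}, \eq{tensor of natural transformations}, together with $\check{\mq}(\bm{\widebreve{\eta}}^C(g)_M) = \g_M \circ (g \otimes \I_M)$, the tensor condition becomes
\[
(\g_M \otimes \g_{M'}) \circ (\I_\O \otimes \t \otimes \I_{M'}) \circ \big((\cp_\O \circ g) \otimes \I_{M \otimes M'}\big) = (\g_M \otimes \g_{M'}) \circ (\I_\O \otimes \t \otimes \I_{M'}) \circ \big(((g \otimes g) \circ \cp_C) \otimes \I_{M \otimes M'}\big).
\]
The forward direction is then obvious. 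For the reverse direction, I would specialize to $M = M' = (\O, m_\O)$ (the regular left dg-module) and precompose with $\I_C \otimes u_\O \otimes u_\O$; applying the unit axiom $m_\O \circ (\I_\O \otimes u_\O) = \I_\O$ causes both $m_\O$'s to act as identities on the $u_\O(1)$ factors, leaving precisely $\cp_\O \circ g = (g \otimes g) \circ \cp_C$ (modulo the canonical isomorphisms involving $C \otimes \Bbbk \otimes \Bbbk$).

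With all three equivalences in hand, $\bm{\widebreve{\eta}}^C$ restricts to a bijection between $\HOM_{\ccdgc}(C, \O)$ and $Z_0\mathsf{End}^\otimes(C \otimes \bm{\o})$. Since it already preserves $\star_{C\!,\O}$, $u_\O \circ \ep_C$, and $\I^C$ by Lemma \ref{ctanha}, this bijection is automatically a group isomorphism, with the inverse $\bm{\widebreve{g}}^C$ restricting symmetrically.
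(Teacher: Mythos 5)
Your proof is correct, but it is organized differently from the paper's. The paper proves two separate containments: it shows directly that $\bm{\widebreve{\eta}}^C$ maps $\HOM_{\ccdgc}(C,\O)$ into $Z_0\mathsf{End}^\otimes(C\otimes\bm{\o})$ by computation, and separately that $\bm{\widebreve{g}}^C$ maps $Z_0\mathsf{End}^\otimes(C\otimes\bm{\o})$ into $\HOM_{\ccdgc}(C,\O)$; for the latter it invokes Lemma \ref{modpr}, namely that $\ep_\O:(\O,m_\O)\to(\Bbbk,\g_\Bbbk)$ and $\cp_\O:(\O,m_\O)\to(\O\otimes\O,\g_{\O\otimes_{\cp_\O}\O})$ are morphisms of left dg-modules, and then exploits the naturality of $\eta^C$ together with the tensor conditions $\eta^C_\Bbbk=\I^C_\Bbbk$ and $\eta^C_{\O\otimes_{\cp_\O}\O}=\eta^C_\O\otimes_{\cp_C}\eta^C_\O$. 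You instead prove a single biconditional, $g\in\HOM_{\ccdgc}(C,\O)$ if and only if $\bm{\widebreve{\eta}}^C(g)\in Z_0\mathsf{End}^\otimes(C\otimes\bm{\o})$, which suffices because Lemma \ref{ctanha} already gives mutually inverse bijections on the ambient dg-algebras. The only nontrivial implication is extracting $\cp_\O\circ g=(g\otimes g)\circ\cp_C$ from the tensor condition, and your device of specializing to $M=M'=(\O,m_\O)$ and precomposing with $\I_C\otimes u_\O\otimes u_\O$ does this cleanly via the unit axiom $m_\O\circ(\I_\O\otimes u_\O)=\jmath_\O$; it plays the same structural role as the paper's appeal to naturality with respect to $\cp_\O$, but avoids citing Lemma \ref{modpr} explicitly. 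Both arguments are of comparable length; the paper's version has the advantage of reusing module-map lemmas needed elsewhere (e.g.\ in Proposition \ref{group-valued}), while yours is more self-contained at this point in the development.
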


\begin{proof}
We only need to show two things: 
$\bm{\widebreve{g}}^C\left(Z_0\mathsf{End}^\otimes\big(C\!\otimes\! \bm{\o}\big)\right) \subset \HOM_{\ccdgc}(C, \O)$ 
and $\bm{\widebreve{\eta}}^C\left(\HOM_{\ccdgc}(C, \O)\right) 
\subset Z_0\mathsf{End}^\otimes\big(C\!\otimes\! \bm{\o}\big)$.

1. For ${\eta}^C \in Z_0\mathsf{End}^\otimes\big(C\!\otimes\! \bm{\o}\big)$
we have $\bm{\widebreve{g}}^C({\eta}^C)\in \HOM_{\ccdgc}(C, \O)$.
\begin{itemize}
\item
$\bm{\widebreve{g}}^C({\eta}^C)$ is of degree $0$ and $\rd_{C\!,\O}\bm{\widebreve{g}}^C({\eta}^C)= 0$: 
This is immediate since ${\eta}^C$ is of degree $0$ with $\d^C\eta^C=0$, and $\bm{\widebreve{g}}^C$ is a chain map 
by Lemma \ref{ctanha}.

\item
$\ep_\O\circ \bm{\widebreve{g}}^C({\eta}^C) =\ep_C$: 
Lemma \ref{modpr}\emph{(b)} states that $\ep_\O:\big(\O,m_\O\big)\to\big(\Bbbk,\g_\Bbbk\big)$ 
is a morphism of left dg-modules over $\O$. Since $\eta^C$ is a tensor natural transformation, the following diagram commutes:
\[
\xymatrixrowsep{1.3pc}
\xymatrixcolsep{3.5pc}
\xymatrix{
\ar[d]_-{{\eta}^C_{\O}}
C\otimes \O \ar[r]^{\I_C\otimes \ep_\O} &C\otimes \Bbbk
\ar[d]^-{{\eta}^C_\Bbbk=\I_{C\otimes \Bbbk}= \I^C_\Bbbk}
\cr
C\otimes \O \ar[r]^{\I_C\otimes \ep_\O} &C\otimes \Bbbk
,}
\quad\hbox{i.e.,}\quad
(\I_C\otimes \ep_\O)\circ  {\eta}^C_{\O}= \I_C\otimes \ep_\O.
\]
Therefore we have
\eqalign{
\ep_\O\circ \bm{\widebreve{g}}^C({\eta}^C)
&
=m_\Bbbk\circ(\ep_C\otimes\I_\Bbbk)\circ(\I_C\otimes\ep_\O)\circ\eta^C_\O\circ(\I_C\otimes u_\O)
\circ\jmath^{-1}_C\\
&=m_\Bbbk\circ(\ep_C\otimes\I_\Bbbk)\circ(\I_C\otimes\ep_\O)\circ(\I_C\otimes u_\O)\circ\jmath^{-1}_C
=\ep_C
.
}


\item
$\cp_\O\circ \bm{\widebreve{g}}^C({\eta}^C) = \left(\bm{\widebreve{g}}^C({\eta}^C)\otimes \bm{\widebreve{g}}^C({\eta}^C)\right)\circ \cp_C$:
Lemma \ref{modpr}\emph{(c)} states that 
$\cp_\O:\big(\O,m_\O\big)\to\big(\O\otimes\O,\g_{\O\otimes_{\cp_\O}\O}\big)$ 
is a morphism of left dg-modules over $\O$.
Since $\eta^C$ is a tensor natural transformation, the following diagram commutes:
\[
\xymatrixrowsep{1.3pc}
\xymatrixcolsep{2.7pc}
\xymatrix{
\ar[d]_-{{\eta}^C_{\O}}
C\otimes \O\ar[r]^-{\I_C\otimes \cp_\O} &C\otimes\O\otimes \O
\ar[d]^-{{\eta}^C_{\O\otimes_{\cp_\O\!} \O}=\eta^C_{\O}\otimes_{\cp_C}\eta^C_{\O}}
\cr
C\otimes \O\ar[r]^-{\I_C\otimes \cp_\O} &C\otimes\O\otimes \O
,}
\quad\hbox{i.e.,}\quad
(\I_C\otimes \cp_\O)\circ {\eta}^C_{\O} =\big( \eta^C_{\O}\otimes_{\cp_C}\eta^C_{\O}\big)\circ (\I_C\otimes \cp_\O).
\]
Therefore we obtain that
\eqalign{
\cp_\O\circ \bm{\widebreve{g}}^C({\eta}^C)
&=\imath_{\O\otimes\O}\circ(\ep_C\otimes\I_{\O\otimes\O})\circ(\I_C\otimes \cp_\O)
\circ\eta^C_\O\circ(\I_C\otimes u_\O)\circ\jmath^{-1}_C\\
&=\imath_{\O\otimes\O}\circ(\ep_C\otimes\I_{\O\otimes\O})\circ\big(\eta^C_\O\otimes_{\cp_C}\eta^C_\O\big)
\circ(\I_C\otimes \cp_\O)\circ(\I_C\otimes u_\O)\circ\jmath^{-1}_C\\
&=\left(\bm{\widebreve{g}}^C(\eta^C) \otimes \bm{\widebreve{g}}^C(\eta^C)\right)\circ \cp_C
.
}
\end{itemize}

2. 
For $g \in \HOM_{\ccdgc}(C,\O)$, we have 
$\bm{\widebreve{\eta}}^C(g) \in Z_0\mathsf{End}^\otimes\big(C\!\otimes\! \bm{\o}\big)$.

\begin{itemize}

\item $\bm{\widebreve{\eta}}^C(g)$ is of degree $0$ and satisfies $\d^C \bm{\widebreve{\eta}}^C(g)=0$: 
This is immediate, since $g$ is of degree $0$ with $\rd_{C\!,\O}g=0$, 
and $\bm{\widebreve{\eta}}^C$ is a chain map by Lemma \ref{ctanha}.

\item $\bm{\widebreve{\eta}}^C(g)_{\Bbbk}= \I^C_\Bbbk$: Using $\ep_\O\circ g=\ep_C$, we have
\[
\begin{aligned}
\bm{\widebreve{\eta}}^C(g)_{\Bbbk}
&=(\I_C\otimes m_\Bbbk)\circ(\I_C\otimes\ep_\O\otimes\I_\Bbbk)\circ(\I_C\otimes g\otimes \I_\Bbbk)\circ(\cp_C\otimes\I_\Bbbk)\\
&=(\I_C\otimes m_\Bbbk)\circ(\I_C\otimes\ep_\O\otimes\I_\Bbbk)\circ(\cp_C\otimes\I_\Bbbk)=\I^C_\Bbbk.
\end{aligned}
\]

\item
$\bm{\widebreve{\eta}}^C(g)_{M\otimes_{\cp_\O}\! M^\pr}
=\bm{\widebreve{\eta}}^C(g)_M\otimes_{\cp_C}\! \bm{\widebreve{\eta}}^C(g)_{M^\pr}$
for every left dg-modules $\big(M,\g_M\big)$ and $\big(M^\pr,\g_{M^\pr}\big)$ over $\O$:
This  is equivalent to the condition 
$\check{\mq}\Big(
\bm{\widebreve{\eta}}^C(g)_M\otimes_{\cp_C}\! \bm{\widebreve{\eta}}^C(g)_{M^\pr}
\Big)=\check{\mq}\big(\bm{\widebreve{\eta}}^C(g)_{M\otimes_{\cp_\O}\! M^\pr}\big)$.
Using $\cp_\O\circ g=(g\otimes g)\circ\cp_C$, we have
\eqalign{
\check{\mq}\Big(
\bm{\widebreve{\eta}}^C(g)_M\otimes_{\cp_C}\! &\bm{\widebreve{\eta}}^C(g)_{M^\pr}
\Big)
:=
\Big(
\check{\mq}\big(\bm{\widebreve{\eta}}^C(g)_M\big)\otimes
\check{\mq}\big(\bm{\widebreve{\eta}}^C(g)_{M^\pr}\big)
\Big)
\circ(\I_C\otimes \t\otimes \I_{M^\pr})\circ(\cp_C\otimes\I_{M\otimes M^\pr})
\\
&=
(\g_M\otimes \g_{M^\pr})\circ(g\otimes \I_M\otimes g\otimes \I_{M^\pr})
\circ(\I_C\otimes \t\otimes \I_{M^\pr})\circ(\cp_C\otimes\I_{M\otimes M^\pr})
\\
&=
(\g_M\otimes\g_{M^\pr})\circ(\I_\O\otimes \t\otimes \I_{M^\pr})\circ(\cp_\O\otimes\I_{M\otimes M^\pr})\circ(g\otimes\I_{M\otimes M^\pr})
\\
&=\g_{M\otimes_{\cp_\O}M^\pr}\circ(g\otimes \I_{M\otimes M^\pr})
=\check{\mq}\big(\bm{\widebreve{\eta}}^C(g)_{M\otimes_{\cp_\O}\! M^\pr}\big).
}
\end{itemize}
\qed
\end{proof}

Now we finish the proof of Proposition \ref{homainpr}.

\begin{proof}[Proposition \ref{homainpr}]
We claim that the isomorphisms $\bm{\widebreve{\eta}}^C:\bm{\CE}_{\!\!\O}(C)\to\bm{\CE}_{\!\!\bm{\o}}(C)$ 
are natural in $C\in\ccdgc$. This will give us a natural isomorphism
\[
\bm{\widebreve{\eta}}:\bm{\CE}_{\!\!\O}\Longrightarrow\bm{\CE}_{\!\!\bm{\o}}
:\mathring{\category{ccdgC}}(\Bbbk) \rightsquigarrow \category{dgA}(\Bbbk),
\]
whose component at a ccdg-coalgebra $C$ is $\bm{\widebreve{\eta}}^C$. Then 
$\bm{\widebreve{g}}:=\{\bm{\widebreve{g}}^C\}$ is also a natural transformation, 
which is the inverse of $\bm{\widebreve{\eta}}$. 
Moreover, $\bm{\widebreve{\eta}}$ will canonically induce a natural isomorphism
$$
\bm{\widebreve{\eta}}:\bm{\CP}_\O\Longrightarrow\bm{\CP}^\otimes_{\!\!\bm{\o}}
:\mathring{\category{ccdgC}}(\Bbbk) \rightsquigarrow \category{Grp},
$$
with its inverse, again, $\bm{\widebreve{g}}$.
We need to show that the following diagram commutes for every morphism $f:C\to C^\pr$ of ccdg-coalgebras:
\[
\xymatrixrowsep{1.3pc}
\xymatrix{
\bm{\CE}_{\!\!\O}(C^\pr) \ar[r]^-{\bm{\widebreve{\eta}}^{C^\pr}} \ar[d]_-{\bm{\CE}_{\!\!\O}(f)}&
\bm{\CE}_{\!\!\bm{\o}}(C^\pr) \ar[d]^-{\bm{\CE}_{\!\!\bm{\o}}(f)}\\
\bm{\CE}_{\!\!\O}(C) \ar[r]^-{\bm{\widebreve{\eta}}^C}&
\bm{\CE}_{\!\!\bm{\o}}(C),
}
\quad\text{i.e.,}\quad
\bm{\CE}_{\!\!\bm{\o}}(f)\circ\bm{\widebreve{\eta}}^{C^\pr}=\bm{\widebreve{\eta}}^C\circ\bm{\CE}_{\!\!\O}(f).
\]
Let $g^\pr:C^\pr\to \O$ be a linear map and $\big(M,\g_M\big)$ be a left dg-module over $\O$. Then
\eqalign{
\check{\mq}\Big(
\bm{\CE}_{\!\!\bm{\o}}(f)\big(\bm{\widebreve{\eta}}^{C^\pr}(g^\pr)\big)_M
\Big)
&=
\check{\mq}\big(\bm{\widebreve{\eta}}^{C^\pr}(g^\pr)_M\big)
\circ(f\otimes \I_M)\\
&=\g_M\circ(g^\pr\otimes \I_M)\circ(f\otimes\I_M)\\
&=\g_M\circ\big((g^\pr\circ f)\otimes \I_M\big)
=\check{\mq}\Big(\bm{\widebreve{\eta}}^{C^\pr}(g^\pr\circ f)_M\Big).
}
Therefore $\big(\bm{\CE}_{\!\!\bm{\o}}(f)\circ\bm{\widebreve{\eta}}^{C^\pr}\big)(g^\pr)
=\big(\bm{\widebreve{\eta}}^C\circ\bm{\CE}_{\!\!\O}(f)\big)(g^\pr)$
holds for all $g^\pr:C^\pr\to \O$.
\qed
\end{proof}

We end this paper with the proof of Theorem \ref{homainth}.

\begin{proof}[Theorem \ref{homainth}]
By Proposition \ref{homainpr} and the definitions of 
$\bm{\mP}_\O$ and $\bm{\mP}_{\bm{\o}}^\otimes$, 
it is sufficient  to show that, for every ccdg-coalgebra $C$,
\begin{enumerate}[label=$({\alph*})$,leftmargin=.8cm]

\item $\bm{\widebreve{\eta}}^C$ sends a homotopy pair 
$\big(g(t),\chi(t)\big)$ on $\HOM_\ccdgc(C,\O)$
to a homotopy pair
$\Big(\bm{\widebreve{\eta}}^C\big(g(t)\big),\bm{\widebreve{\eta}}^C\big(\chi(t)\big)\Big)$
on $Z_0\mathsf{End}^\otimes\big(C\!\otimes\! \bm{\o}\big)$, and

\item $\bm{\widebreve{g}}^C$ sends a homotopy pair
$\big(\eta(t)^C,\l(t)^C\big)$ on $Z_0\mathsf{End}^\otimes\big(C\!\otimes\! \bm{\o}\big)$
to a homotopy pair $\Big(\bm{\widebreve{g}}^C\big(\eta(t)^C\big),\bm{\widebreve{g}}^C\big(\l(t)^C\big)\Big)$
on $\HOM_\ccdgc(C,\O)$.

\end{enumerate}
Then $\bm{\widebreve{\eta}}^C$ and $\bm{\widebreve{g}}^C$ will induce an isomorphism of groups 
$\bm{\mP}^\otimes_{\!\bm{\o}}(C)\cong \bm{\mP}_{\!\O}(C)$ for every ccdg-coalgebra $C$. 
Moreover, this isomorphism is natural in $C\in\ccdgc$ by Proposition \ref{homainpr} and Lemma \ref{costpdy}
so that we have natural isomorphism 
$$
\bm{\mP}_{\!\!\O}\cong\bm{\mP}^\otimes_{\!\!\bm{\o}}:\mathring{\mathit{ho}\category{ccdgC}}(\Bbbk) 
\rightsquigarrow \category{Grp}.
$$

We will prove $(a)$ only since the proof of $(b)$ is similar.
We need to check the pair $\Big(\bm{\widebreve{\eta}}^C\big(g(t)\big),\bm{\widebreve{\eta}}^C\big(\chi(t)\big)\Big)$
has the following properties.
\begin{enumerate}[label=(\arabic*),leftmargin=.8cm]
\item $\frac{d}{dt}\bm{\widebreve{\eta}}^C\big(g(t)\big)=\d^C\bm{\widebreve{\eta}}^C\big(\chi(t)\big)$,
\item $\bm{\widebreve{\eta}}^C\big(g(0)\big)\in Z_0\mathsf{End}^\otimes\big(C\!\otimes\! \bm{\o}\big)$,
\item $\bm{\widebreve{\eta}}^C\big(\chi(t)\big)_\Bbbk=0$, and
\item $\bm{\widebreve{\eta}}^C \big(\chi(t)\big)_M  \otimes_{\cp_C}\bm{\widebreve{\eta}}^C\big(g(t)\big)_{M^\pr}
+\bm{\widebreve{\eta}}^C\big(g(t)\big)_M\otimes_{\cp_C}\bm{\widebreve{\eta}}^C\big(\chi(t)\big)_{M^\pr}
=\bm{\widebreve{\eta}}^C\big(\chi(t)\big)_{M\otimes_{\cp_\O}M^\pr}$,
\end{enumerate}
where the last equality should hold for all left dg-modules $\big(M,\g_M\big)$ and $\big(M^\pr,\g_{M^\pr}\big)$ over $\O$.
Property $(1)$ follows from the condition $\frac{d}{dt}g(t)=\rd_{C\!,\O}\chi(t)$,
since
$\frac{d}{dt}\bm{\widebreve{\eta}}^C\big(g(t)\big)
=\bm{\widebreve{\eta}}^C\left(\frac{d}{dt}g(t)\right)
=\bm{\widebreve{\eta}}^C\left(\rd_{C\!,\O}\chi(t)\right)
=\d^C\bm{\widebreve{\eta}}^C\big(\chi(t)\big)$.
Property $(2)$ follows from the condition $g(0)\in \HOM_\ccdgc(C,\O)$.
Property $(3)$ follows from the condition $\ep_\O\circ \chi(t)=0$, since
we have
$\bm{\widebreve{\eta}}^C\big(\chi(t)\big)_\Bbbk
=(\I_C\otimes m_\Bbbk)\circ\big(\I_C\otimes(\ep_{\O}\circ \chi(t))\otimes\I_\Bbbk\big)\circ(\cp_C\otimes\I_\Bbbk)$.
Finally we check that Property $(4)$ is a consequence of 
the condition $\big(\chi(t)\otimes g(t)+g(t)\otimes \chi(t)\big)\circ \cp_C=\cp_\O\circ \chi(t)$.
We note that Property $(4)$ is equivalent to the identity
\eqn{sdrwa}{
\check{\mq}
\Big(
\bm{\widebreve{\eta}}^C \big(\chi(t)\big)_M  \otimes_{\cp_C}\bm{\widebreve{\eta}}^C\big(g(t)\big)_{M^\pr}
+\bm{\widebreve{\eta}}^C\big(g(t)\big)_M\otimes_{\cp_C}\bm{\widebreve{\eta}}^C\big(\chi(t)\big)_{M^\pr}
\Big)=\check{\mq}\big(\bm{\widebreve{\eta}}^C\big(\chi(t)\big)_{M\otimes_{\cp_\O}M^\pr}\big),
}
which can be checked as follows.  We begin with the $1$st term in the LHS of \eq{sdrwa}:
\eqalign{
\check{\mq}
\Big(
\bm{\widebreve{\eta}}^C \big(\chi(t)\big)_M &  \otimes_{\cp_C}\bm{\widebreve{\eta}}^C\big(g(t)\big)_{M^\pr}
\Big)\\
&=
\Big(
\check{\mq}\big(\bm{\widebreve{\eta}}^C \big(\chi(t)\big)_M\big)
\otimes
\check{\mq}\big(\bm{\widebreve{\eta}}^C\big(g(t)\big)_{M^\pr}\big)
\Big)
\circ(\I_C\otimes\t\otimes\I_{M^\pr})\circ(\cp_C\otimes\I_{M\otimes M^\pr})\\
&=
(\g_M\otimes \g_{M^\pr})\circ\big(\chi(t)\otimes\I_M\otimes g(t)\otimes \I_{M^\pr}\big)
\circ(\I_C\otimes\t\otimes\I_{M^\pr})\circ(\cp_C\otimes\I_{M\otimes M^\pr})\\
&=
(\g_M\otimes \g_{M^\pr})\circ(\I_\O\otimes\t\otimes\I_{M^\pr})\circ
\big(\chi(t)\otimes g(t)\otimes\I_{M\otimes M^\pr}\big)\circ(\cp_C\otimes\I_{M\otimes M^\pr}).
}
After the similar calculation for the $2$nd term in the LHS of \eq{sdrwa},
we obtain that
\eqalign{
\check{\mq}
\Big(&
\bm{\widebreve{\eta}}^C \big(\chi(t)\big)_M  \otimes_{\cp_C}\bm{\widebreve{\eta}}^C\big(g(t)\big)_{M^\pr}
+
\bm{\widebreve{\eta}}^C\big(g(t)\big)_M\otimes_{\cp_C}\bm{\widebreve{\eta}}^C\big(\chi(t)\big)_{M^\pr}
\Big)\\
&=
(\g_M\otimes \g_{M^\pr})\circ(\I_\O\otimes\t\otimes\I_{M^\pr})\circ
\Big(\big(\chi(t)\otimes g(t)+g(t)\otimes \chi(t)\big)\otimes\I_{M\otimes M^\pr}\Big)
\circ(\cp_C\otimes\I_{M\otimes M^\pr})\\
&=
(\g_M\otimes \g_{M^\pr})\circ(\I_\O\otimes\t\otimes\I_{M^\pr})\circ
(\cp_\O\otimes\I_{M\otimes M^\pr})\circ \big(\chi(t)\otimes \I_{M\otimes M^\pr}\big)\\
&=
\g_{M\otimes_{\cp_\O}M^\pr}\circ\big(\chi(t)\otimes \I_{M\otimes M^\pr}\big)
=\check{\mq}\big(\bm{\widebreve{\eta}}^C\big(\chi(t)\big)_{M\otimes_{\cp_\O}M^\pr}\big).
}
\qed
\end{proof}


\begin{thebibliography}{AB}

\bibitem{Adams}
J.F.\  Adams, 
\emph{On the cobar construction}.
Proc.\ Natl.\ Acad.\ Sci.\ USA 42 (1956), 409--412.

\bibitem{Cartier}
P.\ Cartier,
\emph{A primer of Hopf algebras}. In: Frontiers in number theory, physics, and geometry. II., pp. 537--615,  Springer, Berlin, 2007.


\bibitem{Dieud}
J.\ Dieudonn\'{e},
\emph{Introduction to the theory of formal groups}. 
Pure and Applied Mathematics, 20. Marcel Dekker, Inc., New York, 1973.





\bibitem{Deligne90}
P.\ Deligne,
\emph{Cat\'egories tannakiennes}.  
The Grothendieck Festschrift. Vol.\ II,
 pp.\ 111--195,
Progr.\ Math.\ 87, Birkh\"auser Boston, Boston, MA, 1990. 



%
\bibitem{Keller}
B.\ Keller,  \emph{On differential graded categories}. 
International Congress of Mathematicians. Vol. II, pp 151--190, Eur.\ Math.\ Soc.\, Zürich, 2006. 



\bibitem{JLP}
J.\ Lee and J.-S.\ Park, 
\emph{Tannakian reconstruction of affine group dg-scheme and
a rational de Rham fundamental group dg-scheme of a space}.
In prepration.

\bibitem{Quillen}
D.G.\ Quillen, 
\emph{Rational homotopy theory}.
Ann.\ of Math.\ 90 (1969), 205--295.

\bibitem{Rivano}
N.\ Saavedra Rivano, 
\emph{Cat\'egories Tannakiennes},
Lecture Notes in Math., Vol.\ 265.
Springer-Verlag, Berlin. 1972.


\bibitem{Simpson}
C.T.\ Simpson, 
\emph{Higgs bundles and local systems}, 
Publ.\ Math.\ Inst.\ Hautes \'Etudues Sci.\   75  (1992) 5--95.


\bibitem{Sullivan}
D.\ Sullivan, 
\emph{Infinitesimal computations in topology}.
Publ.\ Math.\ Inst.\ Hautes \'Etudues  Sci.\  No.\ 47 (1978), 269--331.

\end{thebibliography}
\end{document}